\documentclass[10pt]{article}
\usepackage{amsfonts,amssymb,amsmath,amsthm,mathtools}
\usepackage{aliascnt}
\usepackage{mathrsfs}
\usepackage{enumitem}
\usepackage[margin=2cm]{geometry}
\usepackage[english]{babel}
\usepackage[T1]{fontenc}
\usepackage{lmodern}
\usepackage{microtype}
\usepackage{hyperref}
\usepackage[nameinlink,capitalize]{cleveref}
\usepackage{url}
\hypersetup{
  colorlinks=true,
  linkcolor=blue,
  citecolor=blue,
  urlcolor=blue,
  pdftitle={Quantitative propagation of chaos in total variation for unregularized Vlasov--Riesz--Fokker--Planck systems},
  pdfauthor={Ning Jiang, Juntao Wu},
  pdfsubject={Quantitative mean-field limits for unregularized repulsive Riesz interactions and the three-dimensional Vlasov--Poisson--Fokker--Planck system},
  pdfkeywords={Vlasov--Poisson--Fokker--Planck, Riesz interaction, Coulomb interaction, propagation of chaos, total variation, BBGKY hierarchy, mean-field limit},
  pdfdisplaydoctitle=true
}
\numberwithin{equation}{section}

\newcommand{\R}{\mathbb R}
\newcommand{\T}{\mathbb T}
\newcommand{\N}{\mathbb N}
\newcommand{\Pp}{\mathbb P}
\newcommand{\E}{\mathbb E}
\newcommand{\dd}{\,\mathrm d}
\newcommand{\ind}{\mathbf 1}
\newcommand{\TV}{\mathrm{TV}}
\newcommand{\esssupp}{\operatorname*{ess\,supp}}

\newtheoremstyle{plainnormalnote}%
  {3pt}%
  {3pt}%
  {\itshape}%
  {0pt}%
  {\bfseries}%
  {.}%
  {.5em}%
  {\thmname{#1}\thmnumber{ #2}\thmnote{ \normalfont(#3)}}%
\theoremstyle{plainnormalnote}
\newtheorem{theorem}{Theorem}[section]
\newaliascnt{corollary}{theorem}
\newtheorem{corollary}[corollary]{Corollary}
\aliascntresetthe{corollary}
\newaliascnt{proposition}{theorem}
\newtheorem{proposition}[proposition]{Proposition}
\aliascntresetthe{proposition}
\newaliascnt{lemma}{theorem}
\newtheorem{lemma}[lemma]{Lemma}
\aliascntresetthe{lemma}
\theoremstyle{definition}

\newaliascnt{assumption}{theorem}
\newtheorem{assumption}[assumption]{Assumption}
\aliascntresetthe{assumption}
\newtheoremstyle{boldremark}%
  {3pt}%
  {3pt}%
  {\normalfont}%
  {0pt}%
  {\bfseries}%
  {.}%
  {.5em}%
  {\thmname{#1}\thmnumber{ #2}\thmnote{ \textbf{(#3)}}}%
\theoremstyle{boldremark}
\newtheorem{remark}{Remark}[section]
\theoremstyle{plainnormalnote}

\crefname{theorem}{Theorem}{Theorems}
\Crefname{theorem}{Theorem}{Theorems}
\crefname{proposition}{Proposition}{Propositions}
\Crefname{proposition}{Proposition}{Propositions}
\crefname{lemma}{Lemma}{Lemmas}
\Crefname{lemma}{Lemma}{Lemmas}
\crefname{corollary}{Corollary}{Corollaries}
\Crefname{corollary}{Corollary}{Corollaries}
\crefname{definition}{Definition}{Definitions}
\Crefname{definition}{Definition}{Definitions}
\crefname{assumption}{Assumption}{Assumptions}
\Crefname{assumption}{Assumption}{Assumptions}
\crefname{remark}{Remark}{Remarks}
\Crefname{remark}{Remark}{Remarks}
\crefname{equation}{equation}{equations}
\Crefname{equation}{Equation}{Equations}

\begin{document}

\title{\texorpdfstring{\Large\bfseries Quantitative propagation of chaos in total variation for unregularized Vlasov--Riesz--Fokker--Planck systems}{Quantitative propagation of chaos in total variation for unregularized Vlasov--Riesz--Fokker--Planck systems}}
\author{Ning Jiang$^\mathrm{a}$,\quad Juntao Wu$^\mathrm{a}$\\
{\small\itshape $^\mathrm{a}$ School of Mathematics and Statistics, Wuhan University, Wuhan, Hubei 430072, P. R. China}\\[1mm]
{\small Corresponding author: Juntao Wu, \texttt{00036371@whu.edu.cn}}\\[1mm]
}
\date{}

\maketitle

\begin{abstract}
We establish quantitative propagation of chaos in total variation for the
unregularized three-dimensional repulsive
Vlasov--Poisson--Fokker--Planck (VPFP) particle system.  For tensorized
initial data satisfying weighted Sobolev regularity and a Gaussian moment, we
prove that for every $0<b<1/3$ there exist $\tau_b>0$ and $C_b\ge1$,
independent of $N$ and $k$, such that
\[
 \sup_{0\le t\le\tau_b}\|F_{N,k}(t)-f_t^{\otimes k}\|_{\TV}
 \le C_b^kN^{-b},
 \qquad N\ge1,\quad 1\le k\le N.
\]
More generally, for repulsive Riesz potentials with singularity
$|x|^{-s}$ on $\T^d$ and $\R^d$, $0<s<d/2$, we obtain
$C^k\bigl(N^{-1/q}+N^{-(d/s-2)}\bigr)$ for
$q\ge2$ and $q>d/(d-s-1)$.  The proof combines conditioning of the product initial law, auxiliary
$k$-particle Fokker--Planck equations, and weighted $L^q$ estimates for the
BBGKY hierarchy.  The level-$(k+1)$ interaction term is
controlled by a weighted H\"older estimate under the local condition
$K\in L^{q'}_{\mathrm{loc}}$; for $d=3$ and $s=1$, this condition is
$q>3$.  On $\R^d$, polynomial spatial weights control the far field.  For each fixed
particle number, the singular particle dynamics are globally well posed and
have no collisions.  This permits the force regularization to be removed.
\end{abstract}

\medskip
\noindent\textbf{Keywords:} propagation of chaos; Riesz interaction; Coulomb interaction; total variation; Vlasov--Riesz--Fokker--Planck equation; singular mean-field limits; BBGKY hierarchy

\medskip
\noindent\textbf{MSC 2020:} Primary 35Q83, 35Q84; Secondary 60H10, 82C22.

\section{Introduction}\label{sec:intro}

Mean-field limits for interacting particle systems with regular forces are
classical \cite{BraunHepp1977,Dobrushin1979,Sznitman1991}; see also
\cite{ChaintronDiez2022I,ChaintronDiez2022II}.  For singular second-order
systems, the interaction is singular in the position variables while the
Fokker--Planck diffusion acts only in velocity.  The resulting mismatch is a
central issue in quantitative estimates for Vlasov--Fokker--Planck systems
\cite{BJS2025}.  We consider the unregularized three-dimensional repulsive
Coulomb particle system and, more generally, repulsive Riesz interactions.
We begin with the model and the main results, then discuss related work and
the proof strategy.

\subsection{Model and main results}\label{subsec:intro-model}

Let
\[
 d\ge2,\qquad 0<s<\frac d2,\qquad
 \mathcal D\in\{\T^d,\R^d\},\qquad
 \Omega_{\mathcal D}=\mathcal D\times\R^d.
\]
Fix $c_{d,s}>0$.  On $\R^d$, set
\begin{equation}\label{eq:intro-whole-riesz-potential}
 \phi_{\R,s}(x)=1+c_{d,s}|x|^{-s},\qquad
 K_{\R,s}(x)=-\nabla\phi_{\R,s}(x)
 =c_{d,s}s\frac{x}{|x|^{s+2}}.
\end{equation}
On the torus of unit volume, let $\phi_{\T,s}\ge1$ be even, smooth away from the
origin, lower semicontinuous with $\phi_{\T,s}(0)=+\infty$, and suppose that
locally near the origin
\begin{equation}\label{eq:intro-periodic-riesz-potential}
 \phi_{\T,s}(x)=c_{d,s}|x|^{-s}+H_{\T,s}(x),\qquad
 H_{\T,s}\in C^\infty.
\end{equation}
Set $K_{\T,s}=-\nabla\phi_{\T,s}$ away from the origin.  Thus
$K_{\T,s}(x)=c_{d,s}s x/|x|^{s+2}+O(1)$ near zero.  These assumptions include smooth periodic extensions of the Euclidean
singularity and the periodic Riesz Green kernel; see
\cite{ChodronRosenzweigSerfaty2025}.  The analysis uses the local singular
structure, evenness, positivity, and smoothness away from the origin.

We consider the mean-field particle system
\begin{equation}\label{eq:intro-riesz-particle}
 \dd X_i=V_i\,\dd t,\qquad
 \dd V_i=\frac1N\sum_{j\ne i}K_{\mathcal D,s}(X_i-X_j)\,\dd t
 +\sqrt{2\sigma}\,\dd B_i,
 \qquad i=1,\ldots,N,
\end{equation}
where $B_1,\ldots,B_N$ are independent standard $\R^d$-valued Brownian
motions, independent of the initial configuration.  Positions are understood
modulo the torus when $\mathcal D=\T^d$.  The corresponding mean-field equation
is the Vlasov--Riesz--Fokker--Planck equation
\begin{equation}\label{eq:intro-riesz-vpfp}
 \left\{
 \begin{aligned}
  &\partial_t f+v\cdot\nabla_x f+(K_{\mathcal D,s}*\rho_f)\cdot\nabla_v f
  =\sigma\Delta_v f,\\
  &\rho_f(t,x)=\int_{\R^d}f(t,x,v)\,\dd v,\qquad f(0)=f_0.
 \end{aligned}
 \right.
\end{equation}
Let $F_N(t)$ denote the law of \eqref{eq:intro-riesz-particle} and let
$F_{N,k}(t)$ be its $k$-particle marginal.  Write
$z_i=(x_i,v_i)$, $Z_k=(z_1,\ldots,z_k)$,
$X_k=(x_1,\ldots,x_k)$, $V_k=(v_1,\ldots,v_k)$ and
$\Delta_{V_k}=\sum_{i=1}^k\Delta_{v_i}$.  For any integrable phase-space
function $g$, we write
\[
 \rho_g(x):=\int_{\R^d}g(x,v)\,\dd v,
 \qquad \rho_0:=\rho_{f_0}.
\]
For $1\le k\le N$, set
\begin{align}
 L_{N,k}
 &:=\sum_{i=1}^k v_i\cdot\nabla_{x_i}
 +\frac1N\sum_{i\ne j\le k}K_{\mathcal D,s}(x_i-x_j)\cdot\nabla_{v_i},
 \label{eq:common-internal-generator}\\
 J_{i,k}[h](Z_k)
 &:=\int_{\mathcal D\times\R^d}
 K_{\mathcal D,s}(x_i-y)h(Z_k,y,w)\,\dd y\dd w.
 \label{eq:common-flux-definition}
\end{align}
For smooth forces, the marginals satisfy the BBGKY hierarchy
\begin{equation}\label{eq:intro-BBGKY}
 \partial_tF_{N,k}+L_{N,k}F_{N,k}-\sigma\Delta_{V_k}F_{N,k}
 +\frac{N-k}{N}\sum_{i=1}^k\nabla_{v_i}\cdot J_{i,k}[F_{N,k+1}]=0,
 \qquad k<N,
\end{equation}
with the last term absent at $k=N$.  Formally replacing $F_{N,k+1}$ by
$f_t^{\otimes(k+1)}$ gives the Vlasov hierarchy associated with
\eqref{eq:intro-riesz-vpfp}.  For the singular Riesz interactions considered
here, the distributional BBGKY hierarchy is obtained by regularization and
passage to the limit in \Cref{sec:whole-space-dynamics} and
Appendix~\ref{app:fixed-N-passage}.

Whenever a probability measure considered below is absolutely continuous with
respect to Lebesgue measure, the same symbol is used for the measure and for
its density.  For probability measures
on a measurable space $S$ we use
\[
 \|\mu-\nu\|_{\TV}:=|\mu-\nu|(S),
\]
so for densities the total variation norm is the $L^1$ distance and has
diameter two.  If $P$ is a Markov kernel, we write $P\mu$ for its forward
action on finite signed measures.  With this convention,
\[
 \|P\mu-P\nu\|_{\TV}\le\|\mu-\nu\|_{\TV},\qquad
 \|\pi_\#\mu-\pi_\#\nu\|_{\TV}\le\|\mu-\nu\|_{\TV}
\]
for every measurable map $\pi$.  If $(X,Y)$ is any coupling of $\mu$ and
$\nu$, then
\[
 \|\mu-\nu\|_{\TV}\le2\,\mathbb P(X\ne Y).
\]
We write $q'=q/(q-1)$, $\langle x\rangle=(1+|x|^2)^{1/2}$,
and $C_tX=C([0,T];X)$, $L_t^pX=L^p(0,T;X)$ on a fixed interval $[0,T]$.
Singular kernels in Lebesgue integrals are understood almost everywhere away
from the collision diagonals.  The well-posedness of the finite particle system and absence
of collisions used below are established in
\Cref{sec:whole-space-dynamics}.

By a \emph{classical solution} of \eqref{eq:intro-riesz-vpfp} on $[0,T]$ we
mean a function $f\in C([0,T];L^1)$ attaining $f_0$ in $L^1$, which is $C^1$
in $(t,x)$ and $C^2$ in $v$ on $(0,T]\times\Omega_{\mathcal D}$ and satisfies
\eqref{eq:intro-riesz-vpfp} pointwise there.  Nonnegativity and unit mass are
stated separately in the results below.

The principal application is the three-dimensional VPFP system,
corresponding to $d=3$ and $s=1$.
\Cref{cor:intro-quantitative-coulomb} gives, without a microscopic force
cutoff, the total variation rate $C_b^kN^{-b}$ for every $0<b<1/3$.
The quantitative estimate is proved under explicit assumptions on a solution
of \eqref{eq:intro-riesz-vpfp}; the Riesz and Coulomb corollaries follow from
the solution classes constructed below.

\begin{theorem}[Quantitative propagation of chaos]
\label{thm:quantitative-comparison}
Let $d\ge2$, $0<s<d/2$, $\sigma>0$, and
$\mathcal D\in\{\T^d,\R^d\}$.  Let $f_0\ge0$ have unit mass.  Assume that, for some $T>0$,
\eqref{eq:intro-riesz-vpfp} admits a nonnegative classical solution $f$ on
$[0,T]$ with initial datum $f_0$ and unit mass, and that
\[
 E_f=K_{\mathcal D,s}*\rho_f\in C([0,T];C_b^1)
\]
and
\begin{equation}\label{eq:intro-velocity-derivative}
 \int_0^T\|\nabla_v f(t)\|_1\,\dd t<\infty.
\end{equation}
Assume in addition that one of the following pointwise bounds holds, with the
estimate for $f$ uniform for $0\le t\le T$.
\begin{enumerate}[label=\textup{(\roman*)}]
 \item If $\mathcal D=\T^d$, then for some $a_0,a_f>0$ and
 $C_0,C_f<\infty$,
 \[
 0\le f_0(x,v)\le C_0e^{-a_0|v|^2},\qquad
 0\le f(t,x,v)\le C_fe^{-a_f|v|^2}.
 \]
 \item If $\mathcal D=\R^d$, then for some $M,M_f>d$,
 $a_0,a_f>0$ and $C_0,C_f<\infty$,
 \[
 0\le f_0(x,v)\le C_0\langle x\rangle^{-M}e^{-a_0|v|^2},\qquad
 0\le f(t,x,v)\le C_f\langle x\rangle^{-M_f}e^{-a_f|v|^2}.
 \]
\end{enumerate}
Consider the unregularized particle system with initial law $f_0^{\otimes N}$, independent
of the Brownian motions.  For any $q$ satisfying
\begin{equation}\label{eq:intro-q-range}
 2\le q<\infty,\qquad q>\frac d{d-s-1},
\end{equation}
there exist $0<\tau\le T$ and $C\ge1$, independent of $N$ and $k$, such that,
for every $N\ge1$ and $1\le k\le N$,
\begin{equation}\label{eq:intro-TV-general}
 \sup_{0\le t\le\tau}\|F_{N,k}(t)-f_t^{\otimes k}\|_{\TV}
 \le C^k\left(N^{-1/q}+N^{-(d/s-2)}\right).
\end{equation}
For each $N$, the unregularized particle system admits a pathwise unique
global strong solution, and collisions occur with probability zero.  The constants $\tau$ and $C$ depend only on $d,s,q,\sigma,T$, the domain and
potential, and the norms and constants entering the preceding assumptions on
$f_0$ and $f$, and not on $N$ or $k$.
\end{theorem}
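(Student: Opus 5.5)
The plan follows the abstract: conditioning of the product initial law, auxiliary $k$-particle Fokker--Planck equations, and weighted $L^q$ estimates for the BBGKY hierarchy. We take the finite-$N$ well-posedness and absence of collisions as given from \Cref{sec:whole-space-dynamics}, together with the distributional BBGKY hierarchy \eqref{eq:intro-BBGKY} from Appendix~\ref{app:fixed-N-passage}.

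\textbf{Step 1: conditioning.} We first condition the initial law $f_0^{\otimes N}$ on a good event. On this event the velocities satisfy $|v_i|\le R$ for a threshold $R$ of order $\sqrt{\log N}$, and on $\R^d$ the positions lie in a polynomially growing ball. Its complement has probability $O(N^{-1})$ by the Gaussian bound on $f_0$ and the $\langle x\rangle^{-M}$ decay. Since the particle flow is a Markov kernel, TV contraction shows that this changes $F_{N,k}$ by at most $2\Pp(\text{bad})$. The conditioned law is still exchangeable with bounded weighted density, which supplies initial weighted $L^q$ bounds of the form $\|e^{a|V_k|^2}\langle X_k\rangle^{m}F_{N,k}(0)\|_{L^q}\le C^k$.

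\textbf{Step 2: the error hierarchy.} Set $e_k=F_{N,k}-f^{\otimes k}$. Subtracting the Vlasov hierarchy from \eqref{eq:intro-BBGKY} gives
\[
 \partial_t e_k+L^{E_f}_{k}e_k-\sigma\Delta_{V_k}e_k
 =-\frac{N-k}{N}\sum_{i\le k}\nabla_{v_i}\cdot J_{i,k}[e_{k+1}]+R_k.
\]
Here $L^{E_f}_k$ is the auxiliary $k$-particle Fokker--Planck operator with the smooth field $E_f\in C_tC_b^1$. The remainder $R_k$ collects three kinds of terms. The first are the $k$-internal singular terms $\frac1N\sum_{i\ne j\le k}K(x_i-x_j)\cdot\nabla_{v_i}F_{N,k}$. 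The second are the $k/N$ terms. The third is the difference between $J_{i,k}[f^{\otimes(k+1)}]$ and the mean field $E_f$. We represent $e_k$ via Duhamel with the auxiliary semigroup and test against bounded functions to estimate TV.

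The divergence structure $\nabla_{v_i}$ is placed on the auxiliary semigroup. Its kinetic hypoelliptic smoothing costs $t^{-1/2}$, which is integrable, and \eqref{eq:intro-velocity-derivative} handles the terms involving $f$.

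\textbf{Step 3: weighted $L^q$ control of $J$.} This is the main obstacle. We bound $J_{i,k}[F_{N,k+1}]$ in $L^1$ by a weighted H\"older estimate:
\[
 |J_{i,k}[h]|\le \|K\ind_{|x|\le1}\|_{L^{q'}}\,\|h\|_{L^q_{y}\text{(weighted)}}+\|K\ind_{|x|>1}\|_\infty\|h\|_{L^1}.
\]
The local integrability $K\in L^{q'}_{\rm loc}$ requires $(s+1)q'<d$, that is, $q>d/(d-s-1)$. The velocity integral is absorbed by the Gaussian weight, and on $\R^d$ the polynomial weights handle the far field.

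We must therefore propagate weighted $L^q$ bounds $\|w_kF_{N,k}(t)\|_{L^q}\le C^k$ for $t\le\tau$. We do this from the Liouville equation for $F_N$ by an $L^q$ energy estimate with weight $e^{a(t)|V|^2}\langle X\rangle^m$, letting $a(t)$ decrease in time so that the short time $\tau$ absorbs the weight's growth. The singular interaction is divergence free in $v$, so it contributes only through its action on the weight. The contribution to $\partial_t$ is $a(t)\frac1N\sum V_i\cdot K(x_i-x_j)$. We bound it using the repulsive energy, which is controlled by a conditioned initial energy. This is the delicate part, and it is why the estimate holds only up to a time $\tau$.

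For the internal singular terms in $R_k$, we use H\"older with $K\in L^{q'}_{loc}$ again, which yields $k^2/N$ times $C^k$. Where $q$-integrability is unavailable, we instead bound the contribution of close pairs $|x_i-x_j|<\delta$ directly and optimize $\delta$. This produces the $N^{-(d/s-2)}$ term, via Riesz energy bounds of the form $\E\,\frac1N\sum|x_i-x_j|^{-s}\lesssim1$.

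\textbf{Step 4: iteration and conclusion.} We iterate Duhamel over the hierarchy from level $k$ up to level $N$ on $[0,\tau]$. With the time-ordered simplex, the $n$-fold iterate has size $(C\tau)^n/n!\cdot k^n$-type factors. Choosing $\tau$ small makes the sum converge to $C^k$ times the per-level error. The per-level error is $N^{-1/q}$: it comes from interpolating the $O(1/N)$ $L^1$ discrepancies against the uniform $L^q$ bound, together with the conditioning error.

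If this naive iteration loses factorials, we will instead truncate it at depth $n\sim\log N$ and use the a priori bound $\|e_{k+n}\|_{\TV}\le2$ at the cut-off level.
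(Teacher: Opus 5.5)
Your approach has a genuine gap: the conditioning and weight never control the interaction energy, so the hierarchy of estimates cannot close. The failure is in Step~3, at the point you yourself call delicate. In the $L^q$ estimate for the Liouville equation with weight $w=e^{a(t)|V|^2}\langle X\rangle^m$, the singular drift contributes $\int|F|^q w\,a(t)\frac2N\sum_{i\ne j}V_i\cdot K(x_i-x_j)$. A decreasing $a(t)$ only produces a negative multiple of $|V|^2$. That can absorb $|V_i|$, but not the unbounded factor $|K(x_i-x_j)|$. An averaged bound such as $\E\frac1N\sum\phi(X_i-X_j)\lesssim1$ is useless inside $\int|F|^qw$.

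Your Step~1 conditioning does not help either. Cutting $|v_i|\lesssim\sqrt{\log N}$ and localizing positions says nothing about the interaction energy. The weighted $L^q$ bound you extract from it already holds for $f_0^{\otimes k}$ without any conditioning.

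The missing idea is to put the interaction energy into the exponent. With $e_{N,k}=\sum_{i\le k}(1+|v_i|^2)+\frac1N\sum_{i\ne j\le k}\phi(x_i-x_j)$ one has $L_{N,k}e_{N,k}=0$. So the internal singular transport cancels exactly in the estimate of $\int|H|^qe^{\lambda(t)e_{N,k}}\varpi_k$. At the same time, $e_{N,k+1}\ge e_{N,k}+1+|v_{k+1}|^2$ makes the weighted H\"older bound for $J_{i,k}$ (with $K\in L^{q'}_{\mathrm{loc}}$) uniform in $k$.

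But $\int(f_0^{\otimes k})^qe^{\lambda e_{N,k}}=+\infty$ for product data (Remark~\ref{rem:product-exponential-divergence}). This is why the paper conditions on $\max_iD_i\le RN$ together with the pair bounds $\phi(x_i-x_j)\le\kappa N$. This gives $\frac1N\sum_{i\ne j\le k}\phi\le Rk$ on the support for every $k$, hence $G^0_{N,k}\le A^ke^{-\gamma e_{N,k}}$ (with a spatial factor on $\R^d$), while keeping $\|G^0_{N,k}-f_0^{\otimes k}\|_1\le C^k/N$. The Bennett/tail argument with $\Psi_h(L)\lesssim L^{-d/s}$ bounds the cost by $N^{2-d/s}$. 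That, not an optimization over close pairs, is the origin of the $N^{-(d/s-2)}$ term. Note also that a weighted $L^q$ bound propagated for $F_N$ at level $N$ yields constants $C^N$, not $C^k$, for the marginals.

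Even granting uniform bounds, your hierarchy does not close. Your reference operator contains only $E_f$, so the internal terms $\frac1N\sum_{i\ne j\le k}K(x_i-x_j)\cdot\nabla_{v_i}F_{N,k}$ sit in the source and act on the unknown $F_{N,k}$. The flux $J_{i,k}[e_{k+1}]$ requires a weighted $L^q$ norm of the next-level error, while your Duhamel iteration runs in TV. Obtaining $N^{-1/q}$ by interpolating ``$O(1/N)$ $L^1$ discrepancies'' is circular, since the $L^1$ smallness of $e_{k+1}$ is what is to be proved.

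Moreover, the hierarchy factor is $k(k+1)\cdots(k+n-1)$, not $k^n$, so the time integrals must supply a full $1/n!$. With a $(t-s)^{-1/2}$ kernel at each step they supply only $1/\Gamma(n/2+1)$, and the series diverges. Truncating at depth $\log N$ does not rescue this. The truncated tail is superpolynomial in $N$, and $J_{i,k}$ is not bounded on TV, so $\|e_{k+n}\|_{\TV}\le2$ cannot be inserted.

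The paper avoids these problems as follows.
\begin{enumerate}[label=\textup{(\roman*)}]
 \item It compares $G_{N,k}$ with auxiliary $U_{N,k}$ that keep $L_{N,k}$. The singular internal term then only acts on $f^{\otimes k}$ in the $L^1$ comparison, which uses $|K|*\rho_f\in L^\infty$ and \eqref{eq:intro-velocity-derivative}.
 \item The remainder $h_{N,k+1}=U_{N,k+1}-U_{N,k}f_t$ is known a priori to be $O(C^k/N)$ in $L^1$ and bounded pointwise by $C^{k+1}e^{-\beta_*e^*_{N,k+1}}$. These interpolate to $O(C^k/N)$ in the weighted $L^q$ norm.
 \item The energy estimate absorbs the $\nabla_{v_i}$ of the flux into the dissipation, which is where $q\ge2$ enters. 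This yields a Volterra system with no singular time kernel.
 \item $U_{N,N}=G_N$ gives $Y_{N,N}=0$. The finite system is then iterated down from level $N$ and summed by the binomial series on a short interval independent of $N$.
\end{enumerate}
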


Since $s<d/2$ and $d\ge2$, one has $d-s-1>0$, so the range in
\eqref{eq:intro-q-range} is nonempty.  Moreover $2s<d$, and therefore
$\phi_{\mathcal D,s}^2\in L^1_{\mathrm{loc}}$ near the singularity, as
required in the conditioning argument.  The condition
$q>d/(d-s-1)$ is equivalent to $(s+1)q'<d$, namely
$K_{\mathcal D,s}\in L^{q'}_{\mathrm{loc}}$ near the singularity.  On
$\R^d$ a polynomial spatial weight controls the far field; the corresponding
kernel estimate is proved in \Cref{lem:whole-space-kernel-criterion}.  The
condition $q\ge2$ enters the estimate of the flux through the velocity
diffusion.  The restriction to short times arises in the iteration of the
finite hierarchy, while the $N$-particle dynamics are global for each fixed
$N$.

The assumptions on the limiting solution in
Theorem~\ref{thm:quantitative-comparison} are verified for the initial data considered below.  For data in the
weighted $C^m$ spaces introduced below, the hypotheses on $f$ are verified in
\Cref{prop:common-riesz-classical}; see
Appendix~\ref{app:whole-space-classical}.  For the three-dimensional VPFP equation we also treat initial data with weighted Sobolev regularity and a
Gaussian moment; see \Cref{prop:gaussian-coulomb-solution}, based on
\Cref{prop:gauss-coulomb-sobolev-core,lem:gauss-integral-to-pointwise} and
Appendix~\ref{app:coulomb-estimates}.  The particle dynamics for fixed $N$ asserted in the theorem follow from
\Cref{prop:common-finite-N}.

\subsection{Initial data and the three-dimensional VPFP equation}

We use weighted $C^m$ norms in which each derivative is multiplied by the
weight.  For periodic data define
\begin{equation}\label{eq:periodic-classical-data-space}
 C^m_{\T,a}
 :=\{h\in C^m(\T^d\times\R^d):\|h\|_{C^m_{\T,a}}<\infty\},
 \quad
 \|h\|_{C^m_{\T,a}}
 :=\max_{|\xi|+|\eta|\le m}\sup_{x,v}
 e^{a|v|^2}|\partial_x^\xi\partial_v^\eta h(x,v)|.
\end{equation}
For data on $\R^d$, set
\begin{equation}\label{eq:whole-space-classical-data-space}
 C^m_{M,a}(\R^{2d})
 :=\{h\in C^m(\R^{2d}):\|h\|_{C^m_{M,a}}<\infty\},
\end{equation}
with
\begin{equation}\label{eq:whole-space-classical-data-norm}
 \|h\|_{C^m_{M,a}}
 :=\max_{|\xi|+|\eta|\le m}\sup_{x,v}
 \langle x\rangle^M e^{a|v|^2}
 |\partial_x^\xi\partial_v^\eta h(x,v)|.
\end{equation}
The weighted $C^m$ assumptions imply the hypotheses of
\Cref{thm:quantitative-comparison}, and hence give the following estimate
for general Riesz interactions.

\begin{corollary}[Quantitative propagation of chaos for Riesz interactions]
\label{cor:intro-quantitative-riesz}
Let $d\ge2$, $0<s<d/2$, $\sigma>0$, $m\in\N$ with $m\ge5$, and $a_0>0$.
Let $f_0\ge0$ have unit mass and assume that one of the following conditions holds.
\begin{enumerate}[label=\textup{(\roman*)}]
 \item $\mathcal D=\T^d$ and
 \begin{equation}\label{eq:intro-periodic-riesz-data}
  \|f_0\|_{C^m_{\T,a_0}}<\infty.
 \end{equation}
 \item $\mathcal D=\R^d$, $M>d$, and
 \begin{equation}\label{eq:intro-riesz-data}
  \|f_0\|_{C^m_{M,a_0}}<\infty.
 \end{equation}
\end{enumerate}
Then there exist $T_f>0$ and a nonnegative classical solution $f$ of
\eqref{eq:intro-riesz-vpfp} on $[0,T_f]$ with unit mass.  We consider the particle
system with initial law $f_0^{\otimes N}$, independent of the Brownian
motions.  For any $q$ satisfying \eqref{eq:intro-q-range}, there
exist $0<\tau\le T_f$ and $C\ge1$, independent of $N$ and $k$, such that,
for every $N\ge1$ and $1\le k\le N$,
\begin{equation}\label{eq:intro-TV-riesz}
 \sup_{0\le t\le\tau}\|F_{N,k}(t)-f_t^{\otimes k}\|_{\TV}
 \le C^{\,k}\left(N^{-1/q}+N^{-(d/s-2)}\right).
\end{equation}
For each $N$, the unregularized particle system admits a pathwise unique
global strong solution, and collisions occur with probability zero.  The constants depend only on the domain, the
Riesz potential, $d,s,q,\sigma,m,a_0$, and the displayed norm of $f_0$;
when $\mathcal D=\R^d$ they also depend on $M$, and never on $N$ or $k$.
\end{corollary}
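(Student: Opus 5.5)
The corollary follows by checking the hypotheses of \Cref{thm:quantitative-comparison} for a local classical solution built from the weighted $C^m$ data. Once those hypotheses hold on some interval $[0,T_f]$, the theorem supplies the bound \eqref{eq:intro-TV-riesz} and the fixed-$N$ well-posedness and collision-freeness directly. The argument therefore has two steps: construct $f$, and verify the regularity and pointwise bounds required in cases (i)/(ii) of the theorem.

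\emph{Construction of $f$.} I would first regularize the kernel by $K^\varepsilon=K_{\mathcal D,s}*\chi_\varepsilon$ and solve the regularized Vlasov--Fokker--Planck equation in the weighted space $C^m_{\T,a(t)}$ (resp.\ $C^m_{M,a(t)}$). The velocity weight $a(t)$ is allowed to decrease slowly, e.g.\ $a(t)=a_0/(1+\kappa t)$, so that the transport $v\cdot\nabla_x$ and the force term can be absorbed. The key a priori estimate is a bound on $\|E\|_{C^{m-1}_b}$ in terms of the weighted $C^{m-1}$ norm of $\rho$. For this I would use that $s+1<d$, so $K\in L^1_{\mathrm{loc}}$, together with the bound $|K(x)|\lesssim|x|^{-s-1}$ and the decay $\langle x\rangle^{-M}$ with $M>d$, which makes $\rho$ integrable. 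Splitting $K$ into near and far parts then gives $\|\partial^\alpha E\|_\infty\lesssim\|\partial^\alpha\rho\|_{L^1\cap L^\infty}$.

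On the weighted norm the Gaussian weight produces a factor $|v|$ from $E\cdot\nabla_v$ acting on $e^{a|v|^2}$. This term is absorbed by the decay of $a(t)$ times $|v|^2$, and the spatial weight $\langle x\rangle^M$ is handled by the bound $|\nabla_x\langle x\rangle^M|\lesssim|v|\langle x\rangle^{M}$ from the transport term. A Grönwall/ODE comparison then gives $\varepsilon$-uniform bounds on a short time $T_f$ depending only on the listed quantities. Passing $\varepsilon\to0$ by Arzelà--Ascoli yields a classical solution. Nonnegativity and unit mass follow from the maximum principle and the divergence structure, and uniqueness is not needed.

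\emph{Verification of the hypotheses.} The pointwise Gaussian/polynomial bounds hold with $a_f=a(T_f)$ and $M_f=M$. Moreover $E_f\in C([0,T];C^1_b)$ follows from the $C^1$ control of $\rho$ via the kernel estimate above (here $m\ge5$ gives ample room). Finally $\|\nabla_v f\|_1\lesssim\sup\langle x\rangle^{M}e^{a|v|^2}|\nabla_vf|$, since the weight $\langle x\rangle^{-M}e^{-a|v|^2}$ is integrable for $M>d$; this gives \eqref{eq:intro-velocity-derivative}.

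The main obstacle is closing the weighted $C^m$ estimate on $\R^d$ in a way that is uniform in the regularization. The $x$-weight interacts with the $|v|$ growth of the transport, so the argument has to trade the factor $|v|$ against the loss in the Gaussian exponent $a(t)$. If the authors' \Cref{prop:common-riesz-classical} (Appendix~\ref{app:whole-space-classical}) already supplies exactly this solution class, I would simply invoke it and spend the proof on the short verification step.
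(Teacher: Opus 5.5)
Your proposal is correct, and its skeleton is the paper's. The proof of the corollary invokes \Cref{prop:common-riesz-classical} with $a_1=a_0/2$ and, on $\R^d$, $M_1=(M+d)/2$. It then reads off the pointwise bound, $E_f\in C([0,T_f];C_b^1)$ and \eqref{eq:intro-velocity-derivative}, and applies \Cref{thm:quantitative-comparison}, which also supplies the fixed-$N$ well-posedness. This is exactly your fallback.

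You genuinely differ in your self-contained construction of $f$.

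\textbf{The paper's route.} Appendix~\ref{app:whole-space-classical} runs a Picard iteration whose linear steps are represented by the stochastic flow of the kinetic SDE. The flow-stability estimate gives a contraction one order below the working order $r=m-2$, so the whole sequence converges. The result is recorded with time continuity only in $C^{m-3}$ and with strict margins $a_1<a_0$, $M_1<M$.

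\textbf{Your route.} You use a maximum principle for the coupled system $w\,\partial^\alpha f$, $w=\langle x\rangle^Me^{a(t)|v|^2}$, followed by compactness. This closes at the full order $m$, since $\partial_x^\alpha E=K*\partial_x^\alpha\rho$ with $K\in L^1+L^\infty$. Note that closing needs your bound $\|\partial^\alpha E\|_\infty\lesssim\|\partial^\alpha\rho\|_{L^1\cap L^\infty}$ for $|\alpha|=m$, because of the term $\partial_x^mE\cdot\nabla_vf$; the $C^{m-1}$ version you announce is not enough.

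The weight computation also shows that no spatial exponent is lost, so $M_f=M$ is legitimate for your construction. The paper's proposition gives $M_f=M_1$, which is equally sufficient since the theorem only needs $M_f>d$.

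The price of your route is twofold:
\begin{itemize}
\item you must justify the maximum principle on an unbounded phase space with unbounded drift, for instance by also mollifying the data or via a Feynman--Kac representation;
\item you must derive $\rho_f\in C([0,T];W^{1,1}\cap W^{1,\infty})$ from local convergence plus uniform weighted tails, because the theorem needs time continuity of $E_f$ in $C_b^1$, not just a bound.
\end{itemize}

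There are two small slips in the sketch. First, the transport term produces $v\cdot\nabla_x\langle x\rangle^M$, bounded by $M|v|\langle x\rangle^M$; the gradient $\nabla_x\langle x\rangle^M$ itself carries no factor $|v|$. Second, the velocity diffusion contributes $+4\sigma a^2|v|^2$ to the zeroth-order coefficient of the equation for $wf$, so the decay of $a$ must dominate this term as well as the force and transport terms. With $a(t)=a_0/(1+\kappa t)$ this requires $\kappa>4\sigma a_0$.
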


The existence and regularity of the limiting solution used in this corollary
are given by \Cref{prop:common-riesz-classical}; its proof is in
Appendix~\ref{app:whole-space-classical}.

\medskip
\noindent\emph{The three-dimensional VPFP equation.}\
For $d=3$ and $s=1$, one has $d/s-2=1$, and the condition
\eqref{eq:intro-q-range} reduces to $q>3$.  Thus \Cref{cor:intro-quantitative-riesz} gives $C_q^kN^{-1/q}$ in
both geometries under the pointwise assumptions above.  In $\R^3$ this is the
Coulomb potential
\[
 \phi_{\R}(x)=1+\frac c{|x|},\qquad
 K_{\R}(x)=c\frac{x}{|x|^3}.
\]
On $\T^3$ we may take the Coulomb Green function with zero mean $G_{\T}$,
\[
 -\Delta G_{\T}=4\pi(\delta_0-1),\qquad \int_{\T^3}G_{\T}=0,
\]
and choose $C_{\T}$ so that
$\phi_{\T}=C_{\T}+cG_{\T}\ge1$, $K_{\T}=-\nabla\phi_{\T}$.
Then $G_{\T}(x)=|x|^{-1}+H_{\T}(x)$ near zero with $H_{\T}$ smooth, so this
choice belongs to the periodic class above.

\medskip
\noindent\emph{Weighted Sobolev notation.}\
For the VPFP result below, define
\[
 \|h\|_{H^m_{\ell}(\T^3\times\R^3)}^2
 :=\sum_{|\xi|+|\eta|\le m}\int_{\T^3\times\R^3}
 \langle v\rangle^{2\ell}
 |\partial_x^\xi\partial_v^\eta h|^2\,\dd x\dd v,
\]
and
\[
 \|h\|_{H^m_{\mu,\ell}(\R^6)}^2
 :=\sum_{|\xi|+|\eta|\le m}\int_{\R^6}
 \langle x\rangle^{2\mu}\langle v\rangle^{2\ell}
 |\partial_x^\xi\partial_v^\eta h|^2\,\dd x\dd v.
\]
The second norm is the value at $t=0$ of the time-dependent weighted norm
used in Appendix~\ref{app:coulomb-estimates}.

For the three-dimensional Coulomb interaction, weighted Sobolev regularity
together with a Gaussian moment suffices in place of the pointwise $C^m$
assumptions.  Combining these two assumptions yields the pointwise bound required
in \Cref{thm:quantitative-comparison}; see
Appendix~\ref{app:coulomb-estimates}.

\begin{corollary}[Quantitative propagation of chaos for 3D VPFP]
\label{cor:intro-quantitative-coulomb}
Let $m\in\N$ with $m\ge7$, $\sigma,c>0$, and let $f_0\ge0$ have unit mass.
Assume that, for some $a_0>0$, one of the following conditions holds:
\begin{enumerate}[label=\textup{(\roman*)}]
 \item $\mathcal D=\T^3$, $\ell>3/2$,
 \[
  f_0\in H^m_{\ell}(\T^3\times\R^3),\qquad
  \int_{\T^3\times\R^3}e^{a_0|v|^2}f_0\,\dd x\dd v<\infty,
 \]
 \item $\mathcal D=\R^3$, $\mu>3/2$, $\ell>3/2$,
 \[
  f_0\in H^m_{\mu,\ell}(\R^6),\qquad
  \int_{\R^6}e^{a_0(|x|^2+|v|^2)}f_0\,\dd x\dd v<\infty.
 \]
\end{enumerate}
Then there exist $T_f>0$ and a nonnegative classical solution $f$ of the
corresponding VPFP equation on $[0,T_f]$ with unit mass.  Consider the particle
system with initial law $f_0^{\otimes N}$, independent of the Brownian
motions.  For every fixed $0<b<1/3$ there exist $0<\tau_b\le T_f$ and
$C_b\ge1$, independent of $N$ and $k$, such that, for every $N\ge1$ and
$1\le k\le N$,
\begin{equation}\label{eq:intro-TV-coulomb-b}
 \sup_{0\le t\le\tau_b}\|F_{N,k}(t)-f_t^{\otimes k}\|_{\TV}
 \le C_b^kN^{-b}.
\end{equation}
For each $N$, the unregularized particle system admits a pathwise unique
global strong solution, and collisions occur with probability zero.  The
constants $\tau_b,C_b$ depend only on $b$, $\sigma,c,m,a_0$, the relevant
Sobolev exponents and norms, and the displayed Gaussian moment, and not on
$N$ or $k$.
\end{corollary}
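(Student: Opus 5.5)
The plan is to deduce \Cref{cor:intro-quantitative-coulomb} from \Cref{thm:quantitative-comparison} with $d=3$, $s=1$. This requires producing a limiting solution with the regularity and pointwise Gaussian bounds that theorem asks for, and then choosing $q$ according to $b$.

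\emph{Step 1: the limiting solution.} By \Cref{prop:gaussian-coulomb-solution}, built on \Cref{prop:gauss-coulomb-sobolev-core} and \Cref{lem:gauss-integral-to-pointwise}, the weighted Sobolev assumption ($m\ge7$, $\ell>3/2$, and $\mu>3/2$ on $\R^3$) gives a time $T_f>0$ and a nonnegative classical solution of unit mass on $[0,T_f]$. This solution should satisfy the following.
\begin{enumerate}[label=\textup{(\alph*)}]
 \item $E_f=K*\rho_f\in C([0,T_f];C^1_b)$. This follows because $\rho_f$ lies in $L^1\cap W^{1,\infty}$, so $\nabla K*\rho_f$ is bounded and continuous.
 \item $\int_0^{T_f}\|\nabla_v f\|_1\,\dd t<\infty$. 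This follows from the $H^m_\ell$ bound with $\ell>3/2$ by Cauchy--Schwarz in $v$, using the spatial weight $\mu>3/2$ in $x$ on $\R^3$.
 \item Pointwise Gaussian bounds for $f_0$ and $f$, namely $e^{-a|v|^2}$ on $\T^3$ and $\langle x\rangle^{-M}e^{-a|v|^2}$ with $M>3$ on $\R^3$.
\end{enumerate}

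\emph{Main obstacle: the pointwise bound (c).} We only know an integral Gaussian moment together with Sobolev regularity. First, propagate $\int e^{a(t)|v|^2}f$ (respectively $e^{a(t)(|x|^2+|v|^2)}$) on a short interval with decreasing $a(t)$, using that $E_f$ is bounded. Then convert the integral moment to a pointwise bound through \Cref{lem:gauss-integral-to-pointwise}. The idea is that a function bounded in $W^{1,\infty}$ (from $H^m$ with $m\ge7>3+1$, by Sobolev embedding) whose weighted integral is finite must be pointwise small where the weight is large. One interpolates $|f(z)|\lesssim \|\nabla f\|_\infty^{6/7}\bigl(\int_{B(z,1)}f\bigr)^{1/7}$ and obtains Gaussian decay with a smaller exponent $a_f$. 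On $\R^3$ the Gaussian spatial moment yields any polynomial decay $M_f>3$. I take these steps as supplied by the cited results.

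\emph{Step 2: choosing $q$.} For $d=3$, $s=1$ we have $d/(d-s-1)=3$ and $d/s-2=1$, so the admissible range \eqref{eq:intro-q-range} is $q>3$. The bound \eqref{eq:intro-TV-general} then reads $C^k(N^{-1/q}+N^{-1})\le 2C^kN^{-1/q}$. Given $0<b<1/3$, set $q=1/b>3$. \Cref{thm:quantitative-comparison} with $T=T_f$ yields $\tau_b$ and $C$, and we take $C_b=2C$, using $2C^k\le(2C)^k$.

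The constants depend only on the data norms, $b$, $\sigma$ and $c$ through $T_f$, $a_f$, $C_f$, $M_f$ and the norms in (a)--(b). The fixed-$N$ global well-posedness of the particle system and the absence of collisions are part of \Cref{thm:quantitative-comparison}, via \Cref{prop:common-finite-N}.
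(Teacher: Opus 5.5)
Your proposal is correct and follows the paper's proof. The hypotheses of \Cref{thm:quantitative-comparison} with $d=3$, $s=1$ come from \Cref{prop:gauss-coulomb-sobolev-core,prop:gaussian-coulomb-solution}, and your choice $q=1/b$ works as well as the paper's $3<q<1/b$, because $b<1/3$ and the $N^{-1}$ term is absorbed via $2C^k\le(2C)^k$.
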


The limiting solution is constructed in
\Cref{prop:gauss-coulomb-sobolev-core}; its Gaussian and pointwise bounds are
given in \Cref{prop:gaussian-coulomb-solution}, using
\Cref{lem:gauss-integral-to-pointwise}.  See Appendix~\ref{app:coulomb-estimates}.

\begin{remark}[Rates]
The range of $q$ in \eqref{eq:intro-q-range} yields every exponent
\[
 0<b<\min\left\{\frac ds-2,\frac12,\frac{d-s-1}{d}\right\}.
\]
Indeed, $1/q$ may approach
$\min\{1/2,(d-s-1)/d\}$ from below, while the error introduced by
conditioning is
$N^{-(d/s-2)}$.  For the three-dimensional Coulomb interaction this gives
$0<b<1/3$.  At $q=3$, the Coulomb force belongs to
$L^{3/2,\infty}_{\mathrm{loc}}$ but not to
$L^{3/2}_{\mathrm{loc}}$; the weighted H\"older estimate
\Cref{lem:common-weighted-holder} uses the strong
$L^{q'}$ norm and hence requires $q>3$ in this argument.
\end{remark}

\begin{remark}[Dependence on $k$]\label{rem:intro-growing-k}
Fix $0<b<1/3$ as in \Cref{cor:intro-quantitative-coulomb}.  If $k=k_N$ satisfies
\[
 k_N\log C_b-b\log N\longrightarrow-\infty,
\]
then \eqref{eq:intro-TV-coulomb-b} implies
\[
 \sup_{0\le t\le\tau_b}
 \|F_{N,k_N}(t)-f_t^{\otimes k_N}\|_{\TV}\longrightarrow0.
\]
In particular, when $C_b>1$, this holds for
$k_N\le c\log N$ with any $c<b/\log C_b$.
\end{remark}

\begin{remark}[Restriction to short times]
The restriction to short times arises from the final iteration of the finite
system of inequalities.  The binomial series estimate in \Cref{lem:TV-iteration}
requires $\widehat D I_L(t)<1$, with $I_L(0)=0$; see
\eqref{eq:TV-iteration-quantities}.  This yields an interval independent of $N$
on which the quantitative estimate holds.
\end{remark}

\subsection{Relation to previous work}
\label{subsec:intro-comparison}

For sufficiently regular interactions, the mean-field limit and propagation of
chaos are classical; the deterministic and stochastic theories go back to
Braun and Hepp, Dobrushin, and Sznitman
\cite{BraunHepp1977,Dobrushin1979,Sznitman1991}.  For singular second-order
systems, Hauray and Jabin developed particle approximations for Vlasov
equations with singular forces \cite{HaurayJabin2007,HaurayJabin2015}, and
Hauray and Salem proved quantitative propagation of chaos for the
one-dimensional VPFP system \cite{HauraySalem2019}.  In three dimensions,
quantitative results for the three-dimensional VPFP system with an $N$-dependent regularization of the
interaction force were obtained in
\cite{CarrilloChoiSalem2019,HuangLiuPickl2020,ChenJungPicklWang2026}.

Bresch, Jabin, and Soler \cite{BJS2025} introduced weighted $L^p$ estimates
for the BBGKY hierarchy of kinetic systems with diffusion in velocity.  They
obtain qualitative mean-field convergence for singular repulsive interactions
and quantitative estimates when the interaction kernel belongs to $L^2$.
The three-dimensional Coulomb force does not belong to
$L^2_{\mathrm{loc}}$.  In addition, for tensorized initial data the exponential moment of the
interaction energy required to initialize the weighted hierarchy is infinite in the
three-dimensional Coulomb case.  In the argument below,
the initial law is first conditioned so that the required weighted bounds hold.
The BBGKY marginals are then compared with auxiliary $k$-particle equations
which retain the interactions among the first $k$ particles.

A complementary approach is based on dual hierarchies.  Bresch, Duerinckx,
and Jabin \cite{BreschDuerinckxJabin2024} developed a duality method for
first- and second-order mean-field systems with square-integrable
interactions.  Khoury and Jabin \cite{KhouryJabin2026} obtained quantitative
estimates for marginals and correlation functions under additional
regularity assumptions.  Duerinckx and Jabin \cite{DuerinckxJabin2026}
derived the two-dimensional Vlasov--Poisson equation with Coulomb interactions
without a microscopic cutoff.  Their argument also applies to forces in
$L^{2-\eta}_{\mathrm{loc}}$, for sufficiently small $\eta>0$, and to Brownian
particle systems.  In a perturbative regime around Gibbs equilibrium,
Duerinckx and Jabin \cite{DuerinckxJabinFluctuations2026} derived the
linearized Vlasov dynamics for a broad class of singular kernels, including
the Coulomb interaction in dimensions $d\le3$.  Quantitative methods based on
relative entropy include the work of Jabin and Wang \cite{JabinWang2016} and,
for singular kinetic McKean--Vlasov equations, the work of Hao, Zhang, and
Zhao \cite{HaoZhangZhao2026}.

For first-order singular mean-field equations, Serfaty \cite{Serfaty2020}
developed the modulated energy method for Coulomb interactions, and
Nguyen, Rosenzweig, and Serfaty \cite{NguyenRosenzweigSerfaty2022} extended
this approach to Riesz interactions.  Rosenzweig and Serfaty
\cite{RosenzweigSerfaty2023} obtained global estimates for singular diffusive
Riesz flows, while Chodron de Courcel, Rosenzweig, and Serfaty
\cite{ChodronRosenzweigSerfaty2025} proved sharp estimates, uniform in time,
for singular periodic Riesz flows in the gradient case.  The relation between
modulated free energy and generation of chaos was further developed by
Rosenzweig and Serfaty \cite{RosenzweigSerfaty2025}.  More recently, Nguyen
and Serfaty \cite{NguyenSerfaty2026} introduced a multiscale metric for a
general class of first-order particle systems with inverse power
singularities.  Their construction uses heat kernels to mollify the empirical
measure at all relevant scales and applies to both attractive and repulsive
interactions, without requiring the force to be of gradient form.

Estimates of partition functions provide another tool for singular mean-field
problems.  Wang and Zhao \cite{WangZhao2026} established estimates uniform in
$N$ for logarithmic, Riesz, and Bessel--Riesz kernels in the locally
square-integrable range $0\le s<d/2$.  Delgadino, Gvalani, and Rosenzweig
\cite{DelgadinoGvalaniRosenzweig2026} obtained related sharp estimates in the
Hilbert--Schmidt regime, with applications to the modulated free energy method
and to fluctuation limits.

For Vlasov--Fokker--Planck dynamics, Feng and Wang \cite{FengWang2026}
considered repulsive Riesz interactions with $0<s<d/2$, together with related
logarithmic and Bessel--Riesz interactions.  They introduced modulated Gibbs
measures for tensorized reference data and, using the partition function
estimates of \cite{WangZhao2026} together with the weighted BBGKY estimates
of \cite{BJS2025}, proved a mean-field limit without a microscopic cutoff on a time
interval independent of $N$.  Their propagation of chaos result gives weak
convergence of the $k$-particle marginals for every fixed $k$.

Here we obtain quantitative convergence in total variation.  The auxiliary
$k$-particle equations retain the internal interaction operator of the BBGKY
hierarchy.  The difference
$U_{N,k+1}-U_{N,k}f_t$ is first estimated in $L^1$ and then in an exponentially
weighted $L^q$ space.  The weighted H\"older estimate then applies under
$K\in L^{q'}_{\mathrm{loc}}$.  For the three-dimensional Coulomb force this
requires $q>3$ and yields, for every $0<b<1/3$,
\[
 \sup_{0\le t\le\tau_b}
 \|F_{N,k}(t)-f_t^{\otimes k}\|_{\TV}
 \le C_b^kN^{-b}.
\]
The same estimate also gives convergence for the growing marginal levels
specified in \Cref{rem:intro-growing-k}.

Theorem~\ref{thm:quantitative-comparison} is stated for a sufficiently regular
solution of \eqref{eq:intro-riesz-vpfp}.  For the three-dimensional VPFP
equation on $\R^3$ with velocity diffusion $\sigma\Delta_v f$, classical
solutions were constructed by Victory and O'Dwyer
\cite{VictoryODwyer1990}, and global smooth well-posedness was established by
Bouchut \cite{Bouchut1993}; see also
\cite[Section~1.2]{HuangLiuPickl2020}.  Local theories for Vlasov--Riesz and
Vlasov--Riesz--Fokker--Planck equations in other function spaces are given in
\cite{ChoiJeong2024,WangZhang2025,ChoiJeongKang2024}.  The weighted estimates
and local constructions used in
\Cref{cor:intro-quantitative-riesz,cor:intro-quantitative-coulomb} are proved
in the appendices.

\subsection{Outline of the proof}\label{subsec:intro-proof-outline}

The proof is based on the BBGKY hierarchy \eqref{eq:intro-BBGKY} and on
the decomposition
\[
 F_{N,k}-f_t^{\otimes k}
 =(F_{N,k}-G_{N,k})+(G_{N,k}-U_{N,k})
 +(U_{N,k}-f_t^{\otimes k}),
\]
where $G_N$ denotes the conditioned $N$-particle law with initial density $G_N^0$,
and $U_{N,k}$ solves an auxiliary $k$-particle Fokker--Planck equation.  The
argument is divided into four steps.

\begin{enumerate}[label=\textup{(\roman*)},leftmargin=*,itemsep=5pt]
\item \emph{Conditioning of the initial law.}
For every subset $I\subset\{1,\ldots,N\}$, the conditioning event bounds the
interaction energy among the particles with labels in $I$ by a constant times
$|I|$.  For Riesz interactions,
\[
 \|G_N^0-f_0^{\otimes N}\|_{\TV}
 \le C N^{-(d/s-2)},
 \qquad
 \|G_{N,k}^0-f_0^{\otimes k}\|_1\le \frac{C^k}{N}.
\]
Total variation contraction preserves the first estimate and gives the bound on
$F_{N,k}-G_{N,k}$.  The second estimate is used in the comparison with the
auxiliary equations.

\item \emph{Auxiliary $k$-particle equations.}
The equation for $U_{N,k}$ retains the singular interactions among the first
$k$ particles and replaces the interaction with the remaining $N-k$ particles
by the mean field $E=K*\rho_f$.  Its internal interaction operator therefore
coincides with that of the BBGKY equation, while $U_{N,k}$ remains close to
the tensorized solution:
\[
 \sup_{t\le T}\|U_{N,k}(t)-f_t^{\otimes k}\|_1
 \le \frac{C_T^k}{N}.
\]
At level $k=N$ the auxiliary equation coincides with the conditioned
$N$-particle equation, and hence $U_{N,N}=G_N$.

\item \emph{Estimate of the remainder.}
For
\[
 h_{N,k+1}=U_{N,k+1}-U_{N,k}f_t,
 \qquad
 e_{N,k+1}^*=e_{N,k}+1+|v_{k+1}|^2,
\]
we prove
\[
 \int |h_{N,k+1}|^q e^{\lambda e_{N,k+1}^*}\varpi_{k+1}
 \le \frac{C^k}{N}.
\]
The weight $e^{\lambda e_{N,k+1}^*}$ contains the interaction energy of the
first $k$ particles and the kinetic energy of particle $k+1$, while omitting
the interactions involving particle $k+1$.  The
weighted H\"older estimate then bounds the remainder under the local
condition $K\in L^{q'}_{\mathrm{loc}}$; on $\R^d$ the polynomial spatial
weight controls the far field.

\item \emph{Weighted estimate and iteration of the hierarchy.}
For $H_{N,k}=G_{N,k}-U_{N,k}$, the identity
$L_{N,k}e_{N,k}=0$ eliminates the contribution of the internal interaction
operator to the weighted energy estimate.  We obtain
\[
 Y_{N,k}(t)\le\int_0^t
 \bigl[kA Y_{N,k}+kL \frac{N-k}{N}Y_{N,k+1}+D_T^kN^{-1}\bigr](r)\,\dd r.
\]
Here $Y_{N,k}(0)=0$ for every $k$ and $Y_{N,N}=0$.  Thus the finite system of
inequalities can be iterated from level $N$ down to level $k$.  Summing the
resulting series gives $Y_{N,k}\le C^k/N$ on a sufficiently short time interval independent of $N$, and a weighted H\"older inequality yields the factor $N^{-1/q}$ in
total variation.
\end{enumerate}

All weighted estimates are first established for smooth initial data and a
regularized force.  For each fixed $N$, the approximation of the conditioned
initial density is removed at fixed $\varepsilon$, followed by
$\varepsilon\downarrow0$.  The latter limit uses a pathwise coupling:
the regularized and singular systems coincide until a pair distance reaches
$2\varepsilon$, while the singular system has no collisions on compact time
intervals.

Section~\ref{sec:conditioning} gives the conditioning argument for
a general nonnegative pair interaction.  Section~\ref{sec:common-comparison}
contains the quantitative estimate and the iteration of the hierarchy.
Section~\ref{sec:kernel-verification} verifies the Riesz kernel bounds,
constructs the singular dynamics for fixed $N$, and removes the force
regularization.  Appendix~\ref{app:fixed-N-passage} justifies the approximation
limits in the weighted $L^q$ estimates for the hierarchy.  The regularity assumptions on the solution of
\eqref{eq:intro-riesz-vpfp} are verified in
Appendices~\ref{app:whole-space-classical} and~\ref{app:coulomb-estimates};
the latter contains the weighted Sobolev and Gaussian estimates used for the
three-dimensional VPFP corollary.

\section{Conditioning of the initial product measure}\label{sec:conditioning}

The conditioning controls the interaction energy of every subset of particles
and yields two estimates with distinct roles.  The complement of
the conditioning event has probability $O(N^{2-p})$, while a refinement that
also excludes pair interactions larger than $\kappa N$ yields
\[
 \|G_{N,k}^{0,\sharp}-f_0^{\otimes k}\|_1\le C^kN^{-1}.
\]
The first estimate is used to compare the original and conditioned
$N$-particle laws; the second enters the weighted estimates in
Section~\ref{sec:common-comparison}.  The argument is first given for a general nonnegative interaction.  The Riesz
assumptions are verified in Section~\ref{sec:kernel-verification}.

\subsection{Tail estimate for the interaction sums}\label{subsec:abstract-conditioning-tail}

Let $(\mathsf X,\mathcal E)$ be a measurable space, let $\rho$ be a probability law on
$\mathsf X$, and let $h:\mathsf X\times\mathsf X\to[0,+\infty]$ be measurable.  If $X\sim\rho$, set
\begin{equation}\label{eq:abstract-potential-uniform-tail}
 \Psi_h(L):=\sup_{x\in\mathsf X}\Pp\bigl(h(x,X)>L\bigr).
\end{equation}
We impose the following tail hypothesis.

\begin{assumption}\label{ass:abstract-tail}
The interaction $h$ is finite $\rho\otimes\rho$-almost everywhere and
\begin{equation}\label{eq:uniform-tail-moments}
 m_1:=\sup_{x\in\mathsf X}\E h(x,X)<\infty,
 \qquad
 m_2:=\sup_{x\in\mathsf X}\E h(x,X)^2<\infty,
\end{equation}
while
\begin{equation}\label{eq:uniform-tail-decay}
 L^2\Psi_h(L)\longrightarrow0\qquad(L\to\infty).
\end{equation}
\end{assumption}

For quantitative rates, we impose the stronger bound
\begin{equation}\label{eq:poly-tail}
 \Psi_h(L)\le C_hL^{-p},\qquad p>2,
 \quad L\ge L_h,\qquad C_h,L_h>0.
\end{equation}
By the layer cake formula, \eqref{eq:poly-tail} implies both
\eqref{eq:uniform-tail-moments} and \eqref{eq:uniform-tail-decay}.

For i.i.d.\ $X_1,\ldots,X_N\sim\rho$, write
$X_N=(X_1,\ldots,X_N)$ and define
\begin{equation}\label{eq:interaction-sum}
 D_i(X_N):=\sum_{j\ne i}h(X_i,X_j)
\end{equation}
and, for $R>0$,
\begin{equation}\label{eq:conditioning-event}
 \mathcal G_N(R)
 :=\left\{\max_{1\le i\le N}D_i(X_N)\le RN\right\}.
\end{equation}

The first step is to control the probability that one of the interaction
sums exceeds a linear threshold.  We separate the event that a single pair
interaction is larger than order $N$ from the deviation of the remaining
truncated sum.  The first contribution is controlled directly by $\Psi_h$, whereas Bennett's
inequality controls the truncated deviation uniformly in the conditioned
value $x$.

\begin{lemma}\label{lem:abstract-interaction-sums}
Let $X_1,\ldots,X_N$ be i.i.d.\ random variables with law $\rho$ on a
measurable space $\mathsf X$, and let $h:\mathsf X\times\mathsf X\to[0,+\infty]$ be measurable.  Set
$\Psi_h$ as in \eqref{eq:abstract-potential-uniform-tail} and assume
\eqref{eq:uniform-tail-moments}.  Define $D_i$ by
\eqref{eq:interaction-sum}.  For any $R>m_1$ and $A>0$,
there exist $\kappa=\kappa(R,A,m_1)\in(0,1)$ and a constant
$C=C(R,A,m_1,m_2)<\infty$, independent of $N$, such that, for all $N\ge2$,
\begin{equation}\label{eq:abstract-uniform-tail-bound}
 \Pp\left(\max_{1\le i\le N}D_i>RN\right)
 \le N(N-1)\Psi_h(\kappa N)+C N^{-A}.
\end{equation}
\end{lemma}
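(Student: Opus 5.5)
By a union bound over $i$ and exchangeability it suffices to bound, for fixed $i$ (say $i=1$), the probability $\Pp(D_1>RN)$ by $(N-1)\Psi_h(\kappa N)+CN^{-A-1}$. I condition on $X_1=x$ and work uniformly in $x\in\mathsf X$. Given $X_1=x$, the variables $Y_j:=h(x,X_j)$, $j=2,\ldots,N$, are i.i.d., nonnegative, with $\E Y_j\le m_1$ and $\E Y_j^2\le m_2$.

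Fix a truncation level $\kappa N$, with $\kappa$ to be chosen. The event $\{\sum_{j\ge2}Y_j>RN\}$ is contained in
\[
 \Bigl\{\max_{j\ge2}Y_j>\kappa N\Bigr\}\cup\Bigl\{\sum_{j\ge2}Y_j\ind_{Y_j\le\kappa N}>RN\Bigr\}.
\]
The first event has probability at most $(N-1)\Psi_h(\kappa N)$ by the definition of $\Psi_h$. Summing over $i$ produces the term $N(N-1)\Psi_h(\kappa N)$.

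For the second event, set $\tilde Y_j=Y_j\ind_{Y_j\le\kappa N}$. These are bounded by $b:=\kappa N$, have mean at most $m_1$, and have variance at most $m_2$. Write $\delta:=R-m_1>0$. Then
\[
 \sum_{j\ge2}\tilde Y_j>RN
 \quad\text{implies}\quad
 \sum_{j\ge2}\bigl(\tilde Y_j-\E\tilde Y_j\bigr)>\delta N,
\]
since $(N-1)m_1\le m_1N$. Bennett's inequality, applied to $N-1$ centered variables bounded above by $b$ with total variance at most $(N-1)m_2\le Nm_2$, gives
\[
 \Pp\le\exp\Bigl(-\frac{Nm_2}{b^2}\,\mathcal H\Bigl(\frac{b\delta N}{Nm_2}\Bigr)\Bigr),
 \qquad \mathcal H(u)=(1+u)\log(1+u)-u.
\]
With $b=\kappa N$ the argument of $\mathcal H$ is $u=\kappa\delta N/m_2$, which grows linearly in $N$. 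The prefactor is $Nm_2/b^2=m_2/(\kappa^2N)$. Using $\mathcal H(u)\ge u\log(1+u)-u$, the exponent is at least
\[
 \frac{\delta}{\kappa}\bigl(\log(1+\kappa\delta N/m_2)-1\bigr).
\]
Hence the bound is of order $N^{-\delta/\kappa}$ up to constants depending on $R,A,m_1,m_2$.

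Choosing $\kappa<1$ so small that $\delta/\kappa\ge A+1$ makes this at most $CN^{-A-1}$ for all $N\ge2$. Multiplying by $N$ from the union bound gives $CN^{-A}$. Note that $\kappa$ depends only on $R,A,m_1$, while $C$ also depends on $m_2$, as claimed.

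The main obstacle is exactly this Bennett step. The truncation level must be of order $N$, so that the large-jump part is measured by $\Psi_h(\kappa N)$. Consequently the sub-Gaussian (Bernstein) regime gives only an $O(1)$ exponent, and the polynomial decay $N^{-\delta/\kappa}$ has to be extracted from the logarithmic growth of $\mathcal H$. This is why $\kappa$ must shrink as $A$ grows. Two further points need checking. First, the bound must be uniform in the conditioned value $x$; this holds because only $m_1$, $m_2$ and $\Psi_h$ enter, and all three are suprema over $x$. Second, the finite range of small $N$ should be absorbed into $C$.
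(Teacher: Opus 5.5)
Your proposal is correct and follows the paper's proof essentially step by step: condition on $X_1=x$, truncate at level $\kappa N$, bound the large-pair event by $(N-1)\Psi_h(\kappa N)$, apply Bennett's inequality with variance proxy $Nm_2$ to the centered truncated sum, extract polynomial decay from the logarithmic growth of Bennett's function, choose $\kappa$ small relative to $(R-m_1)/(A+1)$, and take a union bound over $i$. The only difference is cosmetic: you use the lower bound $\mathcal H(u)\ge u\log(1+u)-u$ where the paper uses $\mathcal H(u)\ge\frac u2\log(1+u)$. This gives you the exponent $\delta/\kappa$ at the cost of a constant factor $e^{\delta/\kappa}$, instead of the paper's exponent $\Delta/(2\kappa)$.
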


\begin{proof}
The case $m_2=0$ is immediate, so assume $m_2>0$ and set
\[
 \Delta:=R-m_1>0.
\]
Choose $\kappa\in(0,1)$ so small that
\[
 \gamma_\kappa:=\frac{\Delta}{2\kappa}>A+1.
\]
This choice depends only on $R,A,m_1$.

Condition on $X_1=x$.  For $j=2,\ldots,N$, set
\[
 Y_j^x:=h(x,X_j),\qquad
 Y_{j,N}^x:=Y_j^x\ind_{\{Y_j^x\le\kappa N\}},\qquad
 \widehat Y_{j,N}^x:=Y_{j,N}^x-\E_\rho Y_{j,N}^x.
\]
The variables $\widehat Y_{j,N}^x$ are independent and centered.  Moreover,
by \eqref{eq:uniform-tail-moments}, we obtain
\[
 \E_\rho Y_{j,N}^x\le \E_\rho h(x,X)\le m_1,
 \qquad
 \widehat Y_{j,N}^x\le \kappa N,
\]
and
\[
 \Sigma_N(x):=\sum_{j=2}^N\E_\rho[(\widehat Y_{j,N}^x)^2]
 \le \sum_{j=2}^N\E_\rho[(Y_{j,N}^x)^2]
 \le (N-1)m_2\le m_2N.
\]
On the event $\max_{2\le j\le N}Y_j^x\le\kappa N$, the truncation does
not change the sum.  Hence, if in addition $\sum_{j=2}^NY_j^x>RN$, then
\begin{align*}
 \sum_{j=2}^N\widehat Y_{j,N}^x
 =\sum_{j=2}^NY_j^x
   -\sum_{j=2}^N\E_\rho Y_{j,N}^x
 > RN-(N-1)m_1
 \ge (R-m_1)N=\Delta N.
\end{align*}
Hence
\[
 \Bigl\{\sum_{j=2}^NY_j^x>RN,\ \max_{2\le j\le N}Y_j^x\le\kappa N\Bigr\}
 \subset
 \Bigl\{\sum_{j=2}^N\widehat Y_{j,N}^x>\Delta N\Bigr\}.
\]
For the event that at least one pair exceeds the truncation threshold, we have,
uniformly in $x$,
\[
 \rho^{\otimes(N-1)}\!\left(\max_{2\le j\le N}Y_j^x>\kappa N\right)
 \le\sum_{j=2}^N\Pp(h(x,X_j)>\kappa N)
 \le (N-1)\Psi_h(\kappa N).
\]

For the centered truncated sum, if $\Sigma_N(x)=0$,
then $\widehat Y_{j,N}^x=0$ almost surely for every $j$ and the deviation
probability is zero.  Otherwise, Bennett's inequality
\cite[Theorem~2.9]{BoucheronLugosiMassart2013} yields
\[
 \rho^{\otimes(N-1)}\!\left(\sum_{j=2}^N\widehat Y_{j,N}^x>\Delta N\right)
 \le
 \exp\!\left[-\frac{\Sigma_N(x)}{\kappa^2N^2}
 b\!\left(\frac{\kappa\Delta N^2}{\Sigma_N(x)}\right)\right],
\]
where $b(u)=(1+u)\log(1+u)-u$.  Since
$b(u)/u\ge\frac12\log(1+u)$ for $u\ge0$, the exponent is bounded above by
\[
 -\frac{\Delta}{2\kappa}
 \log\!\left(1+\frac{\kappa\Delta N^2}{\Sigma_N(x)}\right).
\]
Using $\Sigma_N(x)\le m_2N$ and the monotonicity of the logarithm, this is at most
$
 -\gamma_\kappa
 \log\!\left(1+\frac{\kappa\Delta}{m_2}N\right).
$
Therefore
\[
 \rho^{\otimes(N-1)}\!\left(\sum_{j=2}^N\widehat Y_{j,N}^x>\Delta N\right)
 \le
 \left(1+\frac{\kappa\Delta}{m_2}N\right)^{-\gamma_\kappa}
 \le C N^{-\gamma_\kappa},
\]
with a constant independent of $x$ and $N$.  Combining the preceding estimates and integrating in $X_1$ gives
\[
 \Pp(D_1>RN)\le (N-1)\Psi_h(\kappa N)+CN^{-\gamma_\kappa}.
\]
A union bound over $i=1,\ldots,N$ then yields
\[
 \Pp\!\left(\max_iD_i>RN\right)
 \le N(N-1)\Psi_h(\kappa N)+CN^{1-\gamma_\kappa}.
\]
Since $\gamma_\kappa>A+1$, the last term is bounded by $CN^{-A}$, which
proves \eqref{eq:abstract-uniform-tail-bound}.
\end{proof}

The lemma yields the following bounds for the conditioning probability.

\begin{corollary}
\label{cor:conditioning-probability}
Assume \Cref{ass:abstract-tail}.  For any fixed $R>m_1$ and $A>0$,
there exist $\kappa\in(0,1)$ and $C_{R,A}<\infty$ such that
\begin{equation}\label{eq:conditioning-probability-bound}
 \eta_N(R):=\rho^{\otimes N}(\mathcal G_N(R)^c)
 \le N(N-1)\Psi_h(\kappa N)+C_{R,A}N^{-A},\qquad N\ge2.
\end{equation}
In particular,
\begin{equation}\label{eq:conditioning-probability-vanishing}
 \eta_N(R)\longrightarrow0.
\end{equation}
If, in addition, \eqref{eq:poly-tail} holds, then \Cref{ass:abstract-tail} is satisfied and, for some $C_R<\infty$,
\begin{equation}\label{eq:conditioning-probability-polynomial}
 \eta_N(R)\le C_RN^{2-p},\qquad N\ge2.
\end{equation}
For $N=1$, $\mathcal G_1(R)$ is the whole space and $\eta_1(R)=0$.
\end{corollary}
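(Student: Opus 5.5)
The plan is to read \eqref{eq:conditioning-probability-bound} off \Cref{lem:abstract-interaction-sums} and then handle the two refinements separately.

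\emph{The main bound.} By definition \eqref{eq:conditioning-event}, the complement $\mathcal G_N(R)^c$ is exactly the event $\{\max_i D_i(X_N)>RN\}$. Hence $\eta_N(R)=\Pp(\max_iD_i>RN)$ for i.i.d.\ $X_i\sim\rho$. \Cref{ass:abstract-tail} supplies \eqref{eq:uniform-tail-moments}, so \Cref{lem:abstract-interaction-sums} applies for any $R>m_1$ and $A>0$. It gives $\kappa\in(0,1)$ and $C_{R,A}$ with the stated bound for all $N\ge2$. The case $N=1$ is trivial: $D_1$ is an empty sum, so $D_1=0\le R$, the event $\mathcal G_1(R)$ is everything, and $\eta_1(R)=0$.

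\emph{Vanishing.} Write $N(N-1)\Psi_h(\kappa N)\le \kappa^{-2}(\kappa N)^2\Psi_h(\kappa N)$. This tends to zero by \eqref{eq:uniform-tail-decay}, since $\kappa N\to\infty$. The remaining term $C_{R,A}N^{-A}$ also vanishes, for example with $A=1$.

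\emph{Polynomial case.} First I confirm that \eqref{eq:poly-tail} implies \Cref{ass:abstract-tail}.
\begin{itemize}
\item Apply the layer cake formula uniformly in $x$: $\E h(x,X)^2=\int_0^\infty 2L\,\Pp(h(x,X)>L)\dd L\le L_h^2+2C_h\int_{L_h}^\infty L^{1-p}\dd L<\infty$ because $p>2$. Since $\E h\le 1+\E h^2$, this gives $m_1\le 1+m_2<\infty$.
\item The decay condition holds because $L^2\Psi_h(L)\le C_hL^{2-p}\to0$.
\item Finiteness $\rho\otimes\rho$-a.e.\ follows from $\Pp(h(x,X)=\infty)\le\Psi_h(L)\to0$ for each $x$, integrated over $x$.
\end{itemize}

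Then I apply the main bound with $A=p-2$. For $\kappa N\ge L_h$ one gets $N(N-1)\Psi_h(\kappa N)\le C_h\kappa^{-p}N^{2-p}$, and the second term is $C_{R,A}N^{2-p}$. The finitely many $N$ with $2\le N<L_h/\kappa$ are absorbed by enlarging the constant, using $\eta_N\le1\le N^{p-2}N^{2-p}\le (L_h/\kappa)^{p-2}N^{2-p}$.

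There is no real obstacle here. The only care needed is to check that $m_1,m_2$ are controlled uniformly in $x$ under \eqref{eq:poly-tail}, and to handle small $N$ in the polynomial bound.
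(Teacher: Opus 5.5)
Your proposal is correct and follows essentially the same route as the paper. You read the main bound off \Cref{lem:abstract-interaction-sums}, obtain vanishing from $\kappa^{-2}(\kappa N)^2\Psi_h(\kappa N)\to0$, verify the assumption via the layer cake formula, and absorb small $N$ into the constant; the only differences are cosmetic: you take $A=p-2$ where the paper takes $A>p$ (any $A\ge p-2$ suffices), and you explicitly check the $\rho\otimes\rho$-a.e.\ finiteness of $h$ and the case $N=1$, which the paper leaves implicit.
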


\begin{proof}
The estimate \eqref{eq:conditioning-probability-bound} is
\Cref{lem:abstract-interaction-sums}.  Since $\kappa>0$ is fixed,
\eqref{eq:uniform-tail-decay} gives
\[
 N^2\Psi_h(\kappa N)
 =\kappa^{-2}(\kappa N)^2\Psi_h(\kappa N)\longrightarrow0,
\]
and hence \eqref{eq:conditioning-probability-vanishing}.  Under
\eqref{eq:poly-tail}, the layer cake formula gives
\eqref{eq:uniform-tail-moments}, while
$L^2\Psi_h(L)\le C_hL^{2-p}\to0$.  Thus \Cref{ass:abstract-tail} holds.
Applying \Cref{lem:abstract-interaction-sums} with $A>p$ gives
\[
 \eta_N(R)\le C_RN^{2-p}
\]
for all sufficiently large $N$; increasing $C_R$ covers the remaining
values of $N$.
\end{proof}

\subsection{Conditioned product measures}\label{subsec:abstract-conditioned-hierarchy}

We apply these estimates to the marginals of a conditioned product law.  Let
$(\mathsf X,\mathcal E,\mu)$ be a $\sigma$-finite measured space, and let
$f_0\ge0$ be a probability density on $\mathsf X\times\R^d$ with respect to
$\mu(\dd x)\dd v$.  We denote its spatial law by $\rho$.  We use the same interaction $h$ as above.

Starting from $F_N^0=f_0^{\otimes N}$, set
\begin{equation}\label{eq:pN-conditioned}
 p_N(R):=F_N^0(\mathcal G_N(R))=\rho^{\otimes N}(\mathcal G_N(R)),
 \qquad
 G_N^0
 :=\frac{\ind_{\mathcal G_N(R)}F_N^0}{p_N(R)}
\end{equation}
whenever $p_N(R)>0$, and denote by $G_{N,k}^0$ its $k$-particle marginal.
For $1\le k\le N$ define
\begin{equation}\label{eq:subset-energy}
 e_{N,k}(Z_k)
 :=\sum_{i=1}^k(1+|v_i|^2)
 +\frac1N\sum_{\substack{1\le i,j\le k\\i\ne j}}h(x_i,x_j).
\end{equation}
In the applications below we take $h(x,y)=\phi(x-y)$.

\begin{remark}[Failure of exponential integrability for product data]
\label{rem:product-exponential-divergence}
Let $\mathcal D\in\{\T^d,\R^d\}$ and let $\phi_{\mathcal D,s}$ be a
Riesz potential from the introduction, with $s>0$.  If $f_0$ is a continuous,
nonnegative, nontrivial probability density, then for every fixed
$N\ge k\ge2$, $q\ge1$, and $\lambda>0$,
\begin{equation}\label{eq:product-exponential-divergence-common}
 \int (f_0^{\otimes k})^q e^{\lambda e_{N,k}}\,\dd Z_k=+\infty.
\end{equation}
Indeed, $f_0$ is bounded below by a positive constant on some product of
coordinate balls.  On a smaller spatial ball the Riesz singularity gives
$\phi_{\mathcal D,s}(y)\ge c_0|y|^{-s}$ for $0<|y|<r_0$.  Restricting to
$x_2=x_1+y$ and using both ordered pairs $(1,2)$ and $(2,1)$ yields the lower
bound
\[
 C\int_0^{r_0} r^{d-1}\exp\!\left(\frac{2\lambda c_0}{Nr^s}\right)\dd r
 =+\infty.
\]
On $\R^d$, the conclusion is unchanged after multiplication by
$\prod_{i=1}^k\langle x_i\rangle^\alpha$ for any $\alpha\ge0$, since this
factor is bounded below on the chosen spatial balls.  This divergence is the
reason for conditioning the product law before introducing the exponential
interaction weight.
\end{remark}

Total variation contraction under Markov kernels and coordinate projections
propagates the conditioning estimate.

\begin{lemma}
\label{lem:conditioning-stability}
Assume \Cref{ass:abstract-tail}.  There exists $R_*>m_1$ such that, for every
$R\ge R_*$, the following assertions hold for all $N\ge1$.
\begin{enumerate}[label=\textup{(\roman*)}]
 \item The probability of the conditioning event satisfies
 \begin{equation}\label{eq:uniform-normalization}
  \inf_{N\ge1}p_N(R)\ge\frac12.
 \end{equation}
 \item The following total variation estimates hold:
 \begin{equation}\label{eq:full-TV-conditioning}
  \|G_N^0-F_N^0\|_{\TV}
  =2(1-p_N(R))=2\eta_N(R),
 \end{equation}
 and, for every $1\le k\le N$,
 \begin{equation}\label{eq:marginal-TV-conditioning}
  \|G_{N,k}^0-f_0^{\otimes k}\|_{\TV}
  \le2\eta_N(R).
 \end{equation}
 These quantities tend to zero uniformly in $k$; under
 \eqref{eq:poly-tail} they are $O_R(N^{2-p})$.
 \item For any Markov kernel $P$ on $(\mathsf X\times\R^d)^N$ and any coordinate
 projection $\Pi_k$ onto a fixed set of $k$ coordinates,
 \begin{equation}\label{eq:conditioning-Markov-TV}
  \left\|\Pi_{k\#}PG_N^0
        -\Pi_{k\#}P F_N^0\right\|_{\TV}
  \le 2\eta_N(R),
  \qquad 1\le k\le N.
 \end{equation}
\end{enumerate}
\end{lemma}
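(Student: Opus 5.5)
The proof reduces to the elementary identity for conditioning a probability measure on an event, combined with \Cref{cor:conditioning-probability} to control $\eta_N(R)$.

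\emph{Choice of $R_*$.} By \eqref{eq:conditioning-probability-vanishing}, for fixed $R_0>m_1$ there is $N_0$ with $\eta_N(R_0)\le 1/2$ for all $N\ge N_0$. For the finitely many $N<N_0$ with $N\ge2$, Markov's inequality gives
\[
 \rho^{\otimes N}\bigl(D_i>RN\bigr)\le \frac{\E D_i}{RN}\le \frac{(N-1)m_1}{RN}\le \frac{m_1}{R}.
\]
A union bound then yields $\eta_N(R)\le N m_1/R\le N_0m_1/R$. Choose $R_*\ge R_0$ with $N_0m_1/R_*\le 1/2$; the case $N=1$ is trivial since $\eta_1=0$. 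Monotonicity is also needed: $\mathcal G_N(R)$ increases with $R$, so $\eta_N(R)\le\eta_N(R_*)\le 1/2$ for all $R\ge R_*$ and all $N$. This gives (i), since $p_N(R)=1-\eta_N(R)\ge 1/2$. This step, uniformity in $N$ including small $N$, is the only delicate point, and the Markov bound handles it.

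\emph{Exact total variation identity.} Write $F=F_N^0$, $p=p_N(R)$ and $\mathcal G=\mathcal G_N(R)$. Then
\[
 |G_N^0-F| = \Bigl(\tfrac1p-1\Bigr)F\,\ind_{\mathcal G}+F\,\ind_{\mathcal G^c}.
\]
Integrating gives $(1-p)+(1-p)=2(1-p)=2\eta_N(R)$, which is \eqref{eq:full-TV-conditioning}.

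\emph{Marginals and Markov kernels.} The marginal $G^0_{N,k}$ is the pushforward of $G_N^0$ under a coordinate projection, and $f_0^{\otimes k}$ is the corresponding marginal of $F_N^0$. The contraction $\|\pi_\#\mu-\pi_\#\nu\|_{\TV}\le\|\mu-\nu\|_{\TV}$ from the introduction therefore gives \eqref{eq:marginal-TV-conditioning}. For (iii), apply first the Markov contraction $\|P\mu-P\nu\|_{\TV}\le\|\mu-\nu\|_{\TV}$ and then the projection contraction. Both bounds are uniform in $k$ because the right-hand side depends only on $N$. It tends to zero by \eqref{eq:conditioning-probability-vanishing}, and under \eqref{eq:poly-tail} it is $O_R(N^{2-p})$ by \eqref{eq:conditioning-probability-polynomial}.
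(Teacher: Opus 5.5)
Your proof is correct and follows the paper's structure. Both use \Cref{cor:conditioning-probability} for large $N$, a separate choice of $R$ for the finitely many small $N$, monotonicity of $\mathcal G_N(R)$ in $R$, the exact identity $\|\nu(\cdot\mid G)-\nu\|_{\TV}=2(1-\nu(G))$, and contraction under Markov kernels and projections. The only difference is the small-$N$ step. You use Markov's inequality and a union bound to get the explicit threshold $\eta_N(R)\le N_0m_1/R$ from the first moment alone. The paper instead argues qualitatively that $\max_iD_i<\infty$ almost surely and applies monotone convergence to obtain $p_N(R)\uparrow1$ as $R\to\infty$.
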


\begin{proof}
Fix $R'>m_1$.  By \Cref{cor:conditioning-probability},
$p_N(R')=1-\eta_N(R')\to1$ as $N\to\infty$.  Hence there exists $N_0$
such that
\[
 p_N(R')\ge\frac12,\qquad N\ge N_0.
\]
For the finitely many values $1\le N<N_0$ and every pair $i\ne j$,
\[
 \E h(X_i,X_j)^2
 =\E\!\left[\E\bigl(h(X_i,X_j)^2\mid X_i\bigr)\right]
 \le m_2,
\]
so $h(X_i,X_j)<\infty$ almost surely.  Since only finitely many pairs occur
for fixed $N$, $\max_iD_i<\infty$ almost surely.  Consequently,
\[
 \ind_{\mathcal G_N(R)}\uparrow1
 \qquad \rho^{\otimes N}\text{-almost surely as }R\uparrow\infty.
\]
By monotone convergence,
\[
 p_N(R)=\rho^{\otimes N}(\mathcal G_N(R))\uparrow1.
\]
For each $1\le N<N_0$ choose $R_N\ge R'$ such that $p_N(R_N)\ge1/2$ and
set $R_*:=\max_{1\le N<N_0}R_N$.  Because $\mathcal G_N(R)$ is increasing
in $R$, \eqref{eq:uniform-normalization} holds for every $R\ge R_*$ and
every $N$.

Let $\nu$ be a probability measure and let $G$ be an event with
$p:=\nu(G)>0$.  For every measurable set $A$,
\[
 \nu(A\mid G)=\frac{\nu(A\cap G)}{p}
 =\int_A p^{-1}\ind_G\,\dd\nu.
\]
Thus $\nu(\cdot\mid G)\ll\nu$ and
\[
 \frac{\dd\nu(\cdot\mid G)}{\dd\nu}=p^{-1}\ind_G.
\]
Our convention for total variation then gives
\begin{align*}
 \|\nu(\cdot\mid G)-\nu\|_{\TV}
 &=\int \bigl|p^{-1}\ind_G-1\bigr|\,\dd\nu \\
 &=\int_G(p^{-1}-1)\,\dd\nu+\int_{G^c}1\,\dd\nu \\
 &= (1-p)+(1-p)=2(1-p).
\end{align*}
Applying this identity with $\nu=F_N^0$ and $G=\mathcal G_N(R)$ gives
\eqref{eq:full-TV-conditioning}.
Total variation contracts under Markov kernels and measurable pushforwards;
therefore \eqref{eq:marginal-TV-conditioning} and
\eqref{eq:conditioning-Markov-TV} follow by projection and propagation.
Finally,
\eqref{eq:conditioning-probability-vanishing} gives convergence to zero, and
\eqref{eq:conditioning-probability-polynomial} gives the rate under
\eqref{eq:poly-tail}.
\end{proof}

The preceding estimate controls the conditioned $N$-particle law.  For the
weighted $L^q$ estimate below we also need the marginal bound $O(C^k/N)$, uniformly for
$1\le k\le N$.  We impose the additional constraints
$h(X_i,X_j)\le\kappa N$ and use that only
$k(N-k)+\binom{k}{2}$ of them involve the first $k$ coordinates.

\begin{proposition}
\label{prop:conditioning-marginals}
Assume that $h$ is symmetric and that the polynomial tail bound
\eqref{eq:poly-tail} holds for some $p>2$.  Fix $R\ge R_*$, where $R_*$ is
from \Cref{lem:conditioning-stability}.  Then there exist a constant $C_R\ge1$ and symmetric events
$\mathcal G_N^\sharp(R)\subset\mathcal G_N(R)$ such that, with
\[
 p_N^\sharp:=\rho^{\otimes N}(\mathcal G_N^\sharp(R)),\qquad
 \eta_N^\sharp:=1-p_N^\sharp,
\]
one has
\begin{equation}\label{eq:conditioning-full-N}
 p_N^\sharp\ge\frac12,
 \qquad
 \eta_N^\sharp\le C_RN^{2-p},
 \qquad N\ge1.
\end{equation}
Define
\begin{equation}\label{eq:conditioning-density}
 G_N^{0,\sharp}
 :=(p_N^\sharp)^{-1}\ind_{\mathcal G_N^\sharp(R)}f_0^{\otimes N},
\end{equation}
and denote by $G_{N,k}^{0,\sharp}$ its $k$-particle marginal.  Then, for all
$1\le k\le N$,
\begin{equation}\label{eq:conditioning-marginal-rate}
 \|G_{N,k}^{0,\sharp}-f_0^{\otimes k}\|_1
 \le \frac{C_R^k}{N}.
\end{equation}
Moreover
\begin{equation}\label{eq:conditioning-pointwise}
 0\le G_{N,k}^{0,\sharp}\le2f_0^{\otimes k},
\end{equation}
and for $G_{N,k}^{0,\sharp}$-almost every $Z_k$,
\begin{equation}\label{eq:conditioning-subset-energy}
 \frac1N\sum_{\substack{1\le i,j\le k\\i\ne j}}h(x_i,x_j)\le Rk.
\end{equation}
If, in addition, $\mathsf X$ is a topological space, $\rho$ is a Borel measure,
and $h$ is lower semicontinuous, the events $\mathcal G_N^\sharp(R)$ can be chosen closed
and the same inequality holds on $\esssupp G_{N,k}^{0,\sharp}$.
Finally, for every Markov kernel $P$ on $(\mathsf X\times\R^d)^N$ and
every coordinate projection $\Pi_k$,
\begin{equation}\label{eq:conditioning-Markov}
 \|\Pi_{k\#}PG_N^{0,\sharp}-\Pi_{k\#}P f_0^{\otimes N}\|_{\TV}
 \le2\eta_N^\sharp.
\end{equation}
\end{proposition}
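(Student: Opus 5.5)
We take
\[
 \mathcal G_N^\sharp(R):=\mathcal G_N(R)\cap\mathcal B_N,\qquad
 \mathcal B_N:=\bigl\{h(x_i,x_j)\le\kappa N\ \text{for all } i\ne j\bigr\},
\]
with $\kappa\in(0,1)$ fixed as in \Cref{lem:abstract-interaction-sums}. This event is symmetric. If $h$ is lower semicontinuous, it is closed, because each constraint $\{h\le c\}$ is closed. A union bound gives $\rho^{\otimes N}(\mathcal B_N^c)\le N^2\Psi_h(\kappa N)\le CN^{2-p}$, and \Cref{cor:conditioning-probability} gives $\eta_N(R)\le C_RN^{2-p}$. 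Hence $\eta_N^\sharp\le C_RN^{2-p}$.

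For the lower bound $p_N^\sharp\ge1/2$ we argue as in \Cref{lem:conditioning-stability}. For large $N$ it follows from $\eta_N^\sharp\to0$. For the finitely many small $N$ the issue is that $\mathcal B_N$ is not increasing in $R$. To handle this, we may enlarge $R_*$ if needed, or replace $\kappa N$ by $\max(\kappa N,\kappa_0)$ for small $N$ with $\kappa_0$ large. Since $h<\infty$ almost surely, monotone convergence then gives probability at least $1/2$. For large $N$ we may also have to adjust $\kappa$ slightly, but any fixed $\kappa$ yields the $N^{2-p}$ tail.

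The pointwise bound \eqref{eq:conditioning-pointwise} is immediate: $G_N^{0,\sharp}\le 2f_0^{\otimes N}$, and integrating out coordinates preserves this. For \eqref{eq:conditioning-subset-energy}, note that on $\mathcal G_N(R)$ each $D_i\le RN$. Summing over $i\le k$ and dropping the nonnegative terms with $j>k$ gives $\frac1N\sum_{i\ne j\le k}h(x_i,x_j)\le Rk$. This holds for almost every point of the $N$-particle support, hence for almost every $Z_k$ of the marginal. In the closed case the set $\{Z_k:\text{inequality holds}\}$ is closed by lower semicontinuity, so it contains the essential support. The Markov estimate \eqref{eq:conditioning-Markov} repeats the proof of \eqref{eq:full-TV-conditioning}: the total variation distance equals $2\eta_N^\sharp$ at time zero, and it contracts under $P$ and under $\Pi_{k\#}$.

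The main step is \eqref{eq:conditioning-marginal-rate}. We write
\[
 G_{N,k}^{0,\sharp}-f_0^{\otimes k}=(p_N^\sharp)^{-1}\bigl(\Phi_k-p_N^\sharp\bigr)f_0^{\otimes k},\qquad
 \Phi_k(Z_k):=\Pp\bigl(\mathcal G_N^\sharp\mid Z_k\bigr).
\]
Then
\[
 \|G_{N,k}^{0,\sharp}-f_0^{\otimes k}\|_1\le 2\,\E|\Phi_k-p_N^\sharp|.
\]
We split $\mathcal G^\sharp_N$ according to whether the constraints involve the first $k$ coordinates. The constraints not involving them define an event $\mathcal E'$ that depends only on $X_{k+1},\dots,X_N$. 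The difference between $\Phi_k$ and $\Pp(\mathcal E')$ is bounded by the conditional probability that some constraint touching the first $k$ coordinates fails. For the pair constraints there are $k(N-k)+\binom k2\le kN$ of them, each failing with probability at most $\Psi_h(\kappa N)\le C(\kappa N)^{-p}$ uniformly in the conditioned point. Since $p>2$, these contribute $CkN^{1-p}\le Ck/N$.

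The sum constraints $D_i\le RN$ are harder. For $i\le k$, Bennett's bound from \Cref{lem:abstract-interaction-sums} holds uniformly in $x_i$ with rate $N^{-A}$, together with the pair term already counted. For $i>k$, the sum $D_i$ depends on $x_1,\dots,x_k$ only through at most $k$ terms, each bounded by $\kappa N$ on $\mathcal B_N$. So conditioning on $Z_k$ perturbs the event by shifting the threshold. We compare $\{D_i'\le RN-k\kappa N\}$ with $\{D_i'\le RN\}$, where $D_i'$ excludes the first $k$ indices. This threshold shift is the main obstacle, since it changes the threshold by order $kN$. Our first attempt is to take $R$ large and use the Bennett bound: both events have probability $1-O(N^{-A})$ with constants depending on $R-k\kappa-m_1$. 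When $k\kappa$ is comparable to $R$, we instead use the trivial bound $2\le C^k/N$, which is valid once $C^k\ge 2N$, that is, for $k\gtrsim\log N$. The resulting contributions are of order $k/N+N^{-A}$, and $k/N+N^{-A}\le C^k/N$. Taking $\E|\cdot|$ and using $p_N^\sharp=\E\Phi_k$ then gives \eqref{eq:conditioning-marginal-rate}.
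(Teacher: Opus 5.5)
\textbf{There is a genuine gap in the main step, the marginal rate.} Everything else in your proposal is correct: the choice $\mathcal G_N^\sharp(R)=\mathcal G_N(R)\cap\mathcal B_N$, the bound on $\eta_N^\sharp$, the pointwise bound, the subset-energy and closedness assertions, the Markov estimate, and the reduction $\|G_{N,k}^{0,\sharp}-f_0^{\otimes k}\|_1\le2\,\E|\Phi_k-p_N^\sharp|$.

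The problem is how you control the sum constraints $D_i\le RN$. You bound their conditional failure probabilities uniformly in $Z_k$. On $\mathcal B_N$, the only uniform information about the first $k$ particles is $\sum_{j\le k}h(X_i,x_j)\le k\kappa N$. This lowers the threshold for $D_i'$ to $(R-k\kappa)N$. The truncated Bennett bound then has exponent $(R-k\kappa-m_1)/(2\kappa)=\gamma_\kappa-k/2$. That exponent must still beat the factor $N-k$ from the union over $i>k$, which requires $\gamma_\kappa-k/2\ge2$.

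Once $k\kappa\ge R-m_1$, the event $\{D_i'>(R-k\kappa)N\}$ is no longer a large-deviation event and may have probability close to one. The constraints with $i\le k$ suffer the same shift, by up to $(k-1)\kappa N$. The uniformity in \Cref{lem:abstract-interaction-sums} is in $x_i$ alone, not in all of $Z_k$.

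Your fallback $2\le C^k/N$ needs $k\ge\log(2N)/\log C$. So for each fixed $k$ above roughly $2\gamma_\kappa-4$ and all large $N$, neither bound applies, and \eqref{eq:conditioning-marginal-rate} remains unproved. You cannot make $\kappa$ depend on $k$, because one event serves all marginals. Taking $\kappa=\kappa_N\to0$ would lose a factor $\kappa_N^{-p}$ in the bound for $\eta_N^\sharp$.

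\textbf{The missing observation is that the sum constraints need not be localized at all.} On $\mathcal B_N$ their total failure probability is $\Pp(\mathcal B_N\cap\mathcal G_N(R)^c)\le CN^{1-\gamma_\kappa}\le CN^{-A}$ with $A>1$. This is already $o(N^{-1})$. Only the pair constraints, whose global cost $N^{2-p}$ may exceed $N^{-1}$, must be counted locally.

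In your own notation: $\mathcal G_N^\sharp\subset\mathcal E'$ and $\Pp(\mathcal E')$ is a constant, so $\Phi_k\le\Pp(\mathcal E')$ pointwise. Therefore
\[
 \E|\Phi_k-p_N^\sharp|\le2\bigl(\Pp(\mathcal E')-p_N^\sharp\bigr)=2\,\Pp(\mathcal E'\setminus\mathcal G_N^\sharp),
\]
which is an unconditional probability. The set $\mathcal E'\setminus\mathcal G_N^\sharp$ lies in the union of two pieces:
\begin{itemize}
\item the failures of the $k(N-k)+\binom k2$ pair constraints touching the first $k$ labels;
\item the set $\mathcal B_N\cap\mathcal G_N(R)^c$.
\end{itemize}
This gives $CkN\Psi_h(\kappa N)+CN^{-A}\le C(k+1)/N$, with no conditional estimate on the sums.

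The paper does the same thing in two steps. It first conditions on $\mathcal B_N$ and compares $\mathfrak a_{N,k}$ with the partition function $\mathcal Z_{N-k}^{(N)}$ of the last $N-k$ variables. It then notes that further conditioning on $\mathcal G_N(R)$ costs $O(N^{-A})$ in total variation for the full $N$-particle law, and hence for every marginal.

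\textbf{A smaller point concerns small $N$.} Enlarging $R_*$ is not permitted by the statement. Monotone convergence in $\kappa_0$ by itself only gives convergence to $p_N(R)\ge1/2$, not a value $\ge1/2$ at finite $\kappa_0$. What makes it work is taking $\kappa_0\ge RN$: on $\mathcal G_N(R)$ one has $h(x_i,x_j)\le D_i\le RN$, so the pair constraints are automatic. This is the paper's choice $\mathcal G_N^\sharp(R)=\mathcal G_N(R)$ for $N<N_*$.
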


\begin{proof}
If $m_2=0$, the conclusion is immediate.  Assume $m_2>0$ and choose $A>p+1$ and
$\kappa\in(0,1)$ as in \Cref{lem:abstract-interaction-sums}, and set
\[
 \gamma_\kappa:=\frac{R-m_1}{2\kappa}>A+1,
 \qquad b_N:=\Psi_h(\kappa N).
\]
Set
\begin{equation}\label{eq:conditioning-event-pair-bound}
 \mathcal B_N(\kappa)
 :=\{h(X_i,X_j)\le\kappa N\ \text{for all }1\le i<j\le N\}.
\end{equation}
The union bound gives
\begin{equation}\label{eq:pair-exclusion-cost}
 \rho^{\otimes N}(\mathcal B_N(\kappa)^c)
 \le \binom N2 b_N.
\end{equation}
On $\mathcal B_N(\kappa)$, every interaction incident to a fixed label is
at most $\kappa N$.  Repeating the truncated Bennett estimate in the proof of
\Cref{lem:abstract-interaction-sums} for each label therefore gives
\[
 \rho^{\otimes N}
 \bigl(\{D_i>RN\}\cap\mathcal B_N(\kappa)\bigr)
 \le CN^{-\gamma_\kappa},
 \qquad 1\le i\le N.
\]
A union bound over the labels yields
\begin{equation}\label{eq:truncated-sum-cost}
 \rho^{\otimes N}\bigl(\mathcal G_N(R)^c\cap\mathcal B_N(\kappa)\bigr)
 \le CN^{1-\gamma_\kappa}\le CN^{-A}.
\end{equation}
Choose $N_*$ sufficiently large and, for $N\ge N_*$, set
\[
 \mathcal G_N^\sharp(R)=\mathcal G_N(R)\cap\mathcal B_N(\kappa).
\]
For $N<N_*$ set $\mathcal G_N^\sharp(R)=\mathcal G_N(R)$.
For $N\ge N_*$, we have $b_N\le C_\kappa N^{-p}$, and hence
\[
 1-p_N^\sharp
 \le C_\kappa N^{2-p}+CN^{-A},
 \qquad p_N^\sharp\ge\frac12,
\]
after enlarging $N_*$.  Since $A>p+1$, the second term is bounded by a
constant times $N^{2-p}$.  For the finitely many smaller values of $N$, we take
$\mathcal G_N^\sharp(R)=\mathcal G_N(R)$.  The choice $R\ge R_*$ gives
$p_N^\sharp\ge1/2$ for these values, and increasing $C_R$ to cover this
finite set yields \eqref{eq:conditioning-full-N} for every $N\ge1$.

For the marginal estimate, assume $N\ge N_*$.  The finitely many smaller
values of $N$ are absorbed into the final constant.  After increasing $N_*$
if necessary, \eqref{eq:pair-exclusion-cost} gives
$\rho^{\otimes N}(\mathcal B_N(\kappa))\ge1/2$.  We first condition on
$\mathcal B_N(\kappa)$:
\[
 \mathcal Z_N:=\rho^{\otimes N}(\mathcal B_N(\kappa))\ge\frac12,
 \qquad
 Q_N^0:=\mathcal Z_N^{-1}\ind_{\mathcal B_N(\kappa)}f_0^{\otimes N}.
\]
Denote by $Q_{N,k}^0$ the $k$-particle marginal of $Q_N^0$.  Let
$1\le k\le N$ be arbitrary, write $m=N-k$ and
$Y_m=(y_1,\ldots,y_m)$.  With the threshold $\kappa N$ fixed, set
\[
 \mathcal Z_m^{(N)}:=\rho^{\otimes m}\!\left(
 h(y_a,y_b)\le\kappa N\ \text{for all }a<b\right),
 \qquad \mathcal Z_0^{(N)}=\mathcal Z_1^{(N)}=1,
\]
and define
\[
 \mathcal Z_m^{(N)}(X_k):=
 \int
 \prod_{1\le a<b\le m}\ind_{\{h(y_a,y_b)\le\kappa N\}}
 \prod_{i=1}^k\prod_{a=1}^m
 \ind_{\{h(x_i,y_a)\le\kappa N\}}\,\dd\rho^{\otimes m}(Y_m),
\]
with the empty products equal to one, and
\[
 \mathcal B_k^{(N)}:=\{h(x_i,x_j)\le\kappa N\ \text{for all }i<j\}.
\]
Since $\mathcal B_N(\kappa)$ depends only on the positions, Fubini's theorem
gives the Radon--Nikodym derivative of the $k$-particle marginal:
\begin{equation}\label{eq:pair-conditioned-rn}
 \frac{\dd Q_{N,k}^0}{\dd f_0^{\otimes k}}(Z_k)
 =\frac{\mathfrak a_{N,k}(X_k)}{\mathcal Z_N},
 \qquad
 \mathfrak a_{N,k}:=\ind_{\mathcal B_k^{(N)}}\mathcal Z_m^{(N)}(X_k).
\end{equation}
For fixed $X_k$, the factor $\ind_{\mathcal B_k^{(N)}}$ contains the constraints
among the first $k$ coordinates.  The factor $\mathcal Z_m^{(N)}(X_k)$ is the
conditional probability of the remaining constraints.

We compare $\mathfrak a_{N,k}$ with $\mathcal Z_m^{(N)}$, which contains only the
constraints among the last $m=N-k$ variables.  For fixed $X_k$, each cross
constraint can fail with probability at most $b_N$.  There are $k(N-k)$ such
constraints and $\binom{k}{2}$ constraints among the first $k$ coordinates.
Hence the union bound gives
\begin{align}
 0\le \mathcal Z_m^{(N)}-\mathcal Z_m^{(N)}(X_k)&\le k(N-k)b_N,
 \label{eq:cross-pair-partition}\\
 \rho^{\otimes k}((\mathcal B_k^{(N)})^c)&\le\binom k2b_N,
 \label{eq:conditioning-first-k-pair-bound}
\end{align}
and therefore
\begin{align}
 B_{N,k}
 &:=\int|\mathfrak a_{N,k}-\mathcal Z_m^{(N)}|\,\dd\rho^{\otimes k}
 \le\left(k(N-k)+\binom k2\right)b_N
 \le(kN+k^2)b_N.\label{eq:pair-partition-L1}
\end{align}
Integrating \eqref{eq:pair-conditioned-rn} with respect to
$f_0^{\otimes k}$ gives
$\mathcal Z_N=\int\mathfrak a_{N,k}\,\dd\rho^{\otimes k}$.  Hence
\begin{equation}
 |\mathcal Z_N-\mathcal Z_m^{(N)}|
 =\left|\int(\mathfrak a_{N,k}-\mathcal Z_m^{(N)})\,\dd\rho^{\otimes k}\right|
 \le B_{N,k}.
 \label{eq:partition-normalization-difference}
\end{equation}
Using again \eqref{eq:pair-conditioned-rn}, we obtain
\begin{align}
 \|Q_{N,k}^0-f_0^{\otimes k}\|_1
 &=\frac1{\mathcal Z_N}\int|\mathfrak a_{N,k}-\mathcal Z_N|\,\dd\rho^{\otimes k}\\
 &\le \mathcal Z_N^{-1}\bigl(B_{N,k}+|\mathcal Z_m^{(N)}-\mathcal Z_N|\bigr)
 \le4B_{N,k},
 \label{eq:pair-conditioned-marginal}
\end{align}
where the last inequality uses $\mathcal Z_N\ge1/2$.

We then condition $Q_N^0$ on $\mathcal G_N(R)$.  Since $Q_N^0$ is the
product law conditioned on $\mathcal B_N(\kappa)$,
\[
 Q_N^0(\mathcal G_N(R))
 =\frac{\rho^{\otimes N}(\mathcal G_N(R)\cap\mathcal B_N(\kappa))}{\mathcal Z_N}
 =\frac{p_N^\sharp}{\mathcal Z_N}.
\]
The resulting conditional law is $G_N^{0,\sharp}$.  Hence the conditional
total variation identity from \Cref{lem:conditioning-stability} gives
\begin{align}
 \|G_N^{0,\sharp}-Q_N^0\|_{\TV}
 &=2\left(1-\frac{p_N^\sharp}{\mathcal Z_N}\right)\notag\\
 &=2\frac{\rho^{\otimes N}(\mathcal G_N(R)^c\cap\mathcal B_N(\kappa))}{\mathcal Z_N}
 \le CN^{-A}.
 \label{eq:second-conditioning-TV}
\end{align}
Projecting onto the first $k$ coordinates and using
\eqref{eq:pair-conditioned-marginal}, we obtain
\begin{equation}\label{eq:conditioning-marginal-pre-rate}
 \|G_{N,k}^{0,\sharp}-f_0^{\otimes k}\|_1
 \le C(kN+k^2)b_N+CN^{-A}.
\end{equation}
For $N\ge N_*$, the polynomial tail bound gives
$b_N\le C_{\kappa}N^{-p}$.  Since $k^2\le kN$, $p>2$, and $A>1$,
\[
 C(kN+k^2)b_N+CN^{-A}
 \le C_{\kappa}kN^{1-p}+CN^{-A}
 \le \frac{C_{\kappa}(k+1)}{N}.
\]
Using $k+1\le2^{k+1}$ and increasing $C_R$ gives
\[
 \|G_{N,k}^{0,\sharp}-f_0^{\otimes k}\|_1\le\frac{C_R^k}{N},
 \qquad N\ge N_*,
\]
with $C_R$ independent of $N$ and $k$.  For $N<N_*$ we use
$\mathcal G_N^\sharp(R)=\mathcal G_N(R)$ and
\eqref{eq:marginal-TV-conditioning}.  Since only finitely many pairs
$(N,k)$ with $N<N_*$ occur, increasing $C_R$ once more gives
\eqref{eq:conditioning-marginal-rate} for all $N$ and $1\le k\le N$.

Finally,
\[
 G_{N,k}^{0,\sharp}\le(p_N^\sharp)^{-1}f_0^{\otimes k}\le2f_0^{\otimes k},
\]
and, since $\mathcal G_N^\sharp(R)\subset\mathcal G_N(R)$ and $h\ge0$,
\[
 \frac1N\sum_{\substack{i,j\le k\\i\ne j}}h(x_i,x_j)
 \le \frac1N\sum_{i=1}^kD_i(X_N)\le Rk,
\]
which gives \eqref{eq:conditioning-subset-energy}.  Under the additional
topological assumptions, each $D_i$ is lower semicontinuous, and
the pair constraints in \eqref{eq:conditioning-event-pair-bound} are closed.
Thus the choices of $\mathcal G_N^\sharp(R)$ made above are closed.  By lower
semicontinuity, the set
\[
 \left\{Z_k:\frac1N\sum_{\substack{1\le i,j\le k\\i\ne j}}
 h(x_i,x_j)\le Rk\right\}
\]
is closed.  Since $G_{N,k}^{0,\sharp}$ vanishes almost everywhere on its
complement, the same inequality holds on $\esssupp G_{N,k}^{0,\sharp}$.
This proves the assertion on the essential support.  Total variation
contraction gives \eqref{eq:conditioning-Markov}.
\end{proof}

\section{Proof of the quantitative estimate}\label{sec:common-comparison}

We fix $q$ satisfying \eqref{eq:intro-q-range} and a
limiting solution $f$ on $[0,T]$ satisfying \Cref{thm:quantitative-comparison}.
Throughout this section, $C$ and $C_T$ denote finite constants depending only
on $d,s,q,\sigma,T$, the fixed interaction, the constants in the assumptions
on $f_0$ and $f$, and, on $\R^d$, the fixed exponents $M_*$ and $\alpha$.
Their values may change from line to line and are independent of $N,k,n$ and
$\varepsilon$.  Named constants and exponents such as
$A_T,B_T,D_T,\beta_T,\lambda_*$ have the same dependence.  We treat
$\T^d$ and $\R^d$ simultaneously, with the spatial weights specified below.  Write $\phi,K$ for the potential and force, and
$\phi_\varepsilon,K_\varepsilon$ for the regularizations from
\Cref{lem:common-riesz-regularization}, with $\varepsilon_0$ chosen there.  All regularized estimates below are for
$0<\varepsilon<\varepsilon_0$.  At level $k$ set
\begin{equation}\label{eq:comparison-energy}
 e_{N,k}(Z_k)
 :=\sum_{i=1}^k(1+|v_i|^2)
 +\frac1N\sum_{\substack{1\le i,j\le k\\i\ne j}}
       \phi(x_i-x_j).
\end{equation}
For the regularized interaction, we define
\begin{equation}\label{eq:comparison-energy-regularized}
 e_{N,k}^{\varepsilon}(Z_k)
 :=\sum_{i=1}^k(1+|v_i|^2)
 +\frac1N\sum_{\substack{1\le i,j\le k\\i\ne j}}
       \phi_{\varepsilon}(x_i-x_j).
\end{equation}
We use the operators $L_{N,k}$ and $J_{i,k}$ from
\eqref{eq:common-internal-generator}--\eqref{eq:common-flux-definition}; the
coefficient $(N-k)/N$ is written explicitly throughout.  Their regularized
counterparts are
\begin{align}
 L_{N,k}^{\varepsilon}
 &:=\sum_{i=1}^k v_i\cdot\nabla_{x_i}
 +\frac1N\sum_{i\ne j\le k}K_{\varepsilon}(x_i-x_j)\cdot\nabla_{v_i},
 \label{eq:common-internal-generator-regularized}\\
 J_{i,k}^{\varepsilon}[h](Z_k)
 &:=\int_{\mathcal D\times\R^d}
 K_{\varepsilon}(x_i-y)h(Z_k,y,w)\,\dd y\dd w.
 \label{eq:common-flux-definition-regularized}
\end{align}
The weighted inequalities are first derived for the regularized problem.  At
fixed $N$, the smooth approximation of the conditioned initial density is
then removed at fixed $\varepsilon$, followed by
$\varepsilon\downarrow0$; see Appendix~\ref{app:fixed-N-passage}.  The singular inequality obtained after these limits is then used in the finite
hierarchy iteration.

Because the interaction sum is over ordered pairs, in either geometry we have
\begin{equation}\label{eq:common-energy-identities}
 L_{N,k}e_{N,k}=0,
 \qquad e_{N,k+1}\ge e_{N,k}+1+|v_{k+1}|^2.
\end{equation}
Indeed, evenness of $\phi$ implies oddness of $K$.  Each unordered pair
occurs twice in the definition of $e_{N,k}$, so
\[
 \nabla_{x_i}e_{N,k}=-\frac2N\sum_{j\ne i}K(x_i-x_j),
 \qquad \nabla_{v_i}e_{N,k}=2v_i.
\]
Consequently, we obtain
\[
 L_{N,k}e_{N,k}
 =-\frac2N\sum_{i\ne j}v_i\cdot K(x_i-x_j)
   +\frac2N\sum_{i\ne j}K(x_i-x_j)\cdot v_i=0.
\]
The monotonicity in \eqref{eq:common-energy-identities} follows from
\[
 e_{N,k+1}=e_{N,k}+1+|v_{k+1}|^2
   +\frac2N\sum_{i=1}^k\phi(x_i-x_{k+1}),
\]
since $\phi\ge0$.  Both calculations also hold for the regularized energies.

\subsection{Weights and initial estimates}

On $\T^d$ no spatial weight is needed.  On $\R^d$, choose
\begin{equation}\label{eq:TV-alpha-choice}
 d<M_*<\min\{M,M_f\},\qquad d(q-1)<\alpha<(q-1)M_*,
\end{equation}
and use the spatial weight
\[
 \varpi(x)=\langle x\rangle^\alpha,
 \qquad
 \varpi_k(X_k)=\prod_{i=1}^k\langle x_i\rangle^\alpha.
\]
On $\T^d$ we set $\varpi_k\equiv1$.  In the pointwise estimates below, the
whole-space factor
$
 \prod_{i=1}^k\langle x_i\rangle^{-M_*}
$
is displayed explicitly; this factor is absent on $\T^d$.  The inequalities
in \eqref{eq:TV-alpha-choice} give
\begin{equation}\label{eq:common-weight-compatibility}
 \langle x\rangle^{\alpha-(q-1)M_*}\le1,
 \qquad
 \int_{\R^d}\langle x\rangle^{\alpha-qM_*}\,\dd x<\infty,
 \qquad
 \int_{\R^d}\langle x\rangle^{-\alpha/(q-1)}\,\dd x<\infty.
\end{equation}
On the torus the corresponding integrability conditions are automatic.

For a positive spatial weight $\varpi$, define
\begin{equation}\label{eq:common-kernel-norm}
 \|K\|_{q,\varpi}
 :=\left(\sup_{x\in\mathcal D}\int_{\mathcal D}|K(x-y)|^{q'}
          \varpi(y)^{-1/(q-1)}\,\dd y\right)^{1/q'}.
\end{equation}
Here $\varpi\equiv1$ on $\T^d$ and $\varpi(x)=\langle x\rangle^\alpha$ on
$\R^d$.  The kernel bounds used below are
\begin{equation}\label{eq:kernel-assumptions}
 \sup_{0<\varepsilon<\varepsilon_0}\|K_\varepsilon\|_{q,\varpi}<\infty,
 \qquad
 \|K_\varepsilon-K\|_{q,\varpi}\longrightarrow0,
 \qquad
 \|K_\varepsilon-K\|_{L^1(\mathcal D)}
 \le C\varepsilon^{d-s-1}.
\end{equation}
These properties follow from \Cref{lem:common-riesz-regularization}; on
$\R^d$ we also use \Cref{lem:whole-space-kernel-criterion}.  The pointwise
bounds on $f$ imply
\begin{equation}\label{eq:common-force-convolution-bound}
 \sup_{t\le T}\left(
 \||K|*\rho_f(t)\|_\infty+\|K*\rho_f(t)\|_\infty
 +\sup_{0<\varepsilon<\varepsilon_0}\||K_\varepsilon|*\rho_f(t)\|_\infty
 \right)<\infty.
\end{equation}

We use the conditioning event $\mathcal G_N^\sharp(R)$ with a fixed
$R\ge R_*$ and set
\[
 p_N:=f_0^{\otimes N}(\mathcal G_N^\sharp(R)).
\]
By Proposition~\ref{prop:common-riesz-conditioning},
\begin{equation}\label{eq:TV-common-conditioning}
 G_N^0:=p_N^{-1}\ind_{\mathcal G_N^\sharp(R)}f_0^{\otimes N},\qquad
 G_N(t):=P_t^NG_N^0,\qquad
 \eta_N:=1-p_N,
\end{equation}
where $P_t^N$ is the forward Markov operator associated with the singular
$N$-particle system, constructed in \Cref{prop:common-finite-N}.  We denote by
$G_{N,k}(t)$ the $k$-particle marginal of $G_N(t)$ and by $G_{N,k}^0$ the
corresponding initial marginal.  Moreover,
\begin{equation}\label{eq:TV-conditioned-initial-facts}
 p_N\ge\frac12,\qquad
 \eta_N\le C_RN^{-(d/s-2)},\qquad
 G_{N,k}^0\le2f_0^{\otimes k},\qquad
 \|G_{N,k}^0-f_0^{\otimes k}\|_1\le\frac{C_R^k}{N},
\end{equation}
and, on $\esssupp G_{N,k}^0$,
\begin{equation}\label{eq:common-conditioned-energy}
 e_{N,k}\le\sum_{i=1}^k(1+|v_i|^2)+Rk.
\end{equation}
On $\R^d$, after decreasing the spatial exponent from $M$ to $M_*$,
\begin{equation}\label{eq:common-initial-bound}
 G_{N,k}^0(Z_k)
 \le C_0^k e^{-a_0\sum_{i=1}^k|v_i|^2}
       \prod_{i=1}^k\langle x_i\rangle^{-M_*}.
\end{equation}
On $\T^d$ the same estimate holds without the product in $x$.

\begin{lemma}
\label{lem:common-conditioned-initial-exponential}
For every $0<\gamma<a_0$ there exists $A_\gamma\ge1$, independent of
$N,k$ and $\varepsilon$, such that, on $\R^d$,
\begin{align}
 G_{N,k}^0(Z_k)
 &\le A_\gamma^k e^{-\gamma e_{N,k}(Z_k)}
       \prod_{i=1}^k\langle x_i\rangle^{-M_*},
 \label{eq:common-conditioned-initial-exponential}\\
 G_{N,k}^0(Z_k)
 &\le A_\gamma^k e^{-\gamma e_{N,k}^\varepsilon(Z_k)}
       \prod_{i=1}^k\langle x_i\rangle^{-M_*}.
 \label{eq:common-conditioned-initial-exponential-regularized}
\end{align}
On $\T^d$ the products in $x$ are omitted.
\end{lemma}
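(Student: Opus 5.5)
The bound follows by combining the pointwise estimate \eqref{eq:common-initial-bound} with the energy control \eqref{eq:common-conditioned-energy} on the essential support of $G_{N,k}^0$. Off $\esssupp G_{N,k}^0$ the density vanishes almost everywhere, so both inequalities hold trivially there. We may therefore restrict attention to points $Z_k$ in the essential support; the closedness statement in \Cref{prop:conditioning-marginals} guarantees that \eqref{eq:common-conditioned-energy} holds at every such point, not merely almost everywhere.

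\textbf{Unregularized energy.} On the support, \eqref{eq:common-conditioned-energy} gives
\[
 e^{\gamma e_{N,k}}\le e^{\gamma Rk}\prod_{i=1}^k e^{\gamma(1+|v_i|^2)}.
\]
Multiplying \eqref{eq:common-initial-bound} by this factor, we obtain
\[
 G_{N,k}^0e^{\gamma e_{N,k}}
 \le C_0^k e^{\gamma(R+1)k}e^{-(a_0-\gamma)\sum_i|v_i|^2}\prod_{i=1}^k\langle x_i\rangle^{-M_*}.
\]
Since $\gamma<a_0$, the Gaussian factor is at most $1$. Setting $A_\gamma:=\max\{1,C_0e^{\gamma(R+1)}\}$, which is independent of $N$, $k$ and $\varepsilon$, yields \eqref{eq:common-conditioned-initial-exponential}. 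On $\T^d$ the same computation applies with the spatial product omitted.

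\textbf{Regularized energy.} For \eqref{eq:common-conditioned-initial-exponential-regularized} we need $e^\varepsilon_{N,k}\le e_{N,k}$, which reduces to the pointwise inequality $\phi_\varepsilon\le\phi$. I expect this to be the only genuine obstacle. It should be one of the properties of the regularization in \Cref{lem:common-riesz-regularization}, for instance if $\phi_\varepsilon$ is obtained by capping $\phi$ near the origin, or as a monotone truncation, with $\phi_\varepsilon=\phi$ outside $|x|<2\varepsilon$, consistent with the pathwise coupling described in the outline.

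If that lemma instead gives only $\phi_\varepsilon\le\phi+C$ with $C$ uniform in $\varepsilon$, a fallback is needed. The natural estimate is
\[
 \frac1N\sum_{i\ne j\le k}\phi_\varepsilon(x_i-x_j)\le\frac1N\sum_{i\ne j\le k}\phi(x_i-x_j)+\frac{Ck(k-1)}N\le\frac1N\sum_{i\ne j\le k}\phi(x_i-x_j)+Ck,
\]
using $k\le N$. The extra term is linear in $k$, so it is absorbed by replacing $A_\gamma$ with $A_\gamma e^{\gamma C}$, which keeps the constant of the form $A_\gamma^k$.

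Once either inequality between the energies is available, \eqref{eq:common-conditioned-initial-exponential-regularized} follows directly from \eqref{eq:common-conditioned-initial-exponential}, since
\[
 e^{-\gamma e_{N,k}}\le e^{-\gamma e^\varepsilon_{N,k}}
\]
(possibly after adjusting $A_\gamma$ as above in the fallback case).
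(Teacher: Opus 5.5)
Your treatment of the unregularized bound \eqref{eq:common-conditioned-initial-exponential} is exactly the paper's argument. The regularized bound, however, rests on an inequality that is false for the regularization fixed in \Cref{lem:common-riesz-regularization}. There $\phi_\varepsilon$ is built from
\[
 g_\varepsilon(r)=\chi(r/\varepsilon)r^{-s}+(1-\chi(r/\varepsilon))\varepsilon^{-s}.
\]
On the transition annulus $\varepsilon<r<2\varepsilon$ one has
\[
 g_\varepsilon(r)-r^{-s}=(1-\chi(r/\varepsilon))(\varepsilon^{-s}-r^{-s}),
\]
and this is strictly positive wherever $\chi(r/\varepsilon)<1$. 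At a fixed ratio $u=r/\varepsilon\in(1,2)$ with $\chi(u)<1$, it equals $(1-\chi(u))(1-u^{-s})\varepsilon^{-s}$, which blows up as $\varepsilon\downarrow0$. So neither $\phi_\varepsilon\le\phi$ nor your fallback $\phi_\varepsilon\le\phi+C$ with $C$ uniform in $\varepsilon$ holds.

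What the lemma actually provides is the multiplicative bound $\phi_\varepsilon\le C\phi$, essentially from $g_\varepsilon\le2^sr^{-s}$. With only this, the excess $e^\varepsilon_{N,k}-e_{N,k}\le\frac{C-1}{N}\sum_{i\ne j}\phi(x_i-x_j)$ is not $O(k)$ globally. You therefore cannot pass from the unregularized estimate to the regularized one by a comparison of energies valid everywhere. Using $e^\varepsilon_{N,k}\le Ce_{N,k}$ instead would degrade the exponent from $\gamma$ to $\gamma/C$, which is not enough when $C\gamma\ge a_0$.

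The missing step is to use the conditioning support bound \eqref{eq:common-conditioned-energy} a second time, now for the regularized energy. On $\esssupp G_{N,k}^0$,
\[
 \frac1N\sum_{i\ne j\le k}\phi_\varepsilon(x_i-x_j)\le\frac CN\sum_{i\ne j\le k}\phi(x_i-x_j)\le CRk,
\]
hence $e^\varepsilon_{N,k}\le\sum_i(1+|v_i|^2)+CRk$ there. Your first computation then goes through verbatim with $A_\gamma=\max\{1,C_0e^{\gamma(1+CR)}\}$, which is how the paper concludes. The linear-in-$k$ control of the regularized interaction energy comes from the conditioning, not from any pointwise relation between $\phi_\varepsilon$ and $\phi$.
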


\begin{proof}
On $\esssupp G_{N,k}^0$,
\eqref{eq:common-conditioned-energy} and \eqref{eq:common-initial-bound} give
\[
 G_{N,k}^0 e^{\gamma e_{N,k}}
 \prod_{i=1}^k\langle x_i\rangle^{M_*}
 \le C_0^k
 \exp\!\left(-(a_0-\gamma)\sum_{i=1}^k|v_i|^2
              +\gamma(1+R)k\right),
\]
which proves \eqref{eq:common-conditioned-initial-exponential} on $\R^d$;
the torus is identical without the spatial factors.  Since
$\phi_\varepsilon\le C\phi$ by
\Cref{lem:common-riesz-regularization}, the support condition also gives
$e_{N,k}^\varepsilon\le\sum_i(1+|v_i|^2)+CRk$.  The same estimate proves
\eqref{eq:common-conditioned-initial-exponential-regularized}.
\end{proof}

\paragraph{Choice of parameters.}
First choose $0<\beta_0<\beta_1<a_0$.  The pointwise estimate below, with
initial exponent $\beta_0$, yields $\beta_T>0$.  After $\beta_T$ is fixed, choose
\[
 0<\beta_*<\min\{\beta_T,a_f\}.
\]
In \Cref{prop:TV-small-residual} we then choose
$0<\lambda_*<\min\{(q-1)\beta_*,q\beta_T\}$.  After $\lambda_*$ is fixed, choose
\[
 0<4\lambda_0<\min\{1,\lambda_*,q\beta_T\},
\]
and choose $\Lambda$ as in \Cref{lem:common-weighted-flux-energy}.  Once
$q$, $T$, and the data above are fixed, these parameters are independent of
$N,k,n$ and $\varepsilon$.

\subsection{Weighted estimate for the BBGKY interaction term}\label{subsec:common-weighted-tools}

The interaction term from level $k+1$ to level $k$ is represented by $J_{i,k}$.
The next weighted H\"older estimate is uniform in the hierarchy level $k$.

\begin{lemma}\label{lem:common-weighted-holder}
Let $\mathcal D$ be either $\T^d$ or $\R^d$, let $1<q<\infty$, fix $\lambda>0$, and let
$\varpi:\mathcal D\to(0,\infty)$ be measurable.  Suppose the convolution kernel $K$ satisfies
\[
 \|K\|_{q,\varpi}^{q'}
 :=\sup_{x\in\mathcal D}\int_{\mathcal D}|K(x-y)|^{q'}
       \varpi(y)^{-1/(q-1)}\,\dd y<\infty,
 \qquad q'=\frac q{q-1}.
\]
Let $\mathcal E_k(Z_k)$ and $\widehat{\mathcal E}_{k+1}(Z_{k+1})$
be measurable functions satisfying
\[
 \widehat{\mathcal E}_{k+1}\ge
 \mathcal E_k+1+|v_{k+1}|^2,
\]
and set $\varpi_k=\prod_{j=1}^k\varpi(x_j)$.  If $h$ is measurable and the
right-hand side of \eqref{eq:common-weighted-holder} is finite, then
\[
 J_{i,k}[h](Z_k):=\int_{\mathcal D\times\R^d}
 K(x_i-y)h(Z_k,y,w)\,\dd y\dd w
\]
is absolutely convergent for almost every $Z_k$, and
\begin{align}\label{eq:common-weighted-holder}
 \int |J_{i,k}[h]|^q e^{\lambda\mathcal E_k}\varpi_k\,\dd Z_k
 \le \Gamma_{d,q}(\lambda)\|K\|_{q,\varpi}^q
 \int |h|^q e^{\lambda\widehat{\mathcal E}_{k+1}}
       \varpi_{k+1}\,\dd Z_{k+1},
\end{align}
where
\begin{equation}\label{eq:common-weighted-holder-gamma}
 \Gamma_{d,q}(\lambda)
 :=e^{-\lambda}\left(\frac{\pi(q-1)}{\lambda}\right)^{d(q-1)/2}
 \le C_{d,q}\lambda^{-d(q-1)/2}.
\end{equation}
The constant is independent of the hierarchy level $k$ and, in the
applications below, of $N$.  Replacing $K$ by $K_\varepsilon$ gives
\eqref{eq:common-weighted-holder} with $\|K_\varepsilon\|_{q,\varpi}$
in place of $\|K\|_{q,\varpi}$; in particular the operator constants are
uniform when $\sup_\varepsilon\|K_\varepsilon\|_{q,\varpi}<\infty$.
\end{lemma}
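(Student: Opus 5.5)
The plan is a pointwise H\"older inequality in the integrated variables $(y,w)=(x_{k+1},v_{k+1})$ at fixed $Z_k$, followed by integration in $Z_k$. The exponential kinetic weight in $w$ is to be integrated out explicitly, which produces the Gaussian constant $\Gamma_{d,q}(\lambda)$. The spatial weight $\varpi(y)$ is absorbed into the kernel norm $\|K\|_{q,\varpi}$.

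\emph{Pointwise H\"older step.} Fix $Z_k$ and introduce the weight
\[
 \omega(y,w):=e^{\lambda(1+|w|^2)}\varpi(y).
\]
Writing $K(x_i-y)h=\bigl(K\,\omega^{-1/q}\bigr)\bigl(h\,\omega^{1/q}\bigr)$, H\"older's inequality with exponents $q',q$ gives
\[
 \int|K(x_i-y)|\,|h(Z_k,y,w)|\,\dd y\dd w
 \le\Bigl(\int|K(x_i-y)|^{q'}\omega^{-1/(q-1)}\,\dd y\dd w\Bigr)^{1/q'}
 \Bigl(\int|h|^q\omega\,\dd y\dd w\Bigr)^{1/q},
\]
using $q'/q=1/(q-1)$. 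The first factor splits as a product. The $w$-integral is the Gaussian integral
\[
 \int_{\R^d}e^{-\lambda(1+|w|^2)/(q-1)}\,\dd w
 =e^{-\lambda/(q-1)}\bigl(\pi(q-1)/\lambda\bigr)^{d/2}.
\]
The $y$-integral is bounded by $\|K\|_{q,\varpi}^{q'}$, uniformly in $x_i$. Raising to the power $q$ uses $q/q'=q-1$, so the constant becomes exactly
\[
 e^{-\lambda}\bigl(\pi(q-1)/\lambda\bigr)^{d(q-1)/2}=\Gamma_{d,q}(\lambda).
\]
Hence
\[
 |J_{i,k}[h](Z_k)|^q\le\Gamma_{d,q}(\lambda)\|K\|_{q,\varpi}^q\int|h|^q e^{\lambda(1+|w|^2)}\varpi(y)\,\dd y\dd w .
\]

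\emph{Integration in $Z_k$.} Multiply this bound by $e^{\lambda\mathcal E_k}\varpi_k$. Then use $\mathcal E_k+1+|v_{k+1}|^2\le\widehat{\mathcal E}_{k+1}$ and $\varpi_k\varpi(x_{k+1})=\varpi_{k+1}$, and integrate in $Z_k$ using Tonelli. This yields \eqref{eq:common-weighted-holder}.

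\emph{Absolute convergence.} The same Tonelli argument shows that $\int|h|^q\omega\,\dd y\dd w<\infty$ for a.e.\ $Z_k$ when the right-hand side is finite. This uses that $e^{\lambda\mathcal E_k}\varpi_k>0$, with $\mathcal E_k$ taken real-valued, which is implicit in its use as a weight. By the pointwise H\"older bound, the integral defining $J_{i,k}[h]$ then converges absolutely a.e.

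The bound $\Gamma_{d,q}(\lambda)\le C_{d,q}\lambda^{-d(q-1)/2}$ is immediate from $e^{-\lambda}\le1$. Nothing in the argument depends on $k$ or $N$. The $K_\varepsilon$ version is verbatim the same with $K_\varepsilon$ in place of $K$. I do not expect a real obstacle. The only point needing care is the bookkeeping of the exponents $q'/q=1/(q-1)$ and $q/q'=q-1$, so that the Gaussian constant comes out exactly as in \eqref{eq:common-weighted-holder-gamma}.
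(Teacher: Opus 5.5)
Your proof is correct and is essentially the paper's argument: a weighted H\"older inequality in $(y,w)$ at fixed $Z_k$, exact Gaussian integration in $w$, the kernel norm in $y$, and Tonelli in $Z_k$. The only difference is where the energy inequality is used. You put the reduced weight $e^{\lambda(1+|w|^2)}\varpi(y)$ into H\"older and invoke $\widehat{\mathcal E}_{k+1}\ge\mathcal E_k+1+|v_{k+1}|^2$ only in the final integration, whereas the paper inserts $e^{\lambda\widehat{\mathcal E}_{k+1}}\varpi(y)$ and applies that inequality inside the kernel factor; this difference is cosmetic.
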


\begin{proof}
We fix $Z_k$ and write $z_{k+1}=(y,w)$.  We insert the factor
$e^{\lambda\widehat{\mathcal E}_{k+1}/q}\varpi(y)^{1/q}$ and its reciprocal
inside the integral defining $J_{i,k}[h]$.  H\"older's inequality with
exponents $q$ and $q'$ gives
\begin{align*}
 |J_{i,k}[h](Z_k)|^q
 \le\left(\int |h|^q e^{\lambda\widehat{\mathcal E}_{k+1}}
                         \varpi(y)\,\dd y\dd w\right)\times\left(\int |K(x_i-y)|^{q'}
          e^{-\lambda\widehat{\mathcal E}_{k+1}/(q-1)}
          \varpi(y)^{-1/(q-1)}\,\dd y\dd w\right)^{q-1}.
\end{align*}
By assumption,
$\widehat{\mathcal E}_{k+1}\ge\mathcal E_k+1+|w|^2$.  Hence the second
integral is bounded by
\begin{align*}
 &e^{-\lambda\mathcal E_k/(q-1)}
 \int |K(x_i-y)|^{q'}\varpi(y)^{-1/(q-1)}\,\dd y
 \int_{\R^d}e^{-\lambda(1+|w|^2)/(q-1)}\,\dd w\\
 \le &
 e^{-\lambda\mathcal E_k/(q-1)}
 \|K\|_{q,\varpi}^{q'}
 \int_{\R^d}e^{-\lambda(1+|w|^2)/(q-1)}\,\dd w.
\end{align*}
Raising this bound to the power $q-1$ and using
$q'(q-1)=q$ yields
\[
 e^{-\lambda\mathcal E_k}\|K\|_{q,\varpi}^{q}
 \left(\int_{\R^d}
       e^{-\lambda(1+|w|^2)/(q-1)}\,\dd w\right)^{q-1}.
\]
The Gaussian integral is
\[
 \int_{\R^d}e^{-\lambda(1+|w|^2)/(q-1)}\,\dd w
 =e^{-\lambda/(q-1)}
   \left(\frac{\pi(q-1)}{\lambda}\right)^{d/2},
\]
so its $(q-1)$st power equals
\[
 e^{-\lambda}
 \left(\frac{\pi(q-1)}{\lambda}\right)^{d(q-1)/2}
 =\Gamma_{d,q}(\lambda).
\]
For almost every $Z_k$, Fubini--Tonelli gives finiteness of the first
H\"older factor, while the second is bounded as above.  Hence the defining
integral for $J_{i,k}[h](Z_k)$ is absolutely convergent and
\[
 |J_{i,k}[h]|^q e^{\lambda\mathcal E_k}
 \le \Gamma_{d,q}(\lambda)\|K\|_{q,\varpi}^{q}
 \int |h|^q e^{\lambda\widehat{\mathcal E}_{k+1}}
                         \varpi(y)\,\dd y\dd w.
\]
Multiplying by $\varpi_k$ and integrating in $Z_k$, Tonelli's theorem gives
\eqref{eq:common-weighted-holder}.  Replacing $K$ by $K_\varepsilon$ gives
the regularized estimate, uniformly in $\varepsilon$ when the kernel norms
are uniformly bounded.
\end{proof}

We shall also use the following elementary relations between $L^1$ and
weighted $L^q$ norms.

\begin{lemma}\label{lem:common-weighted-conversion}
Let $1<q<\infty$.
\begin{enumerate}[label=\textup{(\roman*)}]
 \item For every positive weight $W$ and measurable $h$,
 \begin{equation}\label{eq:common-L1-to-weighted-Lq}
  \int |h|^qW
  \le \bigl\||h|^{q-1}W\bigr\|_{\infty}\,\|h\|_1.
 \end{equation}
 \item Suppose on a product space of $k$ variables that
 $W_k(Z_k)\ge\prod_{i=1}^k\omega(z_i)$ and
 \[
  \int\omega(z)^{-1/(q-1)}\,\dd z<\infty.
 \]
 Then
 \begin{equation}\label{eq:common-weighted-Lq-to-L1}
  \|h\|_1\le
  \left(\int\omega(z)^{-1/(q-1)}\,\dd z\right)^{k/q'}
  \left(\int|h|^qW_k\right)^{1/q}.
 \end{equation}
\end{enumerate}
\end{lemma}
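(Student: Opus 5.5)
Both parts are elementary applications of H\"older's inequality, and I will prove them directly.

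For (i), I will write $|h|^qW=(|h|^{q-1}W)\,|h|$ pointwise and bound the first factor by its essential supremum. Integrating then gives
\[
 \int|h|^qW\le\bigl\||h|^{q-1}W\bigr\|_\infty\int|h|=\bigl\||h|^{q-1}W\bigr\|_\infty\|h\|_1 .
\]
No integrability is needed beyond the right-hand side being finite, and the inequality is trivial when it is infinite.

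For (ii), I will insert the weight and its reciprocal:
\[
 |h|=|h|W_k^{1/q}\cdot W_k^{-1/q}.
\]
H\"older's inequality with exponents $q$ and $q'$ gives
\[
 \|h\|_1\le\Bigl(\int|h|^qW_k\Bigr)^{1/q}\Bigl(\int W_k^{-q'/q}\Bigr)^{1/q'} .
\]
Since $q'/q=1/(q-1)$ and $W_k\ge\prod_i\omega(z_i)$, with all quantities positive, the second factor satisfies
\[
 \int W_k^{-1/(q-1)}\,\dd Z_k\le\int\prod_{i=1}^k\omega(z_i)^{-1/(q-1)}\,\dd Z_k=\Bigl(\int\omega^{-1/(q-1)}\,\dd z\Bigr)^k ,
\]
where the product factorizes by Tonelli's theorem. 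Raising this bound to the power $1/q'$ produces the constant $\bigl(\int\omega^{-1/(q-1)}\bigr)^{k/q'}$ in \eqref{eq:common-weighted-Lq-to-L1}.

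There is no real obstacle here. The only points to watch are the exponent bookkeeping $q'/q=1/(q-1)$ and the use of the monotonicity of $t\mapsto t^{-1/(q-1)}$ to pass from $W_k$ to the product weight.

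In the applications, $\omega$ will be $e^{\lambda(1+|v|^2)}\langle x\rangle^\alpha$ on $\R^d$, or $e^{\lambda(1+|v|^2)}$ on $\T^d$. Its integrability is exactly the third condition in \eqref{eq:common-weight-compatibility}, and the lower bound on $W_k$ comes from $e_{N,k}\ge\sum_i(1+|v_i|^2)$.
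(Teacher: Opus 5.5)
Your proof is correct and follows the paper's argument: part (i) is the same pointwise factorization $|h|^qW=|h|\,(|h|^{q-1}W)$ with $L^1$--$L^\infty$ duality, and part (ii) is the same H\"older splitting $|h|W_k^{1/q}\cdot W_k^{-1/q}$ followed by the product bound on $W_k^{-1/(q-1)}$ and Tonelli. Your exponent bookkeeping $q'/q=1/(q-1)$ and the monotonicity step are exactly what the paper's brief proof leaves implicit.
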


\begin{proof}
For (i), we write $|h|^qW=|h|\,(|h|^{q-1}W)$ and use
$L^1$--$L^\infty$ duality.  For (ii), we apply H\"older's inequality to
$|h|W_k^{1/q}\,W_k^{-1/q}$ and the product bound on the inverse weight.
\end{proof}

\subsection{Comparison with the limiting equation}

Let
\[
 E=K*\rho_f,\qquad E^\varepsilon=K_\varepsilon*\rho_f.
\]
For each $k$, define $U_{N,k}$ by
\begin{equation}\label{eq:TV-U-PDE}
 \partial_tU_{N,k}+L_{N,k}U_{N,k}
 +\frac{N-k}{N}\sum_{i=1}^kE(t,x_i)\cdot\nabla_{v_i}U_{N,k}
 =\sigma\Delta_{V_k}U_{N,k},
 \qquad U_{N,k}(0)=G_{N,k}^0.
\end{equation}
At the regularized level, $L_{N,k}$ is replaced by
$L_{N,k}^\varepsilon$.  The prescribed field remains $E=K*\rho_f$.  Hence
the term $E^\varepsilon-E$ appears in \Cref{prop:TV-small-residual}.
We denote by $G_N^\varepsilon$ the regularized $N$-particle law started
from $G_N^0$, and by $G_{N,k}^\varepsilon$ its marginals.

We begin with a pointwise estimate for $U_{N,k}$.  For fixed $N,k$, the
stochastic system associated with the singular equation admits a pathwise
unique strong solution on $[0,T]$.  All constants and exponents below are
independent of $N,k$ and of the force regularization.

\begin{lemma}\label{lem:TV-U-pointwise}
Let $U_{N,k}^\varepsilon$ solve the regularized version of
\eqref{eq:TV-U-PDE}, with initial datum $G_{N,k}^0$ and internal force
$K_\varepsilon$.  There exist constants $\beta_T>0$ and $A_T\ge1$,
independent of $N,k$ and $\varepsilon$, such that, on $\R^d$,
\begin{align}
 U_{N,k}(t,Z_k)
 &\le A_T^ke^{-\beta_Te_{N,k}(Z_k)}
       \prod_{j=1}^{k}\langle x_j\rangle^{-M_*},
 \label{eq:TV-U-bound}\\
 U_{N,k}^\varepsilon(t,Z_k)
 &\le A_T^ke^{-\beta_Te_{N,k}^\varepsilon(Z_k)}
       \prod_{j=1}^{k}\langle x_j\rangle^{-M_*},
 \qquad t\in[0,T].
 \label{eq:TV-U-bound-regularized}
\end{align}
On $\T^d$ the products in $x$ are omitted.  The singular density
$U_{N,k}$ belongs to $C([0,T];L^1)$ and satisfies \eqref{eq:TV-U-PDE} in
distributions with initial trace $G_{N,k}^0$.  Moreover,
\begin{equation}\label{eq:TV-level-N-equality}
 U_{N,N}=G_N,\qquad U_{N,N}^\varepsilon=G_N^\varepsilon,
\end{equation}
and there exists $B_T\ge1$, independent of $N,k,\varepsilon$, such that,
on $\R^d$,
\begin{align}
 G_{N,k}^\varepsilon(t,Z_k)
 &\le B_T^Ne^{-\beta_Te_{N,k}^\varepsilon(Z_k)}
       \prod_{j=1}^{k}\langle x_j\rangle^{-M_*},
 \label{eq:TV-GN-bound-regularized}\\
 G_{N,k}(t,Z_k)
 &\le B_T^Ne^{-\beta_Te_{N,k}(Z_k)}
       \prod_{j=1}^{k}\langle x_j\rangle^{-M_*}.
 \label{eq:TV-GN-bound}
\end{align}
Again, the products in $x$ are absent on $\T^d$.
\end{lemma}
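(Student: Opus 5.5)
The plan is a maximum-principle (supersolution) argument at the regularized level, followed by passage to the limit $\varepsilon\downarrow0$.

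\emph{Step 1: an explicit barrier.} For the regularized equation, set
\[
 W(t,Z_k)=A^k e^{\mu t k}\,e^{-\beta(t)e^{\varepsilon}_{N,k}(Z_k)}\prod_{j=1}^k\langle x_j\rangle^{-M_*},
\]
with $\beta(t)=\beta_0 e^{-ct}$ decreasing and $\beta_0<a_0$ as in the choice of parameters. We then define $\beta_T:=\beta(T)$. At $t=0$ we have $W\ge G_{N,k}^0=U^\varepsilon_{N,k}(0)$ by \eqref{eq:common-conditioned-initial-exponential-regularized}, taking $A=A_{\beta_0}$.

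\emph{Step 2: $W$ is a supersolution.} Let $\mathcal L$ be the regularized operator in \eqref{eq:TV-U-PDE}. We apply it to $W$ and use the regularized form of $L^\varepsilon_{N,k}e^\varepsilon_{N,k}=0$ from \eqref{eq:common-energy-identities}. This makes the singular internal interactions drop out of the exponential factor. The remaining contributions are as follows.
\begin{enumerate}[label=\textup{(\alph*)}]
 \item The transport term acting on the polynomial weight gives at most $M_*|v_j|$ per particle.
 \item The field $E$ acts on $e^{-\beta|v_j|^2}$ and gives at most $2\beta\|E\|_\infty|v_j|$, bounded by \eqref{eq:common-force-convolution-bound}.
 \item The diffusion gives $\sigma(4\beta^2|v_j|^2-2d\beta)$.
 \item The time derivative gives $-\beta'(t)e^\varepsilon_{N,k}+\mu k$.
\end{enumerate}
Since $-\beta'=c\beta$ and $e^\varepsilon_{N,k}\ge\sum_j(1+|v_j|^2)$, choosing $c\ge4\sigma\beta_0+1$ and then $\mu$ large absorbs all linear and quadratic velocity terms per particle. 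This yields $(\partial_t+\mathcal L)W\ge0$ with constants independent of $N,k,\varepsilon$.

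Two points need care. The internal interaction term $\frac1N\sum K_\varepsilon\cdot\nabla_{v_i}$ does not hit the spatial weight, since the weight depends on $x$ only. The drift has no divergence in $v$, so the forward and backward forms coincide. A comparison principle for the linear degenerate (kinetic) Fokker--Planck equation with bounded smooth coefficients then gives $U^\varepsilon_{N,k}\le W$. To justify it, I would represent $U^\varepsilon_{N,k}$ through the SDE, using Itô's formula on $W$ along the dual process, or use an $L^2$/Kato argument on $(U-W)_+$ with cutoffs. Setting $A_T=Ae^{\mu T}$ gives \eqref{eq:TV-U-bound-regularized}.

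\emph{Step 3: level $N$ and the $G$-bounds.} At $k=N$ the mean-field term carries the factor $(N-k)/N=0$, so the equation coincides with the regularized $N$-particle forward equation. Uniqueness then gives $U^\varepsilon_{N,N}=G^\varepsilon_N$. For $k<N$ we cannot use the $k$-level equation for $G_{N,k}^\varepsilon$. Instead we integrate the level-$N$ bound over $z_{k+1},\dots,z_N$, using $e^\varepsilon_{N,N}\ge e^\varepsilon_{N,k}+\sum_{j>k}(1+|v_j|^2)$ and $\phi_\varepsilon\ge0$. Each integrated particle contributes a finite factor $\int e^{-\beta_T(1+|v|^2)}\langle x\rangle^{-M_*}$, which is finite because $M_*>d$. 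This gives the $B_T^N$ bound \eqref{eq:TV-GN-bound-regularized}.

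\emph{Step 4: the limit $\varepsilon\downarrow0$.} This is the main obstacle. We need that $U^\varepsilon_{N,k}\to U_{N,k}$ in $C([0,T];L^1)$, and that the limit is a distributional solution of the singular equation with $U_{N,N}=G_N$. I would use the pathwise coupling of the introduction: the regularized and singular $k$-particle SDEs (internal force $K_\varepsilon$ or $K$, external field $\frac{N-k}N E$) coincide until some pair distance reaches $2\varepsilon$. The singular system is collision-free on $[0,T]$ (\Cref{prop:common-finite-N}, with the same Lyapunov argument for bounded external $E$), so $\Pp(\text{paths differ})\to0$. The TV coupling bound then gives $L^1$ convergence uniformly in $t$, and $U_{N,N}=G_N$ follows because $G_N=P^N_tG^0_N$ is the singular law.

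The pointwise bounds pass to a.e.\ limits, since $e^\varepsilon_{N,k}\to e_{N,k}$ pointwise off the diagonals (by $\phi_\varepsilon\to\phi$ from \Cref{lem:common-riesz-regularization}). The distributional equation passes to the limit through $K_\varepsilon\to K$ in $L^1_{\mathrm{loc}}$, together with the uniform Gaussian bounds, which make $\frac1N K(x_i-x_j)U_{N,k}$ locally integrable. Continuity in time and the initial trace are inherited from the uniform convergence.
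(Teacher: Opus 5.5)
Your proposal is correct and follows the paper's proof: the same barrier $A(t)^k e^{-\beta(t)e^\varepsilon_{N,k}}\prod_j\langle x_j\rangle^{-M_*}$ exploiting $L^\varepsilon_{N,k}e^\varepsilon_{N,k}=0$, the identification $U^\varepsilon_{N,N}=G^\varepsilon_N$ followed by integration over the last $N-k$ particles for the $B_T^N$ bounds, and removal of the regularization by the synchronous coupling of \Cref{prop:common-regularization-removal}. The differences are cosmetic and do not affect the conclusion. You use $\beta(t)=\beta_0e^{-ct}$ with a factor $e^{\mu tk}$ instead of the paper's Riccati profile $\beta'=-(4\sigma+2)\beta^2$, and you pass to the singular distributional equation via the uniform $L^\infty$ bound and $K_\varepsilon\to K$ in $L^1_{\mathrm{loc}}$ instead of invoking \Cref{prop:common-singular-hierarchy}.
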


\begin{proof}
By \Cref{prop:common-finite-N}, applied with prescribed field
$((N-k)/N)E$, the singular $k$-particle diffusion associated with
\eqref{eq:TV-U-PDE} is well posed on $[0,T]$ and has no collisions.  We give
the whole-space supersolution calculation; on $\T^d$ the spatial factor is
omitted and the terms containing $M_*$ below are absent.  Set
\[
 \mathcal P_k=\partial_t+L_{N,k}^\varepsilon
   +\frac{N-k}{N}\sum_iE(t,x_i)\cdot\nabla_{v_i}-\sigma\Delta_{V_k}
\]
and
\[
 \overline U_k^\varepsilon
 =A(t)^ke^{-\beta(t)e_{N,k}^\varepsilon}
  \prod_{i=1}^k\langle x_i\rangle^{-M_*}.
\]
Since
\[
 \left|\nabla\log\langle x\rangle^{-M_*}\right|
 =M_*\frac{|x|}{1+|x|^2}\le\frac{M_*}{2},
\]
direct differentiation gives
\begin{align*}
 \frac{\mathcal P_k\overline U_k^\varepsilon}{\overline U_k^\varepsilon}
 =k\frac{A'}A-\beta'e_{N,k}^\varepsilon
 -M_*\sum_i v_i\cdot\frac{x_i}{1+|x_i|^2}
 -2\beta\frac{N-k}{N}\sum_iE(t,x_i)\cdot v_i+2\sigma d\beta k-4\sigma\beta^2\sum_i|v_i|^2.
\end{align*}
Choose
\[
 -\beta'=(4\sigma+2)\beta^2,
 \qquad \beta(0)=\beta_0,
\]
and
\[
 \frac{A'}A
 =\frac{(M_*/2+2\beta L_T)^2}{8\beta^2},
 \qquad A(0)=A_{\beta_0},
 \qquad L_T=\sup_{t\le T}\|E(t)\|_\infty,
\]
where the term $M_*/2$ is omitted on $\T^d$.  Since
$e_{N,k}^\varepsilon\ge k+\sum_i|v_i|^2$,
\[
 -\beta'e_{N,k}^\varepsilon
 -4\sigma\beta^2\sum_i|v_i|^2
 \ge(4\sigma+2)\beta^2k+2\beta^2\sum_i|v_i|^2,
\]
while
\[
 2\beta^2|v_i|^2-(M_*/2+2\beta L_T)|v_i|
 \ge-\frac{(M_*/2+2\beta L_T)^2}{8\beta^2}.
\]
Thus $\mathcal P_k\overline U_k^\varepsilon\ge0$.  Moreover, we obtain
\[
 \beta(t)=\frac{\beta_0}{1+(4\sigma+2)\beta_0t},
 \qquad
 \beta_T:=\min_{0\le t\le T}\beta(t)>0,
 \qquad
 A_T:=\max_{0\le t\le T}A(t)<\infty.
\]
The comparison principle and
\Cref{lem:common-conditioned-initial-exponential} yield
\[
 U_{N,k}^\varepsilon(t,Z_k)
 \le A_T^ke^{-\beta_Te_{N,k}^\varepsilon(Z_k)}
       \prod_{j=1}^{k}\langle x_j\rangle^{-M_*}
\]
on $\R^d$, with the product omitted on $\T^d$.

At $k=N$, the equations for $G_N^\varepsilon$ and $U_{N,N}^\varepsilon$
coincide, hence \eqref{eq:TV-level-N-equality}.  Since
\[
 \int_{\R^d\times\R^d}
 e^{-\beta_T(1+|v|^2)}\langle x\rangle^{-M_*}\,\dd x\dd v<\infty
\]
on $\R^d$ (and the corresponding integral on $\T^d\times\R^d$ is finite),
integration in the last $N-k$ variables and an enlargement of the constant
give
\[
 G_{N,k}^\varepsilon(t,Z_k)
 \le B_T^Ne^{-\beta_Te_{N,k}^\varepsilon(Z_k)}
       \prod_{j=1}^{k}\langle x_j\rangle^{-M_*}.
\]
This is \eqref{eq:TV-GN-bound-regularized}; the torus estimate is obtained by
omitting the spatial product.

By \Cref{prop:common-regularization-removal},
$U_{N,k}^\varepsilon\to U_{N,k}$ in $C([0,T];L^1)$.  Passing to an almost
everywhere convergent subsequence and using
$e_{N,k}^\varepsilon\to e_{N,k}$ off the collision diagonals gives
\eqref{eq:TV-U-bound}.  The identity $U_{N,N}=G_N$ then yields
\eqref{eq:TV-GN-bound} after integration in the last $N-k$ variables.

Finally, if $0<\lambda<q\beta_T$, then on $\R^d$
\begin{align*}
 |U_{N,k}^\varepsilon|^q e^{\lambda e_{N,k}^\varepsilon}\varpi_k
 &\le A_T^{qk}
 e^{-(q\beta_T-\lambda)e_{N,k}^\varepsilon}
 \prod_{j=1}^k\langle x_j\rangle^{\alpha-qM_*}\\
 &\le A_T^{qk}
 e^{-(q\beta_T-\lambda)\sum_j(1+|v_j|^2)}
 \prod_{j=1}^k\langle x_j\rangle^{\alpha-qM_*},
\end{align*}
which is integrable by \eqref{eq:common-weight-compatibility}; on $\T^d$ the
same conclusion follows without spatial factors.  Hence
\Cref{prop:common-singular-hierarchy} applies and gives
\eqref{eq:TV-U-PDE} in the singular limit.
\end{proof}

For the conditioned marginals, the singular BBGKY hierarchy is obtained by
letting $\varepsilon\downarrow0$ at fixed $N$.  The bound
\eqref{eq:TV-GN-bound-regularized}, together with the
$C([0,T];L^1)$ convergence from
\Cref{prop:common-regularization-removal}, verifies the hypotheses of
\Cref{prop:common-singular-hierarchy} with $b_i=0$ and
$c_k=\frac{N-k}{N}$ and yields the BBGKY hierarchy with the singular kernel $K$.

The comparison of $U_{N,k}$ with $f_t^{\otimes k}$ contains two terms of
order $1/N$.  The first comes from the interactions among the first $k$
particles, and the second from the coefficient $(N-k)/N$.  We estimate both in
$L^1$.

\begin{lemma}
\label{lem:TV-U-product}
There exists a constant $C_T\ge1$, independent of $N,k$ and of the
regularization, such that
\begin{equation}\label{eq:TV-U-product}
 \sup_{t\le T}\|U_{N,k}(t)-f_t^{\otimes k}\|_1
 \le \frac{C_T^k}{N},\qquad1\le k\le N.
\end{equation}
The same bound holds for $U_{N,k}^\varepsilon$.
\end{lemma}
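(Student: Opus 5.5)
We compare $U_{N,k}$ with $f_t^{\otimes k}$ by a Duhamel (energy-free) $L^1$ argument, working first at the regularized level and passing to the limit with \Cref{prop:common-regularization-removal}. The product $f_t^{\otimes k}$ solves
\[
 \partial_t f^{\otimes k}+\sum_{i}v_i\cdot\nabla_{x_i}f^{\otimes k}+\sum_i E(t,x_i)\cdot\nabla_{v_i}f^{\otimes k}=\sigma\Delta_{V_k}f^{\otimes k}.
\]
So $w:=U^\varepsilon_{N,k}-f_t^{\otimes k}$ satisfies
\[
 \mathcal P_k w=-\frac1N\sum_{i\ne j\le k}K_\varepsilon(x_i-x_j)\cdot\nabla_{v_i}f_t^{\otimes k}
 +\frac kN\sum_{i=1}^kE(t,x_i)\cdot\nabla_{v_i}f_t^{\otimes k},
 \qquad w(0)=G^0_{N,k}-f_0^{\otimes k}.
\]
Here $\mathcal P_k$ is the operator from the proof of \Cref{lem:TV-U-pointwise}, and we used $1-\frac{N-k}{N}=\frac kN$. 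The operator $\mathcal P_k$ is a transport–diffusion operator with divergence-free drift in phase space, since the force on $v_i$ does not depend on $v$. Hence its evolution is an $L^1$ contraction; we may invoke positivity and mass conservation of the associated Markov semigroup from \Cref{prop:common-finite-N}, or use the renormalized $|w|$ estimate at the smooth level. Duhamel then gives
\[
 \|w(t)\|_1\le\|G^0_{N,k}-f_0^{\otimes k}\|_1+\int_0^t\|\text{source}(r)\|_1\,\dd r .
\]

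The initial term is at most $C_R^k/N$ by \eqref{eq:TV-conditioned-initial-facts}. For the mean-field-coefficient source we use $\|E\|_\infty\le L_T$ from \eqref{eq:common-force-convolution-bound} and $\|\nabla_{v_i}f^{\otimes k}\|_1=\|\nabla_vf\|_1$. This gives
\[
 \frac kN\,k\,L_T\int_0^T\|\nabla_vf\|_1\,\dd r\le\frac{Ck^2}{N},
\]
which is finite by \eqref{eq:intro-velocity-derivative}.

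The internal-interaction source is the main obstacle, because $K_\varepsilon(x_i-x_j)$ is singular and only $\nabla_v f\in L^1_tL^1$ is assumed. For each pair $(i,j)$ we integrate out $z_j$ first. Writing the term as
\[
 \int|K_\varepsilon(x_i-x_j)|\,|\nabla_vf(z_i)|\,f(z_j)\prod_{l\ne i,j}f(z_l)\,\dd Z_k,
\]
the $z_j$ integral equals $(|K_\varepsilon|*\rho_f)(x_i)$, which is bounded uniformly in $\varepsilon$ and $t$ by \eqref{eq:common-force-convolution-bound}. The remaining factors then integrate to at most $\|\nabla_vf(r)\|_1$. Summing over the $k(k-1)$ ordered pairs gives a contribution of at most $Ck^2/N$. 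We must ensure no derivative falls on $f(z_j)$; it does not, since the force term differentiates only in $v_i$.

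Combining the three bounds,
\[
 \sup_{t\le T}\|U^\varepsilon_{N,k}(t)-f_t^{\otimes k}\|_1\le\frac{C_R^k+Ck^2}{N}\le\frac{C_T^k}{N},
\]
using $k^2\le 4^k$. These bounds are uniform in $\varepsilon$. The singular case follows because $U^\varepsilon_{N,k}\to U_{N,k}$ in $C([0,T];L^1)$ by \Cref{prop:common-regularization-removal}, and the bound is preserved under $L^1$ limits. The only delicate point is justifying Duhamel and the $L^1$ contraction for the regularized $k$-particle equation with merely bounded, continuous $E$. There I would mollify the source and initial data, or run the argument through the stochastic representation: $w$ is a difference of forward Markov evolutions plus a time integral of the semigroup applied to the source, and Markov kernels contract TV.
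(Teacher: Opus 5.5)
Your proposal is correct and follows essentially the same route as the paper. You obtain the same residual by writing $(N-k)/N=1-k/N$, use the same Fubini bounds $\||K_\varepsilon|*\rho_f\|_\infty\|\nabla_vf\|_1$ and $\|E\|_\infty\|\nabla_vf\|_1$, apply Duhamel with the $L^1$ contraction of the regularized evolution, absorb via $k^2\le 4^k$, and pass to $\varepsilon\downarrow0$ through \Cref{prop:common-regularization-removal}. One small correction: $E\in C([0,T];C_b^1)$ by hypothesis, not merely bounded and continuous, so the regularized drift is globally Lipschitz and the $L^1$ contraction you flag as delicate is standard.
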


\begin{proof}
We set $u_k=f^{\otimes k}$.  Since $f$ solves
\eqref{eq:intro-riesz-vpfp}, the tensor product $u_k$ satisfies
\[
 \partial_tu_k+\sum_{i=1}^kv_i\cdot\nabla_{x_i}u_k
 +\sum_{i=1}^kE(t,x_i)\cdot\nabla_{v_i}u_k
 =\sigma\Delta_{V_k}u_k
\]
in distributions.  We add and subtract
$N^{-1}\sum_{i\ne j\le k}K_\varepsilon(x_i-x_j)\cdot\nabla_{v_i}$ and use
$\frac{N-k}{N}=1-k/N$.  If $\mathcal P_{N,k}^\varepsilon$ denotes the
left-hand side operator in the regularized equation for $U_{N,k}^\varepsilon$,
then
\[
 \mathcal P_{N,k}^\varepsilon u_k=r_{N,k}^\varepsilon,
 \qquad
 r_{N,k}^\varepsilon
 =\frac1N\sum_{i\ne j\le k}K_\varepsilon(x_i-x_j)
       \cdot\nabla_{v_i}u_k
 -\frac{k}{N}\sum_{i=1}^kE(t,x_i)\cdot\nabla_{v_i}u_k.
\]
Each term is estimated in $L^1$.  Since
\[
 \nabla_{v_i}u_k
 =(\nabla_vf)(z_i)\prod_{\ell\ne i}f(z_\ell),
\]
Fubini's theorem and $\|f_t\|_1=1$ give, for $i\ne j$,
\begin{align*}
 \|K_\varepsilon(x_i-x_j)\cdot\nabla_{v_i}u_k\|_1
 \le&
 \int |\nabla_vf(x_i,v_i)|f(x_j,v_j)
       |K_\varepsilon(x_i-x_j)|\,\dd z_i\dd z_j\\
\le  &
 \||K_\varepsilon|*\rho_f\|_\infty\,\|\nabla_vf\|_1.
\end{align*}
Similarly, we have
\[
 \|E(t,x_i)\cdot\nabla_{v_i}u_k\|_1
 \le\|E(t)\|_\infty\,\|\nabla_vf(t)\|_1.
\]
There are $k(k-1)$ ordered pair terms, while the second sum contains $k$
terms multiplied by $k/N$.  By \eqref{eq:common-force-convolution-bound},
\[
 \|r_{N,k}^\varepsilon(t)\|_1
 \le \frac{k(k-1)}{N}
      \sup_{0<\varepsilon<\varepsilon_0}
      \||K_\varepsilon|*\rho_f(t)\|_\infty\,\|\nabla_vf(t)\|_1
      +\frac{k^2}{N}\|E(t)\|_\infty\|\nabla_vf(t)\|_1
 \le \frac{C_Tk^2}{N}\,\|\nabla_vf(t)\|_1.
\]  Integrating in time and using
\eqref{eq:intro-velocity-derivative} gives
\[
 \int_0^T\|r_{N,k}^\varepsilon(t)\|_1\,\dd t
 \le C_T\frac{k^2}{N}.
\]
Let $\mathsf S_{N,k}^\varepsilon(t,s)$ denote the positive conservative
$L^1$ evolution family generated by the regularized operator.  Since
$r_{N,k}^\varepsilon\in L^1(0,T;L^1)$, Duhamel's formula and the $L^1$
contraction property give
\[
 \|U_{N,k}^\varepsilon(t)-u_k(t)\|_1
 \le \|G_{N,k}^0-f_0^{\otimes k}\|_1
      +\int_0^t\|r_{N,k}^\varepsilon(s)\|_1\,\dd s.
\]
Hence
\begin{align*}
 \sup_{t\le T}\|U_{N,k}^\varepsilon(t)-u_k(t)\|_1
 &\le \|G_{N,k}^0-f_0^{\otimes k}\|_1
      +\int_0^T\|r_{N,k}^\varepsilon(s)\|_1\,\dd s\\
 &\le \frac{C_R^k}{N}+C_T\frac{k^2}{N}.
\end{align*}
Since $k^2\le4^k$ for $k\ge1$, we may increase
$C_T'\ge1$, independently of $N$ and $k$, so that
$C_R^k+C_Tk^2\le(C_T')^k$.  This proves
\eqref{eq:TV-U-product} for the regularized equation, uniformly in
$\varepsilon$.  Finally,
$U_{N,k}^\varepsilon\to U_{N,k}$ in $C([0,T];L^1)$ by
\Cref{prop:common-regularization-removal}; passing to the limit gives
the estimate for $U_{N,k}$.
\end{proof}

\subsection{Estimate of the remainder term}

We pass from the $L^1$ estimate to the weighted bound used in the error
equation.  Since
$J_{i,k}[U_{N,k}f_t]=E(t,x_i)U_{N,k}$, the remainder term is
$J_{i,k}[U_{N,k+1}]-E(t,x_i)U_{N,k}$.
For $k<N$ define
\begin{equation}\label{eq:TV-remainder-weight}
 e_{N,k+1}^*=e_{N,k}+1+|v_{k+1}|^2.
\end{equation}
The weight $e_{N,k+1}^*$ contains all interactions among the first $k$
particles but omits those involving particle $k+1$.  It is used in the interpolation estimate for
$U_{N,k+1}-U_{N,k}f_t$; $e_{N,k}$ remains the energy in the exponential
weight of the error equation at level $k$.

\begin{proposition}\label{prop:TV-small-residual}
For $1\le k<N$, set
\begin{align*}
 h_{N,k+1}=U_{N,k+1}-U_{N,k}f_t(z_{k+1}),\qquad
 R_{i,N,k}=J_{i,k}[h_{N,k+1}],
\end{align*}
and
\begin{align*}
 h_{N,k+1}^\varepsilon
 &=U_{N,k+1}^\varepsilon-U_{N,k}^\varepsilon f_t(z_{k+1}),\\
 R_{i,N,k}^\varepsilon
 &=J_{i,k}^\varepsilon[h_{N,k+1}^\varepsilon]
 +(E^\varepsilon-E)(t,x_i)U_{N,k}^\varepsilon.
\end{align*}
Then there exist $\lambda_*>0$ and $D_T\ge1$, depending only on the fixed
data specified above and independent of $N,k$ and of the force regularization,
such that, for $0<\lambda\le\lambda_*$,
\begin{align}
 \sup_{t\le T}\int|h_{N,k+1}|^qe^{\lambda e_{N,k+1}^*}\varpi_{k+1}\,\dd Z_{k+1}
 &\le D_T^kN^{-1},
 \label{eq:TV-h-weighted}\\
 \sup_{t\le T}\sum_{i=1}^k
 \|R_{i,N,k}(t)\|_{L^q(e^{\lambda e_{N,k}}\varpi_k)}^q
 &\le D_T^kN^{-1},
 \label{eq:TV-residual-weighted}\\
 \sup_{t\le T}\int|h_{N,k+1}^\varepsilon|^q
 e^{\lambda(e_{N,k}^\varepsilon+1+|v_{k+1}|^2)}\varpi_{k+1}\,\dd Z_{k+1}
 &\le D_T^kN^{-1},
 \label{eq:TV-h-weighted-regularized}\\
 \sup_{t\le T}\sum_{i=1}^k
 \|R_{i,N,k}^\varepsilon(t)\|_{L^q(e^{\lambda e_{N,k}^\varepsilon}\varpi_k)}^q
 &\le D_T^k\bigl(N^{-1}+\varepsilon^{q(d-s-1)}\bigr).
 \label{eq:TV-regularized-residual-bound}
\end{align}
\end{proposition}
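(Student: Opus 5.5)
The plan is to interpolate between the $L^1$ smallness from \Cref{lem:TV-U-product} and the pointwise Gaussian/interaction-energy bounds from \Cref{lem:TV-U-pointwise}, using \Cref{lem:common-weighted-conversion}(i). Then \Cref{lem:common-weighted-holder} transfers the weighted bound on $h_{N,k+1}$ to the residuals $R_{i,N,k}$. I will carry out everything at the regularized level with constants uniform in $\varepsilon$. The singular statements then follow by letting $\varepsilon\downarrow0$, using $U^\varepsilon_{N,k}\to U_{N,k}$ in $C([0,T];L^1)$ (\Cref{prop:common-regularization-removal}), a.e.\ convergence along subsequences, and Fatou's lemma, since $e^\varepsilon_{N,k}\to e_{N,k}$ off the diagonals.

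\emph{Step 1: $L^1$ smallness.} Write
\[
 h_{N,k+1}=(U_{N,k+1}-f_t^{\otimes(k+1)})-(U_{N,k}-f_t^{\otimes k})f_t(z_{k+1}).
\]
By \Cref{lem:TV-U-product} and $\|f_t\|_1=1$, this gives $\|h_{N,k+1}(t)\|_1\le 2C_T^{k+1}/N$, and the same bound holds for $h^\varepsilon_{N,k+1}$.

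\emph{Step 2: pointwise weighted bound.} I claim that $|h|^{q-1}e^{\lambda e^*_{N,k+1}}\varpi_{k+1}\le C^k$ pointwise.
\begin{itemize}
\item For $U_{N,k+1}$: since $e_{N,k+1}\ge e^*_{N,k+1}$ by \eqref{eq:common-energy-identities}, \eqref{eq:TV-U-bound} gives $U_{N,k+1}\le A_T^{k+1}e^{-\beta_Te^*_{N,k+1}}\prod\langle x_j\rangle^{-M_*}$.
\item For $U_{N,k}f_t$: combine \eqref{eq:TV-U-bound} with $f_t\le C_f\langle x\rangle^{-M_f}e^{-a_f|v|^2}$. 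Here $a_f|v|^2\ge \beta_*(1+|v|^2)-\beta_*$, so this term is bounded by $C^{k}e^{-\beta_*e^*_{N,k+1}}\prod\langle x_j\rangle^{-M_*}$, using $\beta_*<\beta_T$.
\end{itemize}
Raising to the power $q-1$ and multiplying by $e^{\lambda e^*}\varpi_{k+1}$, the exponent $\lambda-(q-1)\beta_*$ is negative because $\lambda_*<(q-1)\beta_*$. The spatial factor is $\prod\langle x_j\rangle^{\alpha-(q-1)M_*}\le1$ by \eqref{eq:common-weight-compatibility}. Combining this with Step 1 through \eqref{eq:common-L1-to-weighted-Lq} gives \eqref{eq:TV-h-weighted} and \eqref{eq:TV-h-weighted-regularized} with $D_T^k/N$. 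The regularized case is identical, using \eqref{eq:TV-U-bound-regularized} and $e^\varepsilon_{N,k+1}\ge e^\varepsilon_{N,k}+1+|v_{k+1}|^2$.

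\emph{Step 3: residuals.} I apply \Cref{lem:common-weighted-holder} with $\mathcal E_k=e_{N,k}$ and $\widehat{\mathcal E}_{k+1}=e^*_{N,k+1}$. This yields
\[
 \|R_{i,N,k}\|^q_{L^q(e^{\lambda e_{N,k}}\varpi_k)}\le\Gamma_{d,q}(\lambda)\|K\|^q_{q,\varpi}D_T^k/N .
\]
Summing over $i\le k$ costs a factor $k\le 2^k$, and $\|K\|_{q,\varpi}<\infty$ by \eqref{eq:kernel-assumptions}. The factor $\Gamma_{d,q}(\lambda)$ blows up as $\lambda\to0$. This is harmless only if the estimate is used at fixed $\lambda$, or if one absorbs $\Gamma_{d,q}(\lambda)$ into a constant depending on the fixed $\lambda_0$. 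I would state the bound for $\lambda\in[\lambda_0,\lambda_*]$ if needed, or else note that $D_T$ may depend on the lower bound of $\lambda$; this is the point I would check most carefully.

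For the regularized residual, the $J^\varepsilon$ term is handled identically using $\sup_\varepsilon\|K_\varepsilon\|_{q,\varpi}$. The extra term $(E^\varepsilon-E)U^\varepsilon_{N,k}$ satisfies
\[
 \|E^\varepsilon-E\|_\infty\le\|K_\varepsilon-K\|_{L^1}\|\rho_f\|_\infty\le C\varepsilon^{d-s-1},
\]
since $\rho_f$ is bounded by the pointwise bound on $f$. Moreover $\int|U^\varepsilon_{N,k}|^qe^{\lambda e^\varepsilon}\varpi_k\le C^k$ by the integrability computation at the end of the proof of \Cref{lem:TV-U-pointwise}, using $\lambda<q\beta_T$. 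Together these give the $\varepsilon^{q(d-s-1)}$ contribution in \eqref{eq:TV-regularized-residual-bound}.

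The main obstacle is making the exponent choices compatible: one needs $\lambda<(q-1)\beta_*$ for the $U_{N,k}f_t$ term and the spatial balance $\alpha\le(q-1)M_*$, while tracking that $e^*_{N,k+1}$ rather than $e_{N,k+1}$ is what the bound on $U_{N,k}f_t$ controls. This mismatch is exactly why the weight omits interactions with particle $k+1$.
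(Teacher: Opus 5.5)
Your argument follows the paper's proof step by step. The steps are the $L^1$ bound from \Cref{lem:TV-U-product} and the pointwise bound $|h_{N,k+1}|\le C_T^{k+1}e^{-\beta_*e^*_{N,k+1}}\prod_j\langle x_j\rangle^{-M_*}$ with $\beta_*<\min\{\beta_T,a_f\}$. Next comes interpolation via \Cref{lem:common-weighted-conversion}\textup{(i)} with $\lambda_*<\min\{(q-1)\beta_*,q\beta_T\}$, then \Cref{lem:common-weighted-holder} with $\widehat{\mathcal E}_{k+1}=e^*_{N,k+1}$. Finally, the term $(E^\varepsilon-E)U^\varepsilon_{N,k}$ is bounded separately by $\varepsilon^{d-s-1}$ times $C^k$.

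The one point you left open has a one-line resolution. Your fallback would prove less than the statement claims: restricting to $\lambda\ge\lambda_0$, or letting $D_T$ depend on a lower bound for $\lambda$, does not give $D_T$ uniform on $(0,\lambda_*]$. The fix is as follows. \eqref{eq:TV-h-weighted} holds at $\lambda=\lambda_*$ itself, so apply \Cref{lem:common-weighted-holder} only once, at $\lambda_*$; then only the fixed constant $\Gamma_{d,q}(\lambda_*)$ appears. For $0<\lambda\le\lambda_*$, use $e^{\lambda e_{N,k}}\le e^{\lambda_*e_{N,k}}$, valid since $e_{N,k}\ge k>0$, and likewise for $e^\varepsilon_{N,k}$. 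This gives \eqref{eq:TV-residual-weighted} and \eqref{eq:TV-regularized-residual-bound} with $D_T$ independent of $\lambda$, which is exactly what the paper does ("the exponential weight is monotone in $\lambda$").

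The $\varepsilon\downarrow0$/Fatou detour in your first paragraph is unnecessary. Your Steps 1--3 already treat the singular quantities directly through \eqref{eq:TV-U-bound} and \Cref{lem:TV-U-product}.
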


The regularized auxiliary equation contains the prescribed field $E$.
Accordingly, the residual is defined with the correction
$(E^\varepsilon-E)U_{N,k}^\varepsilon$, so that
\[
 R_{i,N,k}^\varepsilon
 =J_{i,k}^\varepsilon[U_{N,k+1}^\varepsilon]
   -E(t,x_i)U_{N,k}^\varepsilon.
\]
With this definition, the regularized and singular error equations have the same form, while $K_\varepsilon-K$ contributes to the residual estimate.

\begin{proof}
We set $u_j=f_t^{\otimes j}$.  Since $\|f_t\|_1=1$, the triangle
inequality gives
\[
 \|U_{N,k+1}-U_{N,k}f_t\|_1
 \le \|U_{N,k+1}-u_{k+1}\|_1
      +\|U_{N,k}-u_k\|_1,
\]
and the same inequality holds for the regularized quantities.  Hence
\Cref{lem:TV-U-product} yields
\begin{equation}\label{eq:TV-h-L1-explicit}
 \sup_{t\le T}\|h_{N,k+1}(t)\|_1
 +\sup_{t\le T}\|h_{N,k+1}^\varepsilon(t)\|_1
 \le \frac{C_T^k}{N}.
\end{equation}
Let $\beta_*$ be the exponent fixed in the paragraph ``Choice of parameters'', so
$0<\beta_*<\min\{\beta_T,a_f\}$.  By \Cref{lem:TV-U-pointwise},
\[
 U_{N,k+1}\le A_T^{k+1}e^{-\beta_Te_{N,k+1}}\prod_{j=1}^{k+1}\langle x_j\rangle^{-M_*}.
\]
Since $e_{N,k+1}\ge e_{N,k+1}^*$ and $\beta_*<\beta_T$,
\[
 U_{N,k+1}\le A_T^{k+1}e^{-\beta_*e_{N,k+1}^*}\prod_{j=1}^{k+1}\langle x_j\rangle^{-M_*}.
\]
For the product term, on $\R^d$ the pointwise bound on $f$ and
$M_*<M_f$ give
\[
 f_t(x_{k+1},v_{k+1})
 \le C_T\langle x_{k+1}\rangle^{-M_*}e^{-a_f|v_{k+1}|^2};
\]
on $\T^d$ the spatial factor is absent.
Together with
$U_{N,k}\le A_T^ke^{-\beta_Te_{N,k}}\prod_{j=1}^{k}\langle x_j\rangle^{-M_*}$ and
$e_{N,k+1}^*=e_{N,k}+1+|v_{k+1}|^2$, this yields
\[
 U_{N,k}f_t(z_{k+1})
 \le e^{\beta_*} C_TA_T^k e^{-\beta_*e_{N,k+1}^*}\prod_{j=1}^{k+1}\langle x_j\rangle^{-M_*},
\]
because $\beta_*<\beta_T$ and $\beta_*<a_f$.  Therefore, after increasing the base constant,
\begin{equation}\label{eq:TV-h-pointwise-bound}
 |h_{N,k+1}|
 \le C_T^{k+1}e^{-\beta_*e_{N,k+1}^*}\prod_{j=1}^{k+1}\langle x_j\rangle^{-M_*}.
\end{equation}
For the regularized quantities one likewise obtains
\[
 |h_{N,k+1}^\varepsilon|
 \le C_T^{k+1}
 e^{-\beta_*(e_{N,k}^\varepsilon+1+|v_{k+1}|^2)}\prod_{j=1}^{k+1}\langle x_j\rangle^{-M_*}.
\]

Choose
$0<\lambda_*<\min\{(q-1)\beta_*,q\beta_T\}$.  For
$0<\lambda\le\lambda_*$, \eqref{eq:TV-h-pointwise-bound} and
\eqref{eq:common-weight-compatibility} give, on $\R^d$,
\begin{align*}
 |h|^{q-1}e^{\lambda e_{N,k+1}^*}\varpi_{k+1}
 &\le C_T^{(q-1)(k+1)}
 e^{-((q-1)\beta_*-\lambda)e_{N,k+1}^*}
 \prod_{j=1}^{k+1}\langle x_j\rangle^{\alpha-(q-1)M_*}\\
 &\le C_T^{(q-1)(k+1)}.
\end{align*}
On $\T^d$ the same estimate holds without the spatial product.
Here $(q-1)\beta_*-\lambda>0$, so the exponential factor is at most one.
Applying \Cref{lem:common-weighted-conversion}\textup{(i)} and
\eqref{eq:TV-h-L1-explicit}, there are constants $C_1,C_2\ge1$, independent
of $N$ and $k$, such that
\begin{align}\label{eq:TV-h-interpolation-explicit}
 \int|h|^qe^{\lambda e_{N,k+1}^*}\varpi_{k+1}
 &\le C_1^{(q-1)(k+1)}\|h\|_1\notag\\
 &\le C_1^{(q-1)(k+1)}C_2^kN^{-1}
 \le D_T^kN^{-1},
\end{align}
where the last inequality uses $k+1\le2k$ and enlarges a base constant
$D_T\ge1$ independently of $N$ and $k$.  Thus the $q$th weighted moment
retains the factor $N^{-1}$; equivalently,
\[
 \|h\|_{L^q(e^{\lambda e_{N,k+1}^*}\varpi_{k+1})}
 \le D_T^{k/q}N^{-1/q}.
\]
Applying this interpolation estimate to $h_{N,k+1}^\varepsilon$ proves
\eqref{eq:TV-h-weighted} and
\eqref{eq:TV-h-weighted-regularized}.

For the interaction term, we apply
\Cref{lem:common-weighted-holder} with
$\mathcal E_k=e_{N,k}$ and
$\widehat{\mathcal E}_{k+1}=e_{N,k+1}^*$, and similarly for the
regularized energies.  For each $i$,
\[
 \|J_{i,k}[h_{N,k+1}]\|_
 {L^q(e^{\lambda_*e_{N,k}}\varpi_k)}^q
 \le C_{T,q}D_T^kN^{-1},
 \qquad
 \|J_{i,k}[h_{N,k+1}]\|_
 {L^q(e^{\lambda_*e_{N,k}}\varpi_k)}
 \le C_{T,q}^{1/q}D_T^{k/q}N^{-1/q}.
\]
Summing the $q$th powers in $i$ gives a factor $k$.  Since
$k\le2^k$ for $k\ge1$, we may
enlarge the base $D_T$ by a fixed factor, independently of $N$ and $k$, and
absorb this factor into $D_T^k$.  This proves
\eqref{eq:TV-residual-weighted}.  Replacing $K$ and $h_{N,k+1}$ by their
regularized counterparts gives the corresponding estimate for
$J_{i,k}^\varepsilon[h_{N,k+1}^\varepsilon]$.  Since the exponential weight
is monotone in $\lambda$, these estimates hold for all
$0<\lambda\le\lambda_*$.

For the correction involving $E^\varepsilon-E$,
\eqref{eq:kernel-assumptions} gives
\[
 \|E^\varepsilon-E\|_\infty
 \le\|K_\varepsilon-K\|_1\|\rho_f\|_\infty
 \le C_T\varepsilon^{d-s-1}.
\]
Moreover, for $\lambda\le\lambda_*<q\beta_T$, the pointwise estimate
\eqref{eq:TV-U-bound-regularized} gives
\begin{align*}
 \int |U_{N,k}^\varepsilon|^q
 e^{\lambda e_{N,k}^\varepsilon}\varpi_k\,\dd Z_k
 &\le A_T^{qk}\int
 e^{-(q\beta_T-\lambda)e_{N,k}^\varepsilon}\prod_{j=1}^{k}\langle x_j\rangle^{\alpha-qM_*}\,\dd Z_k\\
 &\le A_T^{qk}\left(
 \int_{\R^d\times\R^d}
 e^{-(q\beta_T-\lambda_*)(1+|v|^2)}
 \langle x\rangle^{\alpha-qM_*}\,\dd x\dd v
 \right)^k
 \le C_T^k.
\end{align*}
Here we used
$e_{N,k}^\varepsilon\ge\sum_{j=1}^k(1+|v_j|^2)$ and
\eqref{eq:common-weight-compatibility}; the one-particle integral is finite
and independent of $N,k,\varepsilon$.  Hence
\[
 \sup_{t\le T}\sum_{i=1}^k
 \|(E^\varepsilon-E)U_{N,k}^\varepsilon\|_
 {L^q(e^{\lambda e_{N,k}^\varepsilon}\varpi_k)}^q
 \le D_T^k\varepsilon^{q(d-s-1)}.
\]
Combining the two contributions proves
\eqref{eq:TV-regularized-residual-bound}.

\end{proof}

\subsection{Energy inequality for the difference}

We set
\begin{equation}\label{eq:TV-error-definition}
 H_{N,k}=G_{N,k}-U_{N,k}.
\end{equation}
For $k<N$, the conditioned marginals solve
\begin{equation}\label{eq:common-conditioned-BBGKY}
 \partial_tG_{N,k}+L_{N,k}G_{N,k}-\sigma\Delta_{V_k}G_{N,k}
 +\frac{N-k}{N}\sum_{i=1}^k\nabla_{v_i}\cdot J_{i,k}[G_{N,k+1}]=0.
\end{equation}
For $U_{N,k+1}$, we have
\begin{align*}
 J_{i,k}[U_{N,k+1}]
 =J_{i,k}[U_{N,k}f_t]+J_{i,k}[h_{N,k+1}]=E(t,x_i)U_{N,k}+R_{i,N,k}.
\end{align*}
Subtracting \eqref{eq:TV-U-PDE} from
\eqref{eq:common-conditioned-BBGKY} therefore gives
\begin{align}\label{eq:TV-error-PDE}
 \partial_tH_{N,k}+L_{N,k}H_{N,k}-\sigma\Delta_{V_k}H_{N,k}
 +\frac{N-k}{N}\sum_i\nabla_{v_i}\cdot J_{i,k}[H_{N,k+1}]
 =-\frac{N-k}{N}\sum_i\nabla_{v_i}\cdot R_{i,N,k}.
\end{align}
The initial data agree at every level.  At $k=N$, the two evolution
equations also agree at level $k=N$.  Hence
\begin{equation}\label{eq:TV-error-identities}
 H_{N,k}(0)=0\quad(1\le k\le N),\qquad H_{N,N}(t)=0.
\end{equation}

With $\lambda_0$ fixed as in the parameter choice above, choose
$\Lambda\ge1$ sufficiently large, depending only on $(q,\sigma)$, and set
\begin{equation}\label{eq:TV-global-lambda}
 \lambda(t)=\frac{\lambda_0}{1+\Lambda\lambda_0t},\qquad
 \lambda'=-\Lambda\lambda^2.
\end{equation}
By the choice of parameters,
\begin{equation}\label{eq:TV-error-exponent-window}
 0<4\lambda_0<\min\{1,\lambda_*,q\beta_T\}.
\end{equation}
Define
\[
 Y_{N,k}(t)=\int |H_{N,k}(t)|^q
 e^{\lambda(t)e_{N,k}}\varpi_k\,\dd Z_k.
\]
For each fixed $N$ and $k$, this quantity is finite on $[0,T]$.  Indeed,
\eqref{eq:TV-U-bound} and \eqref{eq:TV-GN-bound}, together with
$4\lambda_0<q\beta_T$, give an integrable pointwise majorant for
$|H_{N,k}|^q e^{\lambda(t)e_{N,k}}\varpi_k$.  The estimate uniform in $N$ and $k$ used below is obtained after the
finite hierarchy iteration.

We derive a weighted energy estimate for $H_{N,k}$ with weight
$e^{\lambda(t)e_{N,k}^\varepsilon}\varpi_k$.  Since
$L_{N,k}^\varepsilon e_{N,k}^\varepsilon=0$, the internal interaction term
cancels in the differentiation of the weight.  We choose a decreasing
exponent $\lambda(t)$ to absorb the quadratic terms in the velocity variables.

\begin{lemma}\label{lem:common-weighted-flux-energy}
Let $2\le q<\infty$ and let $\lambda$ be given by
\eqref{eq:TV-global-lambda} with $0<\lambda_0\le1$.  Suppose that $H$ and
$\mathcal F_i$ are smooth, that
\[
 \partial_tH+L_{N,k}^\varepsilon H-\sigma\Delta_{V_k}H
 +\sum_{i=1}^k\nabla_{v_i}\cdot\mathcal F_i=0,
\]
and that the integrals appearing below are finite.  Set
$W=e^{\lambda e_{N,k}^\varepsilon}\varpi_k$ and
$Y_k=\int|H|^qW$.  If $\Lambda$ is sufficiently large, depending only
on $(q,\sigma)$, then
\begin{equation}\label{eq:common-flux-energy}
 Y_k'(t)\le
 C_{d,q,\sigma}\bigl(1+\mathbf1_{\{\mathcal D=\R^d\}}\alpha^2\lambda(t)^{-2}\bigr)
 kY_k(t)
 +C_{d,q,\sigma}\lambda(t)^{-(q-2)}
 \sum_{i=1}^k\int|\mathcal F_i|^qW.
\end{equation}
For the smooth regularized approximations used in
Appendix~\ref{app:fixed-N-passage}, \eqref{eq:common-flux-energy} remains valid
after removal of a compact phase-space cutoff; see
Section~\ref{subsec:common-cutoff-testing}.
\end{lemma}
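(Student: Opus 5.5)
We differentiate $Y_k=\int|H|^qW$ in time, substitute the equation, and integrate by parts term by term, treating the transport, weight, diffusion and flux contributions separately. Write $\Phi=|H|^q$, so that $\partial_t\Phi=q|H|^{q-2}H\,\partial_tH$. Then
\[
 Y_k'=\int \Phi\,\partial_tW+q\int|H|^{q-2}H\bigl(-L_{N,k}^\varepsilon H+\sigma\Delta_{V_k}H-\textstyle\sum_i\nabla_{v_i}\cdot\mathcal F_i\bigr)W .
\]

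\emph{Transport term.} Since $L^\varepsilon_{N,k}$ is divergence free in phase space, $q|H|^{q-2}H L^\varepsilon_{N,k}H=L^\varepsilon_{N,k}\Phi$. Integrating by parts gives $\int\Phi\,L^\varepsilon_{N,k}W$. By \eqref{eq:common-energy-identities} for the regularized energy, $L^\varepsilon_{N,k}e^\varepsilon_{N,k}=0$, so only the spatial weight survives: $L^\varepsilon_{N,k}W=W\sum_i v_i\cdot\nabla\log\varpi(x_i)$. This is bounded by $W\sum_i\alpha|v_i|$ on $\R^d$ and vanishes on $\T^d$.

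\emph{Weight derivative.} We have $\partial_tW=\lambda'e^\varepsilon_{N,k}W=-\Lambda\lambda^2e^\varepsilon_{N,k}W\le-\Lambda\lambda^2\sum_i(1+|v_i|^2)W$. This negative term is the absorbing term.

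\emph{Diffusion.} We integrate by parts in $v_i$:
\[
 q\sigma\int|H|^{q-2}H\Delta_{v_i}H\,W=-q(q-1)\sigma\int|H|^{q-2}|\nabla_{v_i}H|^2W-q\sigma\int|H|^{q-2}H\nabla_{v_i}H\cdot\nabla_{v_i}W .
\]
Here $\nabla_{v_i}W=2\lambda v_iW$. Writing $|H|^{q-2}H\nabla_{v_i}H=\frac1q\nabla_{v_i}\Phi$ and integrating by parts once more turns the second term into $\sigma\int\Phi\,\Delta_{v_i}W=\sigma\int\Phi(2d\lambda+4\lambda^2|v_i|^2)W$. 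We keep the good dissipation $D:=q(q-1)\sigma\sum_i\int|H|^{q-2}|\nabla_{v_i}H|^2W$.

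\emph{Flux term.} Integrating by parts gives
\[
 q\int\mathcal F_i\cdot\nabla_{v_i}(|H|^{q-2}HW)=q(q-1)\int\mathcal F_i\cdot|H|^{q-2}\nabla_{v_i}H\,W+2q\lambda\int\mathcal F_i\cdot v_i|H|^{q-2}HW .
\]
For the first piece, Cauchy--Schwarz and Young give at most $\tfrac12 D+C\int|\mathcal F_i|^2|H|^{q-2}W$. Then Young with exponents $q/2$ and $q/(q-2)$ (this is where $q\ge2$ is needed) splits it as $\delta\lambda^2\int(1+|v_i|^2)\ldots$ plus $C\lambda^{-\cdots}\int|\mathcal F_i|^qW$. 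The cleanest route is to bound $\int|\mathcal F_i|^2|H|^{q-2}W\le\lambda^{-(q-2)}\int|\mathcal F_i|^qW+\lambda^{2}\int|H|^qW$, using Young with weights $\lambda^{\pm}$ chosen so that the $|\mathcal F|^q$ term carries exactly $\lambda^{-(q-2)}$. For the second piece, Hölder and Young give $|\mathcal F_i||v_i|\lambda|H|^{q-1}\le\lambda^{-(q-2)}|\mathcal F_i|^q+\lambda^{(2q-2)/(q-1)\cdot\ldots}$. One must check that the $|H|^q$ remainder is of the form $\lambda^2|v_i|^{q'}|H|^q\le\lambda^2(1+|v_i|^2)|H|^q$, using $q'\le2$ when $q\ge2$. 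The exponent bookkeeping works out because $\lambda\le1$: any positive power of $\lambda$ at least $2$ is dominated by $\lambda^2$.

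\emph{Collecting terms.} All $|H|^q$ terms are then of three kinds. The first is $\lambda^2|v_i|^2$ and $\lambda$-constant terms, absorbed by $-\Lambda\lambda^2(1+|v_i|^2)$ once $\Lambda$ is large depending on $(q,\sigma)$; the pure $\lambda$ terms that remain contribute $C\lambda\le C$ per particle, i.e.\ $CkY_k$. The second is the spatial-weight term $\alpha|v_i|\Phi W$. For it we use $\alpha|v_i|\le\lambda^2|v_i|^2+\alpha^2\lambda^{-2}/4$ and absorb the first part, leaving $C\alpha^2\lambda^{-2}kY_k$. This produces exactly the indicator term in \eqref{eq:common-flux-energy}. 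The third is the flux contribution $C\lambda^{-(q-2)}\sum_i\int|\mathcal F_i|^qW$.

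The main obstacle is this Young-exponent bookkeeping in the flux terms. We must ensure that every $|H|^q$ remainder carries at most a factor $\lambda^2(1+|v_i|^2)$, or a $\lambda$-independent constant, so that it can be absorbed with $\Lambda$ depending only on $(q,\sigma)$ and not on $\lambda_0$. At the same time the $\mathcal F$ terms must carry only $\lambda^{-(q-2)}$. The boundary terms in the integrations by parts vanish under the stated finiteness and smoothness assumptions. The cutoff removal is deferred as stated.
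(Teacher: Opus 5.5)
Your proposal is correct and follows the paper's proof. Both arguments use the multiplier $q|H|^{q-2}HW$ and the cancellation $L_{N,k}^\varepsilon e_{N,k}^\varepsilon=0$. Both absorb every $\lambda^2(1+|v_i|^2)|H|^q$ term through $\lambda'=-\Lambda\lambda^2$, and both split $\alpha|v_i|\le\lambda^2|v_i|^2+\alpha^2\lambda^{-2}/4$ for the spatial weight. The only difference is cosmetic: you apply Young with exponents $q,q'$ directly to the $2q\lambda\, v_i\cdot\mathcal F_i|H|^{q-2}H$ term, where the power of $\lambda$ on the $|H|^q$ side is exactly $\tfrac{2q-2}{q}\,q'=2$ and $|v_i|^{q'}\le 1+|v_i|^2$. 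The paper instead first uses Cauchy--Schwarz to reduce this term to $\lambda^2|v_i|^2|H|^q+|H|^{q-2}|\mathcal F_i|^2$, and then reuses its bound for the first flux term.
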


\begin{proof}
The calculation follows the weighted $L^q$ estimate of
\cite[Section~3.2]{BJS2025}, with the additional spatial weight $\varpi_k$ and
an inhomogeneous flux $\mathcal F_i$.  A compact phase-space cutoff justifies
the integrations by parts; its removal is given in
Section~\ref{subsec:common-cutoff-testing}.  Write
$e=e_{N,k}^\varepsilon$ and set
\[
 \mathcal D_k=\sum_i\int |H|^{q-2}|\nabla_{v_i}H|^2W,
 \qquad
 \mathcal M_k=\sum_i\int |v_i|^2|H|^qW,
 \qquad
 \mathcal Q_k=\sum_i\int |\mathcal F_i|^qW.
\]
We multiply the equation by $q|H|^{q-2}H W$ and integrate over
$Z_k$.  The transport field in $L_{N,k}^\varepsilon$ is divergence free.
Moreover,
\[
 \partial_tW=\lambda'eW,
 \qquad L_{N,k}^\varepsilon e=0,
 \qquad \nabla_{v_i}W=2\lambda v_iW,
\]
and
\[
 \Delta_{V_k}W
 =(2dk\lambda+4\lambda^2\textstyle\sum_i|v_i|^2)W.
\]
For the diffusion term we use
\[
 q\int |H|^{q-2}H\Delta_{V_k}H\,W
 =-q(q-1)\mathcal D_k+\int|H|^q\Delta_{V_k}W.
\]
For the flux term, integration by parts gives
\begin{align*}
 -q\int |H|^{q-2}H\,\nabla_{v_i}\cdot\mathcal F_i\,W
 =q(q-1)\int |H|^{q-2}\nabla_{v_i}H\cdot\mathcal F_iW+2q\lambda\int |H|^{q-2}H
 v_i\cdot\mathcal F_iW.
\end{align*}
Combining the above identities, we obtain
\begin{align}\label{eq:common-flux-energy-identity}
 Y_k'+q(q-1)\sigma\mathcal D_k
 &=\lambda'\int e|H|^qW
   +\int |H|^qW\frac{L_{N,k}^\varepsilon\varpi_k}{\varpi_k}
   +2d\sigma\lambda kY_k+4\sigma\lambda^2\mathcal M_k\notag\\
 &\quad+q(q-1)\sum_i\int |H|^{q-2}
       \nabla_{v_i}H\cdot\mathcal F_iW
   +2q\lambda\sum_i\int |H|^{q-2}H
       v_i\cdot\mathcal F_iW.
\end{align}
The internal interaction cancels from the right-hand side.  For the first
flux term, Young's inequality gives
\[
 q(q-1)\sum_i\int |H|^{q-2}|\nabla_{v_i}H|\,|\mathcal F_i|W
 \le\frac{q(q-1)\sigma}{4}\mathcal D_k
       +C_{q,\sigma}\sum_i\int |H|^{q-2}|\mathcal F_i|^2W.
\]
If $q>2$, Young's inequality with exponents $q/(q-2)$ and $q/2$, applied after
inserting the factors $\lambda^{2(q-2)/q}$ and
$\lambda^{-2(q-2)/q}$, yields
\begin{equation}\label{eq:common-flux-young-power}
 |H|^{q-2}|\mathcal F_i|^2
 \le\lambda^2|H|^q
       +C_q\lambda^{-(q-2)}|\mathcal F_i|^q.
\end{equation}
For $q=2$ the left-hand side is $|\mathcal F_i|^2$, so the same conclusion
holds with the $\lambda^2|H|^q$ term omitted.  The second flux term satisfies
\[
 2q\lambda |v_i|\,|H|^{q-1}|\mathcal F_i|
 \le C_q\lambda^2|v_i|^2|H|^q
      +C_q|H|^{q-2}|\mathcal F_i|^2.
\]
Consequently, for a constant $C_*=C_*(q,\sigma)$, the two flux terms together
with the $4\sigma\lambda^2\mathcal M_k$ contribution from the diffusion are
bounded by
\[
 \frac{q(q-1)\sigma}{4}\mathcal D_k
 +C_*\lambda^2(\mathcal M_k+kY_k)
 +C_*\lambda^{-(q-2)}\mathcal Q_k.
\]
Since $e\ge k+\sum_i|v_i|^2$ and
$\lambda'=-\Lambda\lambda^2$,
\[
 \lambda'\int e|H|^qW
 \le-\Lambda\lambda^2(kY_k+\mathcal M_k).
\]
Choose $\Lambda\ge2C_*$.  On $\T^d$ the spatial weight term vanishes; on
$\R^d$, since $\varpi(x)=\langle x\rangle^\alpha$ and
$|\nabla\log\varpi(x)|\le\alpha/2$,
\[
 \frac{|L_{N,k}^\varepsilon\varpi_k|}{\varpi_k}
 \le\frac\Lambda4\lambda^2\sum_i|v_i|^2
      +\frac{\alpha^2}{4\Lambda}\lambda^{-2}k.
\]
Substituting these estimates into \eqref{eq:common-flux-energy-identity},
moving the absorbed dissipation to the left, and using $0<\lambda\le1$ gives
\begin{align*}
 Y_k'&+\frac{3q(q-1)\sigma}{4}\mathcal D_k
 +\frac\Lambda4\lambda^2\mathcal M_k\\
 &\le C_{d,q,\sigma}
 \bigl(1+\mathbf1_{\{\mathcal D=\R^d\}}\alpha^2\lambda^{-2}\bigr)kY_k
 +C_{q,\sigma}\lambda^{-(q-2)}\mathcal Q_k.
\end{align*}
Dropping the nonnegative terms on the left proves
\eqref{eq:common-flux-energy}; for $q=2$, $\lambda^{-(q-2)}=1$.  The cutoff
argument is given in Section~\ref{subsec:common-cutoff-testing} and is removed
at fixed $\varepsilon$ before $\varepsilon\downarrow0$.
\end{proof}

We apply this estimate to the approximations constructed in
\Cref{lem:common-smooth-initial-data}.  For fixed $N$, $\varepsilon$, and $n$,
let $G_N^{0,n,\varepsilon}$ be the smooth initial approximation given there,
and denote by $G_{N,k}^{0,n,\varepsilon}$ its $k$-particle marginal.  Let
$G_N^{\varepsilon,n}(t)$ be the regularized $N$-particle law with initial
density $G_N^{0,n,\varepsilon}$, and denote by $G_{N,k}^{\varepsilon,n}(t)$
its $k$-particle marginal.  Let $U_{N,k}^{\varepsilon,n}$ solve the
regularized version of \eqref{eq:TV-U-PDE} with initial datum
$G_{N,k}^{0,n,\varepsilon}$.  Set
\begin{align*}
 H_{N,k}^{\varepsilon,n}
 &:=G_{N,k}^{\varepsilon,n}-U_{N,k}^{\varepsilon,n},
 &H_{N,k}^{\varepsilon}
 &:=G_{N,k}^{\varepsilon}-U_{N,k}^{\varepsilon},\\
 h_{N,k+1}^{\varepsilon,n}
 &:=U_{N,k+1}^{\varepsilon,n}
   -U_{N,k}^{\varepsilon,n}f_t(z_{k+1}),
 &\xi_{n,\varepsilon}
 &:=\|G_N^{0,n,\varepsilon}-G_N^0\|_1,
\end{align*}
and
\begin{align*}
 R_{i,N,k}^{\varepsilon,n}
 &:=J_{i,k}^{\varepsilon}[h_{N,k+1}^{\varepsilon,n}]
 +(E^\varepsilon-E)(t,x_i)U_{N,k}^{\varepsilon,n},\\
 Y_{N,k}^{\varepsilon,n}(t)
 &:=\int |H_{N,k}^{\varepsilon,n}(t)|^q
 e^{\lambda(t)e_{N,k}^{\varepsilon}}\varpi_k\,\dd Z_k,\\
 Y_{N,k}^{\varepsilon}(t)
 &:=\int |H_{N,k}^{\varepsilon}(t)|^q
 e^{\lambda(t)e_{N,k}^{\varepsilon}}\varpi_k\,\dd Z_k.
\end{align*}
The limits $n\to\infty$ and, at fixed $N$, $\varepsilon\downarrow0$ are
justified in \Cref{lem:TV-fixed-N-passage}; they preserve
\eqref{eq:TV-error-identities}.  Estimating the BBGKY interaction term and the
remainder gives the integral inequality to be iterated in $k$.

\begin{lemma}\label{lem:TV-error-energy}
There exist nonnegative continuous functions $A,L$ on $[0,T]$ and a
constant $D_T\ge1$, depending only on the fixed data of this section and not
on $N,k,n$ or $\varepsilon$, such that
\begin{equation}\label{eq:TV-error-energy}
 Y_{N,k}(t)\le\int_0^t\left[
 kA(s)Y_{N,k}(s)+kL(s)\frac{N-k}{N}Y_{N,k+1}(s)
 +D_T^kN^{-1}\right]\dd s,
 \qquad k<N,
\end{equation}
with zero initial values and $Y_{N,N}=0$.  One may take
\begin{align*}
 A(t)&=C_{d,q,\sigma}\bigl(
 1+\mathbf1_{\{\mathcal D=\R^d\}}\alpha^2\lambda(t)^{-2}\bigr),\\
 L(t)&=C_{d,q,\sigma}\lambda(t)^{-(q-2)}
 \Gamma_{d,q}(\lambda(t))
 \sup_{0<\varepsilon<\varepsilon_0}\|K_\varepsilon\|_{q,\varpi}^q.
\end{align*}
\end{lemma}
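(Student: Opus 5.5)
The plan is to work first at the fully regularized and smoothed level $(\varepsilon,n)$, where all functions are smooth and \Cref{lem:common-weighted-flux-energy} applies, and then pass to the limit using \Cref{lem:TV-fixed-N-passage}. The starting point is the regularized error equation for $H_{N,k}^{\varepsilon,n}$. It is obtained exactly as \eqref{eq:TV-error-PDE}: subtract the regularized auxiliary equation from the regularized BBGKY equation, and use the identity
\[
J_{i,k}^\varepsilon[U_{N,k+1}^{\varepsilon,n}]=E(t,x_i)U_{N,k}^{\varepsilon,n}+R_{i,N,k}^{\varepsilon,n}.
\]
This identity holds because $J^\varepsilon_{i,k}[U^{\varepsilon,n}_{N,k}f_t]=E^\varepsilon U^{\varepsilon,n}_{N,k}$, and the difference $E^\varepsilon-E$ has been absorbed into $R^{\varepsilon,n}$. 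The error equation is then of the form treated in \Cref{lem:common-weighted-flux-energy}, with flux
\[
\mathcal F_i=\frac{N-k}{N}\Bigl(J^\varepsilon_{i,k}[H^{\varepsilon,n}_{N,k+1}]+R^{\varepsilon,n}_{i,N,k}\Bigr).
\]
That lemma gives the inequality
\[
(Y^{\varepsilon,n}_{N,k})'\le A(t)\,k\,Y^{\varepsilon,n}_{N,k}+C\lambda^{-(q-2)}\sum_i\int|\mathcal F_i|^qW,
\]
with $A$ exactly as in the statement. The internal interaction contributes nothing here, because $L^\varepsilon_{N,k}e^\varepsilon_{N,k}=0$.

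Next I split the flux using $|a+b|^q\le 2^{q-1}(|a|^q+|b|^q)$ and bound the two pieces separately.

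\emph{Hierarchy term.} I apply \Cref{lem:common-weighted-holder} with $\mathcal E_k=e^\varepsilon_{N,k}$ and $\widehat{\mathcal E}_{k+1}=e^\varepsilon_{N,k+1}$. The monotonicity \eqref{eq:common-energy-identities} supplies the hypothesis $\widehat{\mathcal E}_{k+1}\ge\mathcal E_k+1+|v_{k+1}|^2$. This gives
\[
\int|J^\varepsilon_{i,k}[H^{\varepsilon,n}_{N,k+1}]|^qW\le\Gamma_{d,q}(\lambda)\,\|K_\varepsilon\|_{q,\varpi}^q\,Y^{\varepsilon,n}_{N,k+1}.
\]
Summing over $i$ produces the factor $k$. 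The power $((N-k)/N)^q\le (N-k)/N$ leaves the displayed coefficient, and the constants combine into $L(t)$.

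\emph{Residual term.} I use \eqref{eq:TV-regularized-residual-bound}, transferred to initial data $G^{0,n,\varepsilon}_{N,k}$. Two points need checking. First, the proof of \Cref{prop:TV-small-residual} uses only the $L^1$ comparison from \Cref{lem:TV-U-product} and the pointwise bounds from \Cref{lem:TV-U-pointwise}. These must also hold for the smoothed data, with an extra $L^1$ error of order $\xi_{n,\varepsilon}$, which I expect \Cref{lem:common-smooth-initial-data} to provide. Second, the weight $\lambda(t)\le\lambda_0<\lambda_*$, so monotonicity in $\lambda$ applies. Since $\lambda(t)\ge\lambda(T)>0$, the factor $\lambda^{-(q-2)}$ is bounded on $[0,T]$ and can be absorbed into $D_T^k$. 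This gives the bound
\[
D_T^k\bigl(N^{-1}+\varepsilon^{q(d-s-1)}+o_n(1)\bigr).
\]

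Integrating in time from the zero initial value yields the regularized version of \eqref{eq:TV-error-energy}, with the extra terms $\varepsilon^{q(d-s-1)}$ and $o_n(1)$.

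Finally I pass to the limit, first $n\to\infty$ and then $\varepsilon\downarrow0$, using \Cref{lem:TV-fixed-N-passage}. On the left, $Y^{\varepsilon,n}_{N,k}\to Y_{N,k}$ pointwise in $t$. The right-hand integrals converge because the functions are dominated by the uniform pointwise majorants \eqref{eq:TV-U-bound-regularized} and \eqref{eq:TV-GN-bound-regularized}, which are integrable since $4\lambda_0<q\beta_T$. At this stage $N$ is fixed, so the factor $B_T^N$ in these majorants is harmless. The extra error terms vanish in the limit, and the relations $Y_{N,k}(0)=0$ and $Y_{N,N}=0$ follow from \eqref{eq:TV-error-identities}.

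I expect the main obstacle to be this limit passage. The weight $e^{\lambda e^\varepsilon_{N,k}}$ changes with $\varepsilon$ and blows up near the collision diagonals, so the argument must rely on lower semicontinuity or Fatou's lemma on the left and on domination on the right. Justifying this is what \Cref{lem:TV-fixed-N-passage} is designed for, and I would invoke it directly.
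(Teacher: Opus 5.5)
Your proposal is correct and follows essentially the paper's route. You apply \Cref{lem:common-weighted-flux-energy} to the smoothed, regularized error equation with flux $\mathcal F_i=\frac{N-k}{N}\bigl(J^\varepsilon_{i,k}[H^{\varepsilon,n}_{N,k+1}]+R^{\varepsilon,n}_{i,N,k}\bigr)$, bound the hierarchy part by \Cref{lem:common-weighted-holder} and the remainder by \eqref{eq:TV-regularized-residual-bound-smooth}, absorb $\lambda^{-(q-2)}$ into $D_T^k$, and remove $n$ and then $\varepsilon$ via \Cref{lem:TV-fixed-N-passage}. Two minor imprecisions: the extra $\xi_{n,\varepsilon}$ in the $L^1$ bounds comes from $L^1$ contraction of the evolutions, as in \eqref{eq:fixed-N-L1-approx}, not from \Cref{lem:common-smooth-initial-data} alone; and the limit passage needs no Fatou argument, since the pointwise decay $e^{-q\beta_T e}$ of the densities overcompensates the weight and gives the $\varepsilon$-independent majorant \eqref{eq:fixed-N-common-dominator}, so dominated convergence applies on both sides.
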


\begin{proof}
Apply the estimate to the smooth regularized approximation constructed in
Appendix~\ref{app:fixed-N-passage}.  Its error equation has the form required
by \Cref{lem:common-weighted-flux-energy} with
\[
 \mathcal F_i=\frac{N-k}{N}\bigl(
 J_{i,k}^\varepsilon[H_{N,k+1}^{\varepsilon,n}]
 +R_{i,N,k}^{\varepsilon,n}\bigr).
\]
Because $0\le \frac{N-k}{N}\le1$ and $q\ge1$,
$(\frac{N-k}{N})^q\le \frac{N-k}{N}$.  Therefore
\begin{equation}\label{eq:error-flux-split}
 |\mathcal F_i|^q\le2^{q-1}\frac{N-k}{N}
 \left(|J_{i,k}^\varepsilon[H_{N,k+1}^{\varepsilon,n}]|^q
       +|R_{i,N,k}^{\varepsilon,n}|^q\right).
\end{equation}

For the first term, the energies satisfy
$e_{N,k+1}^\varepsilon\ge e_{N,k}^\varepsilon+1+|v_{k+1}|^2$.
Hence \Cref{lem:common-weighted-holder} gives, for each $i$,
\[
 \int |J_{i,k}^\varepsilon[H_{N,k+1}^{\varepsilon,n}]|^q
 e^{\lambda e_{N,k}^\varepsilon}\varpi_k
 \le \Gamma_{d,q}(\lambda)
 \|K_\varepsilon\|_{q,\varpi}^q
 Y_{N,k+1}^{\varepsilon,n}.
\]
Summing over $i=1,\ldots,k$ and inserting
\eqref{eq:error-flux-split} into
\eqref{eq:common-flux-energy} gives
$
 kL(t)\frac{N-k}{N}Y_{N,k+1}^{\varepsilon,n}
$
with $L$ as stated in the lemma.  The coefficient of
$Y_{N,k}^{\varepsilon,n}$ is $kA(t)$.

For the remainder term we use
\eqref{eq:TV-regularized-residual-bound-smooth}.  Since
$\lambda(t)$ stays in a compact subinterval of $(0,\lambda_*]$ on
$[0,T]$, the factor $\lambda(t)^{-(q-2)}$ in
\eqref{eq:common-flux-energy} is bounded and can be absorbed into $D_T^k$.
Thus, after integration in time,
\begin{align*}
 Y_{N,k}^{\varepsilon,n}(t)
 \le\int_0^t\Bigl[
 kA(s)Y_{N,k}^{\varepsilon,n}(s)
 +kL(s)\frac{N-k}{N}Y_{N,k+1}^{\varepsilon,n}(s)
 +D_T^k\bigl(N^{-1}+\xi_{n,\varepsilon}
 +\varepsilon^{q(d-s-1)}\bigr)\Bigr]\,\dd s.
\end{align*}
The constants in this inequality are independent of $N,k,n,\varepsilon$,
apart from the displayed factor $D_T^k$.  Finally,
\Cref{lem:TV-fixed-N-passage} allows first $n\to\infty$ and then, for fixed
$N$, $\varepsilon\downarrow0$.  It also preserves the zero initial values
and the identity $Y_{N,N}=0$.  We obtain \eqref{eq:TV-error-energy}.
\end{proof}

\subsection{Iteration of the hierarchy}

\label{subsec:common-volterra}

Since $Y_{N,N}=0$, the iteration stops at level $N$.  The resulting coefficients
are summed by the binomial series.

\begin{lemma}
\label{lem:TV-iteration}
Let $A,L\ge0$ be integrable on $[0,T]$, let $D\ge1$, and let
$\epsilon\ge0$.  Suppose nonnegative, measurable, locally bounded functions
$Y_1,\ldots,Y_N$ satisfy $Y_N=0$ and, for some coefficients
$0\le b_{N,k}\le1$,
\[
 Y_k(t)\le\int_0^t\left[
 kA(s)Y_k(s)+kL(s)b_{N,k}Y_{k+1}(s)+D^k\epsilon\right]\dd s,
 \qquad k<N.
\]
Define
\begin{equation}\label{eq:TV-iteration-quantities}
 I_A(t)=\int_0^tA(r)\dd r,\quad
 I_L(t)=\int_0^tL(r)e^{I_A(r)}\dd r,\quad
 \widehat D=\max\{1,T\}D.
\end{equation}
If $\widehat D I_L(t)<1$,
\begin{equation}\label{eq:TV-iteration-bound}
 Y_k(t)\le\epsilon
 \left(\frac{\widehat D e^{I_A(t)}}
 {1-\widehat D I_L(t)}\right)^k,
 \qquad1\le k\le N.
\end{equation}
\end{lemma}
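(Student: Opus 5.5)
We first remove the diagonal term by an integrating factor, then iterate the resulting Volterra-type inequalities downward from $Y_N=0$, and finally sum the iterated integrals with the binomial series.

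\emph{Step 1: Gronwall in the diagonal term.} Set $\Phi_k(t)=\int_0^t[kLb_{N,k}Y_{k+1}+D^k\epsilon](s)\dd s$, which is nondecreasing. The standard Gronwall argument for $Y_k\le\int_0^t kAY_k+\Phi_k'$ gives
\[
 Y_k(t)\le\int_0^t e^{k(I_A(t)-I_A(s))}\bigl[kL(s)b_{N,k}Y_{k+1}(s)+D^k\epsilon\bigr]\dd s .
\]
Local boundedness of $Y_k$ justifies this. Using $e^{-kI_A(s)}\le1$ and $b_{N,k}\le1$, we obtain
\[
 Y_k(t)\le e^{kI_A(t)}\Bigl[tD^k\epsilon+k\int_0^tL(s)Y_{k+1}(s)\dd s\Bigr].
\]

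\emph{Step 2: induction hypothesis.} We prove by downward induction on $k$, from $k=N$ to $k=1$, a bound of the form
\[
 Y_k(t)\le \epsilon\, e^{kI_A(t)}\sum_{j\ge0}\binom{k+j-1}{j}\widehat D^{\,k+j}I_L(t)^j ,
\]
with the sum truncated at $j\le N-1-k$. The case $k=N$ is trivial.

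\emph{Step 3: the inductive step.} Insert the bound for $Y_{k+1}$ into Step 1. We need to control
\[
 k\int_0^tL(s)e^{(k+1)I_A(s)}I_L(s)^j\dd s .
\]
Since $I_A$ is nondecreasing, $e^{kI_A(s)}\le e^{kI_A(t)}$, and this factor is pulled outside, where it combines with the prefactor. The remaining factor $L(s)e^{I_A(s)}$ is exactly $I_L'(s)$, so
\[
 \int_0^t I_L'(s)\,I_L(s)^j\dd s=\frac{I_L(t)^{j+1}}{j+1}.
\]
This leaves a term $e^{2kI_A(t)}$ from composing the prefactor of Step 1 with the pulled-out factor. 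That is too much, so the plan is to use the Step 1 bound in the sharper form that keeps $e^{-kI_A(s)}$ inside the integral. Then $e^{-kI_A(s)}e^{(k+1)I_A(s)}=e^{I_A(s)}$, which gives exactly $I_L'$ with no loss. The coefficient recursion becomes
\[
 c_{k,j+1}=\frac{k}{j+1}\,c_{k+1,j}.
\]
The factor $\widehat D^{k+j+1}$ arises from $\widehat D^{k+1+j}$, and $t\le\max\{1,T\}$ absorbs the source term, since $tD^k\le\widehat D^k$. The identity
\[
 \frac{k}{j+1}\binom{k+j}{j}=\binom{k+j}{j+1}
\]
closes the induction, provided the source term and the induction constants fit. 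Checking that the source term contributes precisely to $j=0$ while the recursion feeds $j\ge1$ is the main bookkeeping obstacle. I would first try the ansatz $Y_k\le\epsilon\sum_j a_{k,j}\widehat D^{k+j}e^{kI_A}I_L^j$ and verify $a_{k,j}\le\binom{k+j-1}{j}$.

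\emph{Step 4: summation.} Drop the truncation and apply the negative binomial series. For $\widehat D I_L<1$,
\[
 \sum_{j\ge0}\binom{k+j-1}{j}x^j=(1-x)^{-k}, \qquad x=\widehat D I_L(t).
\]
Combining with the prefactors $\widehat D^k e^{kI_A(t)}$ yields exactly \eqref{eq:TV-iteration-bound}.
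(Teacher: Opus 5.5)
Your proposal is correct and is essentially the paper's argument: Gronwall on the diagonal term keeping $e^{-kI_A(s)}$ inside the integral, downward iteration from $Y_N=0$ yielding the coefficients $\binom{k+j-1}{j}$ (the paper rescales to $\widetilde Y_k=\epsilon^{-1}e^{-kI_A}Y_k$ and iterates over the time simplex, which is the same computation), and summation by the negative binomial series. The bookkeeping you flag does close exactly: the source contributes the $j=0$ term $\widehat D^k$, and $\frac{k}{j+1}\binom{k+j}{j}=\binom{k+j}{j+1}$ supplies the $j\ge1$ terms. Because your induction never divides by $\epsilon$, it also covers $\epsilon=0$ without the separate case the paper treats.
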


\begin{proof}
If $\epsilon=0$, then $Y_N=0$, and the inequality at level $N-1$ reduces
to $Y_{N-1}(t)\le (N-1)\int_0^tA(s)Y_{N-1}(s)\,\dd s$.
Gronwall's inequality gives $Y_{N-1}=0$, and downward induction yields
$Y_k=0$ for every $k$.  Assume henceforth that $\epsilon>0$.

We fix $k<N$.  We apply Gronwall's inequality to the diagonal term
$kA(s)Y_k(s)$ and obtain
\begin{align*}
 Y_k(t)
 &\le e^{kI_A(t)}\int_0^t e^{-kI_A(r)}
 \left[kL(r)b_{N,k}Y_{k+1}(r)+D^k\epsilon\right]\dd r.
\end{align*}
We define
\[
 \widetilde Y_k(t):=\epsilon^{-1}e^{-kI_A(t)}Y_k(t),
 \qquad \widetilde Y_N(t):=0.
\]
Since $Y_{k+1}(r)=\epsilon e^{(k+1)I_A(r)}\widetilde Y_{k+1}(r)$, we get
\begin{align*}
 \widetilde Y_k(t)
 &\le D^k\int_0^t e^{-kI_A(r)}\,\dd r
 +k\int_0^tL(r)e^{I_A(r)}b_{N,k}\widetilde Y_{k+1}(r)\,\dd r.
\end{align*}
The first term satisfies
\[
 D^k\int_0^t e^{-kI_A(r)}\,\dd r
 \le TD^k\le\widehat D^k.
\]
Indeed, if $T\le1$ then $TD^k\le D^k$, while if $T>1$ then
$TD^k\le T^kD^k=(TD)^k$.  Hence
\begin{equation}\label{eq:TV-iteration-recursion}
 \widetilde Y_k(t)\le \widehat D^k
 +k\int_0^tL(r)e^{I_A(r)}b_{N,k}\widetilde Y_{k+1}(r)\,\dd r.
\end{equation}

Iterating \eqref{eq:TV-iteration-recursion} as in
\cite[Section~3.3]{BJS2025} and using $\widetilde Y_N=0$ gives
\begin{align*}
 \widetilde Y_k(t)
 &\le \widehat D^k
 \sum_{j=0}^{N-k-1}
 \frac{k(k+1)\cdots(k+j-1)}{j!}
 (\widehat D I_L(t))^j
 \prod_{\ell=0}^{j-1}b_{N,k+\ell}\\
 &\le\widehat D^k
 \sum_{j=0}^{N-k-1}\binom{k+j-1}{j}
 (\widehat D I_L(t))^j.
\end{align*}
Empty products are understood as one.  The second inequality uses
$0\le b_{N,\ell}\le1$.  The factor $1/j!$ follows from the
ordered time simplex
\[
 \int_{0<r_j<\cdots<r_1<t}
 \prod_{\ell=1}^jL(r_\ell)e^{I_A(r_\ell)}\,
 \dd r_j\cdots\dd r_1
 =\frac{I_L(t)^j}{j!}.
\]
If $\widehat D I_L(t)<1$, the binomial identity
\[
 \sum_{j=0}^{\infty}\binom{k+j-1}{j}x^j=(1-x)^{-k},
 \qquad |x|<1,
\]
yields
\[
 \widetilde Y_k(t)\le
 \left(\frac{\widehat D}{1-\widehat D I_L(t)}\right)^k.
\]
Multiplying by $\epsilon e^{kI_A(t)}$ gives
\eqref{eq:TV-iteration-bound}; for $k=N$ the assertion follows from
$Y_N=0$.
\end{proof}

We combine the three terms in the decomposition of
$F_{N,k}-f_t^{\otimes k}$.  The resulting estimate is the conclusion of
Section~\ref{sec:common-comparison}.  The final step uses the smallness
condition in \Cref{lem:TV-iteration}.

\begin{proposition}\label{prop:TV-comparison}
Let $A,L,D_T$ be as in \Cref{lem:TV-error-energy}, define $I_A,I_L$ by
\eqref{eq:TV-iteration-quantities}, and set
$\widehat D_T=\max\{1,T\}D_T$.  If
\begin{equation}\label{eq:TV-time-smallness}
 \widehat D_T I_L(T)\le\frac12,
\end{equation}
then there exists $C_{T,q}\ge1$, depending only on the fixed data of this
section, such that
\begin{equation}\label{eq:TV-final-bound}
 \sup_{t\le T}\|F_{N,k}(t)-f_t^{\otimes k}\|_{\TV}
 \le C_{T,q}^k\left(N^{-1/q}+\eta_N\right),
 \qquad 1\le k\le N.
\end{equation}
\end{proposition}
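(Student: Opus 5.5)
We split the error using the three-term decomposition from the outline,
\[
 F_{N,k}-f_t^{\otimes k}
 =(F_{N,k}-G_{N,k})+(G_{N,k}-U_{N,k})+(U_{N,k}-f_t^{\otimes k}),
\]
and estimate each piece in total variation, i.e.\ in $L^1$, uniformly on $[0,T]$.

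\emph{First piece.} Here $F_N(t)=P_t^NF_N^0$ and $G_N(t)=P_t^NG_N^0$, with the same forward Markov operator $P_t^N$ of the singular system from \Cref{prop:common-finite-N}. Hence \eqref{eq:conditioning-Markov} gives $\|F_{N,k}(t)-G_{N,k}(t)\|_{\TV}\le2\eta_N$ for all $t$ and $k$.

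\emph{Third piece.} This is \Cref{lem:TV-U-product}: $\|U_{N,k}(t)-f_t^{\otimes k}\|_1\le C_T^k/N\le C_T^kN^{-1/q}$.

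\emph{Middle piece.} This is the main step. By \Cref{lem:TV-error-energy} the functions $Y_{N,k}$ satisfy the hypotheses of \Cref{lem:TV-iteration} with $b_{N,k}=(N-k)/N\in[0,1]$, $D=D_T$ and $\epsilon=N^{-1}$. Local boundedness and measurability for fixed $N$ follow from the finiteness remark after \eqref{eq:TV-error-exponent-window}. Since $I_L$ is nondecreasing, \eqref{eq:TV-time-smallness} gives $\widehat D_TI_L(t)\le1/2$ for all $t\le T$. Hence
\[
 Y_{N,k}(t)\le N^{-1}\bigl(2\widehat D_Te^{I_A(T)}\bigr)^k .
\]
The functions $A$ and $L$ are continuous on $[0,T]$, because $\lambda(t)\ge\lambda(T)>0$, so $I_A(T)<\infty$ and the base is finite.

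We then convert this weighted $L^q$ bound to $L^1$ with \Cref{lem:common-weighted-conversion}(ii). Since $e_{N,k}\ge\sum_i(1+|v_i|^2)$ (as $\phi\ge0$) and $\lambda(t)\ge\lambda(T)$, the weight satisfies
\[
 e^{\lambda(t)e_{N,k}}\varpi_k\ge\prod_{i=1}^k\omega(z_i),
 \qquad
 \omega(x,v)=e^{\lambda(T)(1+|v|^2)}\varpi(x).
\]
The one-particle integral $\int\omega^{-1/(q-1)}\,\dd z$ is finite: the Gaussian factor in $v$ is integrable, and on $\R^d$ the factor $\langle x\rangle^{-\alpha/(q-1)}$ is integrable by \eqref{eq:common-weight-compatibility}. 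Call this integral $c_\omega$. It depends only on the fixed data, so
\[
 \|H_{N,k}(t)\|_1\le c_\omega^{k/q'}Y_{N,k}(t)^{1/q}
 \le\bigl(c_\omega^{1/q'}(2\widehat D_Te^{I_A(T)})^{1/q}\bigr)^kN^{-1/q}.
\]

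Adding the three pieces gives
\[
 2\eta_N+C^kN^{-1/q}+C_T^kN^{-1}\le C_{T,q}^k\bigl(N^{-1/q}+\eta_N\bigr),
\]
after taking the base $C_{T,q}\ge1$ as the maximum of the bases involved and $2$.

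The expected obstacle is not this combination but the correct application of \Cref{lem:TV-iteration}. First, one must check that the singular $Y_{N,k}$ obtained after the limits in \Cref{lem:TV-fixed-N-passage} are genuinely measurable and locally bounded and keep $Y_{N,N}=0$. Second, the constant produced by \eqref{eq:TV-iteration-bound} must carry no hidden $N$-dependence, in particular none through $I_A$ or through $\lambda(t)^{-(q-2)}$. Both are guaranteed by the parameter choices, which are independent of $N$, $k$, $n$ and $\varepsilon$, and by $\lambda$ staying bounded away from zero on $[0,T]$.
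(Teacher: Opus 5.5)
Your proposal is correct and follows the paper's proof essentially step for step. It uses the same three-term decomposition, Markov contraction for $F_{N,k}-G_{N,k}$, and \Cref{lem:TV-U-product} for the last term. For the middle term it applies \Cref{lem:TV-iteration} with $\epsilon=N^{-1}$ and $b_{N,k}=(N-k)/N$, then \Cref{lem:common-weighted-conversion}(ii) with the product weight $e^{\lambda(T)(1+|v|^2)}\varpi(x)$.

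The only slip is cosmetic. If $C_{T,q}$ is the maximum of the bases and $2$, adding the second and third pieces leaves a factor $2$ in front of $N^{-1/q}$, so take twice that maximum instead.
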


\begin{proof}
We apply \Cref{lem:TV-iteration} to \eqref{eq:TV-error-energy} with
$\epsilon=N^{-1}$, $D=D_T$ and $b_{N,k}=\frac{N-k}{N}$.  Since
$0\le \frac{N-k}{N}\le1$, all assumptions of the lemma are satisfied.  Under
\eqref{eq:TV-time-smallness},
$1-\widehat D_TI_L(t)\ge1/2$ for $0\le t\le T$, and therefore
\begin{equation}\label{eq:TV-comparison-weighted-error}
 \sup_{t\le T}Y_{N,k}(t)\le C_{T,q}^kN^{-1}.
\end{equation}

To pass from the weighted $L^q$ estimate to total variation, note that
$\lambda$ is decreasing and the interaction potential is nonnegative.  Hence
\[
 e^{\lambda(t)e_{N,k}(Z_k)}\varpi_k(X_k)
 \ge \prod_{i=1}^k
 e^{\lambda(T)(1+|v_i|^2)}\varpi(x_i).
\]
By \eqref{eq:common-weight-compatibility},
\[
 \int_{\mathcal D\times\R^d}
 e^{-\lambda(T)(1+|v|^2)/(q-1)}
 \varpi(x)^{-1/(q-1)}\,\dd x\dd v<\infty.
\]
Therefore \Cref{lem:common-weighted-conversion}\textup{(ii)} and
\eqref{eq:TV-comparison-weighted-error} give, for a constant $C_T$ independent
of $N$ and $k$,
\[
 \|G_{N,k}(t)-U_{N,k}(t)\|_1
 \le C_T^kY_{N,k}(t)^{1/q}
 \le C_T^kN^{-1/q}.
\]

Finally, using the decomposition
\[
 F_{N,k}-f_t^{\otimes k}
 =(F_{N,k}-G_{N,k})+(G_{N,k}-U_{N,k})
 +(U_{N,k}-f_t^{\otimes k}),
\]
Markov contraction gives
$\|F_{N,k}-G_{N,k}\|_{\TV}\le2\eta_N$,
\Cref{lem:TV-U-product} gives
$\|U_{N,k}-f_t^{\otimes k}\|_1\le C_T^k/N$, and the middle term is
bounded as above.  Since $q\ge2$, $N^{-1}\le N^{-1/q}$; moreover
$2\eta_N\le2^k\eta_N$ for $k\ge1$.  Hence a single base constant, independent
of $N$ and $k$, absorbs all three terms and gives
\eqref{eq:TV-final-bound}.
\end{proof}

It remains to verify the assumptions of \Cref{prop:TV-comparison} on a
time interval independent of $N$.

\begin{proof}[Proof of \Cref{thm:quantitative-comparison}]
Fix $q$ satisfying \eqref{eq:intro-q-range}.  The pointwise bound on $f_0$
gives, in either geometry,
\[
 \|\rho_0\|_\infty
 \le C_0\int_{\R^d}e^{-a_0|v|^2}\,\dd v<\infty.
\]
Hence \Cref{prop:common-riesz-conditioning}, with $p=d/s$, gives
\[
 \eta_N\le C N^{-(d/s-2)},
\]
together with the marginal and support estimates used in
Section~\ref{sec:common-comparison}.  The singular dynamics are provided by
\Cref{prop:common-finite-N}.

On $\R^d$, choose $M_*$ and $\alpha$ as in \eqref{eq:TV-alpha-choice}; on
$\T^d$ no spatial weight is needed.  The compatibility conditions are already
recorded in \eqref{eq:common-weight-compatibility}.  Moreover,
\[
 \alpha>d(q-1)>q(d-s-1)-d,
\]
since
\[
 d(q-1)-\bigl(q(d-s-1)-d\bigr)=q(s+1)>0.
\]
Hence \Cref{lem:common-riesz-regularization,lem:whole-space-kernel-criterion}
give \eqref{eq:kernel-assumptions}.

The pointwise bounds on $f$ imply
\[
 \sup_{t\le T}\|\rho_f(t)\|_\infty<\infty
\]
and hence \eqref{eq:common-force-convolution-bound}.  Together with
\[
 E_f\in C([0,T];C_b^1),
 \qquad
 \int_0^T\|\nabla_vf(t)\|_1\,\dd t<\infty,
\]
the hypotheses of
\Cref{lem:TV-U-pointwise,lem:TV-U-product,prop:TV-small-residual}
are satisfied.  Consequently,
\[
 A,L\in C([0,T]),\qquad D_T<\infty,
\]
with $A,L,D_T$ independent of $N$ and $k$.

Let
\[
 I_A(t)=\int_0^tA(r)\,\dd r,\qquad
 I_L(t)=\int_0^tL(r)e^{I_A(r)}\,\dd r,\qquad
 \widehat D_T=\max\{1,T\}D_T.
\]
Since $I_L(0)=0$, choose $0<\tau\le T$ such that
\[
 \widehat D_T I_L(\tau)\le\frac12.
\]
Then
\[
 \max\{1,\tau\}D_T\le\widehat D_T,
\]
and \Cref{lem:TV-iteration,prop:TV-comparison} give, for
$1\le k\le N$,
\[
 \sup_{0\le t\le\tau}
 \|F_{N,k}(t)-f_t^{\otimes k}\|_{\TV}
 \le
 C^k\left(N^{-1/q}+N^{-(d/s-2)}\right).
\]
This proves \eqref{eq:intro-TV-general}.  The pathwise unique global strong
solution and absence of collisions follow from
\Cref{prop:common-finite-N}.
\end{proof}

\begin{proof}[Proof of \Cref{cor:intro-quantitative-riesz}]
We choose the intermediate exponents with a strict margin in the velocity
weight and, on $\R^d$, in the spatial weight.  On $\T^d$ we apply
\Cref{prop:common-riesz-classical} with $a_1=a_0/2$.  This gives the
pointwise Gaussian bound, the $C_b^1$ regularity of the field, and the
integrability condition \eqref{eq:intro-velocity-derivative} required in
\Cref{thm:quantitative-comparison}.

On $\R^d$, choose
$M_1=(M+d)/2$, so that $d<M_1<M$, and again set $a_1=a_0/2$.
Then \Cref{prop:common-riesz-classical} yields
\[
 f(t,x,v)\le C\langle x\rangle^{-M_1}e^{-a_1|v|^2}
\]
and the same field and derivative properties.  Hence all hypotheses of
\Cref{thm:quantitative-comparison} hold in either geometry.  Applying that
theorem gives \eqref{eq:intro-TV-riesz}, while the dynamical properties for fixed $N$ follow from the same
theorem.
\end{proof}

\begin{proof}[Proof of \Cref{cor:intro-quantitative-coulomb}]
We set $d=3$ and $s=1$.  By
\Cref{prop:gauss-coulomb-sobolev-core,prop:gaussian-coulomb-solution}, the
limiting VPFP equation has a nonnegative classical solution of unit mass with
\[
 E_f\in C([0,T_f];C_b^1),
 \qquad
 \int_0^{T_f}\|\nabla_vf(t)\|_1\,\dd t<\infty,
\]
and with the pointwise Gaussian bound
\eqref{eq:gauss-solution-pointwise-bound}.  On $\R^3$, this bound implies the
polynomial spatial bounds required in \Cref{thm:quantitative-comparison}, for
arbitrary exponents $M,M_f>3$; on $\T^3$, the Gaussian velocity bound
suffices.  Hence all hypotheses of \Cref{thm:quantitative-comparison} are
satisfied.

For the three-dimensional Coulomb force, $|K(x)|\simeq|x|^{-2}$, and the
condition in \eqref{eq:intro-q-range} is $2q'<3$, equivalently $q>3$.
Given $0<b<1/3$, choose $q$ so that
\[
 3<q<\frac1b.
\]
By \Cref{thm:quantitative-comparison}, there exist $0<\tau_q\le T_f$ and
$C_q\ge1$ such that
\[
 \sup_{0\le t\le\tau_q}
 \|F_{N,k}(t)-f_t^{\otimes k}\|_{\TV}
 \le C_q^kN^{-1/q}.
\]
Since $1/q>b$,
\[
 N^{-1/q}\le N^{-b},\qquad N\ge1.
\]
Taking $\tau_b=\tau_q$ and $C_b=C_q$ gives
\eqref{eq:intro-TV-coulomb-b}.
\end{proof}

\section{Riesz interactions on \texorpdfstring{$\T^d$ and $\R^d$}{the torus and Euclidean space}}
\label{sec:kernel-verification}

This section verifies the kernel estimates, the conditioning hypotheses, and
the finite particle dynamics used in the quantitative argument.  The two
geometries are treated simultaneously; on $\R^d$ the polynomial weight in
\eqref{eq:TV-alpha-choice} provides the required spatial integrability.

\subsection{Regularization of the Riesz interaction and force estimates}
\label{subsec:whole-space-kernel-class}

We regularize the potential inside a ball of radius $2\varepsilon$, preserving
positivity and evenness.  The localization of $K_\varepsilon-K$ is used in
the kernel estimates and in the coupling with the singular particle system.

\begin{lemma}
\label{lem:common-riesz-regularization}
Let $\mathcal D\in\{\T^d,\R^d\}$ and let $\phi=\phi_{\mathcal D,s}$,
$K=-\nabla\phi$ be as in the introduction.  There exist
$\varepsilon_0\in(0,1)$, $r_0>0$, and smooth even potentials
$\phi_\varepsilon\ge1$
such that
\begin{equation}\label{eq:common-riesz-regularization-properties}
 \phi_\varepsilon=\phi\ \text{when }d_{\mathcal D}(x,0)\ge2\varepsilon,
 \quad \phi_\varepsilon\le C\phi,
 \quad K_\varepsilon=-\nabla\phi_\varepsilon,
 \quad \operatorname{supp}(K_\varepsilon-K)
 \subset\{d_{\mathcal D}(x,0)\le2\varepsilon\}.
\end{equation}
Here $d_{\mathcal D}$ is geodesic distance on the torus and Euclidean
distance on $\R^d$.  The constants are independent of
$0<\varepsilon<\varepsilon_0$.  The forces satisfy
\begin{equation}\label{eq:whole-space-regularized-force-pointwise}
 |K_\varepsilon(x)|+|K(x)|\le C d_{\mathcal D}(x,0)^{-s-1}
 \quad(0<d_{\mathcal D}(x,0)<r_0).
\end{equation}
For $1\le p<d/(s+1)$,
\begin{equation}\label{eq:common-force-defect-Lp}
 \|K_\varepsilon-K\|_{L^p(\mathcal D)}
 \le C_p\varepsilon^{d/p-(s+1)}.
\end{equation}
On the torus,
$\sup_{0<\varepsilon<\varepsilon_0}\|K_\varepsilon\|_{L^p}<\infty$ in this range.
On $\R^d$, for $\alpha>0$ write
$\|\cdot\|_{q,\alpha}:=\|\cdot\|_{q,\varpi_\alpha}$ with
$\varpi_\alpha(x)=\langle x\rangle^\alpha$, using the norm in
\eqref{eq:common-kernel-norm}.  If $q>1$ satisfies $(s+1)q'<d$, then
\begin{equation}\label{eq:whole-space-kernel-defect-weighted}
 \|K_\varepsilon-K\|_{q,\alpha}
 \le C\varepsilon^{d/q'-(s+1)}\longrightarrow0.
\end{equation}
If in addition
$(s+1)q'+\alpha/(q-1)>d$, then
\begin{equation}\label{eq:whole-space-kernel-regularization}
 \sup_{0<\varepsilon<\varepsilon_0}\|K_\varepsilon\|_{q,\alpha}<\infty.
\end{equation}
For each fixed $\varepsilon$, $K_\varepsilon$ is smooth and globally
Lipschitz.
\end{lemma}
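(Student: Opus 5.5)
The regularization is built from a smooth radial cutoff, and the force estimates then follow from the locality of $K_\varepsilon-K$ together with the pointwise bound \eqref{eq:whole-space-regularized-force-pointwise}.

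\emph{Construction.} Fix $\chi\in C^\infty(\R^d)$, radial and nondecreasing in $|x|$, with $\chi=0$ on $\{|x|\le1\}$ and $\chi=1$ on $\{|x|\ge2\}$. Choose $r_0>0$ below the injectivity radius on $\T^d$ and such that the local expansion \eqref{eq:intro-periodic-riesz-potential} holds on $\{|x|<4r_0\}$. Let $\varepsilon_0<r_0$. Write $\phi=c_{d,s}|x|^{-s}+H$ near the origin, with $H\equiv1$ on $\R^d$. Define
\[
 \phi_\varepsilon(x)=c_{d,s}\,\psi_\varepsilon(|x|)+H(x)\quad(|x|<4r_0),\qquad \phi_\varepsilon=\phi\ \text{otherwise},
\]
where
\[
 \psi_\varepsilon(r)=\chi(x/\varepsilon)\,r^{-s}+(1-\chi(x/\varepsilon))\,(2\varepsilon)^{-s}.
\]
This is smooth and even. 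It agrees with $\phi$ for $|x|\ge2\varepsilon$. Since $(2\varepsilon)^{-s}\le r^{-s}$ for $r\le2\varepsilon$, we have $\psi_\varepsilon\le r^{-s}$, hence $\phi_\varepsilon\le\phi$. For positivity, $\psi_\varepsilon\ge(2\varepsilon)^{-s}$ is large, while $H$ is bounded near $0$. Since $\phi\ge1$ near $|x|=2\varepsilon$, shrinking $\varepsilon_0$ gives $\phi_\varepsilon\ge1$.

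\emph{Pointwise force bound.} Differentiating gives
\[
 |\nabla\psi_\varepsilon|\le C|x|^{-s-1}+C\varepsilon^{-1}\bigl|r^{-s}-(2\varepsilon)^{-s}\bigr|\mathbf 1_{\varepsilon\le|x|\le2\varepsilon}\le C|x|^{-s-1}.
\]
Together with boundedness of $\nabla H$, this yields \eqref{eq:whole-space-regularized-force-pointwise} uniformly in $\varepsilon$. The support property of $K_\varepsilon-K$ is immediate. For fixed $\varepsilon$, $K_\varepsilon$ is smooth and globally Lipschitz. On $\R^d$, $D^2|x|^{-s}$ is bounded outside $B_\varepsilon$, and the regularized core is compactly supported. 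On $\T^d$ this follows by compactness.

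\emph{$L^p$ and weighted estimates.} For \eqref{eq:common-force-defect-Lp}, use the support property and the pointwise bound:
\[
 \|K_\varepsilon-K\|_p^p\le C\int_{|x|\le2\varepsilon}|x|^{-(s+1)p}\,\dd x=C\varepsilon^{d-(s+1)p},
\]
which is finite since $(s+1)p<d$. The uniform torus bound follows in the same way, since $K$ is bounded away from the origin. For \eqref{eq:whole-space-kernel-defect-weighted}, note that $\varpi^{-1/(q-1)}\le1$. Then for every $x$,
\[
 \int|K_\varepsilon-K|^{q'}(x-y)\,\varpi(y)^{-1/(q-1)}\,\dd y\le\|K_\varepsilon-K\|_{q'}^{q'}\le C\varepsilon^{d-(s+1)q'},
\]
and taking the $1/q'$ power gives the claim.

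\emph{Main obstacle: the uniform bound \eqref{eq:whole-space-kernel-regularization}.} The difficulty is the far field, where $|K_\varepsilon(z)|=|K(z)|\sim|z|^{-s-1}$ for $|z|\ge2\varepsilon$. I would split $y$ into $|x-y|<1$ and $|x-y|\ge1$. On $|x-y|<1$, the integral is bounded by $\int_{B_1}|z|^{-(s+1)q'}$, uniformly in $x$ and $\varepsilon$. On $|x-y|\ge1$, one must bound
\[
 \sup_x\int\langle x-y\rangle^{-(s+1)q'}\langle y\rangle^{-\alpha/(q-1)}\,\dd y.
\]
Set $a=(s+1)q'$ and $b=\alpha/(q-1)$, so that $a+b>d$ by hypothesis. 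First apply Peetre's inequality $\langle y\rangle^{-b}\le C\langle x\rangle^{-b}\langle x-y\rangle^{|b|}$; this handles the region $|y|\ge|x|/2$, where $\langle y\rangle\sim\langle x\rangle$ or $|y|\gtrsim|x-y|$. Then split further into $|y|\le|x|/2$, where $\langle x-y\rangle\sim\langle x\rangle$, and $|y|>|x|/2$, where $\langle y\rangle\gtrsim\langle x\rangle$. The standard convolution bound for $\langle\cdot\rangle^{-a}*\langle\cdot\rangle^{-b}$ with $a<d$ and $a+b>d$ then gives a bound $C\langle x\rangle^{d-a-b}$ or $C\langle x\rangle^{-a}$ (up to logarithms), which is bounded in $x$. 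This bookkeeping across the regions is where care is needed. Everything else is routine.
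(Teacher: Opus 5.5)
Your proof is correct in substance, and for most of the lemma it is the paper's argument. The construction, the pointwise force bound, and the estimates \eqref{eq:common-force-defect-Lp} and \eqref{eq:whole-space-kernel-defect-weighted} match the paper. The only cosmetic difference is the plateau value $(2\varepsilon)^{-s}$ instead of $\varepsilon^{-s}$. Yours gives $\phi_\varepsilon\le\phi$ with constant one, at the cost of a non-monotone profile, which plays no role here.

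\textbf{Where the routes differ: the uniform bound.} For \eqref{eq:whole-space-kernel-regularization}, the paper does the following:
\begin{itemize}
 \item It writes $\|K_\varepsilon\|_{q,\alpha}\le\|K\|_{q,\alpha}+\|K_\varepsilon-K\|_{q,\alpha}$, so uniformity in $\varepsilon$ follows from the defect estimate already proved.
 \item Finiteness of $\|K\|_{q,\alpha}$ is \Cref{lem:whole-space-kernel-criterion}. There, $|x|^{-a}$ and $\langle x\rangle^{-b}$ (with $a=(s+1)q'$, $b=\alpha/(q-1)$) are radial and radially nonincreasing, so their convolution is maximal at the origin. This reduces everything to a one-dimensional radial integral and even gives the sharp iff.
\end{itemize}
You instead split $K_\varepsilon$ itself into $|x-y|<1$ and $|x-y|\ge1$, using only the uniform majorant $|K_\varepsilon(z)|\le C|z|^{-s-1}$. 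This is legitimate and applies to any family with such a majorant.

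\textbf{Your far-field step needs fixing.} Two points in your execution do not work as written.
\begin{itemize}
 \item Peetre's inequality as you state it turns the integrand into $\langle x\rangle^{-b}\langle x-y\rangle^{b-a}$. On the unbounded region $\{|y|\ge|x|/2\}$ this is never integrable here, because $a<d$ and $b>0$ force $b-a>-d$. That step should be dropped.
 \item The two-region split $|y|\le|x|/2$ versus $|y|>|x|/2$ is not enough. Bounding $\langle y\rangle^{-b}\lesssim\langle x\rangle^{-b}$ on all of $\{|y|>|x|/2\}$ leaves $\int\langle x-y\rangle^{-a}\,\dd y=\infty$. You need a third region:
 \begin{itemize}
  \item On $|y|>2|x|$ one has $\langle x-y\rangle\sim\langle y\rangle$, so the integrand is $\lesssim\langle y\rangle^{-a-b}$, which is integrable since $a+b>d$.
  \item On $|x|/2<|y|\le2|x|$ one gets $\lesssim\langle x\rangle^{-b}\langle x\rangle^{d-a}\le1$.
 \end{itemize}
\end{itemize}

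\textbf{Simplest repair.} Apply the paper's rearrangement observation directly to your majorant. The function $|\cdot|^{-a}*\langle\cdot\rangle^{-b}$ attains its supremum at $x=0$, where it equals $\int|y|^{-a}\langle y\rangle^{-b}\,\dd y$. This is finite precisely when $a<d<a+b$, and your near/far split and region bookkeeping become unnecessary.
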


\begin{proof}
Choose $\chi\in C^\infty([0,\infty))$ with $0\le\chi\le1$,
$\chi'\ge0$, $\chi=0$ on $[0,1]$ and $\chi=1$ on $[2,\infty)$, and set
\[
 g_\varepsilon(r)=\chi(r/\varepsilon)r^{-s}
 +(1-\chi(r/\varepsilon))\varepsilon^{-s}.
\]
Then $g_\varepsilon$ is constant near zero, equals $r^{-s}$ for
$r\ge2\varepsilon$, satisfies $0<g_\varepsilon(r)\le2^sr^{-s}$, and on the
transition annulus
\[
 g_\varepsilon'(r)
 =\varepsilon^{-1}\chi'(r/\varepsilon)(r^{-s}-\varepsilon^{-s})
 -s\chi(r/\varepsilon)r^{-s-1}\le0,
 \qquad |g_\varepsilon'(r)|\le Cr^{-s-1}.
\]
On $\R^d$ set $\phi_\varepsilon(x)=1+c_{d,s}g_\varepsilon(|x|)$.  The
stated pointwise properties follow from the preceding bounds.

On $\T^d$, choose an even cutoff $\zeta$ supported in a singular coordinate
chart and equal to one on $B_{r_*}$, and write
\[
 \phi(x)=c_{d,s}\zeta(x)|x|^{-s}+H(x),\qquad H\in C^\infty(\T^d).
\]
After decreasing $r_*$, $\phi(x)\simeq|x|^{-s}$ for $0<|x|<r_*$.  Choose
$\varepsilon_0$ so that $2\varepsilon_0<r_*$ and
$c_{d,s}(2\varepsilon_0)^{-s}+\min H\ge1$, and define
$\phi_\varepsilon=c_{d,s}\zeta g_\varepsilon+H$.  Since $\zeta=1$ on
$B_{2\varepsilon}$, this potential is smooth and even, agrees with $\phi$
outside $B_{2\varepsilon}$, is at least one, and satisfies
$\phi_\varepsilon\le C\phi$.  Differentiation gives
\eqref{eq:whole-space-regularized-force-pointwise} and the support statement
in \eqref{eq:common-riesz-regularization-properties}.

Consequently, for $1\le p<d/(s+1)$,
\[
 \|K_\varepsilon-K\|_p^p
 \le C\int_0^{2\varepsilon}r^{d-1-(s+1)p}\,\dd r
 \le C_p\varepsilon^{d-(s+1)p},
\]
which proves \eqref{eq:common-force-defect-Lp}.  The uniform torus bound
follows from the triangle inequality and $K\in L^p(\T^d)$.

For the weighted Euclidean norm of $K_\varepsilon-K$, with
$\nu=\alpha/(q-1)$,
\begin{align*}
 \sup_x\int |(K_\varepsilon-K)(x-y)|^{q'}\langle y\rangle^{-\nu}\,\dd y
 &\le\int_{|z|<2\varepsilon}|K_\varepsilon(z)-K(z)|^{q'}\,\dd z\\
 &\le C\varepsilon^{d-(s+1)q'}.
\end{align*}
This gives \eqref{eq:whole-space-kernel-defect-weighted}.  Under the second
condition, \Cref{lem:whole-space-kernel-criterion} gives
$\|K\|_{q,\alpha}<\infty$, and the triangle inequality gives
\eqref{eq:whole-space-kernel-regularization}.  Since the potentials are even,
the forces are odd.  For fixed $\varepsilon$, the second derivatives of
$\phi_\varepsilon$ are bounded, hence $K_\varepsilon$ is globally Lipschitz.
\end{proof}

On the torus the kernel estimate follows from local integrability.  On
$\R^d$ the spatial weight also controls the behavior at infinity.  The
required criterion is the following.

\begin{lemma}
\label{lem:whole-space-kernel-criterion}
For $q>1$ and $\alpha>0$, set
\begin{equation}\label{eq:whole-space-kernel-norm}
 \|K\|_{q,\alpha}^{q'}
 :=\sup_{x\in\R^d}\int_{\R^d}|K(x-y)|^{q'}
                 \langle y\rangle^{-\alpha/(q-1)}\,\dd y.
\end{equation}
For $K=K_{\R,s}$ this norm is finite if and only if
\begin{equation}\label{eq:whole-space-q-alpha-conditions}
 (s+1)q'<d,
 \qquad (s+1)q'+\frac{\alpha}{q-1}>d.
\end{equation}
Equivalently,
\begin{equation}\label{eq:whole-space-q-alpha-equivalent}
 q>\frac d{d-s-1},\qquad \alpha>q(d-s-1)-d.
\end{equation}
Equality in either condition produces logarithmic divergence.
\end{lemma}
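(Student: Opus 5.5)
We must show that $\sup_x\int|K(x-y)|^{q'}\langle y\rangle^{-\nu}\,\dd y$, with $\nu=\alpha/(q-1)$ and $|K(z)|=c|z|^{-(s+1)}$, is finite exactly under \eqref{eq:whole-space-q-alpha-conditions}. Write $\gamma=(s+1)q'$, so the integrand is $c|x-y|^{-\gamma}\langle y\rangle^{-\nu}$. The plan is to treat necessity by testing at $x=0$, and sufficiency by splitting the $y$-integral into a near-singular region and a far region, with bounds uniform in $x$.

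For necessity, take $x=0$. The integral is $c\int|y|^{-\gamma}\langle y\rangle^{-\nu}\dd y$. In polar coordinates, the region $|y|<1$ contributes $\int_0^1 r^{d-1-\gamma}\dd r$, which is finite if and only if $\gamma<d$. The region $|y|>1$ contributes a quantity comparable to $\int_1^\infty r^{d-1-\gamma-\nu}\dd r$, which is finite if and only if $\gamma+\nu>d$. In the equality cases both integrals are $\int r^{-1}\dd r$, which gives the logarithmic divergence asserted in the statement. The supremum is at least the value at $x=0$, so both conditions are necessary.

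For sufficiency, fix $x$ and split $\R^d$ into three regions.
\begin{enumerate}[label=\textup{(\alph*)}]
 \item On $|x-y|<1$, we have $\langle y\rangle^{-\nu}\le1$, so the contribution is at most $\int_{|z|<1}|z|^{-\gamma}\dd z<\infty$ because $\gamma<d$.
 \item On $|x-y|\ge1$ and $|y|\ge|x|/2$, we have $\langle y\rangle\gtrsim\langle x\rangle$ and $|x-y|^{-\gamma}\le1$. If $|x|\le2$, bound the contribution crudely by $\int\min(1,|y|^{-\gamma})\langle y\rangle^{-\nu}\dd y$, which is finite by $\gamma+\nu>d$. If $|x|>2$, split further according to whether $|x-y|\le|x|/2$ or $|x-y|>|x|/2$.
 \begin{itemize}
  \item When $|x-y|\le|x|/2$, use $\langle y\rangle^{-\nu}\lesssim\langle x\rangle^{-\nu}$ and $\int_{1\le|z|\le|x|/2}|z|^{-\gamma}\dd z\lesssim |x|^{d-\gamma}$, which holds since $\gamma<d$. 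This gives $\langle x\rangle^{d-\gamma-\nu}\le C$.
  \item When $|x-y|>|x|/2$ and $|y|\ge|x|/2$, note $|x-y|\le|x|+|y|\le3|y|$, so $|x-y|\simeq|y|$ up to constants. The integrand is then $\lesssim|y|^{-\gamma-\nu}$ over $|y|\ge|x|/2\ge1$, which is integrable because $\gamma+\nu>d$.
 \end{itemize}
 \item On $|x-y|\ge1$ and $|y|<|x|/2$, we have $|x-y|\ge|x|/2$, so the contribution is at most $\min(1,(|x|/2)^{-\gamma})\int_{|y|<|x|/2}\langle y\rangle^{-\nu}\dd y$.
\end{enumerate}

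The main obstacle is the last region (c) when $\nu<d$, since then $\int_{|y|<|x|/2}\langle y\rangle^{-\nu}\dd y$ grows like $\langle x\rangle^{d-\nu}$ and the bound is only finite because of the decay of the kernel. The product is $\langle x\rangle^{-\gamma}\cdot\langle x\rangle^{d-\nu}=\langle x\rangle^{d-\gamma-\nu}$, which is bounded since $d-\gamma-\nu<0$. If $\nu=d$ a $\log\langle x\rangle$ factor appears, and if $\nu>d$ the $y$-integral is bounded; in both cases the prefactor $\langle x\rangle^{-\gamma}$ with $\gamma>0$ absorbs it.

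Finally, the equivalence of \eqref{eq:whole-space-q-alpha-conditions} with \eqref{eq:whole-space-q-alpha-equivalent} is algebra. Multiplying by $q-1>0$, the first condition becomes $(s+1)q<d(q-1)$, that is, $q>d/(d-s-1)$. The second becomes $(s+1)q+\alpha>d(q-1)$, that is, $\alpha>q(d-s-1)-d$.
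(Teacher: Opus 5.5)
Your proof is correct. For sufficiency, however, it takes a different route from the paper.

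\textbf{The paper's route.} The paper does not split into regions. It notes that $|\cdot|^{-a}$ and $\langle\cdot\rangle^{-\nu}$ are nonnegative, radial and radially nonincreasing. By the layer cake representation and Tonelli, their convolution is therefore maximal at the origin: all superlevel sets are centered balls, and $|B_r\cap B_\rho(x)|$ is largest at $x=0$. So the supremum \emph{equals} the value at $x=0$. Necessity, sufficiency and both logarithmic endpoint statements then reduce to the single radial integral $\int_0^\infty r^{d-1-a}(1+r^2)^{-\nu/2}\,\dd r$.

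\textbf{Your route.} You use $x=0$ only for the lower bound. You then prove the upper bound, uniformly in $x$, by hand through regions (a)--(c).

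\textbf{Comparison.} The paper's argument is shorter and gives an exact identity. Yours is more elementary. It uses only the power-law size of the kernel and the decay of the weight, not monotonicity or exact radial symmetry. It therefore applies verbatim to any kernel with $|K(z)|\lesssim|z|^{-(s+1)}$, for instance the regularized kernels $K_\varepsilon$ uniformly in $\varepsilon$, which the paper handles separately by the triangle inequality. The cost is some casework.

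\textbf{A small slip.} In the second bullet of (b), the inequality $|x-y|\le|x|+|y|\le3|y|$ is the direction you do not need. To bound $|x-y|^{-\gamma}\lesssim|y|^{-\gamma}$ you need $|y|\le|x|+|x-y|<3|x-y|$. This follows from $|x-y|>|x|/2$ in that subregion, so your claim $|x-y|\simeq|y|$ is correct, but the displayed justification should be replaced.
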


\begin{proof}
Set
\[
 a:=(s+1)q',\qquad \nu:=\frac{\alpha}{q-1}.
\]
Up to a fixed multiplicative constant, the quantity in
\eqref{eq:whole-space-kernel-norm} is
\[
 \sup_{x\in\R^d}(|\cdot|^{-a}*\langle\cdot\rangle^{-\nu})(x).
\]
Both factors are nonnegative, radial and radially nonincreasing.  The
layer cake representation and Tonelli's theorem show, in the extended sense,
that their convolution is maximal at the origin.  Hence
\begin{equation}\label{eq:whole-space-radial-convolution-sup}
 \sup_x\int_{\R^d}|x-y|^{-a}\langle y\rangle^{-\nu}\,\dd y
 =C_d\int_0^\infty r^{d-1-a}(1+r^2)^{-\nu/2}\,\dd r.
\end{equation}
The integral near $0$ is finite exactly when $d-1-a>-1$, that is, $a<d$;
at $a=d$ it has the logarithmic divergence $\int_0^1r^{-1}\,\dd r$.
At infinity the integrand is comparable with $r^{d-1-a-\nu}$, so the
integral is finite exactly when $a+\nu>d$; at $a+\nu=d$ it has the
logarithmic divergence $\int_1^\infty r^{-1}\,\dd r$.  This proves
\eqref{eq:whole-space-q-alpha-conditions}, including necessity and the two
endpoint statements.

Finally, since $q'=q/(q-1)$ and $d-s-1>0$,
\begin{align*}
 (s+1)q'<d
 &\iff q>\frac d{d-s-1},\\
 (s+1)q'+\frac{\alpha}{q-1}>d
 &\iff \alpha>q(d-s-1)-d,
\end{align*}
which is \eqref{eq:whole-space-q-alpha-equivalent}.
\end{proof}

\begin{remark}
\label{rem:alpha-two-roles}
For $d=3,s=1$, the kernel norm alone requires $q>3$ and $\alpha>q-3$.
The final weighted H\"older inequality also needs
$\int\varpi^{-1/(q-1)}<\infty$, so the comparison uses the stronger
condition $\alpha>d(q-1)$ in \eqref{eq:TV-alpha-choice}.  The upper
restriction $\alpha<(q-1)M_*$ guarantees compatibility with the pointwise
spatial decay.  Thus the interval for $\alpha$ is nonempty whenever $M_*>d$.
On the torus $\varpi\equiv1$.
\end{remark}

\subsection{Conditioning of the initial data}

We apply Section~\ref{sec:conditioning} to the Riesz potential.  It suffices
to establish a uniform polynomial tail for
$\phi_{\mathcal D,s}(x-X)$; the exponent is $d/s>2$ because $s<d/2$.

\begin{proposition}
\label{prop:common-riesz-conditioning}
Let $f_0$ be a probability density on $\Omega_{\mathcal D}$ with
$\rho_0\in L^\infty(\mathcal D)$.  There exists $R_*<\infty$ such that, for
every fixed $R\ge R_*$, the interaction $h(x,y)=\phi_{\mathcal D,s}(x-y)$
admits the symmetric event $\mathcal G_N^\sharp(R)$ of
\Cref{prop:conditioning-marginals}.  Set
\[
 p_N=f_0^{\otimes N}(\mathcal G_N^\sharp(R)),\qquad
 G_N^0=p_N^{-1}\ind_{\mathcal G_N^\sharp(R)}f_0^{\otimes N},\qquad
 \eta_N=1-p_N.
\]
We denote by $G_{N,k}^0$ the $k$-particle marginal of $G_N^0$.
Then, with constants independent of $N,k$,
\begin{gather}
 p_N\ge\frac12,\qquad \eta_N\le C_RN^{2-d/s},
 \label{eq:common-riesz-conditioning-cost}\\
 G_{N,k}^0\le2f_0^{\otimes k},\qquad
 \|G_{N,k}^0-f_0^{\otimes k}\|_1\le\frac{C_R^k}{N},
 \label{eq:common-riesz-conditioning-marginals}\\
 \frac1N\sum_{i\ne j\le k}\phi_{\mathcal D,s}(x_i-x_j)\le Rk
 \quad\text{on }\esssupp G_{N,k}^0.
 \label{eq:common-riesz-conditioned-support}
\end{gather}
Moreover $\|G_N^0-f_0^{\otimes N}\|_{\TV}=2\eta_N$, and this upper bound
is preserved by every Markov evolution followed by coordinate projection.
\end{proposition}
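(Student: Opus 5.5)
The plan is to reduce everything to \Cref{prop:conditioning-marginals} with $\mathsf X=\mathcal D$, $\rho=\rho_0$, and the symmetric lower semicontinuous interaction $h(x,y)=\phi_{\mathcal D,s}(x-y)$. Symmetry comes from the evenness of $\phi$. Lower semicontinuity holds because $\phi$ is smooth away from the origin and $\phi(0)=+\infty$. The only hypothesis left to check is the uniform polynomial tail \eqref{eq:poly-tail} with $p=d/s>2$; the inequality $p>2$ is exactly $s<d/2$. Once this is done, \Cref{cor:conditioning-probability} shows that \Cref{ass:abstract-tail} holds, and \Cref{lem:conditioning-stability} supplies $R_*$. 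Then \Cref{prop:conditioning-marginals} yields \eqref{eq:common-riesz-conditioning-cost}, \eqref{eq:common-riesz-conditioning-marginals}, the closed-event version of \eqref{eq:common-riesz-conditioned-support} on the essential support, and the Markov/projection total variation bound. The identity $\|G_N^0-f_0^{\otimes N}\|_{\TV}=2\eta_N$ is the conditional total variation identity proved inside \Cref{lem:conditioning-stability}, now applied to the event $\mathcal G_N^\sharp(R)$.

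For the tail, fix $x\in\mathcal D$ and $L$ large. On $\R^d$ we have $\phi_{\R,s}(x-y)>L$ if and only if $c_{d,s}|x-y|^{-s}>L-1$, that is, $|x-y|<(c_{d,s}/(L-1))^{1/s}$. Hence
\[
 \Pp(\phi(x-X)>L)\le\|\rho_0\|_\infty\,|B_{r_L}|\le C\|\rho_0\|_\infty L^{-d/s},\qquad L\ge2.
\]
On $\T^d$, $\phi$ is bounded, say by $M_0$, outside a small geodesic ball $B_{r_0}$. Inside the ball, $\phi(y)\le c_{d,s}|y|^{-s}+\|H\|_\infty$. Therefore, for $L\ge L_h:=\max\{M_0,2\|H\|_\infty\}+1$, the superlevel set $\{\phi>L\}$ lies in the ball $|y|<(2c_{d,s}/L)^{1/s}$. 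The same volume bound then gives $\Psi_h(L)\le C_hL^{-d/s}$, uniformly in $x$ because $\rho_0\in L^\infty$. The hypothesis of \eqref{eq:poly-tail} is thus satisfied with constants depending only on $d,s$, the potential, and $\|\rho_0\|_\infty$.

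The step needing most care is bookkeeping rather than analysis. First, the constants $R_*$ and $C_R$ must depend only on $\|\rho_0\|_\infty$ and the potential; they do, since $m_1,m_2$ are bounded through the layer cake formula from the same tail bound. Second, the finiteness of $h$ almost everywhere, used in \Cref{lem:conditioning-stability}, follows from $\phi<\infty$ off the diagonal, which is a $\rho_0\otimes\rho_0$-null set because $\rho_0$ is absolutely continuous. Third, the velocity variables play no role: $\mathcal G_N^\sharp$ depends only on positions, so conditioning $f_0^{\otimes N}$ coincides with the abstract construction with $\mu$ equal to Lebesgue measure on $\mathcal D$. Finally, $N^{2-p}=N^{2-d/s}$, which gives the rate stated in \eqref{eq:common-riesz-conditioning-cost}.
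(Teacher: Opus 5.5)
Your proposal is correct and follows the paper's proof: you verify the uniform tail bound $\Psi_h(L)\le CL^{-d/s}$ by placing the set where $\phi_{\mathcal D,s}(x-\cdot)>L$ inside a ball of radius $CL^{-1/s}$ and using $\rho_0\in L^\infty$, note $d/s>2$ together with symmetry and lower semicontinuity, and then read off every conclusion from \Cref{cor:conditioning-probability}, \Cref{lem:conditioning-stability} and \Cref{prop:conditioning-marginals}; your explicit use of the conditional total variation identity for $\mathcal G_N^\sharp(R)$ to get $\|G_N^0-f_0^{\otimes N}\|_{\TV}=2\eta_N$ fills in a step the paper leaves implicit. Your side remark that $R_*$ and $C_R$ depend only on $\|\rho_0\|_\infty$ and the potential is not needed for the statement, and the bounds on $m_1,m_2$ do not justify it, because \Cref{lem:conditioning-stability} chooses $R_*$ for small $N$ by monotone convergence, which depends on $\rho_0$ itself; the Markov bound $\Pp(\max_iD_i>RN)\le Nm_1/R$ would make that step quantitative.
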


\begin{proof}
We let $X$ be a random variable with density $\rho_0$.  In both geometries there are constants
$r_0,c_1,c_2>0$ such that
\[
 \phi_{\mathcal D,s}(z)\le c_1+c_2d_{\mathcal D}(z,0)^{-s}
 \qquad\text{for }0<d_{\mathcal D}(z,0)<r_0,
\]
and $\phi_{\mathcal D,s}$ is bounded when
$d_{\mathcal D}(z,0)\ge r_0$.  We choose $L_0$ larger than this bound
and larger than $2c_1$.  If $L\ge L_0$ and
$\phi_{\mathcal D,s}(x-X)>L$, then necessarily
$d_{\mathcal D}(x,X)<r_0$ and
\[
 c_2d_{\mathcal D}(x,X)^{-s}>L-c_1\ge L/2.
\]
Hence
\[
 \{\phi_{\mathcal D,s}(x-X)>L\}
 \subset B_{CL^{-1/s}}(x),
\]
where the ball is geodesic on $\T^d$ and Euclidean on $\R^d$.  Since
$\rho_0\in L^\infty$, uniformly in $x$,
\begin{align*}
 \Pp\bigl(\phi_{\mathcal D,s}(x-X)>L\bigr)
 &\le \|\rho_0\|_\infty
 |B_{CL^{-1/s}}(x)|\\
 &\le C\|\rho_0\|_\infty L^{-d/s}.
\end{align*}
Thus \eqref{eq:poly-tail} holds with $p=d/s$.  Because $s<d/2$, we have
$p>2$, so the first and second moment assumptions required in
\Cref{prop:conditioning-marginals} follow from
\Cref{cor:conditioning-probability}.  The Riesz potential is symmetric and lower
semicontinuous, with value $+\infty$ on the diagonal.  All hypotheses of
\Cref{prop:conditioning-marginals} are therefore satisfied.  Substituting
$p=d/s$ into \eqref{eq:conditioning-full-N} gives
\[
 \eta_N\le C_RN^{2-d/s}=C_RN^{-(d/s-2)},
\]
while
\eqref{eq:conditioning-marginal-rate},
\eqref{eq:conditioning-pointwise},
\eqref{eq:conditioning-subset-energy}, and
\eqref{eq:conditioning-Markov} give the remaining assertions.
\end{proof}

\subsection{Particle dynamics and removal of the regularization}
\label{sec:whole-space-dynamics}

The singular particle system is considered on the open set where the particle
positions are distinct.  We retain the lower semicontinuous extension $\phi(0)=+\infty$ when using energy
or support arguments.  In Lebesgue integrals and distributional formulations,
the values assigned to $K$ (and to exponential weights) on collision
diagonals are immaterial because those sets have Lebesgue measure zero.  The
initial laws used below have densities and therefore charge no collision
diagonal.

For repulsive potentials satisfying
$\phi_{\mathcal D,s}(x)\to+\infty$ as $x\to0$, the energy estimate up to the
first collision excludes collisions in the finite particle system; see
\cite[Section~2.4.2]{BJS2025}.  The following proposition includes a bounded
prescribed field, as required for the auxiliary equations.

\begin{proposition}
\label{prop:common-finite-N}
Fix integers $N\ge k\ge1$, $T>0$, and a field
$\widehat E\in C([0,T];C_b^1(\mathcal D;\R^d))$.  For every deterministic initial
configuration with distinct positions, the system
\begin{equation}\label{eq:common-forced-finite-system}
 \dd X_i=V_i\,\dd t,
 \qquad
 \dd V_i=\frac1N\sum_{\substack{1\le j\le k\\j\ne i}}
 K(X_i-X_j)\,\dd t+\widehat E(t,X_i)\,\dd t+\sqrt{2\sigma}\,\dd B_i,
 \qquad 1\le i\le k,
\end{equation}
admits a pathwise unique strong solution on $[0,T]$, and collisions do not
occur on this interval.  In particular, for $k=N$ and $\widehat E=0$, the
unregularized particle system \eqref{eq:intro-riesz-particle} admits a
pathwise unique global strong solution without collisions.  Its solutions
form a conservative strong Markov family on the set of configurations with
distinct particle positions.  We denote by $P_t^N$ the associated forward
Markov operator on laws; it contracts total variation.  The same conclusions
hold for initial laws supported on this open set and independent of the
Brownian motions.
\end{proposition}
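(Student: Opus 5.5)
The plan is the standard one: build solutions up to the first time two particles come within distance $\varepsilon$, then show that such times tend to $T$ as $\varepsilon\downarrow0$ using an energy bound. Fix a deterministic initial configuration with distinct positions, and let $\delta_0>0$ be its minimal pair distance.

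\emph{Step 1: regularized solutions and stopping times.} For $0<\varepsilon<\min\{\varepsilon_0,\delta_0/4\}$, replace $K$ by the smooth, globally Lipschitz force $K_\varepsilon$ of \Cref{lem:common-riesz-regularization}. Since $\widehat E(t,\cdot)$ is $C^1_b$, uniformly in $t$, the drift is globally Lipschitz in $(X,V)$. Classical SDE theory therefore gives a pathwise unique strong solution $Z^\varepsilon$ on $[0,T]$. Let $\tau_\varepsilon$ be the first time some pair distance $d_{\mathcal D}(X_i,X_j)$ reaches $2\varepsilon$. Before $\tau_\varepsilon$ we have $K_\varepsilon=K$ along the trajectory, by the support statement in \eqref{eq:common-riesz-regularization-properties}. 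Pathwise uniqueness then gives consistency: for $\varepsilon'<\varepsilon$, $Z^{\varepsilon'}=Z^\varepsilon$ on $[0,\tau_\varepsilon]$, and $\tau_\varepsilon$ is nondecreasing as $\varepsilon\downarrow0$. Setting $\tau^*=\lim_{\varepsilon\downarrow0}\tau_\varepsilon$ defines a unique strong solution of \eqref{eq:common-forced-finite-system} on $[0,\tau^*)$ up to the first collision. Any other solution with no collisions before time $T$ coincides with it up to each $\tau_\varepsilon$.

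\emph{Step 2: energy estimate (the main step).} Apply It\^o's formula to the regularized energy $e^\varepsilon_{N,k}(Z^\varepsilon)$ from \eqref{eq:comparison-energy-regularized}. The internal interaction cancels exactly, since $L^\varepsilon_{N,k}e^\varepsilon_{N,k}=0$ by \eqref{eq:common-energy-identities}. What remains is
\[
 \dd e^\varepsilon=\Bigl(\sum_i 2V_i\cdot\widehat E(t,X_i)+2\sigma dk\Bigr)\dd t+\sum_i 2\sqrt{2\sigma}\,V_i\cdot\dd B_i .
\]
Using $2|V_i||\widehat E|\le |V_i|^2+\|\widehat E\|_\infty^2\le e^\varepsilon+\|\widehat E\|_\infty^2$, localizing the martingale, and applying Gronwall's inequality gives
\[
 \E\sup_{t\le T\wedge\tau_\varepsilon}e^\varepsilon_{N,k}\le C\bigl(e_{N,k}(Z(0))+1\bigr),
\]
with $C$ independent of $\varepsilon$. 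To obtain the $\sup$ I would use BDG, controlling the quadratic variation by $\int e^\varepsilon\,\dd t$. The initial energy is finite because the positions are distinct, and $\phi_\varepsilon\le C\phi$ makes the bound uniform in $\varepsilon$. At time $\tau_\varepsilon<T$ some pair sits at distance $2\varepsilon$, where $\phi_\varepsilon=\phi\gtrsim\varepsilon^{-s}$, and $\phi_\varepsilon\ge1>0$ for all other pairs. Hence $e^\varepsilon(\tau_\varepsilon)\ge cN^{-1}\varepsilon^{-s}$, and Markov's inequality gives
\[
 \Pp(\tau_\varepsilon<T)\le CN\varepsilon^{s}\bigl(e_{N,k}(Z(0))+1\bigr)\to0 .
\]
Since the $\tau_\varepsilon$ are monotone, almost surely $\tau_\varepsilon=T$ for all small $\varepsilon$. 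So $\tau^*\ge T$ and no collision occurs on $[0,T]$. The one delicate point is making the It\^o calculation rigorous before the stopping time. I would handle it by localizing with $\tau_\varepsilon\wedge\inf\{t:|V|\ge n\}$.

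\emph{Step 3: Markov property, random initial data, global solution.} The solution map is measurable in the initial configuration on the open set $\mathcal O$ of distinct positions. It is the a.s.\ limit of Lipschitz-flow solutions, which depend continuously on the data, stopped at the $\tau_\varepsilon$. Pathwise uniqueness together with Yamada--Watanabe gives the strong Markov property; the strong Markov property of each regularized system transfers through the coinciding stopping times. For an initial law on $\mathcal O$ independent of the Brownian motions, condition on the initial datum. The process stays in $\mathcal O$, so the family is conservative. $P_t^N$ is then the forward action of a Markov kernel, and it contracts total variation by the convention recalled in the introduction. For $k=N$ and $\widehat E=0$, the argument works on every interval $[0,T]$, so the solution is global.
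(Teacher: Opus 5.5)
Your proposal is correct and follows essentially the same route as the paper: It\^o's formula for the particle energy, exact cancellation of the internal interaction, Young's inequality plus Gronwall for the bounded prescribed field, and the observation that a collision forces the energy to blow up. The only difference is the localization. You stop at the first time a pair distance reaches $2\varepsilon$ and work with the globally Lipschitz system driven by $K_\varepsilon$; this also yields the quantitative bound $\Pp(\tau_\varepsilon<T)\lesssim N\varepsilon^{s}$, which is convenient in \Cref{prop:common-regularization-removal}. The paper instead invokes local Lipschitz theory up to the first collision or explosion and stops at energy levels.
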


\begin{proof}
Local existence and pathwise uniqueness up to the first collision or
explosion follow from the local Lipschitz continuity of the coefficients away
from the collision diagonals; see \cite[Section~5.2]{KaratzasShreve1991}.
For $\widehat E=0$, the collision argument is given in
\cite[Section~2.4.2]{BJS2025}.  In the presence of the prescribed field, set
\[
 \mathcal H_{N,k}
 :=\frac12\sum_{i=1}^k|V_i|^2
   +\frac1N\sum_{1\le i<j\le k}\phi_{\mathcal D,s}(X_i-X_j),
\]
the internal interaction terms cancel in It\^o's formula and
\begin{equation}\label{eq:common-forced-energy-ito}
 \dd\mathcal H_{N,k}
 =d\sigma k\,\dd t
  +\sum_{i=1}^kV_i\cdot\widehat E(t,X_i)\,\dd t
  +\sqrt{2\sigma}\sum_{i=1}^kV_i\cdot\dd B_i.
\end{equation}
Since $\widehat E$ is bounded and $\phi_{\mathcal D,s}\ge0$, the additional
drift satisfies
\[
 \left|\sum_{i=1}^kV_i\cdot\widehat E(t,X_i)\right|
 \le \mathcal H_{N,k}+\frac{k}{2}
 \sup_{0\le t\le T}\|\widehat E(t)\|_\infty^2.
\]
After stopping at a level of $\mathcal H_{N,k}$, taking expectations in
\eqref{eq:common-forced-energy-ito} and applying Gronwall's inequality gives
a bound on the stopped energy which is uniform in the stopping level.  Letting
the level tend to infinity excludes finite-time explosion of
$\mathcal H_{N,k}$.  Since $\phi_{\mathcal D,s}\ge0$, it follows that
$\sum_{i=1}^k|V_i|^2$ cannot diverge in finite time.  In the whole-space
case,
\[
 X_i(t)=X_i(0)+\int_0^tV_i(r)\,\dd r,
\]
so the positions cannot escape to infinity on a finite time interval.  Since
$\phi_{\mathcal D,s}(x)\to+\infty$ as $x\to0$, a collision would force
$\mathcal H_{N,k}$ to diverge.  Hence no collision occurs on $[0,T]$.
As $T$ is arbitrary, this gives global existence for $k=N$ and
$\widehat E=0$.  The strong Markov property follows from pathwise uniqueness
and nonexplosion; see \cite[Section~5.4.C]{KaratzasShreve1991}.  Random
initial data are treated by conditioning on the initial configuration.
\end{proof}

\begin{proposition}
\label{prop:common-regularization-removal}
Fix integers $N\ge k\ge1$, $T>0$, and fields
$b_i\in C([0,T];C_b^1(\mathcal D;\R^d))$, $1\le i\le k$, independent of
velocity.  Let $\mu_k$ be a probability law absolutely continuous with
respect to Lebesgue measure.  Let $Z_k$ and $Z_k^\varepsilon$ be the singular
and regularized solutions, driven by the same initial configuration and
Brownian motions, of
\[
 \dd X_i=V_i\,\dd t,
 \qquad
 \dd V_i=\frac1N\sum_{\substack{1\le j\le k\\j\ne i}}
 K(X_i-X_j)\,\dd t+b_i(t,X_i)\,\dd t+\sqrt{2\sigma}\,\dd B_i,
 \qquad1\le i\le k,
\]
with $K$ replaced by $K_\varepsilon$ in the regularized system.  Then
\begin{equation}\label{eq:whole-space-path-TV}
 \|\operatorname{Law}(Z_k^\varepsilon|_{[0,T]})
       -\operatorname{Law}(Z_k|_{[0,T]})\|_{\TV}
 \le2\Pp_{\mu_k}(d_T\le2\varepsilon)\longrightarrow0,
\end{equation}
where
\[
 d_T=\min_{0\le t\le T}\min_{i<j}
       d_{\mathcal D}(X_i(t),X_j(t)),
 \qquad d_T=+\infty\quad\text{if }k=1.
\]
If $F_k^\varepsilon(t)$ and $F_k(t)$ denote the corresponding time laws, then
they have densities in $C([0,T];L^1)$ and
\begin{equation}\label{eq:whole-space-regularization-removal-L1}
 \sup_{0\le t\le T}
 \|F_k^\varepsilon(t)-F_k(t)\|_1\longrightarrow0.
\end{equation}
In particular, taking $k=N$ and $b_i=0$ gives the regularization removal for
the $N$-particle dynamics.
\end{proposition}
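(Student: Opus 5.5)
The plan is a pathwise coupling argument.

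\emph{Coupling before the first close encounter.} Realize both systems on one probability space, with the same initial configuration $Z_k(0)\sim\mu_k$ and the same Brownian motions. By \Cref{prop:common-finite-N}, applied with a bounded $C_b^1$ prescribed field (the proof there only uses boundedness of the extra drift, so it allows distinct fields $b_i$), the singular system has a pathwise unique strong solution on $[0,T]$ without collisions. Here we use that $\mu_k$ is absolutely continuous, so it charges no collision diagonal. The regularized system has globally Lipschitz coefficients for fixed $\varepsilon$ by \Cref{lem:common-riesz-regularization}, hence a unique strong solution. Define
\[
 \theta_\varepsilon:=\inf\Bigl\{t\ge0:\min_{i<j}d_{\mathcal D}(X_i(t),X_j(t))\le2\varepsilon\Bigr\}.
\]
On $[0,\theta_\varepsilon\wedge T]$ the singular path only sees pair distances $\ge2\varepsilon$, where $K_\varepsilon=K$ by \eqref{eq:common-riesz-regularization-properties}. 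So the singular path also solves the regularized equation up to that time. By pathwise uniqueness of the Lipschitz regularized system, stopped at $\theta_\varepsilon$, we get $Z_k^\varepsilon=Z_k$ on $[0,\theta_\varepsilon\wedge T]$ almost surely. Consequently $\{Z_k^\varepsilon|_{[0,T]}\ne Z_k|_{[0,T]}\}\subset\{\theta_\varepsilon\le T\}=\{d_T\le2\varepsilon\}$, by continuity of paths. The coupling bound $\|\mu-\nu\|_{\TV}\le2\Pp(X\ne Y)$ then gives \eqref{eq:whole-space-path-TV}.

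\emph{The probability tends to zero.} Since there are no collisions and paths are continuous on the compact interval $[0,T]$, we have $d_T>0$ almost surely, which is where \Cref{prop:common-finite-N} enters. The events $\{d_T\le2\varepsilon\}$ decrease as $\varepsilon\downarrow0$ to $\{d_T=0\}$, which is null, so continuity of measure gives convergence to zero. For $k=1$ there is nothing to prove. Path-space total variation contracts under the time-$t$ evaluation maps, which gives
\[
 \sup_t\|F_k^\varepsilon(t)-F_k(t)\|_{\TV}\le2\Pp(d_T\le2\varepsilon).
\]
This bound is uniform in $t$.

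\emph{Densities and time continuity.} The remaining point, which I expect to be the main technical obstacle, is showing that the time laws have densities in $C([0,T];L^1)$.

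For the regularized system with smooth bounded-derivative coefficients and absolutely continuous initial law, I would argue as follows. The law of $Z_k^\varepsilon(t)$ is the image of $\mu_k$ under the stochastic flow, which is a diffeomorphism for each fixed noise realization. Hence it is absolutely continuous, being the push-forward of an absolutely continuous measure by a.s. diffeomorphisms, averaged over the noise. Continuity in $L^1$ in time follows from the $L^1$ continuity of the Fokker--Planck evolution for Lipschitz coefficients. A simpler route is to approximate $\mu_k$ in $L^1$ by smooth densities and use $L^1$ contraction of the Markov semigroup together with continuity for smooth data.

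For the singular system, $F_k(t)$ is absolutely continuous because $F_k(t)\le F_k^\varepsilon(t)+|F_k(t)-F_k^\varepsilon(t)|$. The mass of the singular part of $F_k(t)$ is therefore at most $2\Pp(d_T\le2\varepsilon)$, which tends to $0$. Continuity of $t\mapsto F_k(t)$ in $L^1$ follows because it is a uniform limit of continuous $L^1$-valued maps. This gives \eqref{eq:whole-space-regularization-removal-L1}. The case $k=N$, $b_i=0$ is a direct specialization.
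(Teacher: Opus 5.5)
Your proposal is correct and follows essentially the same route as the paper. It uses a synchronous coupling in which the singular and regularized paths coincide until a pair distance first reaches $2\varepsilon$, combined with the coupling inequality and the almost sure absence of collisions from \Cref{prop:common-finite-N}. It then uses contraction under time evaluation, together with absolute continuity of the regularized laws via the stochastic flow, to obtain absolute continuity and $C([0,T];L^1)$ continuity of the singular time laws.

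Your differences are minor and make explicit what the paper's short proof leaves implicit. You define the stopping time from the singular path alone and invoke uniqueness for the globally Lipschitz regularized system. You note that the collision argument allows distinct bounded fields $b_i$. You also bound the singular part of $F_k(t)$ explicitly.
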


\begin{proof}
For $\varepsilon>0$, the regularized coefficients generate a stochastic flow
preserving Lebesgue measure, hence the regularized time laws belong to
$C([0,T];L^1)$; see \cite[Sections~4.5--4.6]{Kunita1990}.

Under the synchronous coupling, set
\[
 \mathcal O_{k,\varepsilon}
 :=\{Z_k:\min_{i<j}d_{\mathcal D}(x_i,x_j)>2\varepsilon\}
\]
and
\[
 \widehat\tau_\varepsilon
 :=\inf\{t\in[0,T]:Z_k(t)\notin\mathcal O_{k,\varepsilon}
 \text{ or }Z_k^\varepsilon(t)\notin\mathcal O_{k,\varepsilon}\}.
\]
The two systems have identical coefficients on $\mathcal O_{k,\varepsilon}$;
pathwise uniqueness therefore gives
$Z_k^\varepsilon=Z_k$ on $[0,\widehat\tau_\varepsilon)$.  By continuity,
\[
 \widehat\tau_\varepsilon
 =\inf\{t\in[0,T]:\min_{i<j}
 d_{\mathcal D}(X_i(t),X_j(t))\le2\varepsilon\}.
\]
By \Cref{prop:common-finite-N}, $d_T>0$ almost surely.  Hence
\[
 \Pp_{\mu_k}(\widehat\tau_\varepsilon\le T)
 =\Pp_{\mu_k}(d_T\le2\varepsilon)\longrightarrow0,
\]
and the coupling inequality yields \eqref{eq:whole-space-path-TV}.

Evaluation and coordinate projection contract total variation, so
\[
 \sup_{0\le t\le T}
 \|F_k^\varepsilon(t)-F_k(t)\|_{\TV}\longrightarrow0.
\]
Since each $F_k^\varepsilon(t)$ is absolutely continuous, so is $F_k(t)$, and
the preceding convergence is \eqref{eq:whole-space-regularization-removal-L1}.
For $t_n\to t$,
\[
 \begin{aligned}
 \|F_k(t_n)-F_k(t)\|_1
 &\le \|F_k(t_n)-F_k^\varepsilon(t_n)\|_1
 +\|F_k^\varepsilon(t_n)-F_k^\varepsilon(t)\|_1\\
 &\quad+\|F_k^\varepsilon(t)-F_k(t)\|_1.
 \end{aligned}
\]
Letting first $n\to\infty$ and then $\varepsilon\downarrow0$ gives
$F_k\in C([0,T];L^1)$.
\end{proof}

\subsection{Solutions of the Vlasov--Riesz--Fokker--Planck equation}
\label{sec:limiting-equation}

The following proposition gives the local existence and weighted regularity
required in \Cref{thm:quantitative-comparison}; its proof is given in
Appendix~\ref{app:whole-space-classical}.

\begin{proposition}
\label{prop:common-riesz-classical}
Let $d\ge2$, $0<s<d/2$, $\sigma>0$ and $m\ge5$ be an integer.  Assume
$f_0\ge0$ has unit mass.  On $\T^d$, suppose
$f_0\in C^m_{\T,a_0}$ for some $a_0>0$ and fix $0<a_1<a_0$.
On $\R^d$, suppose $f_0\in C^m_{M,a_0}$ with $M>d$ and fix
$d<M_1<M$, $0<a_1<a_0$.
There exist $T_f>0$ and a nonnegative classical solution $f$ of
\eqref{eq:intro-riesz-vpfp} on $[0,T_f]$ with unit mass such that
\begin{equation}\label{eq:common-riesz-regularity}
 f\in
 \begin{cases}
 C([0,T_f];C^{m-3}_{\T,a_1}),&\mathcal D=\T^d,\\
 C([0,T_f];C^{m-3}_{M_1,a_1}),&\mathcal D=\R^d.
 \end{cases}
\end{equation}
Moreover
\[
 E_f\in C([0,T_f];C_b^1),
 \qquad
 \int_0^{T_f}\|\nabla_v f(t)\|_1\,\dd t<\infty,
\]
and $0\le f\le Cw_{\mathcal D}^{-1}$, where
\[
 w_{\mathcal D}=
 \begin{cases}
 e^{a_1|v|^2},&\mathcal D=\T^d,\\
 \langle x\rangle^{M_1}e^{a_1|v|^2},&\mathcal D=\R^d.
 \end{cases}
\]
The interval and constants depend only on the displayed exponents, the
initial norm, and the fixed model parameters.
\end{proposition}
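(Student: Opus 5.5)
The plan is a contraction-mapping construction in a weighted $C^m$ space, followed by verification of the derived properties. First I would split the force into near and far parts. Near the origin $|K(x)|\le C|x|^{-s-1}$ with $s+1<d/2+1\le d$, so $K\in L^1_{\mathrm{loc}}$. This gives
\[
 \|K*\rho\|_{C^j}\le C\|\rho\|_{C^j\cap L^1}
\]
on $\T^d$. On $\R^d$, the decay $|K(x)|\lesssim|x|^{-s-1}$ at infinity is handled by splitting $K=K\ind_{B_1}+K\ind_{B_1^c}$. The first piece is in $L^1$; the second is bounded and Lipschitz together with its derivatives, and is paired with $\rho\in L^1$.

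For $h$ in $C^{m}_{M_1,a}$ (or $C^m_{\T,a}$), $\rho_h$ lies in $C^m$ with $\langle x\rangle^{-M_1}$ decay, since $M_1>d$ makes it integrable. Hence $E=K*\rho_h\in C^{m}_b$ with norm at most $C\|h\|$.

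Next I would set up the iteration. Given a field $E^n\in C([0,T];C^{m-1}_b)$, solve the linear kinetic Fokker--Planck equation
\[
 \partial_tf+v\cdot\nabla_xf+E^n\cdot\nabla_vf=\sigma\Delta_vf,\qquad f(0)=f_0.
\]
Its solution is given by the hypoelliptic stochastic characteristics (Feynman--Kac). Nonnegativity and unit mass are immediate. For weighted bounds I would use a time-decreasing weight, as in the supersolution calculation of \Cref{lem:TV-U-pointwise}. Take $w(t)=\langle x\rangle^{M_1}e^{a(t)|v|^2}$ with $a(t)$ decreasing from $a_0$ toward $a_1$. Then $\mathcal P(Cw^{-1})\ge0$ holds after absorbing the $v\cdot\nabla_x\log\langle x\rangle$ term and the $E\cdot v$ term. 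Both are absorbed via $-a'|v|^2$ and a growing prefactor, exactly as before.

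For derivatives, I would differentiate the equation. $\partial_x$ produces the source $-\nabla_xE\cdot\nabla_v\partial_x^{\xi-1}f$, and $\partial_v$ produces $-\partial_x f$. These are lower-order sources, so a Gronwall argument in the weighted sup norm closes on a short interval. Each application of the maximum principle with a weight margin costs nothing, but estimating $E^n$ in $C^{m-1}$ from $f^n\in C^{m}$ is where derivatives are lost. I would accept the loss and prove uniform bounds in $C^{m-1}$. I would then show contraction in a lower norm, say $C^0_{M_1,a_1}$, on $[0,T_f]$ with $T_f$ small. The difference of two iterates solves a linear equation with source $(E^n-E^{n-1})\cdot\nabla_vf^n$, and this source is bounded by $\|f^n-f^{n-1}\|_{C^0_{w}}\|\nabla_vf^n\|_{C^0_w}$.

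Interpolating between the uniform $C^{m-1}$ bound and convergence in $C^0$ gives convergence in $C^{m-3}$ (indeed in $C^{m-2-\delta}$). The limit $f\in C([0,T_f];C^{m-3}_{M_1,a_1})$ with $m-3\ge2$ is therefore $C^1$ in $(t,x)$ and $C^2$ in $v$ and solves the equation classically. Time continuity would come from the equation, using $m-3\ge2$.

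The consequences then follow. $E_f\in C([0,T_f];C^1_b)$ holds by the force bound. The estimate $\|\nabla_vf\|_1\le C\int w^{-1}<\infty$ uses $M_1>d$. The pointwise bound $f\le Cw^{-1}$ is part of the norm.

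The main obstacle is the derivative loss combined with the weight degradation. Each differentiation of the transport term $v\cdot\nabla_x$ in the weighted estimate, and each commutator of the Gaussian weight with $\sigma\Delta_v$, produces factors of $|v|$. These must be absorbed by decreasing $a(t)$. I would allocate the gap $a_0-a_1$ (and $M-M_1$) evenly across the derivative levels, and handle the top-order closure via the energy-free sup-norm Duhamel estimate. If that fails at top order, I would fall back on the heat-kernel gain in $v$ (the $t^{-1/2}$ smoothing of $\sigma\Delta_v$) to recover one velocity derivative. That recovery is the reason $m\ge5$ leaves room for losing three derivatives.
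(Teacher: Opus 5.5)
Your proposal is correct. It reaches the result by a different technical route from the paper, in both the linear estimate and the contraction step.

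\textbf{How the two routes differ.} The paper never differentiates the equation. It writes the linear solution as $g_E(t,z)=\E[g_0(\Psi^E_tz)]$ with the inverse stochastic flow. It then bounds $D^\beta(g_0\circ\Psi^E_t)$ by Fa\`a di Bruno and the flow-derivative estimates \eqref{eq:flow-small-time-derivatives}. The weight is transported pathwise through the random factor $\Theta_T=\exp(C(T+B_T^*)^2)$, which satisfies $\E\Theta_\tau\to1$. This gives a bound $(1+\omega(\tau))\|g_0\|_{\mathcal C^r_{M,a}}$ with constant close to one, so the Picard bound closes immediately with $R>2\|f_0\|$.

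The paper proves contraction directly in $C_t\mathcal C^{r-1}$, with $r=m-2$, using the flow-stability estimate \eqref{eq:flow-field-stability} under synchronous coupling. It recovers the top-order derivatives by weak-* compactness.

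You instead work at the PDE level. You apply a weighted maximum principle, with a time-decreasing Gaussian exponent, to the whole system $\{w\partial^\alpha f\}_{|\alpha|\le j}$ and run Gronwall over it. You contract only in weighted $C^0$. There the source $(E^n-E^{n-1})\cdot\nabla_vf^n$ needs only $\|\nabla_vf^n\|_{C^0_w}$ and $\|E^n-E^{n-1}\|_\infty\lesssim\|\rho_{f^n}-\rho_{f^{n-1}}\|_{L^1\cap L^\infty}$. This avoids high-order flow stability entirely, and after interpolation gives convergence in $C^k$ for every $k\le m-2$.

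\textbf{Technical points the paper's representation handles for free.} Your route has to supply these separately.
\begin{itemize}
\item The maximum principle on the unbounded phase space needs a priori growth control on $w\partial^\alpha f$. You can get this from the Feynman--Kac representation with the modified drift $E+4\sigma a(t)v$.
\item Interpolation with Gaussian weights is not scale-invariant: $e^{a|v|^2}$ varies by a factor $e^{C|v_0|}$ on unit balls. So it loses an arbitrarily small amount of the exponent. Run the iteration with an intermediate $a_1<a_1'<a_0$ and interpolate down to $a_1$. The same margin absorbs the factor $|v|$ in $\partial_tf=-v\cdot\nabla_xf+\cdots$ when you derive time continuity in $C^{m-3}_{M_1,a_1}$.
\item Continuity in time of $E^n$ in $C_b^{m-1}$ is not available from the equation. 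Run the linear theory with $E\in L^\infty_tC_b^{m-1}\cap C_tC_b^{m-2}$, as \Cref{lem:whole-space-prescribed-linear} does.
\end{itemize}

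\textbf{Your ``main obstacle'' paragraph misidentifies the difficulty.} None of that machinery is needed.
\begin{itemize}
\item No derivative is lost in the field. Since $K\in L^1_{\mathrm{loc}}$, your own first step gives $\|K*\rho_h\|_{C^j}\lesssim\|h\|_{\mathcal C^j}$ (compare \eqref{eq:whole-space-force-derivative-bound}). So the uniform bounds close at any fixed level.
\item The commutator $[\partial_{v_i},v\cdot\nabla_x]=\partial_{x_i}$ produces no factor of $|v|$. Its source terms, and likewise $\nabla_xE\cdot\nabla_v\partial_x^{\xi-e_i}f$, have the same total order; they are not lower order. The Gronwall closes only because you take the maximum over all $|\alpha|\le j$.
\item The $|v|$ and $|v|^2$ terms from commuting the weight with $\sigma\Delta_v$ enter only the drift and the zeroth-order coefficient. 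They are absorbed once by $-a'(t)|v|^2$, independently of $|\alpha|$.
\item Every iterate starts from $f_0$, so the weight loss does not accumulate across iterations.
\end{itemize}
So you do not need to split $a_0-a_1$ across derivative levels, and you do not need heat-kernel recovery.

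A small point: $K\ind_{B_1^c}$ is not Lipschitz. Either put the derivatives on $\rho_h$, which is fine because $\partial^\xi\rho_h\in L^1$ when $M_1>d$, or use a smooth cutoff.
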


Appendix~\ref{app:whole-space-classical} proves the proposition by a Picard
iteration in both geometries.  Since $m-3\ge2$, the weighted bounds imply
condition \eqref{eq:intro-velocity-derivative} required in
\Cref{thm:quantitative-comparison}.  Initial data with weighted Sobolev regularity for the three-dimensional VPFP equation are treated
separately in Appendix~\ref{app:coulomb-estimates}.

\appendix

\section{Approximation at fixed particle number}
\label{app:fixed-N-passage}

The weighted energy estimates are first established for smooth initial data
and a regularized force.  At fixed $N$, we then let $n\to\infty$ and
subsequently $\varepsilon\downarrow0$; the identities
$H_{N,k}(0)=0$ and $H_{N,N}=0$ pass through both limits.  The construction
uses the initial bounds from Section~\ref{sec:common-comparison} and the
regularization properties from Section~\ref{sec:kernel-verification}.

\subsection{Smooth approximation of the conditioned data}

\begin{lemma}\label{lem:common-smooth-initial-data}
Fix $0<\beta_0<\beta_1<a_0$.  For every fixed $N$ and
$0<\varepsilon<\varepsilon_0$ there exist smooth symmetric probability
densities $G_N^{0,n,\varepsilon}$ and a constant $A_2\ge1$, independent of
$N,k,n,\varepsilon$, such that
\[
 \|G_N^{0,n,\varepsilon}-G_N^0\|_1
 =\xi_{n,\varepsilon}\longrightarrow0.
\]
Denote by $G_{N,k}^{0,n,\varepsilon}$ the $k$-particle marginal of
$G_N^{0,n,\varepsilon}$.  On $\R^d$,
\[
 G_{N,k}^{0,n,\varepsilon}
 \le A_2^k e^{-\beta_0e_{N,k}^\varepsilon}
       \prod_{j=1}^{k}\langle x_j\rangle^{-M_*},
 \qquad 1\le k\le N,
\]
and on $\T^d$ the same estimate holds without the spatial product.
\end{lemma}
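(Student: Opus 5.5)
We build $G_N^{0,n,\varepsilon}$ by mollifying and truncating the conditioned density $G_N^0$ in a way that keeps the structure behind \Cref{lem:common-conditioned-initial-exponential}: an upper bound by a product of one-particle Gaussian/polynomial profiles, and control of the regularized interaction energy of every subset of labels. The intended construction is
\[
 G_N^{0,n,\varepsilon}:=\frac{(\chi_n G_N^0)*\theta_{1/n}^{\otimes N}}{\int\chi_nG_N^0},
\]
with the following choices. Here $\theta_\delta$ is a standard even mollifier in $z=(x,v)$, applied to each particle (tensorized, so the result stays symmetric). The function $\chi_n$ is a symmetric cutoff that is smooth and compactly supported in phase space. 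It would be natural to take $\chi_n$ to vanish where some pair distance is below $1/n$; this keeps the support of $\chi_nG_N^0$ away from the collision diagonals, so that after mollification at scale $1/n\ll$ pair separation the interaction energies move only slightly. Convergence $\xi_{n,\varepsilon}\to0$ in $L^1$ is then standard: $\chi_nG_N^0\to G_N^0$ in $L^1$ by dominated convergence, since the diagonals are null and compact exhaustion works, and mollification is continuous in $L^1$. The normalization tends to one and is at least $1/2$ for large $n$. Smoothness, symmetry and unit mass are immediate.

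The main step is the pointwise marginal bound. Integrating out the last $N-k$ variables commutes with the tensorized mollifier, because $\theta$ has unit mass. This gives
\[
 G_{N,k}^{0,n,\varepsilon}\le 2\,\bigl(\chi_nG_N^0\bigr)_k*\theta_{1/n}^{\otimes k}\le 2\,G_{N,k}^0*\theta_{1/n}^{\otimes k}.
\]
By \Cref{lem:common-conditioned-initial-exponential} with $\gamma=\beta_1$, we have $G_{N,k}^0\le A_{\beta_1}^k e^{-\beta_1 e_{N,k}^\varepsilon}\prod\langle x_j\rangle^{-M_*}$. It therefore suffices to show that mollification at scale $1/n$ costs only a factor $C^k$ and lowers the exponent from $\beta_1$ to $\beta_0$, uniformly in $N$ and $\varepsilon$. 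The three factors are handled separately. For the velocities, $|v-w|\le1/n$ gives $\beta_1|w|^2\ge\beta_0|v|^2-C$ with $C$ depending on $\beta_1-\beta_0$. For the spatial weight, $\langle x-y\rangle^{-M_*}\le 2^{M_*}\langle x\rangle^{-M_*}$ for $|y|\le1$.

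The difficulty is the interaction term. A pair that is close can see its energy $\frac1N\phi_\varepsilon(x_i-x_j)$ change a lot under a shift of size $1/n$. We avoid mollifying against $e^{-\beta_1 e^\varepsilon}$ directly. Instead we use the conditioning: on $\esssupp G_{N,k}^0$ the subset energy satisfies $\frac1N\sum\phi\le Rk$, and $\phi_\varepsilon\le C\phi$. Hence $G_{N,k}^0$ is bounded by the product profile $C_0^k\prod e^{-a_0|v_i|^2}\langle x_i\rangle^{-M_*}$, and this bound persists after mollification, with constants changing as above. It remains to compare $e^{-\beta_0 e_{N,k}^\varepsilon}$ with $e^{-\beta_0\sum(1+|v_i|^2)}$ on the support of the mollified marginal. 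There the subset energy is controlled by the following argument. Every point in that support lies within $1/n$, in each coordinate, of a point with $\frac1N\sum\phi_\varepsilon\le CRk$. Since $\phi_\varepsilon$ is smooth for fixed $\varepsilon$ with $\|\nabla\phi_\varepsilon\|_\infty\le C\varepsilon^{-s-1}$, the regularized subset energy changes by at most $\frac1N k^2C\varepsilon^{-s-1}n^{-1}\le k\,C\varepsilon^{-s-1}n^{-1}$. For fixed $\varepsilon$ this is at most $k$ once $n\ge n_0(\varepsilon)$. Since only the limit $n\to\infty$ at fixed $\varepsilon$ matters, we restrict to such $n$. On the support we then get $e_{N,k}^\varepsilon\le\sum(1+|v_i|^2)+(CR+1)k$, and therefore $e^{-\beta_0e^\varepsilon_{N,k}}\ge e^{-\beta_0(CR+1)k}e^{-\beta_0\sum(1+|v_i|^2)}$. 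Combining this with the velocity margin $a_0>\beta_0$ yields the claimed bound with $A_2$ independent of $N,k,n,\varepsilon$. This uses that the support condition holds for every subset: the conditioning gives $\frac1N\sum_{i\ne j\le k}\phi\le\frac1N\sum_{i\le k}D_i\le Rk$, and symmetry gives the same for every $k$-subset. The cutoff $\chi_n$ is then unnecessary for the bound and serves only to produce compact support.
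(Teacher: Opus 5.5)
Your proof is correct, and your construction (renormalized cutoff, tensorized mollifier, with your restriction $n\ge n_0(\varepsilon)$ playing the role of the paper's choice $2\|K_\varepsilon\|_\infty\delta_n\le1$) is the paper's. The difference is in how the weight $e^{-\beta_0e^\varepsilon_{N,k}}$ is recovered after mollification.

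The paper pushes the conclusion of \Cref{lem:common-conditioned-initial-exponential}, namely $G^0_{N,k}\le A_{\beta_1}^ke^{-\beta_1e^\varepsilon_{N,k}}\prod_j\langle x_j\rangle^{-M_*}$, directly through the mollifier. Each ordered-pair term $\frac1N\phi_\varepsilon(x_i-x_j)$ moves by at most $1/N$ under the shift, so
\[
\beta_0e^\varepsilon_{N,k}(Z_k)-\beta_1e^\varepsilon_{N,k}(Z_k-(Y_k,W_k))\le C_{\beta_0,\beta_1}k
\]
on the support of the mollifier.

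You instead mollify the interaction-free majorant $2f_0^{\otimes k}$. You then transport the subset-energy constraint $\frac1N\sum_{i\ne j\le k}\phi_\varepsilon\le CRk$ using the same Lipschitz estimate. In effect you re-run the proof of that lemma after mollification rather than using its conclusion.

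Consequently, the obstacle you cite for mollifying against $e^{-\beta_1e^\varepsilon_{N,k}}$ directly is not real. At fixed $\varepsilon$ and $n\ge n_0(\varepsilon)$, the bound you use later already shows the interaction energy changes by at most $k$ under shifts of size $1/n$. That is exactly the paper's step, so its route is one step shorter. Your version does make explicit that only the product bound and the support constraint are needed.

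Two points should be stated more carefully:
\begin{enumerate}
\item The energy constraint holds only $G^0_{N,k}$-almost everywhere. It still transfers to every $Z_k$ where the mollified marginal is positive, because positivity of the convolution yields a positive-measure set of admissible shifts.
\item The product majorant comes from $G^0_{N,k}\le2f_0^{\otimes k}$ and the pointwise bound on $f_0$, not from the support condition as your ``Hence'' suggests.
\end{enumerate}
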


\begin{proof}
By \Cref{lem:common-conditioned-initial-exponential}, applied with exponent
$\beta_1$, there exists $A_1\ge1$, independent of $N,k$ and $\varepsilon$,
such that
\[
 G_{N,k}^0
 \le A_1^k e^{-\beta_1 e_{N,k}^\varepsilon}\prod_{j=1}^{k}\langle x_j\rangle^{-M_*},
 \qquad 1\le k\le N.
\]

Fix $N$ and $\varepsilon$.  We first truncate the initial density with the
same one-particle cutoff in each variable.  Choose $R_n\uparrow\infty$.  On
$\R^d$, let $\chi\in C_c^\infty(\R^{2d})$ satisfy $0\le\chi\le1$ and
$\chi=1$ on $B_1$, and set $\chi_R(z)=\chi(z/R)$.  On $\T^d$, choose
$\chi\in C_c^\infty(\R^d)$ with the same properties and set
$\chi_R(x,v)=\chi(v/R)$.  Set
\[
 c_n=\int G_N^0\prod_{i=1}^N\chi_{R_n}(z_i)\,\dd Z_N,
 \qquad
 \widetilde G_N^{0,n}=c_n^{-1}G_N^0\prod_{i=1}^N\chi_{R_n}(z_i).
\]
Denote by $\widetilde G_{N,k}^{0,n}$ the $k$-particle marginal of
$\widetilde G_N^{0,n}$.  By dominated convergence, $c_n\to1$, and we may
assume $c_n\ge1/2$; hence
\[
 \widetilde G_{N,k}^{0,n}
 \le2G_{N,k}^0.
\]
We next mollify $\widetilde G_N^{0,n}$ with a product mollifier of scale
$\delta_n$ (periodic in $x$ on $\T^d$), and denote the resulting density by
$G_N^{0,n,\varepsilon}$.  Product convolution preserves symmetry and mass,
and commutes with taking marginals.  Choose $\delta_n\downarrow0$ so that
$2\|K_\varepsilon\|_\infty\delta_n\le1$.  Write
$Y_k=(y_1,\ldots,y_k)$ and $W_k=(w_1,\ldots,w_k)$ for the translation
variables in the product mollifier.  On the support of the mollifier,
\[
 \left|\frac1N\sum_{i\ne j\le k}\phi_\varepsilon(x_i-x_j)
 -\frac1N\sum_{i\ne j\le k}\phi_\varepsilon(x_i-y_i-x_j+y_j)\right|
 \le k,
\]
and, for $v,w\in\R^d$,
\[
 \beta_0|v|^2-\beta_1|v-w|^2
 \le \frac{\beta_0\beta_1}{\beta_1-\beta_0}.
\]
On $\R^d$, for $|y|\le1$,
\[
 \langle x-y\rangle^{-M_*}\le C\langle x\rangle^{-M_*},
\]
while no spatial factor is present on $\T^d$.  We therefore have
\[
 \beta_0e_{N,k}^\varepsilon(Z_k)
 -\beta_1e_{N,k}^\varepsilon(Z_k-(Y_k,W_k))
 \le C_{\beta_0,\beta_1}k,
\]
and
\[
 G_{N,k}^{0,n,\varepsilon}
 \le A_2^ke^{-\beta_0e_{N,k}^\varepsilon}\prod_{j=1}^{k}\langle x_j\rangle^{-M_*},
\]
with $A_2$ independent of $N,k,n,\varepsilon$.  Finally,
\[
 \|G_N^{0,n,\varepsilon}-G_N^0\|_1\longrightarrow0
\]
by the cutoff approximation and the $L^1$ continuity of mollification.
\end{proof}

For the regularized laws and auxiliary solutions introduced above, the
coefficient $(N-k)/N$ vanishes at $k=N$, and hence
\begin{equation}\label{eq:fixed-N-smooth-level-N}
 U_{N,N}^{\varepsilon,n}=G_N^{\varepsilon,n}.
\end{equation}
For these initial data, use the supersolution construction of
\Cref{lem:TV-U-pointwise} with
$A(0)=\max\{A_{\beta_0},A_2\}$.  After enlarging $A_T$ by a factor depending on the fixed data, we retain the
same notation.  The
resulting $A_T$ and $\beta_T$ are independent of $N,k,n,\varepsilon$.  At
level $N$, \eqref{eq:fixed-N-smooth-level-N} and integration in the last
$N-k$ variables then give
\begin{equation}\label{eq:fixed-N-smooth-pointwise}
 U_{N,k}^{\varepsilon,n}(t,Z_k)
 \le A_T^k e^{-\beta_Te_{N,k}^\varepsilon(Z_k)}
       \prod_{j=1}^{k}\langle x_j\rangle^{-M_*},
 \qquad
 G_{N,k}^{\varepsilon,n}(t,Z_k)
 \le B_T^N e^{-\beta_Te_{N,k}^\varepsilon(Z_k)}
       \prod_{j=1}^{k}\langle x_j\rangle^{-M_*}.
\end{equation}
On $\T^d$ the spatial products are omitted.  The constants
$A_T,B_T,\beta_T$ are independent of $N,k,n,\varepsilon$.

\subsection{Passage to the limit in the hierarchy}

\begin{proposition}\label{prop:common-singular-hierarchy}
Fix $N$ and an exponent $q$ satisfying
\eqref{eq:intro-q-range}.  Let the kernels and spatial weights be those of
Section~\ref{sec:common-comparison}.  Let $c_k\in[0,1]$, and let
$b_i:[0,T]\times\mathcal D\to\R^d$ be bounded Borel fields.  Suppose that a collection of functions
$H_\ell^\varepsilon\in C([0,T];L^1)$ is given at every level appearing in
the equation, and that the level-$k$ function satisfies
\begin{align*}
 \partial_tH_k^\varepsilon+L_{N,k}^\varepsilon H_k^\varepsilon
 -\sigma\Delta_{V_k}H_k^\varepsilon
 +\sum_{i=1}^k b_i(t,x_i)\cdot\nabla_{v_i}H_k^\varepsilon
 +c_k\sum_{i=1}^k\nabla_{v_i}\cdot
 J_{i,k}^\varepsilon[H_{k+1}^\varepsilon]=0
\end{align*}
in distributions, where the last term is omitted when it is not present.  More
precisely, for every $\psi\in C_c^\infty([0,T)\times\Omega_{\mathcal D}^k)$,
\begin{align}\label{eq:common-regularized-hierarchy-weak}
 &\int H_k^\varepsilon(0,Z_k)\psi(0,Z_k)\,\dd Z_k\notag\\
 &\quad+\int_0^T\!\int H_k^\varepsilon
 \left(\partial_t\psi+L_{N,k}^\varepsilon\psi
 +\sigma\Delta_{V_k}\psi
 +\sum_{i=1}^k b_i(t,x_i)\cdot\nabla_{v_i}\psi\right)\,\dd Z_k\dd t\notag\\
 &\quad+c_k\sum_{i=1}^k\int_0^T\!\int
 J_{i,k}^\varepsilon[H_{k+1}^\varepsilon]\cdot\nabla_{v_i}\psi
 \,\dd Z_k\dd t=0,
\end{align}
with the last line omitted when the hierarchy term is absent.  On the torus,
the test functions are understood to be periodic in the spatial variables.

Assume that, at every level $\ell$ appearing in the equation (that is,
$\ell=k$ and, when the hierarchy term is present, also $\ell=k+1$),
\begin{equation}\label{eq:common-singular-hierarchy-convergence}
 H_\ell^\varepsilon\longrightarrow H_\ell
 \quad\text{in }C([0,T];L^1),
\end{equation}
and, for some continuous $\lambda:[0,T]\to(0,\infty)$,
\begin{equation}\label{eq:common-singular-hierarchy-weight}
 \operatorname*{ess\,sup}_{t\le T}
 \int |H_\ell^\varepsilon(t)|^q
 e^{\lambda(t)e_{N,\ell}^\varepsilon}\varpi_\ell\,\dd Z_\ell\le C_\ell,
\end{equation}
where $C_\ell$ is independent of $\varepsilon$.  Then
\eqref{eq:common-regularized-hierarchy-weak} holds with the regularized
operators and functions replaced by $L_{N,k}$, $J_{i,k}$, and $H_\ell$.
In particular, the level-$k$ limiting equation holds in distributions with
initial trace $H_k(0)$.
\end{proposition}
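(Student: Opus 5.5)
The plan is to pass to the limit term by term in the weak formulation \eqref{eq:common-regularized-hierarchy-weak}. Fix $\psi\in C_c^\infty([0,T)\times\Omega_{\mathcal D}^k)$, supported in a compact set $\mathcal K\subset[0,T)\times\Omega_{\mathcal D}^k$.

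\textbf{Initial, transport, diffusion and drift terms.} The initial term and the terms involving $\partial_t\psi$, $v_i\cdot\nabla_{x_i}\psi$, $\Delta_{V_k}\psi$ and $b_i\cdot\nabla_{v_i}\psi$ have bounded test factors. By \eqref{eq:common-singular-hierarchy-convergence} in $C([0,T];L^1)$ they converge directly.

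\textbf{Internal interaction term.} We must show
\[
\int_0^T\!\!\int H_k^\varepsilon\,K_\varepsilon(x_i-x_j)\cdot\nabla_{v_i}\psi\;\longrightarrow\;\int_0^T\!\!\int H_k\,K(x_i-x_j)\cdot\nabla_{v_i}\psi .
\]
Split the difference as
\[
(H_k^\varepsilon-H_k)\,K_\varepsilon\;+\;H_k\,(K_\varepsilon-K).
\]
Use H\"older with exponents $q,q'$ and weight $W=e^{\lambda(t)e^{\varepsilon}_{N,k}}\varpi_k$. On the compact set $\mathcal K$, $\lambda$ is bounded below and the velocities and positions are bounded. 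Since $e^\varepsilon_{N,k}\ge k$ and $\varpi_k$ is bounded below on $\mathcal K$, $W$ is bounded below there uniformly in $\varepsilon$.

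\begin{itemize}
\item For the second piece we need $H_k$ in weighted $L^q$. This follows from \eqref{eq:common-singular-hierarchy-weight} by Fatou along an a.e.\ subsequence, using $e^\varepsilon_{N,k}\to e_{N,k}$ off diagonals and dropping the weight via its lower bound. We also need $\|(K_\varepsilon-K)(x_i-x_j)\nabla\psi\|_{L^{q'}}\to0$, which follows from \eqref{eq:common-force-defect-Lp} with $p=q'<d/(s+1)$ on the bounded set.
\item For the first piece we interpolate. The uniform weighted $L^q$ bound together with $L^1$ convergence gives $L^r$ convergence on $\mathcal K$ for every $r<q$. Choose $r<q$ with $r'<d/(s+1)$; this is possible because $q'<d/(s+1)$ is strict. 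Since $\sup_\varepsilon\|K_\varepsilon\|_{L^{r'}}$ is bounded locally, the first piece tends to $0$.
\end{itemize}

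\textbf{Hierarchy term.} The term involving $J_{i,k}^\varepsilon[H_{k+1}^\varepsilon]$ is handled the same way, and here Lemma~\ref{lem:common-weighted-holder} does the work. Write $J^\varepsilon_{i,k}[H^\varepsilon_{k+1}]-J_{i,k}[H_{k+1}]=J_{i,k}^\varepsilon[H_{k+1}^\varepsilon-H_{k+1}]+(J^\varepsilon_{i,k}-J_{i,k})[H_{k+1}]$.

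\begin{itemize}
\item For the second part, apply the weighted H\"older estimate with kernel $K_\varepsilon-K$, using $\|K_\varepsilon-K\|_{q,\varpi}\to0$ from \eqref{eq:kernel-assumptions} and the Fatou bound on $H_{k+1}$. This gives $L^q(W)$ convergence to zero, and pairing with $\nabla_{v_i}\psi$, which is compactly supported with $W^{-1/(q-1)}$ bounded on its support, gives convergence.
\item For the first part the main obstacle is that we only have $L^1$ convergence of $H_{k+1}^\varepsilon-H_{k+1}$, while the H\"older estimate needs weighted $L^q$. I would handle it by truncation in $(y,w)$. On $|y|+|w|\le\rho$, apply weighted H\"older with exponent $r<q$ as above, again using a uniform $\|K_\varepsilon\|_{r,\varpi}$ bound for suitable $r$ close to $q$. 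Interpolating between the uniform weighted $L^q$ bound and $L^1$ convergence gives convergence in the weighted $L^r$ on the truncated region. On the complement, use the full weighted $L^q$ bound and the Gaussian $w$-factor and weight decay. The tail contribution is then small uniformly in $\varepsilon$ as $\rho\to\infty$.
\end{itemize}

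A subtlety is that the weight uses $e^\varepsilon$ rather than $e$. Since $\phi_\varepsilon\ge1$ and only lower bounds $e^\varepsilon\ge\sum(1+|v|^2)$ are needed in these steps, it causes no difficulty.

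\textbf{Conclusion.} Passing to the limit in all terms gives \eqref{eq:common-regularized-hierarchy-weak} with $L_{N,k}$, $J_{i,k}$ and $H_\ell$. Continuity in $C([0,T];L^1)$ then gives the initial trace.
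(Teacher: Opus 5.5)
Your proposal is correct, but it handles the singular products differently from the paper.

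\textbf{The paper's route.} The paper never upgrades to strong convergence. It replaces the weight by $\widetilde W_\ell=e^{\underline\lambda\sum_{j\le\ell}(1+|v_j|^2)}\varpi_\ell$ with $\underline\lambda=\min\lambda$. Reflexivity together with the $C([0,T];L^1)$ convergence then gives $H_\ell^\varepsilon\rightharpoonup H_\ell$ weakly in $L^q((0,T)\times\Omega_{\mathcal D}^\ell;\widetilde W_\ell)$.
\begin{itemize}
\item For the hierarchy term, \Cref{lem:common-weighted-holder} makes $J_{i,k}$ a bounded linear map $L^q(\widetilde W_{k+1})\to L^q(\widetilde W_k)$, hence weakly continuous. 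So $J_{i,k}[H_{k+1}^\varepsilon-H_{k+1}]\rightharpoonup0$, and pairing with $\nabla_{v_i}\psi$, which lies in the dual, finishes the term. No truncation in $(y,w)$ and no tail estimate are needed. The $K_\varepsilon-K$ piece is handled exactly as you do it.
\item The internal term is obtained by testing the same weak convergence against $K(x_i-x_j)\cdot\nabla_{v_i}\psi\in L^{q'}$.
\end{itemize}

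\textbf{Your route.} You produce strong convergence in $L^r_{\mathrm{loc}}$, $r<q$, by interpolating the uniform $L^q$ bound with $L^1$ convergence. This requires $\sup_\varepsilon\|K_\varepsilon\|_{L^{r'}_{\mathrm{loc}}}<\infty$ for some $r'>q'$.
\begin{itemize}
\item That is available here because the Riesz singularity is a pure power and $(s+1)q'<d$ is strict. The paper's argument uses only $K\in L^{q'}_{\mathrm{loc}}$ and would also cover kernels with no such slack.
\item In exchange, you avoid weak compactness and the identification of weak limits.
\item The cost is the extra tail estimate for the nonlocal term: the Gaussian factor in $w$, the integrability of $\langle y\rangle^{-\alpha/(q-1)}$, and uniformity for $x_i$ in the compact projection of $\operatorname{supp}\psi$. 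Each of these checks out.
\end{itemize}

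\textbf{A small simplification.} On the truncated region $\{|y|+|w|\le\rho\}$, the test function already confines $Z_k$ to a compact set. So the whole integrand lives on a compact set, and unweighted H\"older with $\sup_\varepsilon\|K_\varepsilon\|_{L^{r'}(B)}$ on a ball $B$ suffices. No weighted kernel norm at exponent $r$, and no weighted $L^r$ interpolation, is needed there.
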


\begin{proof}
We set
\[
 \underline\lambda:=\min_{0\le t\le T}\lambda(t)>0,\qquad
 \widetilde W_\ell
 :=e^{\underline\lambda\sum_{j=1}^\ell(1+|v_j|^2)}\varpi_\ell
\]
for every level $\ell$ appearing in the equation.  For each such level,
$e_{N,\ell}^\varepsilon\ge\sum_{j=1}^\ell(1+|v_j|^2)$, so
\eqref{eq:common-singular-hierarchy-weight} implies
\[
 \sup_\varepsilon
 \|H_\ell^\varepsilon\|_{L^q((0,T)\times\Omega_{\mathcal D}^\ell;\widetilde W_\ell)}<\infty.
\]
We first identify the weak limit in the weighted space.  Reflexivity and
\eqref{eq:common-singular-hierarchy-convergence} yield, at each level $\ell$,
\[
 H_\ell^\varepsilon\rightharpoonup H_\ell
 \quad\text{in }L^q((0,T)\times\Omega_{\mathcal D}^\ell;\widetilde W_\ell).
\]
Indeed, on compact subsets the weight is comparable to a positive constant,
and the strong local $L^1$ convergence identifies every weak limit with
$H_\ell$.

We now pass to \eqref{eq:common-regularized-hierarchy-weak}.  The time
derivative, free transport, prescribed-field terms, and diffusion term converge
directly from \eqref{eq:common-singular-hierarchy-convergence}; moreover
$H_k^\varepsilon(0)\to H_k(0)$ in $L^1$.  It remains to treat the hierarchy
flux and the singular internal interaction.

We first consider the hierarchy flux.  When this term is present,
\Cref{lem:common-weighted-holder}, applied with
\[
 e_\ell^0(Z_\ell)=\sum_{j=1}^\ell(1+|v_j|^2),
 \qquad
 \widetilde W_\ell=e^{\underline\lambda e_\ell^0}\varpi_\ell,
\]
gives, after integration in time,
\begin{equation}\label{eq:singular-passage-flux-map}
 \|J_{i,k}[h]\|_{L^q((0,T)\times\Omega_{\mathcal D}^k;\widetilde W_k)}
 \le \Gamma_{d,q}(\underline\lambda)^{1/q}\|K\|_{q,\varpi}
 \|h\|_{L^q((0,T)\times\Omega_{\mathcal D}^{k+1};\widetilde W_{k+1})}.
\end{equation}
Hence $J_{i,k}$ is bounded between the corresponding weighted $L^q$ spaces.  We write
\begin{align*}
 J_{i,k}^{\varepsilon}[H_{k+1}^\varepsilon]-J_{i,k}[H_{k+1}]
 &=J_{i,k}[H_{k+1}^\varepsilon-H_{k+1}]\\
 &\quad+\int_{\mathcal D\times\R^d}
 (K_\varepsilon-K)(x_i-y)H_{k+1}^\varepsilon(Z_k,y,w)\,\dd y\dd w.
\end{align*}
The first term converges weakly to zero by
\eqref{eq:singular-passage-flux-map}.  Applying the same weighted H\"older
estimate to the second term gives
\[
 \left\|\int_{\mathcal D\times\R^d}
 (K_\varepsilon-K)(x_i-y)H_{k+1}^\varepsilon(Z_k,y,w)\,\dd y\dd w
 \right\|_{L^q(\widetilde W_k)}
 \le \Gamma_{d,q}(\underline\lambda)^{1/q}
       \|K_\varepsilon-K\|_{q,\varpi}
       \|H_{k+1}^\varepsilon\|_{L^q(\widetilde W_{k+1})}\longrightarrow0.
\]
Hence the hierarchy term converges in the distributional formulation.

It remains to pass to the limit in the internal interaction.  For
$\psi\in C_c^\infty([0,T)\times\Omega_{\mathcal D}^k)$, set
$G_{ij}=K(x_i-x_j)\cdot\nabla_{v_i}\psi$.  Since $(s+1)q'<d$,
$K\in L^{q'}_{\mathrm{loc}}$, while $\widetilde W_k$ and its reciprocal are
bounded on the support of $\psi$.  Hence
\begin{equation}\label{eq:singular-passage-internal-dual}
 G_{ij}\in L^{q'}\!\left(\widetilde W_k^{-1/(q-1)}\,\dd t\dd Z_k\right).
\end{equation}
The weak convergence of $H_k^\varepsilon$ therefore gives
\[
 \int(H_k^\varepsilon-H_k)G_{ij}\,\dd t\dd Z_k\longrightarrow0.
\]
For $K_\varepsilon-K$, weighted H\"older gives
\begin{align*}
 &\left|\int H_k^\varepsilon(K_\varepsilon-K)(x_i-x_j)
       \cdot\nabla_{v_i}\psi\,\dd t\dd Z_k\right|\\
 &\qquad\le C_\psi\|H_k^\varepsilon\|_{L^q(\widetilde W_k)}
       \|K_\varepsilon-K\|_{L^{q'}(\mathcal D)}\longrightarrow0,
\end{align*}
by \eqref{eq:common-force-defect-Lp} with $p=q'$.  On $\R^d$ the displayed
norm is global because $K_\varepsilon-K$ is supported in $B_{2\varepsilon}$.
Moreover,
\[
 H_kK(x_i-x_j)\in L^1_{\mathrm{loc}},
 \qquad
 J_{i,k}[H_{k+1}]\in L^q_{\mathrm{loc}},
\]
where the second assertion is needed only when the hierarchy term is present.
This proves the limiting weak formulation and the initial trace.
\end{proof}

\subsection{Localization in phase space}
\label{subsec:common-cutoff-testing}

For fixed $(N,k,\varepsilon,n)$ the coefficients and the initial density are
smooth, and the corresponding Fokker--Planck equations have classical
solutions.  The integrations by parts in
\Cref{lem:common-weighted-flux-energy} are justified by a phase-space
truncation, as in the weighted BBGKY estimate of
\cite[Section~3.2]{BJS2025}.

We introduce a compact phase-space cutoff.  Set
\[
 W(t,Z_k)=e^{\lambda(t)e_{N,k}^\varepsilon(Z_k)}\varpi_k(X_k),
\]
and let $\chi_R\in C_c^\infty$ satisfy
$\chi_R=1$ on $B_R$, $|\nabla\chi_R|\le C/R$, and
$|D^2\chi_R|\le C/R^2$.  Testing with
$q|H|^{q-2}HW\chi_R$ is legitimate.  The cutoff terms vanish as
$R\to\infty$ by \Cref{lem:TV-U-pointwise,lem:common-weighted-holder}, the
remainder estimate, $\lambda<q\beta_T$, and, on $\R^d$,
\eqref{eq:common-weight-compatibility}.  Hence
\eqref{eq:common-flux-energy} holds in integrated form for the smooth
regularized solutions and for every $t\in[0,T]$.

\subsection{Passage to the limit in the integrated inequality}

\begin{lemma}
\label{lem:TV-fixed-N-passage}
Fix $N$.  Let $G_N^{0,n,\varepsilon}$ be the approximations of
\Cref{lem:common-smooth-initial-data}.  For every fixed $\varepsilon$,
\[
 \xi_{n,\varepsilon}
 =\|G_N^{0,n,\varepsilon}-G_N^0\|_1\longrightarrow0
 \qquad(n\to\infty).
\]
Suppose that, for every $1\le k<N$, the regularized errors satisfy
\begin{align}
 Y_{N,k}^{\varepsilon,n}(t)
 \le \int_0^t\!\left[
 kA(s)Y_{N,k}^{\varepsilon,n}(s)
 +kL(s)\frac{N-k}{N}Y_{N,k+1}^{\varepsilon,n}(s)
 +D_T^k\bigl(N^{-1}+\xi_{n,\varepsilon}
 +\varepsilon^{q(d-s-1)}\bigr)\right]\dd s,
 \label{eq:fixed-N-regularized-integrated}
\end{align}
with $H_{N,k}^{\varepsilon,n}(0)=0$ for every $1\le k\le N$ and
$H_{N,N}^{\varepsilon,n}=0$.  If one first lets $n\to\infty$ for fixed
$\varepsilon$ and then lets $\varepsilon\downarrow0$, the limiting functions
satisfy, for every $1\le k<N$,
\begin{equation}\label{eq:fixed-N-limit-integrated}
 Y_{N,k}(t)
 \le \int_0^t\left[
 kA(s)Y_{N,k}(s)
 +kL(s)\frac{N-k}{N}Y_{N,k+1}(s)
 +D_T^kN^{-1}\right]\dd s,
\end{equation}
with $H_{N,k}(0)=0$ and $H_{N,N}=0$.
\end{lemma}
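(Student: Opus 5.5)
The plan is two successive limits in \eqref{eq:fixed-N-regularized-integrated}: first $n\to\infty$ at fixed $\varepsilon$, then $\varepsilon\downarrow0$. At each stage I need (a) strong convergence of $H_{N,k}^{\varepsilon,n}$ to the next-level object in $C([0,T];L^1)$, (b) uniform pointwise majorants, and (c) convergence of the weights $e^{\lambda e^\varepsilon_{N,k}}\varpi_k$. With these, Fatou gives the left side and dominated convergence the right side.

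\emph{Step 1: $n\to\infty$ at fixed $\varepsilon$.}
The regularized $N$-particle dynamics and the regularized auxiliary $k$-particle dynamics have Lipschitz coefficients for fixed $\varepsilon$. They are therefore positive $L^1$-contractive Markov evolutions. Hence
\[
 \sup_t\|G_{N,k}^{\varepsilon,n}-G_{N,k}^\varepsilon\|_1\le\xi_{n,\varepsilon},
 \qquad
 \sup_t\|U_{N,k}^{\varepsilon,n}-U_{N,k}^\varepsilon\|_1\le\|G^{0,n,\varepsilon}_{N,k}-G^0_{N,k}\|_1\le\xi_{n,\varepsilon},
\]
so $H_{N,k}^{\varepsilon,n}\to H_{N,k}^\varepsilon$ in $C([0,T];L^1)$. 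Pass to an a.e.\ subsequence at each $t$. The weight $e^{\lambda(t)e^\varepsilon_{N,k}}\varpi_k$ does not depend on $n$, so Fatou gives $Y^\varepsilon_{N,k}(t)\le\liminf_n Y^{\varepsilon,n}_{N,k}(t)$ on the left.

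On the right I need convergence of $Y^{\varepsilon,n}_{N,\ell}(s)$ for $\ell=k,k+1$, uniformly enough to integrate in $s$. For this I use the majorants \eqref{eq:fixed-N-smooth-pointwise}. These give
\[
 |H^{\varepsilon,n}_{N,\ell}|^qe^{\lambda e^\varepsilon_{N,\ell}}\varpi_\ell\le
 (A_T^\ell+B_T^N)^q\,e^{-(q\beta_T-\lambda_0)e^\varepsilon_{N,\ell}}\prod_j\langle x_j\rangle^{\alpha-qM_*},
\]
which is integrable because $\lambda_0<q\beta_T$ and \eqref{eq:common-weight-compatibility} holds. The same bound holds for the limits by a.e.\ convergence. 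Dominated convergence then gives $Y^{\varepsilon,n}_{N,\ell}(s)\to Y^\varepsilon_{N,\ell}(s)$ for each $s$ along the subsequence. Since the majorant is uniform in $s$, dominated convergence in $s$ applies too. A diagonal argument over rational $t$ handles the subsequence, and continuity in $t$ extends to all $t$. The term $\xi_{n,\varepsilon}$ vanishes in the limit. The identities are preserved: $H^\varepsilon_{N,k}(0)=0$ because both components start from $G^0_{N,k}$, and $H^\varepsilon_{N,N}=0$ by \eqref{eq:TV-level-N-equality}.

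\emph{Step 2: $\varepsilon\downarrow0$.}
\Cref{prop:common-regularization-removal} gives $G_{N,k}^\varepsilon\to G_{N,k}$ and $U^\varepsilon_{N,k}\to U_{N,k}$ in $C([0,T];L^1)$; the former applies with $k=N$ followed by projection. Hence $H^\varepsilon_{N,k}\to H_{N,k}$ in $C([0,T];L^1)$. Off the collision diagonals, which are Lebesgue-null, $e^\varepsilon_{N,k}\to e_{N,k}$ pointwise, because $\phi_\varepsilon=\phi$ once $2\varepsilon$ is below the minimal pair distance.

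The step I expect to be the main obstacle is domination uniform in $\varepsilon$, since $e^\varepsilon_{N,k}$ varies with $\varepsilon$. The plan is to combine the majorants \eqref{eq:TV-U-bound-regularized} and \eqref{eq:TV-GN-bound-regularized}, which carry $e^{-\beta_Te^\varepsilon_{N,k}}$, with the weight $e^{\lambda e^\varepsilon_{N,k}}$. The product is $e^{-(q\beta_T-\lambda)e^\varepsilon_{N,k}}\le e^{-(q\beta_T-\lambda_0)\sum_j(1+|v_j|^2)}$, using $\phi_\varepsilon\ge0$. This majorant is independent of $\varepsilon$ and integrable after multiplying by $\prod_j\langle x_j\rangle^{\alpha-qM_*}$. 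With it, Fatou on the left and dominated convergence on the right go through as in Step 1. The term $\varepsilon^{q(d-s-1)}$ vanishes since $d-s-1>0$. The identities $H_{N,k}(0)=0$ and $H_{N,N}=0$ pass to the limit through $L^1$ convergence and \eqref{eq:TV-level-N-equality}. This yields \eqref{eq:fixed-N-limit-integrated}.
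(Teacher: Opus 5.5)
Your proposal is correct and follows essentially the same route as the paper: $L^1$ contraction of the regularized Markov evolutions for $n\to\infty$, \Cref{prop:common-regularization-removal} for $\varepsilon\downarrow0$, a majorant uniform in $n$, $\varepsilon$ and $t$ obtained from the pointwise bounds and $\phi_\varepsilon\ge0$, pointwise convergence $e_{N,k}^\varepsilon\to e_{N,k}$ off the collision diagonals, and dominated convergence. The only point to tidy is the diagonal argument over rational times, which only gives convergence at countably many times and so does not justify dominated convergence in $s$; instead, as in the paper, apply the subsequence criterion using the $C([0,T];L^1)$ convergence and the uniform majorant, which yields $Y_{N,\ell}^{\varepsilon,n}(s)\to Y_{N,\ell}^{\varepsilon}(s)$ (and likewise $Y_{N,\ell}^{\varepsilon}(s)\to Y_{N,\ell}(s)$) along the full sequence for every $s$, and then use dominated convergence in $s$ (Fatou on the left is then unnecessary).
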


\begin{proof}
Since both hierarchies start from the same marginals, we have
\begin{equation}\label{eq:fixed-N-error-identities}
 H_{N,k}^{\varepsilon,n}(0)=0,
 \qquad H_{N,N}^{\varepsilon,n}(t)=0.
\end{equation}
Markov contraction and coordinate projection give, uniformly in $t$,
\begin{align}
 \|G_{N,k}^{\varepsilon,n}-G_{N,k}^{\varepsilon}\|_1
 &\le \xi_{n,\varepsilon},\qquad\|U_{N,k}^{\varepsilon,n}-U_{N,k}^{\varepsilon}\|_1
 \le \xi_{n,\varepsilon},\notag\\
 \|H_{N,k}^{\varepsilon,n}-H_{N,k}^{\varepsilon}\|_1
 &\le2\xi_{n,\varepsilon},\qquad
 \|h_{N,k+1}^{\varepsilon,n}-h_{N,k+1}^{\varepsilon}\|_1 \le2\xi_{n,\varepsilon}.\label{eq:fixed-N-L1-approx}
\end{align}
At $k=N$, \eqref{eq:fixed-N-smooth-level-N} and the $L^1$ limits give
$U_{N,N}^{\varepsilon}=G_N^{\varepsilon}$.  Regularization removal then
yields $U_{N,N}=G_N$, and hence $H_{N,N}=0$ in the singular limit.
The pointwise estimates \eqref{eq:fixed-N-smooth-pointwise} and the
corresponding estimates for $U_{N,k}^\varepsilon$, together with
\eqref{eq:fixed-N-L1-approx}, give the same interpolation estimate between
the $L^1$ bound and the weighted $L^q$ bound used in \Cref{prop:TV-small-residual}, with
$N^{-1}$ replaced by $N^{-1}+\xi_{n,\varepsilon}$.  Applying
\Cref{lem:common-weighted-holder} and adding the term involving
$E^\varepsilon-E$, for $0<\lambda\le\lambda_*$ we obtain
\begin{equation}\label{eq:TV-regularized-residual-bound-smooth}
 \sup_{t\le T}\sum_{i=1}^k
 \|R_{i,N,k}^{\varepsilon,n}(t)\|_
 {L^q(e^{\lambda e_{N,k}^{\varepsilon}}\varpi_k)}^q
 \le D_T^k\bigl(N^{-1}+\xi_{n,\varepsilon}
 +\varepsilon^{q(d-s-1)}\bigr).
\end{equation}
For fixed $\varepsilon$, the third estimate in
\eqref{eq:fixed-N-L1-approx} gives directly
\begin{equation}\label{eq:fixed-N-n-limit}
 H_{N,k}^{\varepsilon,n}
 \longrightarrow H_{N,k}^{\varepsilon}
 \quad\text{in }C([0,T];L^1),
 \qquad n\to\infty.
\end{equation}
For fixed $N$, the initial densities are bounded by the pointwise estimates
of \Cref{lem:common-smooth-initial-data}.  Hence
\Cref{prop:common-regularization-removal}
applies to $G$ and $U$, respectively, and the triangle inequality yields
\begin{equation}\label{eq:fixed-N-epsilon-limit}
 H_{N,k}^{\varepsilon}
 \longrightarrow H_{N,k}
 \quad\text{in }C([0,T];L^1),
 \qquad \varepsilon\downarrow0.
\end{equation}
The pointwise bounds for $G_{N,k}^{\varepsilon,n}$,
$U_{N,k}^{\varepsilon,n}$, $G_{N,k}^{\varepsilon}$, $U_{N,k}^{\varepsilon}$,
and their singular counterparts imply, for $0<\lambda\le\lambda_0$,
\begin{align*}
 |H_{N,k}^{\varepsilon,n}(t)|^q
 e^{\lambda e_{N,k}^{\varepsilon}}\varpi_k,
 \ |H_{N,k}^{\varepsilon}(t)|^q
 e^{\lambda e_{N,k}^{\varepsilon}}\varpi_k
 &\le
 2^{q-1}\bigl(B_T^{qN}+A_T^{qk}\bigr)
 e^{-(q\beta_T-\lambda)e_{N,k}^{\varepsilon}}\prod_{j=1}^{k}\langle x_j\rangle^{\alpha-qM_*},\\
 |H_{N,k}(t)|^q e^{\lambda e_{N,k}}\varpi_k
 &\le
 2^{q-1}\bigl(B_T^{qN}+A_T^{qk}\bigr)
 e^{-(q\beta_T-\lambda)e_{N,k}}\prod_{j=1}^{k}\langle x_j\rangle^{\alpha-qM_*}.
\end{align*}
Since both $e_{N,k}^{\varepsilon}$ and $e_{N,k}$ dominate
$\sum_{i=1}^k(1+|v_i|^2)$ and $\lambda\le\lambda_0$, the three weighted
densities above are bounded by
\begin{equation}\label{eq:fixed-N-common-dominator}
 C_{N,T,k}
 e^{-(q\beta_T-\lambda_0)\sum_{i=1}^k(1+|v_i|^2)}
 \prod_{j=1}^{k}\langle x_j\rangle^{\alpha-qM_*}\in L^1(\dd Z_k),
\end{equation}
where $C_{N,T,k}$ is independent of $n$ and $\varepsilon$.  Moreover, away
from the collision diagonals,
\begin{equation}\label{eq:fixed-N-energy-convergence}
 e_{N,k}^{\varepsilon}(Z_k)\longrightarrow e_{N,k}(Z_k)
 \qquad\text{as }\varepsilon\downarrow0
\end{equation}
for almost every $Z_k$.  Indeed, for each such configuration all pair
distances are positive, and the regularized potential agrees with the
singular potential for sufficiently small $\varepsilon$.

We first remove the approximation of the initial density.  For each
$t\in[0,T]$, \eqref{eq:fixed-N-n-limit} and
\eqref{eq:fixed-N-common-dominator} give, by the subsequence criterion and
dominated convergence,
\begin{equation}\label{eq:fixed-N-energy-n-passage}
 Y_{N,k}^{\varepsilon,n}(t)\longrightarrow
 Y_{N,k}^{\varepsilon}(t).
\end{equation}
We next remove the force regularization.  By
\eqref{eq:fixed-N-epsilon-limit},
\eqref{eq:fixed-N-energy-convergence}, and
\eqref{eq:fixed-N-common-dominator},
\begin{equation}\label{eq:fixed-N-energy-epsilon-passage}
 Y_{N,k}^{\varepsilon}(t)\longrightarrow Y_{N,k}(t).
\end{equation}
The same majorant is uniform in time; hence both convergences hold in
$L^1(0,T)$.  Consequently, for every $a\in L^\infty(0,T)$,
\[
 \sup_{t\le T}\left|
 \int_0^ta(s)\bigl(Y_{N,k}^{\varepsilon,n}-Y_{N,k}^{\varepsilon}\bigr)(s)\,\dd s
 \right|\to0,
\]
and similarly for $Y_{N,k}^{\varepsilon}-Y_{N,k}$.  We use this with
$a=kA$ and $a=kL\frac{N-k}{N}$.

For every fixed $\varepsilon>0$, $\xi_{n,\varepsilon}\to0$ as
$n\to\infty$, and therefore
\[
 \lim_{\varepsilon\downarrow0}\lim_{n\to\infty}
 \bigl(N^{-1}+\xi_{n,\varepsilon}
 +\varepsilon^{q(d-s-1)}\bigr)=N^{-1}.
\]
The source term is constant in time, so the error in its time integral is
bounded uniformly in the upper limit by
$TD_T^k(\xi_{n,\varepsilon}+\varepsilon^{q(d-s-1)})$.  We may therefore pass
first $n\to\infty$ and then $\varepsilon\downarrow0$ in every term of
\eqref{eq:fixed-N-regularized-integrated}, using the convergences above.
Taking limits in the inequality gives \eqref{eq:fixed-N-limit-integrated} for
every $t$.  The $C_tL^1$ convergence at $t=0$ preserves $H_{N,k}(0)=0$, and
the terminal identity has already been preserved above.  Thus
\eqref{eq:fixed-N-error-identities} holds in the limit.
\end{proof}

\section{Weighted estimates for the Vlasov--Riesz--Fokker--Planck equation}\label{app:whole-space-classical}

Choi and Jeong \cite{ChoiJeong2024} establish local well-posedness for
Vlasov--Riesz equations and discuss a corresponding model with a linear
Fokker--Planck operator in velocity.  Related local theories are given in
\cite{WangZhang2025,ChoiJeongKang2024}.  For the equation \eqref{eq:intro-riesz-vpfp}, we establish the weighted
$C^m$ estimates and the local construction required in
\Cref{prop:common-riesz-classical}.  For a prescribed
field, the linear kinetic equation is represented by the stochastic flow of
the corresponding SDE; see \cite[Sections~4.5--4.6]{Kunita1990}.

\subsection{Stochastic flow for a prescribed field}

Let $I=[t_*,t_*+\tau]$, $0<\tau\le1$, and let
\[
 E\in L^\infty(I;C_b^r(\mathcal D;\R^d))
 \cap C(I;C_b^{r-1}(\mathcal D;\R^d)).
\]
For every $u\in I$, the stochastic system
\[
 \dd X_t=V_t\,\dd t,\qquad
 \dd V_t=E(t,X_t)\,\dd t+\sqrt{2\sigma}\,\dd B_t
\]
generates a stochastic flow $(\Phi^E_{u,t})_{u\le t}$ of $C^r$-diffeomorphisms;
see \cite[Sections~4.5--4.6]{Kunita1990}.  Since the phase-space vector field $(v,E(t,x))$ is divergence free, the
flow preserves Lebesgue measure.  The estimates needed below are collected in
the next lemma.

\begin{lemma}
\label{lem:stochastic-flow-estimates}
Assume
\[
 \|E\|_{L^\infty(I;C_b^r)}\le R_E.
\]
There is $C=C(d,r,R_E)$ such that, for $u\le t\le t_*+\tau$,
\begin{equation}\label{eq:flow-small-time-derivatives}
 \|D\Phi^E_{u,t}-I\|_\infty
 +\|D(\Phi^E_{u,t})^{-1}-I\|_\infty
 +\sum_{j=2}^r\Bigl(
   \|D^j\Phi^E_{u,t}\|_\infty
  +\|D^j(\Phi^E_{u,t})^{-1}\|_\infty\Bigr)
 \le C(t-u).
\end{equation}
If $E,\widetilde E$ have a common $C_b^{j_0+1}$ bound for some
$0\le j_0\le r-1$, then
\begin{equation}\label{eq:flow-field-stability}
 \sup_{u\le t\le t_*+\tau}
 \left(
  \|\Phi^E_{u,t}-\Phi^{\widetilde E}_{u,t}\|_\infty
  +\sum_{j=1}^{j_0}
   \|D^j\Phi^E_{u,t}-D^j\Phi^{\widetilde E}_{u,t}\|_\infty
 \right)
 \le C\tau\|E-\widetilde E\|_{L^\infty(I;C_b^{j_0})},
\end{equation}
with the same estimate for the inverse flows.  The constants are independent
of $t_*$.  In the periodic case the derivative estimates are understood for
the periodic lift.
\end{lemma}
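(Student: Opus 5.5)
The flow $\Phi^E_{u,t}$ solves, pathwise, the random ODE
\begin{equation*}
 \frac{\dd}{\dd t}\Phi^E_{u,t}(z)=\bigl(V,E(t,X)\bigr)+\bigl(0,\sqrt{2\sigma}\,\dot B_t\bigr).
\end{equation*}
The noise is additive and does not depend on the state, so we work with the shifted variable. Set $\Phi^E_{u,t}(z)=(X_t,V_t)$ and $W_t:=V_t-\sqrt{2\sigma}(B_t-B_u)$. Then
\begin{equation*}
 \dot X_t=W_t+\sqrt{2\sigma}(B_t-B_u),\qquad \dot W_t=E(t,X_t).
\end{equation*}
For each fixed Brownian path this is a nonautonomous ODE in $z$, and its vector field has state-derivatives bounded by $R_E$ (and by $1$ in the transport part). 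Hence all the $z$-derivatives of the flow can be estimated deterministically, path by path, and the constants do not see the Brownian increment. The Brownian term only enters additively in $X$, so it drops out of every derivative in $z$.

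\textbf{Step 1 (first derivative).} Let $J_t=D_z\Phi^E_{u,t}$. It solves the linear system $\dot J_t=\mathcal A(t)J_t$ with $J_u=I$, where
\begin{equation*}
 \mathcal A(t)=\begin{pmatrix}0&I\\ \nabla_xE(t,X_t)&0\end{pmatrix},\qquad |\mathcal A|\le 1+R_E.
\end{equation*}
Gronwall on $[u,t]$ with $t-u\le\tau\le1$ gives $|J_t|\le e^{1+R_E}$. Then
\begin{equation*}
 |J_t-I|\le\int_u^t|\mathcal A||J|\le C(t-u).
\end{equation*}
The inverse flow $(\Phi^E_{u,t})^{-1}$ is the backward flow of the same field. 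Equivalently, $D\Phi^{-1}=J^{-1}\circ\Phi^{-1}$, and $|J^{-1}-I|\le |J^{-1}|\,|I-J|$. This gives the same bound.

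\textbf{Step 2 (higher derivatives).} Differentiate the equation $j$ times for $2\le j\le r$, by induction on $j$. The $j$-th derivative $D^j\Phi$ satisfies $\dot D^j\Phi=\mathcal A\,D^j\Phi+\mathcal R_j$ with $D^j\Phi_u=0$. The source $\mathcal R_j$ is a sum of products of $D^\ell_xE$ (for $\ell\le j$) with lower derivatives of $\Phi$. By the induction hypothesis those lower derivatives are bounded, and $\|E\|_{C^r_b}\le R_E$, so $\mathcal R_j$ is bounded. Duhamel's formula and Gronwall then give $|D^j\Phi_t|\le C(t-u)$, since the initial value is zero. For the inverse, differentiate the identity $\Phi^{-1}\circ\Phi=\mathrm{id}$ repeatedly (Faà di Bruno). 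Each $D^j\Phi^{-1}$ is a polynomial in $J^{-1}$ and $D^\ell\Phi$ for $2\le\ell\le j$, and every term contains at least one factor $D^\ell\Phi$ with $\ell\ge2$, which is $O(t-u)$. This proves \eqref{eq:flow-small-time-derivatives}. On the torus, the same computation applies to the lift on $\R^d\times\R^d$ with periodic $E$.

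\textbf{Step 3 (stability in the field).} Use the same Brownian path for $E$ and $\widetilde E$, and write $\delta=\Phi^E_{u,t}-\Phi^{\widetilde E}_{u,t}$. The additive noise cancels, and
\begin{equation*}
 \dot\delta=(\delta_V,\ E(t,X)-\widetilde E(t,\widetilde X)),\qquad \delta_u=0.
\end{equation*}
Split the field difference as
\begin{equation*}
 E(t,X)-\widetilde E(t,\widetilde X)=\bigl[E(t,X)-E(t,\widetilde X)\bigr]+\bigl[(E-\widetilde E)(t,\widetilde X)\bigr].
\end{equation*}
The first bracket is bounded by $R_E|\delta|$ and the second by $\|E-\widetilde E\|_\infty$. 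Gronwall then gives $|\delta|\le C\tau\|E-\widetilde E\|_{L^\infty C^0}$. For derivatives of order $j\le j_0$, we subtract the variational equations of Steps 1--2, which produces terms of three kinds.
\begin{itemize}
 \item Terms $(D^\ell E-D^\ell\widetilde E)\circ\widetilde\Phi$ with $\ell\le j$: these are bounded by $\|E-\widetilde E\|_{C^{j_0}}$.
 \item Terms $D^\ell E\circ\Phi-D^\ell E\circ\widetilde\Phi$ with $\ell\le j$: these are bounded by $\|E\|_{C^{j+1}}|\delta|$. This is why a common $C_b^{j_0+1}$ bound is assumed.
 \item Lower-order differences of derivatives of the flows: these are handled by induction on $j$.
\end{itemize}
Since all differences start at zero, Gronwall yields the factor $\tau$. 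For the inverse flows, write $\Phi^{-1}-\widetilde\Phi^{-1}=\Phi^{-1}\circ\widetilde\Phi\circ\widetilde\Phi^{-1}-\Phi^{-1}\circ\Phi\circ\widetilde\Phi^{-1}$. Using the Lipschitz bound on $\Phi^{-1}$, this is at most $C|\widetilde\Phi-\Phi|$. Derivative differences follow in the same way from the bounded derivatives in Step 2.

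\textbf{Main obstacle.} The only delicate point is the bookkeeping in Step 3. We must check that comparing the $j$-th derivatives costs exactly one extra derivative of $E$, and no more, so that the $C_b^{j_0+1}$ hypothesis suffices. We must also check that no constant depends on $t_*$ or on the Brownian path. The second is clear because the noise cancels in every state-derivative and the field bounds hold uniformly in time.
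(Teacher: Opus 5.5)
Your proposal is correct and follows essentially the same route as the paper: variational equations for $D^j\Phi^E_{u,t}$, zero initial data for $j\ge2$, induction plus Gronwall, and synchronous coupling with the same Brownian motion for the stability estimate, where each difference equation costs exactly one extra derivative of $E$. Two differences are only cosmetic: your explicit shift $W_t=V_t-\sqrt{2\sigma}(B_t-B_u)$, which makes the argument a pathwise deterministic ODE, and your treatment of the inverse by differentiating $\Phi^{-1}\circ\Phi=\mathrm{id}$ instead of using the backward equations; note only that the bound on $|J^{-1}|$ you use does need justification, and it follows from the backward equation or from $\det J=1$.
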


\begin{proof}
The first variational equation is
\[
 \frac{\dd}{\dd t}D\Phi^E_{u,t}
 =\begin{pmatrix}0&I_d\\ \nabla E(t,X_t)&0\end{pmatrix}D\Phi^E_{u,t},
 \qquad D\Phi^E_{u,u}=I.
\]
We first estimate the derivatives of a single flow.  For $j\ge2$,
$D^j\Phi^E_{u,t}$ satisfies a linear equation with zero initial data and a
right-hand side depending on $D^\ell\Phi^E_{u,t}$, $\ell<j$, and
$D^\ell E$, $\ell\le j$.  Induction and Gronwall's inequality give
\eqref{eq:flow-small-time-derivatives}; the inverse estimates follow from the
backward equations.

For the stability estimate, we drive the two systems with the same Brownian
motion.  The corresponding difference equations, treated by the same
induction, give \eqref{eq:flow-field-stability}.  The constants are uniform
in $t_*$.
\end{proof}

\subsection{Weighted estimates for a prescribed field}

We set
\[
 w^{\mathcal D}_{M,a}(x,v)=
 \begin{cases}
 e^{a|v|^2},&\mathcal D=\T^d,\ M=0,\\
 \langle x\rangle^Me^{a|v|^2},&\mathcal D=\R^d,\ M>d,
 \end{cases}
 \qquad
 \|h\|_{\mathcal C^j_{M,a}}
 =\max_{|\xi|+|\eta|\le j}
 \|w^{\mathcal D}_{M,a}\partial_x^\xi\partial_v^\eta h\|_\infty.
\]
Thus $\mathcal C^j_{0,a}=C^j_{\T,a}$ and
$\mathcal C^j_{M,a}=C^j_{M,a}$ on $\R^d$.  We fix $0<a'<a$ and, on $\R^d$,
$d<M'<M$; on $\T^d$ set $M'=M=0$.

\begin{lemma}
\label{lem:whole-space-weight-flow}
Let $0<T\le1$ and assume
\[
 E\in L^\infty(0,T;C_b^1(\mathcal D;\R^d))
 \cap C([0,T];C_b(\mathcal D;\R^d)),
 \qquad \|E\|_{L^\infty C_b^1}\le R_E,
\]
so that the stochastic flow $\Phi^E_{0,t}$ is given by
\Cref{lem:stochastic-flow-estimates} with $r=1$.  With
$B_T^*=\sup_{r\le T}|B_r|$, there is a random variable $\Theta_T$, depending
only on $B_T^*$ and the fixed bounds, such that
\[
 \sup_{t\le T,z_0}
 \frac{w^{\mathcal D}_{M',a'}(\Phi^E_{0,t}z_0)}
      {w^{\mathcal D}_{M,a}(z_0)}
 \le \Theta_T.
\]
For every finite $p\ge1$, $\Theta_T$ has finite $p$th moment when $T$ is
sufficiently small, and $\Theta_T\to1$ almost surely and in $L^1$ as
$T\downarrow0$.
\end{lemma}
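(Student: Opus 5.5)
Write $\Phi^E_{0,t}z_0=(X_t,V_t)$ with $z_0=(x_0,v_0)$. The plan is to bound the velocity and position components pathwise, uniformly in $z_0$, and then compare the two weights factor by factor. From the SDE,
\[
 V_t=v_0+\int_0^tE(r,X_r)\,\dd r+\sqrt{2\sigma}B_t,
 \qquad X_t=x_0+\int_0^tV_r\,\dd r .
\]
Hence, using only $\|E\|_\infty\le R_E$,
\[
 |V_t-v_0|\le R_ET+\sqrt{2\sigma}B_T^*=:\delta_T,
 \qquad |X_t-x_0|\le T(|v_0|+\delta_T)
\]
for all $t\le T$, with the second bound read on the lift in the periodic case. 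Both bounds are deterministic functions of $(B_T^*,|v_0|)$; only boundedness of $E$ is used, and the $C_b^1$ hypothesis merely guarantees that the flow exists.

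\emph{Velocity factor.} By Young's inequality, for any $\theta>0$,
\[
 |V_t|^2\le(1+\theta)|v_0|^2+(1+\theta^{-1})\delta_T^2 .
\]
Choose $\theta=a/a'-1>0$, so that $a'(1+\theta)=a$. This gives
\[
 e^{a'|V_t|^2}\le e^{a|v_0|^2}\,e^{a'(1+\theta^{-1})\delta_T^2}.
\]
On $\T^d$ this already proves the claim with $\Theta_T=e^{c\,\delta_T^2}$, where $c=a'(1+\theta^{-1})$.

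\emph{Spatial factor on $\R^d$.} Peetre's inequality gives $\langle x_0+y\rangle^{M'}\le2^{M'/2}\langle x_0\rangle^{M'}\langle y\rangle^{M'}$, hence
\[
 \langle X_t\rangle^{M'}\le C\langle x_0\rangle^{M'}\bigl(1+T|v_0|+T\delta_T\bigr)^{M'} .
\]
Since $a-a'>0$, the polynomial factor $(1+T|v_0|)^{M'}$ is absorbed by the surplus Gaussian. To do this, shrink $\theta$ so that $a'(1+\theta)<a$, which leaves a factor $e^{-\kappa|v_0|^2}$ with $\kappa=a-a'(1+\theta)>0$. Then
\[
 \sup_{v_0}(1+T|v_0|)^{M'}e^{-\kappa|v_0|^2}\le1+C T^{\gamma}
\]
for some $\gamma>0$; for instance, split into $|v_0|\le T^{-1/2}$ and $|v_0|>T^{-1/2}$. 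The remaining factor $\langle x_0\rangle^{M'-M}\le1$ is harmless. This yields
\[
 \Theta_T=(1+CT^{\gamma})(1+T\delta_T)^{M'}\,C'^{\,\mathbf 1}\,e^{c\,\delta_T^2}.
\]
The constant from Peetre must be avoided so that $\Theta_T\to1$. To that end, use instead the sharper bound $\langle x+y\rangle\le\langle x\rangle+|y|$, so that $\langle X_t\rangle^{M'}\le\langle x_0\rangle^{M'}(1+|X_t-x_0|)^{M'}$, and no constant appears.

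\emph{Moments and the limit.} The quantity $\Theta_T$ is a function of $B_T^*$ alone. Since $\delta_T\to0$ almost surely as $T\downarrow0$, we get $\Theta_T\to1$ almost surely. For moments, $\E e^{p c\,\delta_T^2}<\infty$ as soon as $2pc\sigma T$ lies below the Gaussian threshold for $\sup_{r\le T}|B_r|$. This follows from the reflection principle, $\Pp(B_T^*>u)\le2d\,e^{-u^2/(2dT)}$, and it gives uniform integrability for $T$ small, hence $L^1$ convergence by Vitali. The polynomial factors in $\delta_T$ are handled by Hölder's inequality.

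The main obstacle is exactly this exponential square moment, $e^{c\,\delta_T^2}$. It is finite only for small $T$, which is why the statement restricts to sufficiently small $T$. A second delicate point is keeping every multiplicative constant equal to $1$ in the limit $T\downarrow0$, which is why $(1+\theta)$ and the sharp spatial bound must be chosen as above rather than with crude constants.
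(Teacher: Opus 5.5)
Your proposal is correct and follows essentially the paper's route. The common steps are pathwise bounds $|V_t-v_0|\le\delta_T$ and $|X_t-x_0|\le T|v_0|+T\delta_T$, then Young's inequality leaving a Gaussian surplus $e^{-(a-a'(1+\vartheta))|v_0|^2}$ that absorbs the spatial growth, and finally the reflection principle with uniform integrability for the moments and the $L^1$ limit. The differences are cosmetic. The paper bounds $\log(\langle X_t\rangle/\langle x_0\rangle)$ by $T|v_0|$ plus a Brownian term and absorbs $M'T|v_0|$ into half of the Gaussian surplus plus $CT^2$ by Young. This gives the single expression $\Theta_T=\exp(C(T+B_T^*)^2)$ without your splitting of $v_0$-space or the Peetre detour.
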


\begin{proof}
Integrating the characteristic equations, for $0\le t\le T$ we have
\[
 |V_t-v_0|\le\delta_T,
 \qquad
 |X_t-x_0-tv_0|\le\eta_T,
\]
where
\[
 \delta_T=R_ET+\sqrt{2\sigma}B_T^*,
 \qquad
 \eta_T=\frac12R_ET^2+\sqrt{2\sigma}TB_T^*.
\]
We first control the velocity weight.  Choose $\vartheta>0$ with
$a'(1+\vartheta)<a$ and set $c_a=a-a'(1+\vartheta)$.  Then
\[
 a'|V_t|^2-a|v_0|^2
 \le -c_a|v_0|^2+C\delta_T^2.
\]
On $\R^d$ we also estimate the spatial weight.  Since
$|\nabla\log\langle x\rangle|\le1$,
\[
 \log\frac{\langle X_t\rangle}{\langle x_0\rangle}
 \le T|v_0|+\eta_T,
 \qquad
 M'T|v_0|\le\frac{c_a}{2}|v_0|^2+CT^2.
\]
Hence, in both geometries,
\[
 \frac{w^{\mathcal D}_{M',a'}(\Phi^E_{0,t}z_0)}
      {w^{\mathcal D}_{M,a}(z_0)}
 \le \exp\bigl(C(T+B_T^*)^2\bigr)=:\Theta_T,
\]
with constants depending only on the fixed exponents, $R_E,d$, and $\sigma$.
The reflection principle and Brownian scaling give
\[
 \Pp(B_T^*>R)\le C_d e^{-c_dR^2/T};
\]
see, for instance, \cite[Sections~2.6 and~2.8.A]{KaratzasShreve1991}.  Consequently,
$\E e^{pC(B_T^*)^2}<\infty$ for every finite $p\ge1$ once $T$ is
sufficiently small.  Since $B_T^*\to0$ almost surely as $T\downarrow0$, the exponential tail
bound gives uniform integrability and therefore
$\Theta_T\to1$ in $L^1$.
\end{proof}

\begin{lemma}
\label{lem:whole-space-prescribed-linear}
Let $r\ge3$ and suppose
\[
 E\in L^\infty(0,T;C_b^r(\mathcal D))
       \cap C([0,T];C_b^{r-1}(\mathcal D)),
 \qquad \|E\|_{L^\infty C_b^r}\le R_E.
\]
If $g_0\ge0$ belongs to $\mathcal C^r_{M,a}$, then, for some
$0<T_0\le T$ sufficiently small, the equation
\begin{equation}\label{eq:whole-space-prescribed-linear}
 \partial_tg+v\cdot\nabla_xg+E\cdot\nabla_vg=\sigma\Delta_vg,
 \qquad g(0)=g_0,
\end{equation}
has a nonnegative classical solution that preserves mass and satisfies
\begin{align}
 \sup_{t\le\tau}\|g(t)\|_{\mathcal C^r_{M',a'}}
 &\le(1+\omega(\tau;R_E))\|g_0\|_{\mathcal C^r_{M,a}},
 \qquad 0<\tau\le T_0,
 \label{eq:whole-space-prescribed-small-time-bound}\\
 g&\in C([0,T_0];\mathcal C^{r-1}_{M',a'}),
 \qquad \partial_tg\in C([0,T_0];C_b^{r-3}),
 \label{eq:whole-space-prescribed-time-regularity}
\end{align}
where $\omega(\tau;R_E)\to0$ as $\tau\downarrow0$.  If $0\le j\le r-1$, $[t_*,t_*+\tau]\subset[0,T]$, the fields
$E,\widetilde E$ have a common $C_b^{j+1}$ bound, and the two equations have
the same datum $h_*\in\mathcal C^{j+1}_{M,a}$ at time $t_*$, then
\begin{equation}\label{eq:whole-space-interval-stability}
 \|g_E-g_{\widetilde E}\|_{C([t_*,t_*+\tau];\mathcal C^j_{M',a'})}
 \le C\tau\|h_*\|_{\mathcal C^{j+1}_{M,a}}
               \|E-\widetilde E\|_{L^\infty_tC_b^j},
\end{equation}
where $C$ is independent of $t_*$.
\end{lemma}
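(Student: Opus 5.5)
We use the stochastic representation. Let $\Phi^E_{0,t}$ be the flow from \Cref{lem:stochastic-flow-estimates}, with $r$ as in the statement. Set
\[
 g(t,z)=\E\bigl[g_0\bigl((\Phi^E_{0,t})^{-1}z\bigr)\bigr].
\]
Because the flow preserves Lebesgue measure, this is the forward (Fokker--Planck) solution: it is nonnegative and preserves mass. That it is a classical solution of \eqref{eq:whole-space-prescribed-linear} will follow from Kunita's backward It\^o formula for the inverse flow \cite[Sections~4.5--4.6]{Kunita1990}, once the derivatives of $g$ are controlled. Alternatively, one can write $g(t,\cdot)$ as the law of $\Phi^E_{0,t}$ pushed from $g_0$ and use the Kolmogorov forward equation; the same derivative bounds justify either route.

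\textbf{Step 1 (weighted derivative bound).} Differentiate under the expectation using the chain rule (Fa\`a di Bruno). Each $\partial_x^\xi\partial_v^\eta g$ with $|\xi|+|\eta|\le r$ is an expectation of terms of the form $(D^jg_0)\circ(\Phi^E_{0,t})^{-1}$ times products of $D^\ell(\Phi^E_{0,t})^{-1}$ with $\ell\le r$. The leading term has $j=|\xi|+|\eta|$ and carries factors $D(\Phi^E_{0,t})^{-1}=I+O(t)$. All other terms carry at least one factor $D^\ell(\Phi^E_{0,t})^{-1}$ with $\ell\ge2$, which is $O(t)$ by \eqref{eq:flow-small-time-derivatives}. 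This gives, pathwise,
\[
 |D^jg(t,z)|\le\bigl(1+Ct\bigr)\,\E\Bigl[\max_{i\le j}|D^ig_0|\bigl((\Phi^E_{0,t})^{-1}z\bigr)\Bigr].
\]
Next multiply by $w_{M',a'}(z)$ and write $z=\Phi^E_{0,t}z_0$. Then $|D^ig_0(z_0)|\le\|g_0\|_{\mathcal C^r_{M,a}}/w_{M,a}(z_0)$, and \Cref{lem:whole-space-weight-flow} bounds the weight ratio by $\Theta_t$. Hence
\[
 \|g(t)\|_{\mathcal C^r_{M',a'}}\le(1+Ct)\,\E\Theta_t\,\|g_0\|_{\mathcal C^r_{M,a}}.
\]
Since $\E\Theta_t\to1$, this is \eqref{eq:whole-space-prescribed-small-time-bound} with $\omega(\tau)=(1+C\tau)\E\Theta_\tau-1$. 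One technical point: the inverse flow must be used inside the weight lemma. I would redo \Cref{lem:whole-space-weight-flow} for the backward characteristics; the estimates are identical.

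\textbf{Step 2 (time regularity).} Continuity in $\mathcal C^{r-1}_{M',a'}$ comes from losing one derivative. By the mean value theorem, $D^{j}g_0\circ(\Phi^E_{0,t})^{-1}-D^{j}g_0\circ(\Phi^E_{0,s})^{-1}$ is bounded by $D^{j+1}g_0$ times $|(\Phi^E_{0,t})^{-1}-(\Phi^E_{0,s})^{-1}|$. This displacement is $O(|t-s|(1+|v|)+|B_t-B_s|)$. The factor $(1+|v|)$ is absorbed by the strict gap $a'<a$, and the spatial weight by $M'<M$. Together with the continuity in time of the derivatives of the flow, this gives $g\in C([0,T_0];\mathcal C^{r-1}_{M',a'})$.

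The statement on $\partial_tg$ then follows from the equation itself. We have $\partial_tg=\sigma\Delta_vg-v\cdot\nabla_xg-E\cdot\nabla_vg$, which involves at most two derivatives of $g$, multiplied by $v$ or by $E\in C(C_b^{r-1})$. Using the weighted bounds this gives $\partial_tg\in C([0,T_0];C_b^{r-3})$.

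\textbf{Step 3 (stability).} Run both flows from time $t_*$, driven by the same Brownian motion. Then
\[
 g_E-g_{\widetilde E}=\E\bigl[h_*\circ(\Phi^E_{t_*,t})^{-1}-h_*\circ(\Phi^{\widetilde E}_{t_*,t})^{-1}\bigr].
\]
Differentiate up to order $j$ and telescope. Each resulting term contains either $D^{i+1}h_*$ at an intermediate point times the difference of the inverse flows, or $D^ih_*$ times a difference $D^\ell(\Phi^E)^{-1}-D^\ell(\Phi^{\widetilde E})^{-1}$ with $\ell\le j$. Both are bounded via \eqref{eq:flow-field-stability} by $C\tau\|E-\widetilde E\|_{L^\infty C_b^j}$. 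The intermediate points lie on segments between the two inverse flows, so the weight comparison must hold uniformly along those segments. This needs a version of \Cref{lem:whole-space-weight-flow} with sup over convex combinations, which again follows from the displacement bounds. Taking expectations of $\Theta$ yields \eqref{eq:whole-space-interval-stability}, with a constant independent of $t_*$ because all flow constants are.

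\textbf{Main obstacle.} I expect the hardest part to be the weight bookkeeping for the inverse flows and the intermediate points, while keeping $\E\Theta$ finite. The exponential moment of $(B^*_T)^2$ is finite only for $T$ small; this is exactly what forces $T_0$ small.
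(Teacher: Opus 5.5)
Your proposal is correct and follows essentially the same route as the paper. The shared steps are: the inverse-flow representation $g(t,z)=\E[g_0(\Psi_t^Ez)]$; Fa\`a di Bruno with \eqref{eq:flow-small-time-derivatives} and the factor $\E\Theta_\tau$ for the $\mathcal C^r$ bound; losing one derivative against a strict weight gap for time continuity; and synchronous coupling with \eqref{eq:flow-field-stability} and the mean-value theorem for stability. For time continuity the paper approximates $g_0$ in an intermediate space $\mathcal C^{r-1}_{M'',a''}$, whereas you use a direct mean-value estimate; the mechanism is the same. One small simplification: you do not need a backward version of \Cref{lem:whole-space-weight-flow} in Step 1. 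After your substitution $z=\Phi^E_{0,t}z_0$, the ratio $w_{M',a'}(\Phi^E_{0,t}z_0)/w_{M,a}(z_0)$ is exactly the forward quantity that lemma already controls.
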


\begin{proof}
We first prove the weighted $C^r$ estimate.  Let
$\Psi_t^E=(\Phi^E_{0,t})^{-1}$.  By the stochastic-flow representation,
\[
 g_E(t,z)=\E[g_0(\Psi_t^Ez)],
\]
with positivity, conservation of mass, and uniqueness; see
\cite[Sections~4.5--4.6]{Kunita1990}.  For $|\beta|\le r$, the Fa\`a di Bruno
formula and \eqref{eq:flow-small-time-derivatives} give
\[
 D^\beta(g_0\circ\Psi_t^E)
 =(D^\beta g_0)(\Psi_t^E)+\mathcal R_\beta(t),
 \qquad
 |\mathcal R_\beta(t)|
 \le Ct\sum_{|\gamma|\le|\beta|}|D^\gamma g_0(\Psi_t^E)|.
\]
Using \Cref{lem:whole-space-weight-flow} and dominated convergence,
\[
 \sup_{t\le\tau}\|g_E(t)\|_{\mathcal C^r_{M',a'}}
 \le (1+C_r\tau)\E\Theta_\tau\,
      \|g_0\|_{\mathcal C^r_{M,a}}
 =:(1+\omega(\tau;R_E))\|g_0\|_{\mathcal C^r_{M,a}},
\]
where $\omega(\tau;R_E)\to0$ as $\tau\downarrow0$.

We next prove the time continuity.  Choose $a'<a''<a$ and, on $\R^d$,
$d<M'<M''<M$.  Approximation of $g_0$
in $\mathcal C^{r-1}_{M'',a''}$, together with
\Cref{lem:whole-space-weight-flow}, gives
\[
 g\in C([0,T_0];\mathcal C^{r-1}_{M',a'}).
\]
Equation \eqref{eq:whole-space-prescribed-linear} then yields
$\partial_tg\in C([0,T_0];C_b^{r-3})$.

Finally, we compare two prescribed fields.  On
$[t_*,t_*+\tau]$, the synchronous coupling,
\eqref{eq:flow-field-stability}, the Fa\`a di Bruno formula, and the
mean-value theorem give
\[
 \|g_E-g_{\widetilde E}\|_{C([t_*,t_*+\tau];\mathcal C^j_{M',a'})}
 \le C\tau\|h_*\|_{\mathcal C^{j+1}_{M,a}}
       \|E-\widetilde E\|_{L^\infty_tC_b^j},
\]
which is \eqref{eq:whole-space-interval-stability}.
\end{proof}

\subsection{Construction of the nonlinear solution}

\begin{proof}[Proof of \Cref{prop:common-riesz-classical}]
We set $r=m-2\ge3$.  On $\R^d$ we fix $d<M_1<M$ and $0<a_1<a_0$; on $\T^d$ set
$M=M_1=0$.  The weighted bounds imply, for $0\le j\le r$,
\[
 \max_{|\xi|\le j}\left(
 \|\partial_x^\xi\rho_h\|_\infty
 +\mathbf1_{\{\mathcal D=\R^d\}}\|\partial_x^\xi\rho_h\|_1\right)
 \le C\|h\|_{\mathcal C^j_{M_1,a_1}}.
\]
Since $s<d/2$ and $d\ge2$, one has $s+1<d$, hence
$K\in L^1_{\mathrm{loc}}$.  On $\T^d$ this gives $K\in L^1$.  On $\R^d$,
split $K=K_<+K_>$ with $K_<\in L_c^1$ and
$K_>\in C_b^\infty$.  Young's inequality then gives, also for
differences,
\begin{equation}\label{eq:whole-space-force-derivative-bound}
 \|K*\rho_h\|_{W^{j,\infty}}
 \le C\|h\|_{\mathcal C^j_{M_1,a_1}},
 \qquad 0\le j\le r.
\end{equation}
We now construct the nonlinear solution by Picard iteration.  Let
$\mathcal S_E(t)$ denote the solution operator in
\Cref{lem:whole-space-prescribed-linear}, and set
\[
 f^{(0)}=f_0,\qquad E^{(n)}=K*\rho_{f^{(n)}},\qquad
 f^{(n+1)}=\mathcal S_{E^{(n)}}(t)f_0.
\]
Choose $R>2\|f_0\|_{\mathcal C^r_{M,a_0}}$.  By
\eqref{eq:whole-space-force-derivative-bound} and
\eqref{eq:whole-space-prescribed-small-time-bound}, for $T_f>0$ sufficiently
small,
\[
 \sup_n\|f^{(n)}\|_{L_t^\infty\mathcal C^r_{M_1,a_1}}\le R,
 \qquad
 \sup_n\|E^{(n)}\|_{L_t^\infty C_b^r}\le C_R.
\]
To obtain contraction, set
$\Delta_n=\|f^{(n)}-f^{(n-1)}\|_{C_t\mathcal C^{r-1}_{M_1,a_1}}$.
The field estimate and \eqref{eq:whole-space-interval-stability}, with
$t_*=0$, $\tau=T_f$, and $j=r-1$, give
\[
 \Delta_{n+1}\le C_RT_f\Delta_n.
\]
Taking $C_RT_f<1$ and using the weighted $L^1$ embedding,
\begin{equation}\label{eq:riesz-picard-strong-limit}
 f^{(n)}\to f
 \quad\hbox{in }C_t\mathcal C^{r-1}_{M_1,a_1}\cap C_tL^1,
 \qquad
 E^{(n)}\to E_f
 \quad\hbox{in }C_tC_b^{r-1}.
\end{equation}
Moreover,
\[
 \rho_{f^{(n)}}\to\rho_f\quad\text{in }C_tW^{r-1,\infty},
\]
with convergence also in $C_tW^{r-1,1}$ on $\R^d$; hence
\[
 E_f=K*\rho_f.
\]

It remains to recover the derivatives of order $r$.  For $|\beta|=r$,
$w^{\mathcal D}_{M_1,a_1}D^\beta f^{(n)}$ is bounded in $L^\infty$.
Weak-* compactness and integration by parts against compactly supported test
functions identify the limit with $w^{\mathcal D}_{M_1,a_1}D^\beta f$; hence
\[
 \max_{|\beta|=r}
 \|w^{\mathcal D}_{M_1,a_1}D^\beta f\|_{L^\infty((0,T_f)\times\Omega_{\mathcal D})}
 <\infty.
\]
Passing to the limit in the linear equations gives the nonlinear
Vlasov--Riesz--Fokker--Planck equation in distributions.  Since
$r-1=m-3\ge2$, \eqref{eq:riesz-picard-strong-limit} and
\eqref{eq:whole-space-force-derivative-bound} imply
\[
 f\in C([0,T_f];C^2_{\mathrm{loc}}),\qquad
 E_f\in C([0,T_f];C_b^1).
\]
Consequently
\[
 -v\cdot\nabla_xf-E_f\cdot\nabla_vf+\sigma\Delta_vf
 \in C([0,T_f];C_{\mathrm{loc}}).
\]
The distributional identity therefore implies, locally uniformly in phase
space,
\[
 f(t)-f(0)=\int_0^t
 \bigl(-v\cdot\nabla_xf-E_f\cdot\nabla_vf
       +\sigma\Delta_vf\bigr)(s)\,\dd s.
\]
Hence $f$ is $C^1$ in time and satisfies the equation pointwise, so it is a
classical solution in the sense of \Cref{subsec:intro-model}.  Positivity and mass are preserved by the
$C_tL^1$ convergence.  The estimates above close the Picard iteration in
$C_t\mathcal C^{r-1}_{M_1,a_1}\cap C_tL^1$.

Since $r-1=m-3\ge2$, the weighted pointwise bounds also imply
\[
 \int_0^{T_f}\|\nabla_v f(t)\|_1\,\dd t<\infty.
\]
This proves \Cref{prop:common-riesz-classical}.
\end{proof}

\section{Weighted Sobolev and Gaussian estimates for the Vlasov--Poisson--Fokker--Planck equation}\label{app:coulomb-estimates}

For the three-dimensional VPFP equation on $\R^3$ with velocity diffusion
$\sigma\Delta_v f$, local classical solutions were obtained by Victory and
O'Dwyer
\cite{VictoryODwyer1990}, while Bouchut \cite{Bouchut1993} proved global
existence and uniqueness; see also \cite[Section~1.2]{HuangLiuPickl2020}.
The argument below constructs in both geometries a local solution satisfying
the weighted Sobolev estimates required by
\Cref{cor:intro-quantitative-coulomb}.  On $\R^3$ the spatial weight is
transported by the free flow.  The Gaussian moment is then converted into a
pointwise bound by \Cref{lem:gauss-integral-to-pointwise}.

\subsection{Weighted spaces and local regularity}

We set
\begin{equation}\label{eq:gauss-domain-confinement}
 r_{\T}(x,v):=1+|v|^2,\qquad
 r_{\R}(x,v):=1+|x|^2+|v|^2,
\end{equation}
and write $r_{\mathcal D}$ for the corresponding function.

\begin{lemma}\label{lem:gauss-integral-to-pointwise}
Let $m>3$, $h\ge0$, and $\mathcal D\in\{\T^3,\R^3\}$.  If
\[
 h\in H^m(\Omega_{\mathcal D}),\qquad
 \int_{\Omega_{\mathcal D}}h(z)e^{a r_{\mathcal D}(z)}\,\dd z<\infty
\]
for some $a>0$, then there exist $a_*>0$ and $C_*<\infty$ such that
\begin{equation}\label{eq:gauss-pointwise-bound}
 0\le h(x,v)\le C_*e^{-a_*r_{\mathcal D}(x,v)}.
\end{equation}
\end{lemma}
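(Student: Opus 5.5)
The idea is to combine a local averaging argument with the Sobolev embedding $H^m(\Omega_{\mathcal D})\hookrightarrow C^{1}_b$, which holds because $m>3=\dim\Omega_{\mathcal D}/2$. We may assume $m\le 4$ after reducing to $H^{m}$ with $3<m<4$, or simply use $C^{0,\theta}$ regularity for some $\theta\in(0,1]$. From the embedding, $h$ is bounded and uniformly Hölder continuous: $|h(z)-h(z')|\le C_1|z-z'|^\theta$ with $C_1\lesssim\|h\|_{H^m}$. Nonnegativity is immediate, since $h\ge0$ almost everywhere and $h$ is continuous. On $\T^3\times\R^3$ we work on the periodic lift, so distances in $x$ are geodesic and the balls are small enough to embed in a chart.

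Fix $z_0$ and set $\delta:=h(z_0)$. Hölder continuity gives $h\ge\delta/2$ on the ball $B_r(z_0)$ with $r=(\delta/(2C_1))^{1/\theta}$; we also cap $r\le1$, which is automatic once $\delta$ is small. Next, on $B_1(z_0)$ we have $r_{\mathcal D}(z)\ge \tfrac12 r_{\mathcal D}(z_0)-C$. This follows from $|z|^2\ge\tfrac12|z_0|^2-|z-z_0|^2$, applied to $v$ alone on the torus and to $(x,v)$ on $\R^3$. Writing $\mathcal M:=\int h e^{ar_{\mathcal D}}$, we obtain
\[
 \frac{\delta}{2}\,|B_r|\,e^{\frac a2 r_{\mathcal D}(z_0)-aC}
 \le\int_{B_r(z_0)}h\,e^{ar_{\mathcal D}}\le\mathcal M .
\]
Since $|B_r|=c\,r^6=c'\delta^{6/\theta}$, this yields $\delta^{1+6/\theta}\le C\mathcal M e^{-\frac a2 r_{\mathcal D}(z_0)}$, that is, $h(z_0)\le C_*e^{-a_*r_{\mathcal D}(z_0)}$ with $a_*=\frac{a}{2(1+6/\theta)}$. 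If instead $r$ would exceed $1$, then $\delta\ge 2C_1$. Taking $r=1$ gives $h\ge\delta/2$ on $B_1$ for $\delta\ge2C_1$, and the same computation yields $\delta\le C\mathcal M e^{-\frac a2 r_{\mathcal D}(z_0)}$, which is even better. Combining the two cases, and using $\|h\|_\infty<\infty$ to absorb constants, gives \eqref{eq:gauss-pointwise-bound}.

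The only delicate point is the uniform Hölder modulus, which must come from the Sobolev embedding on the full (unbounded) phase space. This is standard for $H^m$ with $m>3$ on $\R^6$ or $\T^3\times\R^3$. Everything else is the elementary comparison of $r_{\mathcal D}$ on unit balls.
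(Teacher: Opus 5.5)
Your argument is correct. It follows the same overall strategy as the paper's proof: localize at unit scale, use $r_{\mathcal D}(z)\ge\tfrac12 r_{\mathcal D}(z_0)-C$ to turn the Gaussian moment into an exponentially small local mass, then upgrade local $L^1$ smallness to a pointwise bound using uniform regularity. The difference is in that last step.

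The paper multiplies $h$ by a translated smooth cutoff $\chi_{z_0}$ and applies the Gagliardo--Nirenberg inequality $\|g\|_\infty\le C\|g\|_{H^m}^{6/(m+3)}\|g\|_1^{(m-3)/(m+3)}$ on $\R^6$, using charts on $\T^3$. This gives $a_*=ac_0(m-3)/(m+3)$.

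You use only the embedding $H^m\hookrightarrow C^{0,\theta}$, together with the observation that $h\ge h(z_0)/2$ on a ball of radius comparable to $h(z_0)^{1/\theta}$. This is an elementary proof of the H\"older analogue of the same interpolation, $\|g\|_{L^\infty}\lesssim[g]_{\theta}^{6/(6+\theta)}\|g\|_{L^1}^{\theta/(6+\theta)}$. Your exponent $a\theta/(2(\theta+6))$ has the same structure as the paper's, with $\theta\approx m-3$ when $3<m<4$.

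What each route buys:
\begin{itemize}
\item Your route is self-contained. It applies to any nonnegative, uniformly H\"older function with a Gaussian moment.
\item The paper's route uses the full $H^m$ regularity, so its interpolation exponent $(m-3)/(m+3)$ improves as $m$ grows, whereas yours is capped at $1/7$ because $\theta\le1$. The lemma only needs some $a_*>0$, so this does not matter here.
\end{itemize}

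Two small slips, neither affecting the argument:
\begin{itemize}
\item $H^m(\R^6)\hookrightarrow C^1_b$ requires $m>4$, not $m>3$. This is harmless, since you only use $C^{0,\theta}$ for some $0<\theta<\min\{1,m-3\}$, which follows from $H^m\subset H^{3+\theta}\hookrightarrow C^{0,\theta}$.
\item On the unit-volume torus, a ball of radius $1$ in the periodic lift wraps around. Either cap the radius below $1/2$, or note that the lifted integral is at most a fixed multiple of $\mathcal M$.
\end{itemize}
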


\begin{proof}
Choose a fixed smooth cutoff equal to one on the unit ball and supported in a
ball of fixed radius, and denote by $\chi_{z_0}$ its translate about $z_0$.
There is $c_0>0$, independent of $z_0$, such that
$r_{\mathcal D}(z)\ge c_0r_{\mathcal D}(z_0)$ on the support of
$\chi_{z_0}$.  Hence
\[
 \|\chi_{z_0}h\|_1
 \le e^{-ac_0r_{\mathcal D}(z_0)}
      \int h e^{a r_{\mathcal D}}.
\]
By the Gagliardo--Nirenberg interpolation inequality in dimension six
\cite{AdamsFournier2003},
\[
 \|g\|_\infty
 \le C_m\|g\|_{H^m}^{6/(m+3)}
            \|g\|_1^{(m-3)/(m+3)},
 \qquad m>3.
\]
Since $\|\chi_{z_0}h\|_{H^m}\le C\|h\|_{H^m}$ uniformly in $z_0$, we obtain
\[
 \|\chi_{z_0}h\|_\infty
 \le C_*\exp\!\left(-\frac{ac_0(m-3)}{m+3}
             r_{\mathcal D}(z_0)\right),
\]
which gives \eqref{eq:gauss-pointwise-bound}.  For $\mathcal D=\T^3$, apply the estimate in each chart of a fixed finite
atlas in the spatial variable.
\end{proof}

On $\R^3$ define
\begin{equation}\label{eq:whole-space-moving-sobolev-weight}
 \omega_{\mu,\ell,t}(x,v):=\langle x-tv\rangle^\mu\langle v\rangle^\ell,
\end{equation}
and
\begin{equation}\label{eq:whole-space-moving-sobolev-norm}
 \|h\|_{H^m_{\mu,\ell}(t)}^2
 :=\sum_{|\xi|+|\eta|\le m}\int_{\R^6}
 \omega_{\mu,\ell,t}^2
 |\partial_x^\xi\partial_v^\eta h|^2\,\dd x\dd v.
\end{equation}
We write $h\in C_tH^j_{\mu,\ell}(t)$ when
\begin{equation}\label{eq:moving-weight-continuity-definition}
 t\longmapsto\omega_{\mu,\ell,t}\partial_x^\xi\partial_v^\eta h(t)
 \quad\hbox{belongs to }C([0,T];L^2),\qquad |\xi|+|\eta|\le j.
\end{equation}
We record two consequences of this definition.  Since
$\omega_{\mu,\ell,t}\ge1$, the weighted norm controls $H^j$, and
\eqref{eq:moving-weight-continuity-definition} implies the corresponding
$C_tH^j$ continuity.  For $\mu,\ell>3/2$, weighted
Cauchy--Schwarz gives, uniformly on bounded time intervals,
\[
 \|\partial_x^\xi\rho_h(t)\|_{L_x^2}
 \le C\|h(t)\|_{H^j_{\mu,\ell}(t)},
\]
and, using the change of variables $y=x-tv$,
\[
 \|\partial_x^\xi\rho_h(t)\|_{L_x^1}
 \le
 \|\omega_{\mu,\ell,t}\partial_x^\xi h(t)\|_2
 \left(\int_{\R^6}\langle y\rangle^{-2\mu}
                    \langle v\rangle^{-2\ell}\,\dd y\dd v\right)^{1/2}.
\]
Thus, for $\mu,\ell>3/2$,
\[
 \rho_h\in C_tH_x^j,
 \qquad
 \partial_x^\xi\rho_h\in C_tL_x^1,
\]
whenever \eqref{eq:moving-weight-continuity-definition} holds.

\subsection{Density, field and commutator bounds}\label{subsec:coulomb-field-bounds}

We set
\[
 \omega_t=
 \begin{cases}
 \langle v\rangle^\ell,&\mathcal D=\T^3,\\
 \langle x-tv\rangle^\mu\langle v\rangle^\ell,&\mathcal D=\R^3,
 \end{cases}
 \qquad
 \mathcal H^j(t)=
 \begin{cases}H^j_\ell,&\mathcal D=\T^3,\\
 H^j_{\mu,\ell}(t),&\mathcal D=\R^3.
 \end{cases}
\]

The next estimate converts phase-space Sobolev control into bounds for the
spatial density and the Coulomb field.  On $\R^3$, the $L_x^1$ term is
used to treat the far-field part of the kernel.

\begin{lemma}\label{lem:whole-space-coulomb-sobolev-field}
For $j\ge0$, $\ell>3/2$ and, on $\R^3$, $\mu>3/2$,
\begin{equation}\label{eq:whole-space-coulomb-sobolev-density-field}
 \|\rho_h\|_{H_x^j}
 +\sum_{|\gamma|\le j}\|\partial_x^\gamma\rho_h\|_{L_x^1}
 +\|K*\rho_h\|_{H_x^j}
 \le C\|h\|_{\mathcal H^j(t)}.
\end{equation}
The constant is independent of $t$, and the estimate also holds for signed
differences.
\end{lemma}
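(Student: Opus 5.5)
We bound the three terms on the left of \eqref{eq:whole-space-coulomb-sobolev-density-field} separately: the density terms come from weighted Cauchy--Schwarz in $v$, and the field term from splitting the Coulomb kernel into a near part and a far part. Since $\rho_h$ and $K*\rho_h$ are linear in $h$, every bound applies verbatim to signed differences.

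\emph{Density bounds.} For $|\gamma|\le j$ we have $\partial_x^\gamma\rho_h=\int\partial_x^\gamma h\,\dd v$. On $\T^3$, Cauchy--Schwarz with the weight $\langle v\rangle^{\ell}$ gives
\[
 |\partial_x^\gamma\rho_h(x)|^2\le\Bigl(\int\langle v\rangle^{-2\ell}\dd v\Bigr)\int\langle v\rangle^{2\ell}|\partial_x^\gamma h|^2\dd v,
\]
and the first factor is finite since $\ell>3/2$. Integrating in $x$ yields the $H^j_x$ bound. The $L^1_x$ bound then follows because the torus has unit volume, so $L^1\le L^2$.

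On $\R^3$ we use the moving weight $\omega_t=\langle x-tv\rangle^\mu\langle v\rangle^\ell\ge\langle v\rangle^\ell$. The pointwise Cauchy--Schwarz above then gives the $L^2_x$ estimate, with a constant independent of $t$. For $L^1_x$ we apply Cauchy--Schwarz in $(x,v)$ jointly:
\[
 \|\partial_x^\gamma\rho_h\|_{L^1_x}\le\|\omega_t\partial_x^\gamma h\|_2\Bigl(\int\langle x-tv\rangle^{-2\mu}\langle v\rangle^{-2\ell}\dd x\dd v\Bigr)^{1/2}.
\]
Substituting $y=x-tv$ (unit Jacobian at fixed $v$) turns the last integral into $\int\langle y\rangle^{-2\mu}\dd y\int\langle v\rangle^{-2\ell}\dd v$. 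This is finite and independent of $t$ because $\mu,\ell>3/2$. These are exactly the computations recorded after \eqref{eq:moving-weight-continuity-definition}.

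\emph{Field bound.} Derivatives commute with the convolution, $\partial_x^\gamma(K*\rho_h)=K*\partial_x^\gamma\rho_h$, so it suffices to estimate $\|K*g\|_{L^2}$ for $g=\partial_x^\gamma\rho_h$ in terms of $\|g\|_{L^2}+\|g\|_{L^1}$.

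On $\R^3$, write $K=K\ind_{|x|\le1}+K\ind_{|x|>1}=:K_<+K_>$.
\begin{itemize}[leftmargin=*,itemsep=2pt]
\item Near part: $|K_<(x)|\lesssim|x|^{-2}$ on the unit ball, so $K_<\in L^1$ and Young's inequality gives $\|K_<*g\|_2\le\|K_<\|_1\|g\|_2$.
\item Far part: $|K_>(x)|\lesssim|x|^{-2}$ for $|x|>1$, and $\int_{|x|>1}|x|^{-4}\dd x<\infty$, so $K_>\in L^2$ and Young's inequality gives $\|K_>*g\|_2\le\|K_>\|_2\|g\|_1$.
\end{itemize}
This far-field step is the only place the $L^1_x$ control is needed, and it is the reason the lemma keeps the $L^1_x$ term.

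On $\T^3$, the periodic force satisfies $K_{\T}=c\,x/|x|^3+O(1)$ near the origin and is smooth elsewhere, so $K_{\T}\in L^1(\T^3)$. Young's inequality on the torus then gives $\|K_{\T}*g\|_2\le\|K_{\T}\|_1\|g\|_2$ directly.

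Summing over $|\gamma|\le j$ and combining with the density bounds gives \eqref{eq:whole-space-coulomb-sobolev-density-field}. The only point needing care is the uniformity of the constant in $t$ on $\R^3$. It is settled by the change of variables $y=x-tv$ in the $L^1$ estimate together with the inequality $\omega_t\ge\langle v\rangle^\ell$ in the $L^2$ estimate. Neither step depends on $t$, so the constant is independent of $t$ on all of $[0,\infty)$.
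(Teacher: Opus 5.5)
Your proof is correct and has the same overall structure as the paper's, with two differences in how the field is estimated.

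\textbf{Density bounds.} These match the paper exactly: weighted Cauchy--Schwarz in $v$, and on $\R^3$ a joint Cauchy--Schwarz with the shift $y=x-tv$.

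\textbf{Field on $\T^3$.} The paper uses the Poisson estimate
$\|K*\rho_h\|_{H^{j+1}_x}\le C\|\rho_h-\langle\rho_h\rangle\|_{H^j_x}$.
That estimate exploits $K_{\T}$ being the gradient of the Coulomb Green function and gains one derivative. You use only $K_{\T}\in L^1(\T^3)$ and Young's inequality. Your route is more elementary and works for any kernel in the periodic Riesz class with $s+1<d$. It loses the derivative gain, which the lemma does not require.

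\textbf{Field on $\R^3$.} Both arguments split $K$ into a near part in $L^1$ and a far part in $L^2$, but the derivatives land in different places.
\begin{itemize}
\item The paper chooses a cutoff with $\partial^\gamma K_>\in L^2$ and puts the derivatives on the kernel, bounding the far part by $\|\partial^\gamma K_>\|_2\|\rho_h\|_1$. It therefore needs only the $L^1$ norm of $\rho_h$ itself.
\item You use a sharp cutoff and put the derivatives on the density, bounding the far part by $\|K_>\|_2\|\partial^\gamma\rho_h\|_1$. This is legitimate because you have already bounded $\sum_{|\gamma|\le j}\|\partial_x^\gamma\rho_h\|_{L^1_x}$ by $\|h\|_{\mathcal H^j(t)}$.
\end{itemize}
In both versions the constants are independent of $t$. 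Your use of $\omega_t\ge\langle v\rangle^\ell$ makes the $L^2$ density bound uniform for all $t\ge0$, not just on bounded intervals.
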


\begin{proof}
We treat the two geometries separately.  The density bounds follow from
weighted Cauchy--Schwarz.  On $\T^3$, write
$\langle\rho_h\rangle:=\int_{\T^3}\rho_h(x)\,\dd x$.  The Poisson estimate gives
\begin{equation}\label{eq:torus-coulomb-elliptic-bound}
 \|K*\rho_h\|_{H_x^{j+1}}
 \le C_c\|\rho_h-\langle\rho_h\rangle\|_{H_x^j}.
\end{equation}
On $\R^3$, write $K=K_<+K_>$ with
$K_<\in L^1_c$ and $\partial^\gamma K_>\in L^2$.  Then
\[
 \|\partial^\gamma(K*\rho_h)\|_2
 \le \|K_<\|_1\|\partial^\gamma\rho_h\|_2
      +\|\partial^\gamma K_>\|_2\|\rho_h\|_1.
\]
This proves \eqref{eq:whole-space-coulomb-sobolev-density-field}.
\end{proof}

After differentiating the kinetic equation, the terms involving the Coulomb
field are controlled by the next product estimates.  The
commutator with the free transport is handled directly in the energy
estimate.

\begin{lemma}\label{lem:whole-space-moving-weight-commutator}
Let $m\ge4$ and $\mu,\ell\ge0$.  In either geometry, if
$E\in H^m(\mathcal D;\R^3)$ and $h\in\mathcal H^m(t)$, then
\begin{equation}\label{eq:whole-space-moving-weight-commutator}
 \sum_{|\xi|+|\eta|\le m}
 \|\omega_t[\partial_x^\xi\partial_v^\eta,E\cdot\nabla_v]h\|_2
 \le C\|E\|_{H_x^m}\|h\|_{\mathcal H^m(t)},
\end{equation}
Moreover, if $E_1,E_2:\mathcal D\to\R^3$ are vector fields such that
$E_1-E_2\in H^{m-1}(\mathcal D;\R^3)$, then
\begin{equation}\label{eq:whole-space-moving-weight-source}
 \|(E_1-E_2)\cdot\nabla_vh\|_{\mathcal H^{m-1}(t)}
 \le C\|E_1-E_2\|_{H_x^{m-1}}\|h\|_{\mathcal H^m(t)}.
\end{equation}
\end{lemma}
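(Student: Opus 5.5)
We prove both estimates by the Leibniz rule together with the moving-weight bookkeeping. The key fact is that $E$ depends only on $x$, so the commutator contains only $x$-derivatives of $E$.

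\emph{Expansion of the commutator.} For $|\xi|+|\eta|\le m$, $\partial_v^\eta$ commutes with $E(x)\cdot\nabla_v$. Hence
\[
 [\partial_x^\xi\partial_v^\eta,E\cdot\nabla_v]h
 =\sum_{0<\gamma\le\xi}\binom{\xi}{\gamma}\partial_x^\gamma E\cdot\nabla_v\partial_x^{\xi-\gamma}\partial_v^\eta h .
\]
Each term is a product of a factor $\partial_x^\gamma E$, with $1\le|\gamma|\le m$, and a factor $g=\partial_x^{\xi-\gamma}\partial_v^{\eta}\nabla_v h$ of total order at most $m-|\gamma|+1\le m$. The weight $\omega_t$ multiplies only the $h$-factor. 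For each term we use a sup bound on one factor and an $L^2$ bound on the other, split by size.

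\emph{Case analysis.} There are two cases.
\begin{itemize}
 \item If $|\gamma|\le m-2$, then $\|\partial_x^\gamma E\|_{L^\infty_x}\le C\|E\|_{H^m_x}$ by the Sobolev embedding $H^2(\mathcal D)\subset L^\infty$ in dimension three. The term is then bounded by $C\|E\|_{H^m_x}\|\omega_t g\|_{L^2}\le C\|E\|_{H^m_x}\|h\|_{\mathcal H^m(t)}$.
 \item If $|\gamma|\in\{m-1,m\}$, then $g$ has order at most $2\le m-2$, since $m\ge4$. Here we use $\|\partial^\gamma_xE\|_{L^2_x}$. For fixed $x$ we bound $\omega_t g$ in $L^\infty_x L^2_v$, which gives
\[
 \|\omega_t\,\partial_x^\gamma E\, g\|_{L^2_{x,v}}\le\|\partial_x^\gamma E\|_{L^2_x}\,\sup_x\|\omega_t g(x,\cdot)\|_{L^2_v}.
\]
To control $\sup_x\|\omega_t g(x,\cdot)\|_{L^2_v}$ we use the one-dimensional-in-$x$ Sobolev embedding $H^2_x\subset L^\infty_x$ valued in $L^2_v$. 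This requires two further $x$-derivatives of $\omega_t g$, so the total order is at most $4\le m$.
\end{itemize}

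\emph{Weight bookkeeping.} On $\T^3$ the weight $\langle v\rangle^\ell$ commutes with $\partial_x$, so no further terms arise. On $\R^3$ we need $|\partial_x^\alpha\omega_t|\le C_\alpha\omega_t$ with a constant uniform in $t$. This holds because $\partial_x\langle x-tv\rangle^\mu$ is bounded by $\mu\langle x-tv\rangle^{\mu-1}$, and the factor $\langle v\rangle^\ell$ does not depend on $x$. Therefore $\|\omega_t g\|_{L^\infty_xL^2_v}\le C\sum_{|\alpha|\le2}\|\omega_t\partial_x^\alpha g\|_{L^2}\le C\|h\|_{\mathcal H^m(t)}$. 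The $t$-independence of this constant is the step that needs care, and it holds because we never differentiate the weight in $v$.

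\emph{Source estimate.} For \eqref{eq:whole-space-moving-weight-source}, set $F=E_1-E_2$ and expand $\partial_x^\xi\partial_v^\eta(F\cdot\nabla_vh)$ with $|\xi|+|\eta|\le m-1$. Each resulting term is $\partial_x^\gamma F\cdot\partial_x^{\xi-\gamma}\partial_v^\eta\nabla_vh$ with $|\gamma|\le m-1$, and the $h$-factor has order at most $m$. We split as before at $|\gamma|\le m-3$ versus $|\gamma|\ge m-2$, using the threshold $m-1$ in place of $m$. In the first range, $H^2\subset L^\infty$ applied to $\partial^\gamma F$ uses $|\gamma|+2\le m-1$. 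In the second range, the $h$-factor has order at most $2$, so the $L^\infty_xL^2_v$ argument costs at most $4\le m$ derivatives of $h$.

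The main obstacle is choosing the split thresholds so that in both estimates neither factor exceeds its available regularity; this is exactly where $m\ge4$ is used.
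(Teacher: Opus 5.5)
Your proposal is correct and follows the paper's proof essentially step for step. It uses the same Leibniz expansion of the commutator (only $x$-derivatives of $E$ appear), the same splits $|\gamma|\le m-2$ versus $|\gamma|\ge m-1$ for the commutator and $|\gamma|\le m-3$ versus $|\gamma|\ge m-2$ for the source term, the vector-valued embedding $H^2_x(L^2_v)\hookrightarrow L^\infty_x(L^2_v)$, and the bound $|\partial_x^\alpha\omega_t|\le C_\alpha\omega_t$ for $|\alpha|\le2$, with total derivative count at most $4\le m$.

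The only slip is verbal. The embedding you need is the three-dimensional one, $H^2\subset L^\infty$ because $2>3/2$, applied in $x\in\mathcal D$ with values in $L^2_v$; it is not a ``one-dimensional-in-$x$'' embedding. Your derivative count of two extra $x$-derivatives is already the correct three-dimensional count, so the argument is unaffected.
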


\begin{proof}
Since $E=E(x)$,
\[
 [\partial_x^\xi\partial_v^\eta,E\cdot\nabla_v]h
 =\sum_{0<\gamma\le\xi}\binom\xi\gamma
 (\partial_x^\gamma E)\cdot\nabla_v
 \partial_x^{\xi-\gamma}\partial_v^\eta h.
\]
Set
\[
 G=\nabla_v\partial_x^{\xi-\gamma}\partial_v^\eta h.
\]
We distinguish the number of derivatives falling on the field.  For
$1\le |\gamma|\le m-2$, $H_x^2\hookrightarrow L_x^\infty$ gives
\[
 \|\omega_t(\partial_x^\gamma E)G\|_{L^2_{x,v}}
 \le \|\partial_x^\gamma E\|_{L_x^\infty}
      \|\omega_tG\|_{L^2_{x,v}}
 \le C\|E\|_{H_x^m}\|h\|_{\mathcal H^m(t)}.
\]
For $|\gamma|\ge m-1$,
$H_x^2(L_v^2)\hookrightarrow L_x^\infty(L_v^2)$ gives
\[
 \|\omega_t(\partial_x^\gamma E)G\|_{L^2_{x,v}}
 \le \|\partial_x^\gamma E\|_{L_x^2}
      \|\omega_tG\|_{L_x^\infty L_v^2}.
\]
For $|\alpha|\le2$ one has
$|\partial_x^\alpha\omega_t|\le C_\alpha\omega_t$.  Hence
\[
 \|\omega_tG\|_{L_x^\infty L_v^2}
 \le C\sum_{|\alpha|\le2}
       \|\partial_x^\alpha(\omega_tG)\|_{L^2_{x,v}}
 \le C\|h\|_{\mathcal H^m(t)},
\]
because
$|\xi|+|\eta|-|\gamma|+1+|\alpha|\le m$ whenever
$|\gamma|\ge m-1$ and $m\ge4$.  Summing over the multi-indices proves
\eqref{eq:whole-space-moving-weight-commutator}.

The source estimate is obtained in the same way.  For
\eqref{eq:whole-space-moving-weight-source}, write $F=E_1-E_2$.  A typical
differentiated term is
\[
 (\partial_x^\gamma F)\cdot
 \nabla_v\partial_x^{\xi-\gamma}\partial_v^\eta h,
 \qquad |\xi|+|\eta|\le m-1.
\]
For $|\gamma|\le m-3$ we use $L_x^\infty$ on the field factor; for
$|\gamma|\ge m-2$ we use $L_x^2$ on that factor and
$L_x^\infty(L_v^2)$ on the other.  The derivative count is
\[
 (m-1)-|\gamma|+1+2=m-|\gamma|+2\le4\le m.
\]
The preceding derivative bound for $\omega_t$ then gives
\eqref{eq:whole-space-moving-weight-source}.
\end{proof}

\subsection{Weighted Sobolev estimates and local solutions}

\begin{proposition}\label{prop:gauss-coulomb-sobolev-core}
Assume the Sobolev hypotheses in \Cref{cor:intro-quantitative-coulomb}.
Then there exist $T_f>0$ and a nonnegative classical solution $f$ of the
VPFP equation on $[0,T_f]$ with unit mass.  On
$\R^3$,
\[
 \operatorname*{ess\,sup}_{t\le T_f}\|f(t)\|_{H^m_{\mu,\ell}(t)}<\infty,
\]
while on $\T^3$,
\[
 \operatorname*{ess\,sup}_{t\le T_f}\|f(t)\|_{H^m_\ell}<\infty.
\]
Moreover $f\in C([0,T_f];H^{m-1}_\ell)$ on $\T^3$ and
$f\in C_tH^{m-1}_{\mu,\ell}(t)$ on $\R^3$, and
\[
 \int_0^{T_f}\|\nabla_vf(t)\|_{H^m_\ell}^2\,\dd t<\infty
 \quad\text{on }\T^3,
 \qquad
 \int_0^{T_f}\|\nabla_vf(t)\|_{H^m_{\mu,\ell}(t)}^2\,\dd t<\infty
 \quad\text{on }\R^3.
\]
The field satisfies
\begin{equation}\label{eq:gauss-sobolev-field-core}
 E_f\in C([0,T_f];C_b^1),\qquad
 \sup_{t\le T_f}\|E_f(t)\|_{H_x^m}<\infty.
\end{equation}
\end{proposition}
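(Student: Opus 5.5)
We construct $f$ by an iteration scheme with a uniform weighted $H^m$ energy estimate on a short interval, followed by a contraction in a norm with one fewer derivative. Set $f^{(0)}=f_0$ and $E^{(n)}=K*\rho_{f^{(n)}}$. Let $f^{(n+1)}$ solve the linear equation
\[
\partial_tg+v\cdot\nabla_xg+E^{(n)}\cdot\nabla_vg=\sigma\Delta_vg,\qquad g(0)=f_0 .
\]
For smooth prescribed fields this equation is solved through the stochastic flow representation of Appendix~\ref{app:whole-space-classical}, after mollifying $f_0$ if needed. This gives nonnegativity and conservation of mass for every iterate.

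For the a priori estimate, apply $\partial_x^\xi\partial_v^\eta$ with $|\xi|+|\eta|\le m$, multiply by $\omega_t^2\partial_x^\xi\partial_v^\eta g$, and integrate. Three terms arise.

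\emph{Free transport.} On $\R^3$ the moving weight $\langle x-tv\rangle^\mu$ is exactly transported by $\partial_t+v\cdot\nabla_x$. The commutator $[\partial_v^\eta,v\cdot\nabla_x]$ produces terms with one $v$-derivative replaced by an $x$-derivative, so they stay at total order $m$ and are bounded by $C\|g\|_{\mathcal H^m(t)}^2$.

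\emph{Diffusion.} It yields the dissipation $\sigma\|\omega_t\nabla_v\partial^{\xi,\eta}g\|_2^2$, up to lower-order terms produced by $\nabla_v\omega_t$. Since $|\nabla_v\omega_t|\le C(1+t)\omega_t$, these lower-order terms are harmless.

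\emph{Force.} Split $E\cdot\nabla_v\partial^{\xi,\eta}g$ into its transport part, which integrates to zero up to $\nabla_v\omega_t$ terms bounded by $\|E\|_\infty\|g\|^2$, and the commutator. The commutator is controlled by \Cref{lem:whole-space-moving-weight-commutator}, giving $C\|E\|_{H^m_x}\|g\|_{\mathcal H^m(t)}^2$.

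Combining these with \Cref{lem:whole-space-coulomb-sobolev-field}, which gives $\|E^{(n)}\|_{H^m_x}\le C\|f^{(n)}\|_{\mathcal H^m(t)}$, we obtain
\[
\frac{\dd}{\dd t}\|f^{(n+1)}\|_{\mathcal H^m}^2+\sigma\|\nabla_vf^{(n+1)}\|_{\mathcal H^m}^2\le C\bigl(1+\|f^{(n)}\|_{\mathcal H^m}\bigr)\|f^{(n+1)}\|_{\mathcal H^m}^2 .
\]
This closes by induction on $n$ for $T_f$ small. It yields uniform bounds in $L^\infty_t\mathcal H^m$ together with the $L^2_t$ bound on $\nabla_vf$ stated in the proposition.

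For contraction, set $d_n=f^{(n+1)}-f^{(n)}$. It solves the linear equation with field $E^{(n)}$ and source $-(E^{(n)}-E^{(n-1)})\cdot\nabla_vf^{(n)}$. By \eqref{eq:whole-space-moving-weight-source} together with the field lemma at level $m-1$, the source is bounded in $\mathcal H^{m-1}$ by $C\|d_{n-1}\|_{\mathcal H^{m-1}}$. The same energy estimate at level $m-1$ then gives $\sup_t\|d_n\|_{\mathcal H^{m-1}}\le CT_f\sup_t\|d_{n-1}\|_{\mathcal H^{m-1}}$.

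The iterates therefore converge in $C_t\mathcal H^{m-1}(t)$, with $E^{(n)}\to E_f$ in $C_tH^{m-1}_x$. Weak-* compactness recovers the $\operatorname{ess\,sup}$ bound in $\mathcal H^m$ and $\sup_t\|E_f\|_{H^m_x}<\infty$. Since $m-1\ge6$, Sobolev embedding in $x$ gives $E_f\in C_tC_b^1$. Sobolev embedding in six dimensions gives $f\in C_tC^2_{\mathrm{loc}}$, since $m-1-3\ge3$. The time regularity is then read off the equation as in the proof of \Cref{prop:common-riesz-classical}, so $f$ is classical. Nonnegativity and unit mass pass to the limit through the $C_tL^1$ convergence; this convergence follows from the weighted $L^2$ norm because $\mu,\ell>3/2$.

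The step I expect to be the main obstacle is the rigorous energy estimate on $\R^3$ with the moving weight. The difficulty is the commutator of the weight and of $\partial_v$ with free transport: $\partial_v^\eta$ acting through $v\cdot\nabla_x$ must be balanced against the weighted norm at the same order, and the $\nabla_v\omega_t$ terms grow like $t\,\omega_t$. I would first verify these bounds at the level of a cutoff-regularized smooth solution and only then pass to the limit. Continuity in $C_t\mathcal H^{m-1}(t)$, as opposed to mere boundedness, will come from the Cauchy property of the iteration rather than from the top-order estimate.
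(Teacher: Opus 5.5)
Your proposal is correct and follows essentially the same route as the paper. Both arguments use a Picard iteration driven by the linear weighted $\mathcal H^m$ energy estimate with the transport-invariant weight $\langle x-tv\rangle^\mu\langle v\rangle^\ell$, the commutator and field lemmas, a contraction in $\mathcal H^{m-1}$ via the source estimate, and weak-* compactness plus Sobolev embedding to obtain a classical solution. The one point to make explicit is that the Cauchy property passes time continuity to the limit only once each iterate is known to lie in $C_t\mathcal H^{m-1}(t)$. The paper obtains this for the linear problem by mollifying the field and the data and then using the order-$(m-1)$ difference estimate.
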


\begin{proof}
We first establish the linear weighted estimate used in the Picard
construction.  Let $E\in L_t^\infty H_x^m\cap C_tH_x^{m-1}$ and consider
\begin{equation}\label{eq:whole-space-moving-linear-equation}
 \partial_th+v\cdot\nabla_xh+E\cdot\nabla_vh=\sigma\Delta_vh,
 \qquad h(0)=h_0.
\end{equation}
For $D^{\xi,\eta}=\partial_x^\xi\partial_v^\eta$,
\[
 [D^{\xi,\eta},v\cdot\nabla_x]h
 =\sum_{j=1}^3\eta_j
 \partial_x^{\xi+e_j}\partial_v^{\eta-e_j}h,
\]
which has the same total differential order.  Moreover
$(\partial_t+v\cdot\nabla_x)\omega_t=0$, and on every bounded time interval
\[
 \frac{|\nabla_v\omega_t|}{\omega_t}
 +\frac{|\Delta_v(\omega_t^2)|}{\omega_t^2}\le C_T.
\]
Using \Cref{lem:whole-space-moving-weight-commutator}, integration by parts in
$v$, and Young's inequality for the derivatives of the weight, we obtain
\begin{equation}\label{eq:whole-space-moving-linear-energy}
 \frac{\dd}{\dd t}\|h\|_{\mathcal H^m(t)}^2
 +\sigma\|\nabla_vh\|_{\mathcal H^m(t)}^2
 \le C_T(1+\|E\|_{H_x^m})\|h\|_{\mathcal H^m(t)}^2.
\end{equation}
We justify the estimate for the stated regularity by approximation.  Let
$E^\nu$ be a spatial mollification of $E$ and choose smooth data $h_0^\nu$ such that
\[
 E^\nu\longrightarrow E\quad\text{in }C_tH_x^{m-1},
 \qquad
 \sup_\nu\|E^\nu\|_{L_t^\infty H_x^m}
 \le \|E\|_{L_t^\infty H_x^m},
 \qquad
 h_0^\nu\longrightarrow h_0\quad\text{in }\mathcal H^m(0).
\]  The integrated estimate is uniform in $\nu$; the stochastic-flow
representation, weak lower semicontinuity, and the order-$(m-1)$ difference
estimate yield \eqref{eq:whole-space-moving-linear-energy} and
\[
 h\in C_t\mathcal H^{m-1}(t).
\]
Positivity and mass conservation follow from the stochastic-flow
representation; see \cite[Sections~4.5--4.6]{Kunita1990}.

We now turn to the nonlinear equation.  Set
\[
 f^{(0)}=f_0,\qquad E^{(n)}=K*\rho_{f^{(n)}},
\]
and let $f^{(n+1)}$ solve
\eqref{eq:whole-space-moving-linear-equation} with field $E^{(n)}$ and initial
datum $f_0$.  Here $\mathcal H^m(t)=H^m_\ell$ on $\T^3$ and
$\mathcal H^m(t)=H^m_{\mu,\ell}(t)$ on $\R^3$.

Fix $R>2\|f_0\|_{\mathcal H^m(0)}$.  For $T_f>0$ sufficiently small,
\Cref{lem:whole-space-coulomb-sobolev-field} and
\eqref{eq:whole-space-moving-linear-energy} give
\begin{equation}\label{eq:coulomb-picard-uniform}
 \sup_{n\ge1}\sup_{t\le T_f}\|f^{(n)}(t)\|_{\mathcal H^m(t)}
 +\sup_{n\ge1}\left(
   \int_0^{T_f}\|\nabla_v f^{(n)}(t)\|_{\mathcal H^m(t)}^2\,\dd t
 \right)^{1/2}
 \le C_R,
\end{equation}
and
\[
 \sup_{n\ge0}\sup_{t\le T_f}\|E^{(n)}(t)\|_{H_x^m}\le C_R.
\]

To prove convergence of the iteration, let
$g^{(n)}=f^{(n+1)}-f^{(n)}$.  Then
\[
 \partial_tg^{(n)}+v\cdot\nabla_xg^{(n)}+E^{(n)}\cdot\nabla_vg^{(n)}
 -\sigma\Delta_vg^{(n)}
 =(E^{(n-1)}-E^{(n)})\cdot\nabla_vf^{(n)}.
\]
The order-$(m-1)$ estimate and
\eqref{eq:whole-space-moving-weight-source} give
\begin{equation}\label{eq:coulomb-picard-contraction}
 \sup_{0\le r\le t}
 \|g^{(n)}(r)\|_{\mathcal H^{m-1}(r)}^2
 \le C_R t\,
 \sup_{0\le r\le t}
 \|g^{(n-1)}(r)\|_{\mathcal H^{m-1}(r)}^2.
\end{equation}
After decreasing $T_f$ so that $C_RT_f<1$, the sequence is Cauchy and
\[
 f^{(n)}\to f
 \quad\text{in }C_t\mathcal H^{m-1}(t)\cap C_tL^1.
\]
By \Cref{lem:whole-space-coulomb-sobolev-field},
\[
 \rho_{f^{(n)}}\longrightarrow\rho_f
 \quad\text{in }C_tH_x^{m-1},
\]
and, on $\R^3$, also in $C_tL_x^1$.  Consequently
\[
 E^{(n)}=K*\rho_{f^{(n)}}\longrightarrow K*\rho_f
 \quad\text{in }C_tH_x^{m-1}.
\]
We therefore identify the limiting field as $E_f=K*\rho_f$.  Passing to the
limit in the linear equations and using
weak lower semicontinuity gives the VPFP equation and
\begin{equation}\label{eq:coulomb-local-weighted-bound}
 \operatorname*{ess\,sup}_{t\le T_f}\|f(t)\|_{\mathcal H^m(t)}
 +\left(\int_0^{T_f}\|\nabla_vf(t)\|_{\mathcal H^m(t)}^2\,\dd t\right)^{1/2}
 \le C_R.
\end{equation}
The $C_tL^1$ convergence preserves nonnegativity and unit mass.  Moreover,
\[
 E_f\in C_tH_x^{m-1}\cap L_t^\infty H_x^m.
\]
It remains to obtain the $H_x^m$ bound for every time.  Fix
$t\in[0,T_f]$ and choose $t_j\to t$ from the full-measure set where
$\|E_f(t_j)\|_{H_x^m}\le C$.  Then
\[
 E_f(t_j)\rightharpoonup E_f(t)\quad\text{in }H_x^m,
 \qquad
 \|E_f(t)\|_{H_x^m}\le C.
\]
Thus $\sup_{t\le T_f}\|E_f(t)\|_{H_x^m}<\infty$.  Since $m\ge7$, Sobolev
embedding gives
\[
 E_f\in C([0,T_f];C_b^1),\qquad
 f\in C([0,T_f];C_b^2).
\]
The equation gives
\[
 \partial_tf=-v\cdot\nabla_xf-E_f\cdot\nabla_vf+\sigma\Delta_vf
 \in C([0,T_f];C_{\mathrm{loc}}),
\]
so $f$ is classical in the sense of \Cref{subsec:intro-model}.
\end{proof}

\subsection{Gaussian moments and pointwise bounds}

\begin{proposition}\label{prop:gaussian-coulomb-solution}
Under either set of assumptions in \Cref{cor:intro-quantitative-coulomb},
let $f$ be the solution furnished by \Cref{prop:gauss-coulomb-sobolev-core}.
For every $T\le T_f$ there exist $a_T>0$ and $C_T<\infty$ such that
\begin{equation}\label{eq:gauss-solution-pointwise-bound}
 0\le f(t,x,v)\le C_Te^{-a_T r_{\mathcal D}(x,v)},
 \qquad0\le t\le T.
\end{equation}
Moreover,
\[
 \int_0^T\|\nabla_v f(t)\|_1\,\dd t<\infty.
\]
On $\R^3$, \eqref{eq:gauss-solution-pointwise-bound} implies, for every
$M>0$, the polynomial spatial bounds required in
\Cref{thm:quantitative-comparison}, both for $f$ on $[0,T]$ and for $f_0$.
\end{proposition}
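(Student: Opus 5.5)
The plan is to propagate the Gaussian moment $\mathcal M(t):=\int_{\Omega_{\mathcal D}} f(t)e^{a(t)r_{\mathcal D}}\,\dd z$, with a decreasing exponent $a(t)$. Then, at each time, we convert it into a pointwise bound with \Cref{lem:gauss-integral-to-pointwise}, using the uniform Sobolev bound from \Cref{prop:gauss-coulomb-sobolev-core}.

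\emph{Step 1: the moment estimate.} Let $a(t)=a_0/(1+\Lambda a_0 t)$, so that $a'=-\Lambda a^2$. We test the VPFP equation against $e^{a r_{\mathcal D}}$. Diffusion contributes $\sigma\int f\,\Delta_v e^{ar}=\sigma\int f(6a+4a^2|v|^2)e^{ar}$. The field term contributes $\int f\,2a\,E_f\cdot v\, e^{ar}$, which is at most $a\|E_f\|_\infty\int f(1+|v|^2)e^{ar}$; here $\|E_f\|_\infty$ is bounded by \eqref{eq:gauss-sobolev-field-core}. On $\T^3$ the transport term vanishes. On $\R^3$ it gives $\int f\,2a\,x\cdot v\,e^{ar}\le\int f\,a(|x|^2+|v|^2)e^{ar}$. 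The time derivative of the weight contributes $a'\int f\,r\,e^{ar}=-\Lambda a^2\int f\,r\,e^{ar}$.

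The terms that grow like $a|v|^2$ or $a|x|^2$ are only of order $a$, whereas the damping is of order $a^2$. To fix this, I will replace the constant-in-$t$ profile by $a(t)=a_0e^{-\Lambda t}$, i.e.\ use $a'=-\Lambda a$. Then $-\Lambda a\,r$ dominates $2a(1+\|E\|_\infty)r+4\sigma a^2|v|^2$ once $\Lambda\ge C(1+\sup\|E_f\|_\infty)+4\sigma a_0$. This gives $\mathcal M'(t)\le C\mathcal M(t)$, hence $\mathcal M(t)\le e^{Ct}\mathcal M(0)<\infty$ with $a(t)\ge a_0e^{-\Lambda T}>0$.

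To justify the computation, I will truncate the weight, replacing $e^{ar}$ by $e^{a\min(r,n)}$ smoothed, or test with a cutoff. Alternatively, I will use the stochastic representation $f(t)=$ law of $(X_t,V_t)$ under the SDE with bounded field $E_f$, and estimate $\mathbb E e^{a(t)r(X_t,V_t)}$ by It\^o's formula with stopping times. This is cleaner: it requires only $f_0$ having the moment and $E_f$ bounded, and uniqueness for the linear equation identifies the law with $f$. I expect this justification to be the main technical point, though it is routine.

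\emph{Step 2: pointwise bound.} For each $t\le T$, $f(t)\in H^m$ uniformly in $t$. On $\R^3$ this holds because $\omega_t\ge1$, so $\|f(t)\|_{H^m}\le\|f(t)\|_{H^m_{\mu,\ell}(t)}$. For times outside the essential-sup set, I will use continuity in $H^{m-1}$ together with $m-1>3$. Together with Step 1, the hypotheses of \Cref{lem:gauss-integral-to-pointwise} hold with exponent $a(T)$. Its proof shows that the resulting constants depend only on $\sup_t\|f(t)\|_{H^{m-1}}$ and $\sup_t\mathcal M(t)$, so they are uniform in $t$. This yields \eqref{eq:gauss-solution-pointwise-bound}, with $a_T=a(T)c_0(m-4)/(m+2)$ when working in $H^{m-1}$. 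Nonnegativity comes from \Cref{prop:gauss-coulomb-sobolev-core}.

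\emph{Step 3: remaining claims.} Write $\|\nabla_vf\|_1\le\|\langle v\rangle^{\ell}\langle x-tv\rangle^{\mu}\nabla_vf\|_2\,\|\omega_t^{-1}\|_2$. The second factor is finite for $\mu,\ell>3/2$ after the change of variables $y=x-tv$; on $\T^3$ only the $v$-weight is used. Then Cauchy--Schwarz in time together with the bound $\int_0^{T_f}\|\nabla_vf\|_{\mathcal H^m}^2<\infty$ from \Cref{prop:gauss-coulomb-sobolev-core} gives $\int_0^T\|\nabla_vf\|_1\,\dd t<\infty$. Finally, $e^{-a_T(1+|x|^2+|v|^2)}\le C_M\langle x\rangle^{-M}e^{-a_T|v|^2}$ for every $M$, which gives the polynomial spatial bounds on $f$. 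For $f_0$, the same bound follows by applying \Cref{lem:gauss-integral-to-pointwise} directly at $t=0$.
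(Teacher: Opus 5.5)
Your proposal is correct and follows the paper's argument: an It\^o/stopping-time propagation of $\int f\,e^{a(t)r_{\mathcal D}}$ with a decreasing exponent, then conversion to a pointwise bound through \Cref{lem:gauss-integral-to-pointwise} using the uniform $H^{m-1}$ bound, then weighted Cauchy--Schwarz for $\|\nabla_vf\|_1$, and finally $e^{-a_T|x|^2}\le C_M\langle x\rangle^{-M}$ for the polynomial bounds. The differences are cosmetic: you take $a'=-\Lambda a$ and bound $2aE_f\cdot v\le a\|E_f\|_\infty(1+|v|^2)$, whereas the paper uses $2aE_f\cdot v\le a^2|v|^2+L_T^2$ with $a'+\mathbf 1_{\{\mathcal D=\R^3\}}a+(4\sigma+1)a^2=0$ (both close, and your first attempt $a'=-\Lambda a^2$ should simply be dropped); also, you control $\|\nabla_vf\|_1$ through $\int_0^{T_f}\|\nabla_vf\|_{\mathcal H^m(t)}^2\,\dd t$ and Cauchy--Schwarz in time, while the paper uses the ess-sup bound on $\|f(t)\|_{\mathcal H^m(t)}$.
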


\begin{proof}
We first propagate the exponential moment.  Fix $T\le T_f$, set
$L_T=\sup_{t\le T}\|E_f(t)\|_\infty$, and define
\[
 \mathcal A_t=v\cdot\nabla_x+E_f(t,x)\cdot\nabla_v+\sigma\Delta_v.
\]
Let $a\in C^1([0,T])$ be positive; it will be chosen below.  Then
\begin{align*}
 e^{-a r_{\mathcal D}}(\partial_t+\mathcal A_t)e^{a r_{\mathcal D}}
 &=a'r_{\mathcal D}
 +2\mathbf1_{\{\mathcal D=\R^3\}}a\,x\cdot v
 +2aE_f\cdot v+6\sigma a+4\sigma a^2|v|^2.
\end{align*}
Using
\[
 2aE_f\cdot v\le a^2|v|^2+L_T^2,
 \qquad
 2a x\cdot v\le a|x|^2+a|v|^2
 \quad(\mathcal D=\R^3),
\]
we choose $a$ to solve
\begin{equation}\label{eq:gauss-a-ode}
 a'+\mathbf1_{\{\mathcal D=\R^3\}}a+(4\sigma+1)a^2=0,
 \qquad 0<a(0)<a_0.
\end{equation}
Then $a(t)>0$ and
\[
 e^{-a r_{\mathcal D}}(\partial_t+\mathcal A_t)e^{a r_{\mathcal D}}
 \le C_T.
\]
For the kinetic diffusion $Z_t=(X_t,V_t)$ with density $f(t)$, set
\[
 \tau_R:=\inf\{t\ge0:r_{\mathcal D}(Z_t)\ge R\}.
\]
It\^o's formula gives
\[
 \mathbb E\,e^{a(t\wedge\tau_R)r_{\mathcal D}(Z_{t\wedge\tau_R})}
 \le \mathbb E\,e^{a(0)r_{\mathcal D}(Z_0)}
 +C_T\int_0^t
 \mathbb E\,e^{a(s\wedge\tau_R)r_{\mathcal D}(Z_{s\wedge\tau_R})}\,\dd s.
\]
Gronwall's inequality and Fatou's lemma yield, as $R\to\infty$,
\begin{equation}\label{eq:gauss-moment-gronwall}
 \int f(t,z)e^{a(t)r_{\mathcal D}(z)}\,\dd z
 \le e^{C_Tt}\int f_0(z)e^{a(0)r_{\mathcal D}(z)}\,\dd z,
 \qquad 0\le t\le T;
\end{equation}
see, for instance, \cite[Section~3.3]{KaratzasShreve1991}.  In particular,
\[
 \underline a_T:=\min_{0\le t\le T}a(t)>0,
 \qquad
 \sup_{0\le t\le T}\int f(t,z)e^{\underline a_T r_{\mathcal D}(z)}\,\dd z<\infty.
\]

We next convert the integral moment into a pointwise estimate.  By
\Cref{prop:gauss-coulomb-sobolev-core} and
\eqref{eq:moving-weight-continuity-definition}, in either geometry
\[
 f\in C([0,T];H^{m-1}(\Omega_{\mathcal D})),
 \qquad
 \sup_{0\le t\le T}\|f(t)\|_{H^{m-1}}<\infty.
\]
Since $m-1>3$, \Cref{lem:gauss-integral-to-pointwise} and the preceding
uniform bounds give \eqref{eq:gauss-solution-pointwise-bound}.

Weighted Cauchy--Schwarz gives, for almost every $t$,
\[
 \|\nabla_v f(t)\|_1
 \le C\|f(t)\|_{\mathcal H^m(t)},
\]
where on $\R^3$ one uses
\[
 \int_{\R^6}\langle y\rangle^{-2\mu}
 \langle v\rangle^{-2\ell}\,\dd y\dd v<\infty.
\]
Hence \eqref{eq:coulomb-local-weighted-bound} gives
\[
 \int_0^T\|\nabla_v f(t)\|_1\,\dd t<\infty.
\]

On $\R^3$, for every $M>0$,
\[
 e^{-a_T|x|^2}\le C_{T,M}\langle x\rangle^{-M}.
\]
The final assertion follows from \eqref{eq:gauss-solution-pointwise-bound},
including at $t=0$.
\end{proof}

\section*{Statements and Declarations}
\textbf{Funding.} This work was supported in part by the National Natural
Science Foundation of China under Grant No.~12371180.\par
\textbf{Competing interests.} The authors declare that they have no competing
financial or non-financial interests.\par
\noindent\textbf{Data availability.} No datasets were generated or analyzed in the
course of this study.\par
\noindent\textbf{Use of generative artificial intelligence.} During the preparation of this manuscript, the authors used GPT-6 Pro to assist with parts of the existence and regularity arguments for the limiting VRFP and VPFP equations. The authors reviewed and revised all material produced with AI assistance and take responsibility for the mathematical content of the manuscript.

\end{document}